\newcommand{\supp}{\operatorname{supp}}
\numberwithin{equation}{section}
\newtheoremstyle{thmlemcorr}{10pt}{10pt}{\itshape}{}{\bfseries}{.}{10pt}{{\thmname{#1}\thmnumber{ #2}\thmnote{ (#3)}}}
\newtheoremstyle{thmlemcorr*}{10pt}{10pt}{\itshape}{}{\bfseries}{.}\newline{{\thmname{#1}\thmnumber{ #2}\thmnote{ (#3)}}}
\newtheoremstyle{defi}{10pt}{10pt}{\itshape}{}{\bfseries}{.}{10pt}{{\thmname{#1}\thmnumber{ #2}\thmnote{ (#3)}}}
\newtheoremstyle{remexample}{10pt}{10pt}{}{}{\bfseries}{.}{10pt}{{\thmname{#1}\thmnumber{ #2}\thmnote{ (#3)}}}
\newtheoremstyle{ass}{10pt}{10pt}{}{}{\bfseries}{.}{10pt}{{\thmname{#1}\thmnumber{ A#2}\thmnote{ (#3)}}}
\theoremstyle{thmlemcorr}
\newtheorem{theorem}{Theorem}
\numberwithin{theorem}{section}
\newtheorem{lemma}[theorem]{Lemma}
\newtheorem{proposition}[theorem]{Proposition}
\theoremstyle{thmlemcorr*}
\newtheorem{theorem*}{Theorem}
\newtheorem{lemma*}[theorem]{Lemma}
\newtheorem{corollary*}[theorem]{Corollary}
\newtheorem{proposition*}[theorem]{Proposition}
\newtheorem{problem*}[theorem]{Problem}
\newtheorem{conjecture*}[theorem]{Conjecture}
\theoremstyle{defi}
\newtheorem{definition}[theorem]{Definition}
\theoremstyle{remexample}
\newtheorem{remark}[theorem]{Remark}
\theoremstyle{ass}
\def\Xint#1{\mathchoice 
{\XXint\displaystyle\textstyle{#1}}%
{\XXint\textstyle\scriptstyle{#1}}%
{\XXint\scriptstyle\scriptscriptstyle{#1}}%
{\XXint\scriptscriptstyle\scriptscriptstyle{#1}}%
\!\int} 
\def\XXint#1#2#3{{\setbox0=\hbox{$#1{#2#3}{\int}$} 
\vcenter{\hbox{$#2#3$}}\kern-.5\wd0}} 
\def\dashint{\,\Xint-}
\definecolor{mygreen}{cmyk}{0.91, 0, 0.88, 0.12}
\newcommand{\lii}{\leqslant}
\newcommand{\gii}{\geqslant}
\newcommand{\epsi}{\varepsilon}
\newcommand{\aev}{{a.e.}}
\newcommand{\grad}{\nabla}
\newcommand{\intO}{\int_{\Omega}}
\newcommand{\ffi}{\varphi}
\def\weaklystar{\buildrel{\hskip-.6mm\star}\over\weakly}
\def\dist{{\rm dist}}
\def\d{{\rm d}}
\def\dx{{\rm d}x}
\def\dy{{\rm d}y}
\def\dt{{\rm d}t}
\let\weakly\rightharpoonup
\def\calH{{\cal H}}
\def\calQ{{\cal Q}}
\def\calB{{\cal B}}
\def\calC{{\cal C}}
\def\calF{{\cal F}}
\def\calQF{{\cal QF}}
\def\calA{{\cal A}}
\def\calL{{\cal L}}
\def\calP{{\cal P}}
\def\calM{{\cal M}}
\def\sminus{\backslash}
\newcommand{\II}{\mathbb{I}}
\newcommand{\RR}{\mathbb{R}}
\newcommand{\NN}{\mathbb{N}}
\newcommand{\ZZ}{\mathbb{Z}}
\begin{document}


\title{A chromaticity-brightness model for color images denoising\\ in a Meyer's ``u + v'' framework}
\author{
Rita Ferreira,
\!\!\footnote{King Abdullah University of Science and Technology (KAUST), CEMSE Division \&
KAUST SRI,
Center for Uncertainty Quantification  in Computational
Science and Engineering, Thuwal 23955-6900, Saudi Arabia.\hfill\break
E-mail address: rita.ferreira@kaust.edu.sa }
\  Irene Fonseca,\!\!
\footnote{Department of Mathematical Sciences, Carnegie Mellon University, Pittsburgh, PA 15213, USA.  \hfill\break
E-mail address: fonseca@andrew.cmu.edu}
\ and M. Lu\'{\i}sa Mascarenhas\!
\footnote{Departamento de Matem\'atica \& C.M.A. - F.C.T./U.N.L., Quinta da Torre, 2829--516 Caparica, Portugal. \hfill\break
E-mail address: mascar@fct.unl.pt}
}

\maketitle
\thispagestyle{empty}

\begin{abstract}
 A variational model for imaging segmentation
and denoising  color images  is  proposed.
 The  model combines Meyer's ``u+v" decomposition
 with a  chromaticity-brightness framework
 and is expressed by a minimization  of energy
integral functionals depending on a small parameter
$\epsi>0$. The asymptotic behavior as $\epsi\to0^+$
is characterized,
and convergence of infima, almost minimizers,
and energies are established.  In particular,
an integral representation of the lower semicontinuous
envelope, with respect to the $L^1$-norm, of
functionals with linear growth and defined
for maps taking values on a certain compact
manifold is provided. This study escapes the
realm of previous results since the underlying
manifold has boundary, and the  integrand and
its recession function fail to satisfy   hypotheses
commonly  assumed  in the literature. The main
tools
are $\Gamma$-convergence and relaxation techniques. \\

\textbf{MSC (2010):} 49J45, 26B30, {94A08}  \\

\textbf{Key words:} imaging denoising, color images, Meyer's $G$-norm, chromaticity-brightness decomposition, $\Gamma$-convergence, relaxation, $BV$ functions, manifold constraints.\end{abstract}




\parindent=0pt
\parskip=2mm

\tableofcontents 

\section{Introduction and Main Results}\label{intro}
An important problem in image processing is
the restoration, or denoising,  of a given
``noisy" image. 
Deterioration of images may be caused by several factors, some of which  occur  in the process of acquisition (e.g., blur may derive from
an incorrect lens adjustment
or due to motion)  or transmission. 
Variational
PDE methods have proven
to be  successful in
the restoration  process, where the desired clean and sharp image is obtained as a minimizer of a certain energy functional. The energy functionals proposed in the literature share the common feature of taking into account a balance between a certain distance to the given noisy image, the so-called fidelity term, and a filter acting as a regularization of the image.

In the seminal work by Tikhonov and Arsenin \cite{TALXXVII}, the fidelity term is expressed in terms of the $L^2$-distance to the noisy image, while the regularization term is given by the $L^2$-norm of the gradient. This model suffers from an important drawback, as an over smoothing is observed, and edges in images are not preserved. It turns out that the $L^2$-norm for the gradient allows the removal of noise, but penalizes too much the gradient near and on the edges of  an image. The
same observation applies
to any $L^p$-norm, $p>1$,
and this suggests using instead the $L^1$-norm, as first noticed by Rudin, Osher, and Fatemi \cite{ROFCXII}. Precisely, representing by $\Omega\subset\RR^2$ the image domain and by $u_0:\Omega\to\RR$ the observed noisy version
of the true unknown image
$u$, Rudin-Osher-Fatemi's model 
(the ROF model) aims at finding
\begin{eqnarray*}
\begin{aligned}
\inf_{u\in W^{1,1}(\Omega)\atop u_0-u\in L^2(\Omega)} \bigg\{ \int_{\Omega} |\grad u|\,\dx + \lambda\int_\Omega |u_0 - u|^2 \,\dx\bigg\}
\end{aligned}
\end{eqnarray*}
or, equivalently,
\begin{eqnarray*}
\begin{aligned}
\min_{u \in  BV(\Omega)\atop u_0-u\in L^2(\Omega)} \bigg\{ |D u|(\Omega) + \lambda\int_\Omega |u_0 - u|^2 \,\dx\bigg\},
\end{aligned}
\end{eqnarray*}
where $\lambda>0$ is a tuning parameter and
$BV(\Omega)$ is the space of functions of
bounded variation in
$\Omega$. The ROF model, also known as the total variation model (TV model) since the filter used is the total variation of the image,  searches functions $u$ that best fit the data, measured in terms of the $L^2$
fidelity term, and whose gradient (total variation) is low so that noise is removed. It yields a decomposition of the type
\begin{eqnarray}\label{u+v}
\begin{aligned}
u_0=u+v,
\end{aligned}
\end{eqnarray}
where $u$ is well-structured, aimed at modeling homogeneous regions, while $v$ encodes
noise or textures.

The ROF model removes noise while preserving
edges, and it was extended to higher-order
 and  vectorial settings to treat color images
(see, for instance, 
\cite{AKVI, CEPYVI} for an overview).
However, it leads to  undesirable phenomena like blurring,  stair-casing effect
 (see \cite{AKVI, CEPYVI}), and it may also fail
to provide a good decomposition \eqref{u+v} of the given corrupted image as, for example, some pure geometric images (represented by characteristic functions) are treated as noise or textures (see \cite{Meyer01}). The reasons pointed out in literature relate to both the fidelity term and the regularization term. In this paper, we will focus mainly on the former.

Meyer \cite{Meyer01} showed that oscillating images are often treated as texture or noise. He proved that replacing the $L^2$-norm in the fidelity term  by a certain $G$-norm leads to better decompositions. Accordingly, he suggested the model
\begin{eqnarray}\label{MeyM}
\begin{aligned}
\inf_{u\in BV(\Omega)\atop u-u_0\in G(\Omega)}\bigg\{ |Du|(\Omega) + \lambda \Vert u-u_0\Vert_{G(\Omega)}\bigg\},
\end{aligned}
\end{eqnarray}
and we refer to Subsection~\ref{Gspa} for a detailed description and main properties of the space $G(\Omega)$ established in \cite{AuAuV,Meyer01}.
 Meyer's model has motivated several contributions aiming at overcoming some numerical difficulties posed by the structure of the $G$-norm (see, for instance, 
 \cite{AABFC05,OSV03,VeseOsher03}).
Finally, we mention that the infimum in \eqref{MeyM} is  a minimum, but the uniqueness of  minimizers is still an open problem.

When dealing with color images, the general
idea of the chromaticity-brightness approach is as follows: as before, $\Omega\subset\RR^2$ denotes the image domain, while $u_0:\Omega\to(\RR^+_0)^3$ is the observed deteriorated image, represented in the RGB (red, green, blue) system and assumed to belong to  $L^\infty(\Omega;\RR^3)$. The brightness component, $(u_0)_b$, of $u_0$ measures
the intensity of $u_0$,
 is defined by
\begin{eqnarray*}
\begin{aligned}
(u_0)_b := |u_0|,
\end{aligned}
\end{eqnarray*}
and is assumed to be different from zero \aev\ in $\Omega$. The chromaticity 
component, $(u_0)_c$, of $u_0$ is given by
\begin{eqnarray*}
\begin{aligned}
(u_0)_c := {u_0\over|u_0|}= {u_0\over (u_0)_b},
\end{aligned}
\end{eqnarray*}
which is well defined \aev\ in $\Omega$ and takes values in $S^2$, the unit sphere in $\RR^3$. It stores the color information of $u_0$. The function $u_0$ and its components are related by the identity $u_0=(u_0)_b (u_0)_c$. The core of the chromaticity-brightness models is to restore these two components independently. Representing by $u_b$ and $u_c$ the restored brightness and chromaticity components, respectively, the restored imaged is given by $u:=u_b u_c$. 

Because $(u_0)_b$ behaves as a gray-scaled image, to restore this component we may use, for instance,  one of the models previously mentioned. To restore the chromaticity component $(u_0)_c$, we  adopt Kang and March's model \cite{KMVII} using weighted harmonic maps. To be
precise, we  consider the problem
\begin{eqnarray}
\label{KMmodel}
\begin{aligned}
\min_{u_c\in W^{1,2}(\Omega;S^2)}\bigg\{\int_\Omega g(|\grad u_b^\sigma|)|\grad u_c|^2\,\dx + 
\lambda\int_{\Omega} |u_c- (u_0)_c|^2\,\dx\bigg\},
\end{aligned}
\end{eqnarray}
where $\lambda$ is a tuning parameter, $u_0$
is
extended by zero outside
$\Omega$,
\begin{eqnarray*}
\begin{aligned}
u_b^\sigma:=G_\sigma* (u_0)_b,\quad G_\sigma(x):= {A\over\sigma} e^{-{|x|^2\over4\sigma}}, \enspace A>0, \,\sigma>0,
\end{aligned}
\end{eqnarray*}
is a smooth regularization of $(u_0)_b$, and $g:\RR_0^+\to\RR^+$ is a
non-increasing positive function satisfying $g(0)=1$ and
$\lim_{t\to+\infty}g(t)=0$  (see also \cite{DMFLM09},
and the references therein, for this choice of functions
\(g\)). Examples of such functions $g$ are
\begin{eqnarray*}
\begin{aligned}
g(t):={1\over 1 + \big({t\over a}\big)^2}, \quad g(t):= e^{-({t\over a})^2},\enspace a>0.
\end{aligned}
\end{eqnarray*}
The model proposed by Kang and March in \cite{KMVII} is aimed at image colorization. It is assumed that the brightness data is known everywhere in $\Omega$, while the color data is only available in a subset $D$ of $\Omega$. Thus, in \cite{KMVII}, the second integral (fidelity term) in \eqref{KMmodel} is taken over $D$ (here we assume $D=\Omega$),
and it forces the function $u_c$ to be close  to the chromaticity data in $D$. The first integral acts as a regularization functional
and allows for sharp transitions of $u_c$ across the edges of $(u_0)_b$ since the value of $g(|\grad u_b^\sigma|)$ is close to zero in the regions where $u_b^\sigma$ varies fast. To deal with the nonconvex $S^2$ constraint,  in \cite{KMVII} the authors introduce a penalized version of the variational problem above, and convergence to the original variational problem as the penalty parameter tends to infinity is established. Numerical simulations are also performed. 

A natural question that is not considered in \cite{KMVII} is the asymptotic behavior of the variational model \eqref{KMmodel} as $\sigma$ tends to zero. Since $u_b^\sigma\in C^\infty(\overline\Omega)$ for every $\sigma>0$, it represents a  smooth version of the brightness component and, therefore, some relevant information may not be encoded in the model \eqref{KMmodel}. Furthermore, it avoids a compactness issue since, for fixed $\sigma>0$, $\inf_{\Omega} g(|\grad u_b^\sigma|)>0$ and, therefore, every minimizing sequence for \eqref{KMmodel} is relatively compact 
in $W^{1,2}(\Omega;\RR^3)$.

In this paper, we  deal with the denoising
problem
for color images by
considering  a model that combines the strengths
of Meyer's decomposition, adapted to color
images, with the strengths of chromaticity-brightness
models, which are preferred in literature as
they are considered as reducing shadowing and
providing better simulation results
(see, for
example, \cite{ChanKangShen01,KMVII,TangSapiroCaselles01}).
To be precise, we  adopt 
Kang and March's brightness-chromaticity approach,
but we  avoid the smoothing step. A vectorial
version of Meyer's model is 
\begin{eqnarray*}
\begin{aligned}
F_0(u):= |Du|(\Omega) + \lambda_0 \Vert u-u_0
\Vert_{G(\Omega;\RR^3)},\quad u\in BV(\Omega;\RR^3),\enspace
u-u_0\in G(\Omega;\RR^3), \enspace\lambda_0\in\RR^+.
\end{aligned}
\end{eqnarray*}
We   treat  the brightness component and the chromaticity component of $u_0$ separately. 
For the former, we use Meyer's model (for gray-scaled images),
which leads to the functional
\begin{eqnarray*}
\begin{aligned}
F_1(u_b):= |Du_b|(\Omega) + \lambda_b \Vert
u_b-(u_0)_b\Vert_{G(\Omega)},\quad u_b\in BV(\Omega),
\enspace u_b-(u_0)_b\in G(\Omega), \enspace
\lambda_b\in\RR^+.
\end{aligned} 
\end{eqnarray*}
For the latter, we use Kang and March's model
replacing $u_b^\sigma$ by $u_b$ assuming for
the moment that $u_b\in W^{1,1}(\Omega)$, and
thus we introduce the functional
\begin{eqnarray*}
\begin{aligned}
F_2(u_c):= \int_\Omega g(|\grad u_b|)|\grad u_c|^2\,\dx + \lambda_c\int_\Omega |u_c- (u_0)_c|^2\,\dx, \quad u_c\in W^{1,2}(\Omega;S^2),  \enspace \lambda_c\in\RR^+.
\end{aligned}
\end{eqnarray*}
To couple these two approaches, we set $u:= u_b u_c$ and consider the minimization problem
\begin{eqnarray*}
\begin{aligned}
&  \inf_{u_b\in W^{1,1}(\Omega),  u_c\in W^{1,2} (\Omega;S^2),\atop u_b-(u_0)_b\in G(\Omega), u_0-u_cu_b\in G(\Omega;\RR^3)} \Big\{F_0(u_bu_c) + F_1(u_b) + F_2(u_c)\Big\}; 
\end{aligned}%
\end{eqnarray*}
that is,
{\setlength\arraycolsep{0.5pt}
\allowdisplaybreaks
\begin{eqnarray}
&& \inf_{u_b\in W^{1,1}(\Omega),
 u_c\in W^{1,2}(\Omega;S^2),\atop
u_b-(u_0)_b\in G(\Omega),
u_0-u_cu_b\in G(\Omega;\RR^3)}
\bigg\{ \int_{\Omega}
|\grad(u_c u_b)|\,\dx
+  \int_\Omega |\grad
u_b|\,\dx +  \int_\Omega
g(|\grad u_b|)|\grad
u_c|^2\,\dx \nonumber \\
&&\hskip35mm + \, \lambda_v\Vert u_b u_c -
u_0 \Vert_{G(\Omega;\RR^3)}
+ \lambda_b \Vert u_b-(u_0)_b\Vert_{G(\Omega)}+
\lambda_c\int_\Omega
|u_c- (u_0)_c|^2\,\dx\bigg\}.\label{ourM1}
\end{eqnarray}}%
As stated, problem \eqref{ourM1} presents a lack of uniform estimates regarding the gradient of $u_c$. To overcome this issue, we will add a constraint for the brightness component $(u_0)_b$ and its  test functions $u_b$, namely
\begin{eqnarray}\label{ubbdd}
\begin{aligned}
& (u_0)_b, \,  u_b\in [\alpha,\beta] \hbox{ \aev\ in $\Omega$,\,\, for some $0<\alpha\lii\beta$.}
\end{aligned}
\end{eqnarray}
Under hypothesis \eqref{ubbdd},
we have
{\setlength\arraycolsep{0.5pt}
\begin{eqnarray*}
&&\alpha\int_\Omega |\grad
u_c|\,\dx \lii \int_\Omega
|u_b\grad u_c + u_c\otimes\grad
u_b|\,\dx  + \int_\Omega
|u_c\otimes\grad u_b|\,\dx  \lii \int_{\Omega}
|\grad(u_c u_b)|\,\dx
+ \int_{\Omega} |\grad
u_b|\,\dx.
\end{eqnarray*}}%
Consequently, if 
{\setlength\arraycolsep{0.5pt}
\begin{eqnarray*}
\{(u_b^n,u_c^n)\}_{n\in\NN}\subset \big\{(u_b,u_c)\in W^{1,1}(\Omega;
[\alpha,\beta])\times W^{1,2}(\Omega;S^2)\!:\, &&\,u_b-(u_0)_b\in G(\Omega), \, u_b u_c -
u_0 \in G(\Omega;\RR^3)
\big\}
\end{eqnarray*}}%
is a infimizing sequence for \eqref{ourM1}, then, using the properties of the $G$- and  $BV$-spaces, up to a (not relabeled) subsequence, we conclude that there exist $\bar u_b\in BV(\Omega; [\alpha,\beta])$ and $\bar u_c\in BV(\Omega;S^2)$ such that
\begin{eqnarray}
&& u_b^n\weaklystar \bar u_b \enspace \hbox{weakly-$\star$ in $BV(\Omega)$,}\quad u_c^n\weaklystar \bar u_c \enspace \hbox{weakly-$\star$ in $BV(\Omega;\RR^3)$,} \quad \hbox{as $n\to\infty$}, \label{comp ourM1}\\
&& \bar u_b-(u_0)_b\in G(\Omega), \quad \bar u_b\bar u_c - u_0\in G(\Omega;\RR^3), \nonumber\\
&& \lim_{n\to+\infty}F^{fid}(u_b^n,u_c^n) = F^{fid}(\bar u_b,\bar u_c),\nonumber
\end{eqnarray}
where
\begin{eqnarray}\label{Ffid}
\begin{aligned}
F^{fid}(u_b,u_c):=\lambda_v\Vert u_b u_c -
u_0 \Vert_{G(\Omega;\RR^3)} + \lambda_b \Vert u_b-(u_0)_b\Vert_{G(\Omega)} +   \lambda_c\int_\Omega |u_c- (u_0)_c|^2\,\dx
\end{aligned}
\end{eqnarray}
is the sum of the three fidelity terms in \eqref{ourM1}. If it turned out that $\bar u_b\in W^{1,1}(\Omega; [\alpha,\beta])$ and $\bar u_c\in W^{1,1}(\Omega;S^2)$,
then   minimizers for \eqref{ourM1} would
exist  provided that the functional given by the first three terms in \eqref{ourM1} (the regularization terms) was sequential lower semicontinuous with respect to the convergences in \eqref{comp ourM1}. This sequential lower semicontinuity is intrinsically related to the problem of finding an integral representation for 
{\setlength\arraycolsep{0.5pt}
\begin{eqnarray}
\inf\bigg\{\liminf_{n\to\infty}
\int_\Omega h(u_b^n,u_c^n,\grad
u_b^n,\grad u_c^n)\,\dx\!:\,
&&\,u_b^n\in  W^{1,1}(\Omega;[\alpha, \beta]),
\, u_b^n\weakly u_b \hbox{
weakly in }  W^{1,1}(\Omega), \nonumber\\
&&\,\,  u_c^n\in W^{1,2}(\Omega;S^2),\,
u_c^n\weakly u_c \hbox{
weakly in } W^{1,1}(\Omega;\RR^3)\bigg\}, \label{infgap}
\end{eqnarray}}%
with
\begin{eqnarray*}
\begin{aligned}
h(r,s,\xi,\eta):=|\xi| + g(|\xi|)|\eta|^2 + |s\otimes\xi+r\eta|.
\end{aligned}
\end{eqnarray*}

In general, 
\begin{eqnarray*}
\begin{aligned}
(\xi,\eta)\mapsto h(r,s,\xi,\eta)= |\xi| + g(|\xi|)|\eta|^2 + |s\otimes\xi+r\eta| 
\end{aligned}
\end{eqnarray*}
is not quasiconvex. 
Moreover, for $(r,s)\in [\alpha,\beta] \times
S^2$, $h$ satisfies the non-standard growth conditions
\begin{eqnarray*}
\begin{aligned}
{1\over C} (|\xi| + |\eta|)\lii h(r,s,\xi,\eta)\lii C(1+|\xi| + |\eta|^2),
\end{aligned}
\end{eqnarray*}
 which leads us to a well-known, but poorly
understood, gap problem
(see \cite{FLMIV,FMXCVII,  MMV} concerning the unconstrained setting). We also observe  that the admissible sequences in \eqref{infgap}  should satisfy in addition the restrictions $u^n_b-(u_0)_b\in G(\Omega)$ and $u^n_b u_c^n -
u_0 \in G(\Omega;\RR^3)$ or, equivalently (see Proposition~\ref{Galtern}), 
{\setlength\arraycolsep{0.5pt}
\begin{eqnarray}
&&\int_\Omega (u_b^n-(u_0)_b)\,\dx=0 \enspace
\hbox{
and}\enspace  \int_\Omega
(u_b ^nu_c ^n- u_0)\,\dx=0. \label{1.9star}
\end{eqnarray}}%
It turns out to be a challenging task to construct
a recovery sequence that simultaneously satisfies
the manifold constraint and the average restrictions.

In view of these considerations, to avoid the
gap and to penalize deviations from average
zero in \eqref{1.9star}, as a first approach to  problem \eqref{ourM1}, we   study the asymptotic behavior, as $\epsi\to0^+$, of   the problems
\begin{eqnarray}\label{ourM}
\begin{aligned}
\inf_{(u_b,u_c)\in W^{1,1}(\Omega;[\alpha,\beta])\times
W^{1,1}(\Omega;S^2)} \big\{F^{reg}(u_b,u_c) + F_\epsi^{fid}(u_b,u_c)\big\},
\end{aligned}
\end{eqnarray}
where
\begin{eqnarray}\label{Freg}
\begin{aligned}
F^{reg}(u_b,u_c):=\int_\Omega |\grad u_b|\,\dx +   \int_\Omega g(|\grad u_b|)|\grad u_c|\,\dx + \int_{\Omega}|\grad(u_c u_b)|\,\dx
\end{aligned}
\end{eqnarray}
and 
{\setlength\arraycolsep{0.5pt}
\begin{eqnarray}
F_\epsi^{fid}(u_b,u_c):=&&\lambda_v \bigg\Vert u_b u_c -
u_0  - \dashint_\Omega
(u_b u_c - u_0)\,\dx \bigg \Vert_{G(\Omega;\RR^3)} + \frac{1}{\epsi} \bigg| \int_\Omega
(u_b u_c - u_0)\,\dx\bigg| \nonumber \\
&&+\, \lambda_b \bigg\Vert
u_b-(u_0)_b  - \dashint_\Omega
(u_b-(u_0)_b)\,\dx\bigg\Vert_{G(\Omega)} + \frac{1}{\epsi} \bigg| \int_\Omega
(u_b-(u_0)_b)\,\dx\bigg|  \nonumber\\
&&+\,   \lambda_c\int_\Omega
|u_c- (u_0)_c|^2\,\dx. \label{penFfid}
\end{eqnarray}}%
The integrand involved in \eqref{Freg} satisfies standard growth conditions (see \eqref{boundsf}
below), and it remains relevant in terms of applications to imaging. Note that the term
\begin{eqnarray*}
\begin{aligned}
\int_\Omega g(|\grad u_b|)|\grad u_c|^2\,\dx
\end{aligned}
\end{eqnarray*}
in \eqref{ourM1} can be viewed as a weighted version of Tikhonov and Asenin \cite{TALXXVII}'s regularization term mentioned at the beginning of this introduction, while in \eqref{ourM} we use instead a weighted version of ROF's regularization term (se also \cite{CMM00,DMFLM09}), namely
\begin{eqnarray*}
\begin{aligned}
\int_\Omega g(|\grad u_b|)|\grad u_c|\,\dx.
\end{aligned}
\end{eqnarray*}
For small $\epsi>0$, the functional $F_\epsi^{ fid}$ is a penalized version of  the functional $F^{
fid}$ in \eqref{Ffid} that, by means of the factor $\frac{1}{\epsi}$, penalizes sequences $\{(u_b^n, u_c^n)\}_{n\in\NN}$ whose averages $\int_\Omega
(u^n_b-(u_0)_b)\,\dx$ and  $\int_\Omega
(u^n_b u^n_c - u_0)\,\dx$ are far from zero. This penalization allows us to incorporate the $G$-norm and the $G$-restrictions in our model. As we will see, in the limit as $\epsi\to0^+$, we will recover the functional $F^{fid}$ and limit pairs $(u_b,u_c)$ will satisfy $u_b-(u_0)_b\in G(\Omega)$ and $u_b u_c - u_0 \in G(\Omega;\RR^3)$.

Before we state our main theorem, we introduce some notation. Regarding functions of bounded variation, we  adopt the notations in \cite{AFPMM}, and we refer to Subsection~\ref{onBV} for more details. 
Let $f:\RR\times \RR^3\times \RR^{2}\times \RR^{3\times 2}\to[0,+\infty)$ be defined, for $(r,s,\xi,\eta)\in \RR\times \RR^3\times \RR^{2}\times \RR^{3\times 2}$, by
\begin{eqnarray}\label{defoff}
\begin{aligned}
f(r,s,\xi, \eta):=|\xi| + g(|\xi|)|\eta| + |r\eta + s\otimes\xi|,
\end{aligned}%
\end{eqnarray}
where, as above, $g:[0,+\infty)\to (0,1]$ is a non-increasing, Lipschitz continuous function satisfying $g(0)=1$ and $\lim_{t\to+\infty} g(t) = 0$. Notice that
\begin{eqnarray*}
\begin{aligned}
F^{reg}(u_b,u_c) = \intO f(u_b(x), u_c(x),\grad
u_b(x),\grad u_c(x))\,\dx.
\end{aligned}%
\end{eqnarray*}
For $s\in \overline{B(0,1)}$, we have
\(\frac{1}{2}|\xi| +
\frac{|r|}{2}|\eta| \lii \frac{1}{2}|\xi| +
\frac{1}{2}(|r\eta + s \otimes \xi) + |\xi|) \lii
f(r,s,\xi, \eta)\),
and so
\begin{eqnarray}\label{boundsf}
\begin{aligned}
& \frac{1}{2}|\xi| +
\frac{|r|}{2}|\eta| \lii
f(r,s,\xi, \eta) \lii
2|\xi| + (1+ |r|)|\eta|,
\end{aligned}
\end{eqnarray}
where we used the fact that \(g(\cdot) \lii
1\).
 
 For $r\in\RR$, $s\in S^2$, $\xi\in\RR^{2}$, and $\eta\in [T_s(S^2)]^2$, where $T_s(S^2)$ is the tangential space to $S^2$ at $s$, we denote by $\calQ_Tf$ the tangential quasiconvex envelope of  $f$; to
be precise (see \cite{DFMTXCIX}),
{\setlength\arraycolsep{0.5pt}
\begin{eqnarray}\label{defQTf}
\calQ_T f(r,s,\xi,\eta)
:=\inf\bigg\{\int_Q
f(r,s,\xi + \grad\ffi(y),
\eta+\grad\psi(y))\,\dy\!:\,
&&\,\,\ffi\in
W^{1,\infty}_0(Q), \,\,\psi\in
 W^{1,\infty}_0(Q;T_s(S^2))\bigg\}.\nonumber
 \\
\end{eqnarray}}%
The recession function,
$f^\infty$, of $f$ 
is the function defined, for $(r,s,\xi,\eta)\in
\RR\times
\RR^3\times \RR^{2}\times
\RR^{3\times 2}$,
by 
{\setlength\arraycolsep{0.5pt}
\begin{eqnarray}\label{frec}
f^\infty(r,s,\xi,\eta):=&&
\limsup_{t\to+\infty}
{f(r,s,t\xi,t\eta)\over
t}= \limsup_{t\to+\infty}
\big(|\xi| + g(t|\xi|)|\eta|
+ |r\eta + s\otimes\xi|\big) \nonumber\\
=&&|\xi|+ \chi_{_{\{0\}}}(|\xi|)|\eta|+|r\eta + s\otimes\xi|
,
\end{eqnarray}}%
where $\chi(t) :=1$ if $t=0$ and $\chi(t):=0$
if $t\in\RR\backslash \{0\}$,  because $g(0)=1$ and $\displaystyle\lim_{t\to+\infty}
g(t) = 0$. Note that
{\setlength\arraycolsep{0.5pt}
\begin{eqnarray}
&& f^\infty(r,s,\xi,\eta) \lii (3+\beta) |(\xi,\eta)| \label{upborecfct} 
\end{eqnarray}}%
for $r\in[\alpha,\beta]$,
$s\in S^2$, $\xi\in\RR^2$,
and $\eta\in [T_s(S^2)]^2$.
The recession function,
$(Q_Tf)^\infty$,
of $Q_Tf$ is the function defined, for $r\in\RR$,
$s\in S^2$, $\xi\in\RR^2$,
and $\eta\in [T_s(S^2)]^2$, by
\begin{eqnarray*}
\begin{aligned}
(Q_Tf)^\infty(r,s,\xi,\eta):=&
\limsup_{t\to+\infty}
{Q_Tf(r,s,t\xi,t\eta)\over
t}\cdot
\end{aligned}
\end{eqnarray*}
For $a,b\in [\alpha,\beta]\times S^2$ and $\nu\in S^1$, we set
{\setlength\arraycolsep{0.5pt}
\begin{eqnarray}\label{defK}
K(a,b,\nu):= && \inf\bigg\{\int_{Q_\nu}
f^\infty( \ffi(y),\psi(y),
\grad\ffi(y), \grad\psi(y))
\,\dy\!:\, \vartheta=(\ffi,\psi)
\in\calP(a,b,\nu)\bigg\}\nonumber
\\ 
=&&\inf\bigg\{\int_{Q_\nu}
\big(|\grad \ffi(y)|
+ |\grad(\ffi\psi)(y)|
+ \chi_{_{\{0\}}}(|\grad\ffi|)|
\grad\psi|\big)
\,\dy\!:\, \vartheta=(\ffi,\psi)
\in\calP(a,b,\nu)\bigg\},
\end{eqnarray}}%
where $Q_\nu$ is the unit cube in $\RR^2$ centered at the origin and with two faces orthogonal to $\nu$, and
{\setlength\arraycolsep{0.5pt}
\begin{eqnarray}\label{defP}
\calP(a,b,\nu):=\Big\{
\vartheta=(\ffi,\psi)\in
W^{1,1}(Q_\nu; [\alpha,\beta]\times
S^2)\!:\,\,&&\hbox{$\vartheta$ is $1$-periodic
in the  orthogonal direction to $\nu,$} \nonumber\\
 &&\vartheta(y)
= a \hbox{ if } y\cdot\nu=-
\frac{1}{2}, \,\vartheta(y)
= b \hbox{ if } y\cdot\nu=
\frac{1}{2}\Big\}.
\end{eqnarray}}%
Finally, we define the functional $F^{reg, sc^-}:
BV(\Omega;[\alpha,\beta])\times BV(\Omega;S^2)\to\RR$
 as
{\setlength\arraycolsep{0.5pt}
\begin{eqnarray}\label{lscFreg}
F^{reg, sc^-}(u_b,u_c):=
&&\displaystyle \intO
\calQ_Tf(u_b(x), u_c(x),\grad
u_b(x),\grad u_c(x))\,\dx
\nonumber\\
&&\quad + \int_{S_{(u_b,u_c)}}
K\big((u_b,u_c)^+(x),(u_b,u_c)^-(x),\nu_{(u_b,u_c)}(x)\big) \,\d\calH^1(x)
\nonumber\\
&&\quad +\int_{\Omega}
(\calQ_Tf)^\infty(\tilde
u_b(x), \tilde u_c(x),
W^c_b(x), W^c_c(x))\,
\d|D^c(u_b,u_c)|(x),
\end{eqnarray}}%
where \(\tilde u_b(x)\) and \(\tilde u_c(x)\)
are
the approximate limits of \(u_b\) and \(u_c\)
at \(x\),
respectively, and where \(W^c\) is the Radon-Nikodym
derivative of $D^c(u_b,u_c)$ with respect to
its total variation, 
 $W^c_b$ is the first row of ${W^c}$,  and
$W^c_c$ is the $3\times2$ matrix obtained from
${W^c}$ by erasing its first row.
Our main result is the following.
\begin{theorem}\label{mainthm}
Let $\Omega\subset\RR^2$
be an open, bounded domain with Lipschitz
boundary $\partial\Omega$, and
let $\{\epsi_n\}_{n\in\NN}$ and $\{\delta_n\}_{n\in\NN}$
be two arbitrary sequences of positive
numbers
converging to zero. Let $F^{reg}$, $F^{fid}_{\epsi_n}$,
$F^{reg, sc^-}$, and $F^{fid}$ be the
functionals introduced in   \eqref{Freg},
\eqref{penFfid}, \eqref{lscFreg}, and
\eqref{Ffid}, respectively, and let  $X$
be the set 
\begin{eqnarray}\label{defsetX}
\begin{aligned}
 X:=\big\{(u_b,u_c)\in BV(\Omega;[\alpha,\beta])
\times BV(\Omega;S^2)\!:\, &\, u_b-(u_0)_b\in
G(\Omega), \, u_b u_c -
u_0 \in G(\Omega;\RR^3)\big\}.
\end{aligned}
\end{eqnarray}
Then,
{\setlength\arraycolsep{0.5pt}%
\begin{eqnarray*}
&&\min_{(u_b,u_c)\in  X} \big (F^{reg,
sc^-}(u_b,u_c) + F^{fid}(u_b,u_c)\big)
= \lim_{n\to\infty} \inf_{(u_b,u_c)\in
W^{1,1}(\Omega;[\alpha,\beta])\times
W^{1,1}(\Omega;S^2)} \big(F^{reg}(u_b,u_c)+
F^{fid}_{\epsi_n}(u_b,u_c)\big).
\end{eqnarray*}}%
Moreover, if for each $n\in\NN$, 
$( u_b^n,
u_c^n)\in W^{1,1}(\Omega;[\alpha,\beta])\times
W^{1,1}(\Omega;S^2)$ is a $\delta_n$-minimizer
of the functional $\big(F^{reg}+F^{fid}_{\epsi_n}\big)$
 in $W^{1,1}(\Omega;[\alpha,\beta])\times
W^{1,1}(\Omega;S^2)$, that is,
\begin{eqnarray*}
\begin{aligned}
F^{reg}( u_b^n, u_c^n) + F^{fid}_{\epsi_n}(
u_b^n, u_c^n)\lii \inf_{(u_b,u_c)\in W^{1,1}(\Omega;[\alpha,\beta])\times
W^{1,1}(\Omega;S^2)} \big(F^{reg}(u_b,u_c)+
F^{fid}_{\epsi_n}(u_b,u_c)\big) + \delta_n,
\end{aligned}%
\end{eqnarray*}
 then $\{( u_b^n, u_c^n)\}_{n\in\NN}$
is sequentially, relatively compact with
respect
to the weak-$\star$ convergence in $BV(\Omega)\times
BV(\Omega;\RR^3)$; and if $( u_b, u_c)$
is a cluster point of $\{( u_b^n,
u_c^n)\}_{n\in\NN}$, then $( u_b, u_c)\in
X$
is a minimizer of $(F^{reg,sc^-} + F^{fid})$
in $X$
 and
\begin{eqnarray*}
\begin{aligned}
F^{reg, sc^-}( u_b, u_c) + F^{fid}(
u_b, u_c) = \lim_{n\to\infty} \Big(
F^{reg}( u_b^n, u_c^n) + F^{fid}_{\epsi_n}(
u_b^n, u_c^n)\Big).
\end{aligned}%
\end{eqnarray*}
\end{theorem}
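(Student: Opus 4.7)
The plan is to derive the theorem from a $\Gamma$-convergence plus equi-coerciveness argument for the family
\begin{equation*}
\calF_n(u_b,u_c) := F^{reg}(u_b,u_c) + F^{fid}_{\epsi_n}(u_b,u_c)
\end{equation*}
defined on $W^{1,1}(\Omega;[\alpha,\beta]) \times W^{1,1}(\Omega;S^2)$ and extended by $+\infty$ elsewhere in $BV(\Omega)\times BV(\Omega;\RR^3)$, with respect to the weak-$\star$ topology in $BV$. The target $\Gamma$-limit is $\calF := F^{reg,sc^-} + F^{fid}$ on $X$, and $+\infty$ outside $X$. Once $\Gamma$-convergence and equi-coerciveness are established, the fundamental theorem of $\Gamma$-convergence gives at once the convergence of infima, the compactness of the $\delta_n$-minimizers, and the identification of their cluster points as minimizers with convergent energies. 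Equi-coerciveness follows from the lower bound in \eqref{boundsf} together with the uniform $L^\infty$ bounds $u_b^n \in [\alpha,\beta]$ and $u_c^n \in S^2$: any sequence with $\sup_n \calF_n(u_b^n,u_c^n) < +\infty$ is bounded in $BV$, and this supremum is finite for the $\delta_n$-minimizers because testing $\calF_n$ against a fixed admissible pair (e.g.\ a smooth approximation of $((u_0)_b,(u_0)_c)$ with the averages restored to zero) yields a uniform bound.

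For the liminf inequality, take $(u_b^n,u_c^n) \weaklystar (u_b,u_c)$ in $BV$ with $\sup_n \calF_n(u_b^n,u_c^n) < +\infty$. The $1/\epsi_n$ factors force $\int_\Omega(u_b^n - (u_0)_b)\,\dx \to 0$ and $\int_\Omega(u_b^n u_c^n - u_0)\,\dx \to 0$; Proposition~\ref{Galtern} then places the limit pair in $X$. Strong $L^1$-convergence of the mean-corrected defects, lower semicontinuity of the $G$-norm under $L^1$-convergence (from its duality characterization in Subsection~\ref{Gspa}), and Fatou's lemma for the $L^2$-chromaticity term combine to give $\liminf_n F^{fid}_{\epsi_n}(u_b^n,u_c^n) \ge F^{fid}(u_b,u_c)$. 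For the regularization part I would appeal to the integral representation of the $L^1$-relaxation of $F^{reg}$ in $BV(\Omega;[\alpha,\beta])\times BV(\Omega;S^2)$ proved separately in the paper, which is precisely formula \eqref{lscFreg}: the tangentially-quasiconvex bulk density $\calQ_T f$, the jump density $K$ from the cell problem \eqref{defK}, and the Cantor density $(\calQ_T f)^\infty$.

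For the limsup inequality I need, for each $(u_b,u_c)\in X$, a sequence $(\tilde u_b^n,\tilde u_c^n)$ in $W^{1,1}(\Omega;[\alpha,\beta])\times W^{1,1}(\Omega;S^2)$ with $\tilde u_b^n \to u_b$, $\tilde u_c^n \to u_c$ in $L^1$, and $\calF_{k(n)}(\tilde u_b^n,\tilde u_c^n) \to \calF(u_b,u_c)$ along a suitable diagonal $k(n)\to+\infty$. I would first invoke the same relaxation result to obtain an approximating sequence delivering $F^{reg}(\tilde u_b^n,\tilde u_c^n)\to F^{reg,sc^-}(u_b,u_c)$; since the limit pair lies in $X$, the averages of $\tilde u_b^n - (u_0)_b$ and of $\tilde u_b^n\tilde u_c^n - u_0$ already tend to zero. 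If necessary, a small zero-mean $BV$-perturbation corrects them to exactly zero with vanishing energy cost (for $\tilde u_c^n$ such a perturbation must remain $S^2$-valued, which is possible since it need only affect a thin collar). A diagonal extraction then chooses $\epsi_{k(n)}$ small enough that the $1/\epsi_{k(n)}$ penalties still go to zero, while continuity of the $G$-norm and of the $L^2$-chromaticity fidelity along the $L^1$-convergent sequence yields $F^{fid}_{\epsi_{k(n)}}(\tilde u_b^n,\tilde u_c^n) \to F^{fid}(u_b,u_c)$.

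The hard part will be the construction of the recovery sequence for the regularization term: it must simultaneously respect the nonconvex manifold constraint $u_c\in S^2$ (so the admissible perturbations of the Dirichlet-type cell problem \eqref{defQTf} take values in $T_{s}S^2$), reproduce the surface energy $K$ on $S_{(u_b,u_c)}$ through the two-state cell formula \eqref{defK}, and account for the Cantor component via a blow-up argument driven by the recession function $(\calQ_T f)^\infty$. The nonstandard form of $f^\infty$ in \eqref{frec}, with the singular factor $\chi_{\{0\}}(|\xi|)$, makes the jump-cell analysis particularly delicate, since a recovery profile for a pure $u_c$-jump pays no $g(|\grad u_b|)|\grad u_c|$ cost only if $\grad u_b \equiv 0$ along it; this forces a careful choice of coupled profiles $(\ffi,\psi)$ in \eqref{defP}, and is precisely where the properties of $g$ (monotonicity, $g(0)=1$, $g(\infty)=0$, Lipschitz continuity) enter essentially. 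I expect this to be the longest and most technical step of the proof.
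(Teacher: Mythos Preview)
Your overall strategy---$\Gamma$-convergence plus equi-coerciveness, with the relaxation result (Theorem~\ref{relaxthm}) serving as the key input for both the liminf and the limsup of $F^{reg}$---matches the paper's proof exactly. A few points where the paper's execution is cleaner than your sketch, and one place where you conflate two theorems:

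\textbf{Limsup.} No correction of the averages to exactly zero is needed, and your ``thin collar'' perturbation respecting the $S^2$ constraint is both unnecessary and the weakest link in your argument. The paper simply takes a recovery sequence $\{(u_b^j,u_c^j)\}_j$ from Theorem~\ref{relaxthm}, observes that since $(u_b,u_c)\in X$ the averages $\int_\Omega(u_b^j-(u_0)_b)\,\dx$ and $\int_\Omega(u_b^j u_c^j-u_0)\,\dx$ tend to zero, and then extracts a subsequence $j_n$ along which both are bounded by $\epsi_n^2$. The $1/\epsi_n$ penalties then vanish automatically, and the manifold constraint never needs to be touched.

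\textbf{Fidelity terms.} The paper does not use lower semicontinuity of the $G$-norm; it uses Proposition~\ref{normGto0} to get actual \emph{convergence}. The mean-corrected defects are $L^\infty$-bounded and converge a.e., hence weakly in $L^p$ for every $p>2$, so their $G$-norms converge. This handles liminf and limsup symmetrically. Your phrase ``lower semicontinuity of the $G$-norm under $L^1$-convergence from its duality characterization'' is not quite right as stated (the $G$-norm is an infimum, not a supremum), though in the present $L^\infty$-bounded setting Proposition~\ref{normGto0} rescues the conclusion.

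\textbf{Scope.} Your last paragraph is not part of the proof of Theorem~\ref{mainthm} at all. The construction of recovery sequences for $F^{reg}$, the tangential cell formula, the jump density $K$, and the handling of the singular factor $\chi_{\{0\}}(|\xi|)$ in $f^\infty$ are the content of Theorem~\ref{relaxthm}, proved separately in Section~\ref{proofrelaxthm} and invoked here as a black box. Once Theorem~\ref{relaxthm} is granted, the proof of Theorem~\ref{mainthm} is short and soft; you should not fold the hard analysis into it.
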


The proof of Theorem~\ref{mainthm} relies on the following relaxation result.

\begin{theorem}\label{relaxthm}
Let $\Omega\subset\RR^2$
be an open, bounded domain
with Lipschitz boundary, let  $F^{reg}$  be given by  
 \eqref{Freg},  and consider the functional $ F:L^1(\Omega)\times L^1(\Omega;\RR^3)\to[0,+\infty]$  defined by
\begin{eqnarray*}
F(u_b,u_c):= 
\begin{cases}
\displaystyle F^{reg}(u_b,u_c) & \hbox{if $(u_b,u_c)\in W^{1,1}(\Omega;[\alpha,\beta])\times W^{1,1} (\Omega;S^2)$},\\
+\infty & \hbox{otherwise},
\end{cases}
\end{eqnarray*}
for $(u_b,u_c)\in L^1(\Omega)\times L^1(\Omega;\RR^3)$. Then the lower semicontinuous envelope of $F$, ${\cal F}:L^1(\Omega)\times L^1(\Omega;\RR^3)\to[0,+\infty],$ defined  by
{\setlength\arraycolsep{0.5pt}
\begin{eqnarray*}
\calF(u_b,u_c):=\inf\Big\{\liminf_{n\to+\infty}  F(u_b^n,u_c^n)\!: \ && n\in\NN,\, (u_b^n,u_c^n)\in L^1(\Omega)\times L^1(\Omega;\RR^3),\\
&&u_b^n\to u_b \hbox{ in } L^1(\Omega),\, u_c^n\to u_c \hbox{ in } L^1(\Omega;\RR^3)\Big\},%
\end{eqnarray*}}%
has the  integral representation
\begin{eqnarray}\label{lscF1}
\calF(u_b,u_c)= 
\begin{cases}
F^{reg, sc^-}(u_b,u_c) & \hbox{if $(u_b,u_c)\in BV(\Omega;[\alpha,\beta])\times BV(\Omega;S^2),$ }\\
+\infty & \hbox{otherwise},
\end{cases}
\end{eqnarray}
for $(u_b,u_c)\in L^1(\Omega)\times L^1(\Omega;\RR^3)$, where $F^{reg, sc^-}$ is given by \eqref{lscFreg}.
\end{theorem}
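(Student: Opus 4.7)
The plan is to establish the representation in two steps: a lower bound $\calF(u_b,u_c) \gii F^{reg,sc^-}(u_b,u_c)$ for every $(u_b,u_c)$ in $BV(\Omega;[\alpha,\beta]) \times BV(\Omega;S^2)$ via the blow-up method of Fonseca--M\"uller, and a matching upper bound through a constrained recovery sequence. The assertion that $\calF(u_b,u_c) = +\infty$ when $(u_b,u_c)$ is not in $BV(\Omega;[\alpha,\beta]) \times BV(\Omega;S^2)$ follows from the coercivity inequality in \eqref{boundsf}, which forces the limit to inherit $BV$ regularity, together with the $L^1$-closedness of the pointwise constraints $u_b \in [\alpha,\beta]$ and $u_c \in S^2$.

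For the lower bound, I would fix a sequence $(u_b^n,u_c^n)$ converging to $(u_b,u_c)$ in $L^1$ with $\sup_n F(u_b^n,u_c^n) < +\infty$ and, up to a subsequence, pass to a weak-$\star$ limit $\mu$ of the positive Radon measures $\mu_n := f(u_b^n, u_c^n, \grad u_b^n, \grad u_c^n)\,\calL^2 \lfloor \Omega$. Decomposing $\mu$ with respect to $\calL^2$, $\calH^1 \lfloor S_{(u_b,u_c)}$, and $|D^c(u_b,u_c)|$ produces three independent blow-up problems. At an $\calL^2$-a.e.\ Lebesgue point $x_0$, suitable rescaling yields pairs of tangential perturbations which, by the definition \eqref{defQTf}, bound the Radon--Nikodym density from below by $\calQ_T f(u_b(x_0), u_c(x_0), \grad u_b(x_0), \grad u_c(x_0))$. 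At $\calH^1$-a.e.\ jump point $x_0$, the rescalings deliver admissible pairs in $\calP((u_b,u_c)^+(x_0),(u_b,u_c)^-(x_0),\nu(x_0))$ and hence the lower bound $K$ from \eqref{defK}. At $|D^c|$-a.e.\ Cantor point, rescaling by the size of $|D^c|$ on balls produces the lower bound $(\calQ_T f)^\infty$, with the matrices $W^c_b$ and $W^c_c$ appearing naturally from the Radon--Nikodym differentiation of $D^c(u_b,u_c)$.

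For the upper bound, the idea is a three-scale construction. First, approximate $(u_b, u_c)$ by pairs whose brightness takes values in an interior range $[\alpha + \eta, \beta - \eta]$, by truncation followed by convolution and radial retraction of $u_c$ to $S^2$. Second, on the cells of a Vitali covering of $\Omega$ associated with the three measures $\calL^2$, $\calH^1 \lfloor S_{(u_b,u_c)}$, and $|D^c(u_b,u_c)|$, insert the near-optimal tangential competitors from \eqref{defQTf} for bulk cells, the near-optimal one-dimensional profiles from \eqref{defP} in thin slabs across the jump set, and suitably rescaled recession-type competitors for Cantor cells. After each local perturbation, retract the chromaticity component to $S^2$ via radial projection, which is licit because the perturbation keeps the modified $u_c$ uniformly close to unit length, and clip the brightness component into $[\alpha,\beta]$. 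Third, a diagonal argument in the mesh size, in the interior shrinking $\eta$, and in the suboptimality of the chosen competitors produces a sequence in $W^{1,1}(\Omega;[\alpha,\beta]) \times W^{1,1}(\Omega;S^2)$ converging in $L^1$ to $(u_b,u_c)$ whose energies converge to $F^{reg,sc^-}(u_b,u_c)$.

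The principal obstacle is the combination of two non-standard features already flagged in the abstract. The manifold $[\alpha,\beta] \times S^2$ has boundary, so truncation of $u_b$ near the endpoints $\alpha$ or $\beta$ creates regions where the standard smooth projection is unavailable; this compels the auxiliary $\eta$-shrinking in the recovery sequence and a careful diagonal extraction to remove it. Moreover, the recession function $f^\infty$ contains the discontinuous term $\chi_{_{\{0\}}}(|\xi|)|\eta|$, placing it outside the hypotheses of classical integral representation theorems for $BV$ relaxation. Matching the lower and upper bounds at the jump and Cantor parts therefore requires exhibiting, in \eqref{defP} and in the blow-up of $(\calQ_T f)^\infty$, competitors whose brightness gradient $\grad\varphi$ does not vanish on the support of the chromaticity gradient $\grad\psi$, so that the indicator term is inactive along minimizing sequences and $K$ in \eqref{defK} can be treated as an integrand continuous in the relevant directions.
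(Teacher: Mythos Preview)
Your overall two-inequality structure matches the paper, but the actual mechanisms differ substantially on both sides, and on the upper bound your sketch skips the step that carries most of the work.

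For the lower bound at bulk and Cantor points, the paper does \emph{not} run a manifold-constrained blow-up. Instead it introduces the extended integrand $\tilde f$ of \eqref{deftildef} (which agrees with $f$ on tangent vectors via \eqref{fandtildef}), forms $H_\epsi = \calQ\tilde f + \epsi(|\xi|+|\eta|)$, checks in Proposition~\ref{hepsiasFM} that $H_\epsi$ satisfies the \emph{unconstrained} hypotheses (H1)--(H4) of Fonseca--M\"uller, and invokes that theorem directly to obtain \eqref{byFMXCIII1}; letting $\epsi\to0$ gives \eqref{leblb}--\eqref{cantlb} in one stroke. Your direct blow-up would also succeed, but requires carrying the $S^2$ constraint through the rescaling and linearizing at the tangent space, which the $H_\epsi$ device bypasses. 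At jump points the paper does blow up, but combines this with the constrained slicing Lemma~\ref{slicingMan} to fix traces so the rescaled pair lands in $\calP(a,b,\nu)$; the indicator term is handled by the elementary pointwise bound $g(t|\xi|)|\eta|\gii\chi_{_{\{0\}}}(|\xi|)|\eta|$, not by ``deactivating'' it along competitors.

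For the upper bound, the paper does \emph{not} build a single global recovery sequence via a Vitali partition. It first proves (Lemma~\ref{CalFmeasure}, relying on the manifold-constrained slicing Lemma~\ref{slicingMan} and the projection $\pi_y$ of Lemma~\ref{seqonman}) that $A\mapsto\calF(u,v;A)$ is the trace of a Radon measure, and then bounds its three Radon--Nikodym densities separately in Lemmas~\ref{abscontpartCalF}, \ref{cantorpartCalF}, \ref{jumppartCalF}. Your Vitali-plus-gluing plan hides exactly the difficulty the paper isolates: matching the cell competitors across interfaces while keeping values in $[\alpha,\beta]\times S^2$ is a genuine obstacle, and the paper's route avoids ever needing a global glue. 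For the manifold boundary, the paper does not $\eta$-shrink the range of $u_b$; in the bulk density (Lemma~\ref{abscontpartCalF}) it uses the explicit affine retraction $\Phi_{n,\epsi}$ of \eqref{Phinrest} onto $[\alpha,\beta]$, whose derivative tends uniformly to $1$, so no diagonal extraction in $\eta$ is needed. For the Cantor density, the discontinuity of $f^\infty$ is circumvented by the auxiliary positively $1$-homogeneous function $\mathfrak z$ and Alberti's rank-one theorem (see \eqref{=onrank1}), rather than by arranging $\grad\ffi\neq0$ on $\supp\grad\psi$.
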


The relaxation result above falls within the general context of studying lower semicontinuity and/or finding integral representations for the lower semicontinuous envelope of functionals of the type
\begin{eqnarray*}
\begin{aligned}
\int_\Omega f(x, u(x),\grad u(x))\,\dx, \quad u\in W^{1,p}(\Omega;\calM),
\end{aligned}
\end{eqnarray*}
where $\Omega\subset\RR^N$ is an open, bounded set, $p\in[1,+\infty)$, and $\calM\subset\RR^d$ is a (sufficiently) smooth, $m$-dimensional manifold. There is a vast literature 
in this framework (see, for instance, \cite{ ACELVII,
BMIX, BMX, BrezisCoronLieb86, CFLXI,DFMTXCIX,
 Ericksen90, Mucci09, Virga94}),  motivated, for
example, by the study of equilibria for liquid
crystals and magnetostrictive materials, where
the class of admissible fields is constrained
to take values on a certain manifold $\calM$ (commonly,
$\calM= S^{d-1}$, the unit sphere in $\RR^{d}$).
As in \cite{ACELVII, BMIX,Mucci09}, the key ingredients
in the proof of Theorem~\ref{relaxthm} are the
density of smooth functions in $W^{1,1}(\Omega;\calM)$
\cite{Bethuel91,BZLXXXVIII,HaLi03} and a projection
technique introduced in \cite{ACELVII,HKL86,HaLi87}.
However, new arguments are required as three main
features of our problem prevent us from using
immediately the  relaxation results concerning
the constraint case in the $BV$ setting \cite{ACELVII,BMIX,Mucci09}:
 unlike \cite{ACELVII,Mucci09}, our starting point
cannot be a tangential quasiconvex function as
the energy density considered here (see \eqref{defoff})
 fail always to satisfy such condition (see Remark~\ref{fnotTqcx});
and
unlike the general setting in the literature,
(i) our manifold, $\calM=[\alpha,\beta]\times
S^2$, {\sl has} boundary, (ii) the recession function
$f^\infty$ in our case (see \eqref{frec}) does
not satisfy a hypothesis of the type $|f(r,s,\xi,\eta)
- f^\infty(r,s,\xi,\eta)| \lii C(1+ |(\xi,\eta)|^{1-m})$
for some $C>0$ and $m\in (0,1)$ (for \aev\ $(r,s)$
and for all $(\xi,\eta)$). We anticipate that
our arguments may be used to treat more general
manifolds with boundary and  more general integrands.

This paper is organized as follows. 
In Section~\ref{notprel}, we collect the
 notation,  we recall   properties of
the space $G$ introduced by Meyer \cite{Meyer01},
and we also recall    properties of functions
of bounded variation and  sets of finite
perimeter. We also make some considerations
on quasiconvexity. In Section~\ref{proofmainthm},
we prove Theorem~\ref{mainthm}.
Finally, in Section~\ref{proofrelaxthm},
we establish Theorem~\ref{relaxthm}.

\section{Notation and Preliminaries}\label{notprel}

\subsection{Notation}\label{not}

Let $N, d\in\NN$. If $x,y\in\RR^N$, then $x\cdot y$ stands for the Euclidean inner product of $x$ and $y$, and $|x|:=\sqrt{x\cdot x}$ for the Euclidean norm of $x$. The space of $d\times N$-dimensional matrices is identified with $\RR^{dN}$, and we write $\RR^{d\times N}$. We define $S^{N-1}:=\{x\in\RR^N\!: |
x| =1\}$, $Q:=(-\frac{1}{2}, \frac{1}{2})^N$, $Q(x,\delta):= x + \delta Q$, and $B(x,\delta):=\{y\in\RR^N\!:
|y- x| <\delta\}$, where $\delta>0$. Given $\nu\in
S^{N-1}$ and a rotation  $R_\nu$ such that $R_\nu e_N=\nu$, we set $Q_\nu:= R_\nu Q$, $Q_\nu (x,\delta):= x + \delta Q_\nu$, $B^+_\nu(x,\delta):=\{y\in
B(x,\delta)\!: (y-x)\cdot\nu>0\},$ and $B^-_\nu(x,\delta):=\{y\in
B(x,\delta)\!: (y-x)\cdot\nu<0\}$.

Let $\Omega\subset \RR^N$ be an open set. We represent by $\calA(\Omega)$ the family of all open subsets of $\Omega$, and by $\calA_\infty(\Omega)$ the family of all sets in $\calA(\Omega)$ with Lipschitz boundary. 
The Borel $\sigma$-algebra on $\Omega$ is denoted by ${\cal B}(\Omega)$, and ${\cal M}(\Omega;\RR^d)$ is the Banach space of all bounded Radon measures $\mu:{\cal B}(\Omega)\to\RR^d$ endowed with the total variation norm $|\cdot|$. If $\mu\in\calM(\Omega;\RR^+_0)$ is a nonnegative Radon measure and
$v:\Omega\to\RR^d$ is a $\mu$-measurable function, then $\dashint_\Omega v(x) \, \d\mu(x)$ stands for $\frac{1}{\mu(\Omega)} \int_\Omega v(x) \, \d\mu(x)$.
The $N$-dimensional Lebesgue measure is denoted by ${\cal L}^N$ and the $(N-1)$-dimensional Hausdorff measure is designated
by ${\cal H}^{N-1}$. Also, ``\aev\ in $\Omega$" stands for  ``almost everywhere in $\Omega$ with respect to $\calL^N$".

Let $\calM$ be an $m$-dimensional manifold, $m\in\NN$, in $\RR^d$. 
The tangent space of $\calM$ at $z\in \calM$ is represented by $T_z(\calM)$. 
Given a Banach space $X(\Omega;\RR^d)$ of functions  $\vartheta:\Omega  \to \RR^d$, 
we denote by $X(\Omega;\calM)$ the set 
{\setlength\arraycolsep{0.5pt}%
\begin{eqnarray*}
\begin{aligned}
X(\Omega;\calM):= \big\{ \vartheta\in X(\Omega;\RR^d)\!:\, \vartheta(\cdot) \in \calM \hbox{  \aev\ in } \Omega \big\}.
\end{aligned}
\end{eqnarray*}}%
To simplify the notation, if $\calM_1$  is an
$m_1$-dimensional manifold in $\RR^{d_1}$, $\calM_2$
 is an $m_2$-dimensional
manifold in $\RR^{d_2}$,
   $u\in X(\Omega;\calM_1)$,  $v\in X(\Omega;\calM_2)$,
and $w:=(u,v)$, we write $w\in X(\Omega; \calM_1\times
\calM_2)$.

\subsection{Meyer's $G$-space}\label{Gspa}

In this section, we recall the definition of the space $G(\Omega)$ introduced in \cite{Meyer01} for $\Omega=\RR^2$ and generalized in \cite{AuAuV}  for bounded domains $\Omega\subset\RR^2$. The vectorial case, which allows modeling
textures in color  images, has been treated in  \cite{DuvalAujoVese10}. Below we  collect the main properties of the space $G$  used in this paper. For the proofs and  for more considerations on the space $G$, we refer to \cite{AuAuV,DuvalAujoVese10,Meyer01}.

\begin{definition}
Let
$\Omega\subset\RR^2$ be an open, bounded domain with Lipschitz boundary, and let $d\in\NN$. We
define 
\begin{eqnarray*}
\begin{aligned}
G(\Omega;\RR^d):=\big\{v\in L^2(\Omega;\RR^d)\!:\, v_i=
{\rm div}\, \xi_i,\, \xi\in L^\infty(\Omega;(\RR^2)^d),
\, \xi_i\cdot n =0 {\hbox{ on }} \partial\Omega, \, i\in \{ 1, ..., d\}\big\},
\end{aligned}
\end{eqnarray*}
where $n$ is the outward unit normal to $\partial
\Omega$. We endow $G(\Omega;\RR^d)$ with the norm
\begin{eqnarray*}
\begin{aligned}
\Vert v\Vert_{G(\Omega;\RR^d)}:=\inf\big\{\Vert
\xi\Vert_{L^\infty(\RR^2;(\RR^2)^d)}\!:\, v_i={\rm div}\,
\xi_i , \, \xi_i\cdot n =0 {\hbox{ on }} 
\partial\Omega,
\, i\in \{ 1, ..., d\} \big\}.
\end{aligned}
\end{eqnarray*}
\end{definition}

  $G(\Omega;\RR^d)$ is a Banach space, and  when
  \(N=2\), it admits the following characterization.

\begin{proposition}\label{Galtern}
Let
$\Omega\subset\RR^2$ be an open, bounded domain with Lipschitz boundary. Then, 
\begin{eqnarray*}
\begin{aligned}
G(\Omega;\RR^d) = \bigg\{v\in L^2(\Omega;\RR^d)\!:\,
\intO v(x)\,\dx=0\bigg\}.
\end{aligned}
\end{eqnarray*}
\end{proposition}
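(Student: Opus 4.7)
The plan is to establish the two set inclusions separately, appealing for the nontrivial direction to a critical-exponent solvability result for the divergence equation.

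For the inclusion $G(\Omega;\RR^d) \subseteq \{v \in L^2(\Omega;\RR^d) : \intO v\, \dx = 0\}$ I would argue componentwise. If $v \in G(\Omega;\RR^d)$, then each $v_i = {\rm div}\,\xi_i$ for some $\xi_i \in L^\infty(\Omega;\RR^2)$ with $\xi_i \cdot n = 0$ on $\partial\Omega$. Since $v_i \in L^2(\Omega)$, the field $\xi_i$ belongs to $H({\rm div};\Omega)$ and therefore admits a well-defined normal trace in $H^{-1/2}(\partial\Omega)$, in which sense the vanishing of $\xi_i \cdot n$ must be read. The Gauss--Green identity, tested against the constant function $1 \in H^{1/2}(\partial\Omega)$, then gives $\intO v_i\,\dx = \langle \xi_i \cdot n, 1\rangle_{\partial\Omega} = 0$, so that every component of $v$, and hence $v$ itself, has zero mean.

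The reverse inclusion is the essential content. Given $v \in L^2(\Omega;\RR^d)$ with $\intO v\,\dx = 0$, I need, for each component $i$, a vector field $\xi_i \in L^\infty(\Omega;\RR^2)$ solving ${\rm div}\,\xi_i = v_i$ in $\Omega$ with $\xi_i \cdot n = 0$ on $\partial\Omega$. The naive attempt, namely setting $\xi_i = \nabla w_i$ where $w_i$ solves the (compatible) Neumann problem $\Delta w_i = v_i$, $\partial_n w_i = 0$ on $\partial\Omega$, correctly recovers the divergence equation and the boundary condition, but fails to deliver an $L^\infty$ bound: in dimension two, elliptic regularity places $w_i$ in $W^{2,2}$ on the smooth part of the boundary and hence $\nabla w_i$ in every $L^q$ with $q<\infty$, missing $L^\infty$ by a logarithm. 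The correct substitute is the Bourgain--Brezis theorem on the divergence equation at the critical exponent $L^N$ in $\RR^N$, which in dimension $N=2$ ensures that for every mean-zero $f \in L^2(\Omega)$ there exists a primitive $\xi \in L^\infty(\Omega;\RR^2)$ with vanishing normal trace. Applying this result componentwise delivers the representation required by the definition of $G(\Omega;\RR^d)$.

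The main obstacle is clearly this second inclusion. The $L^\infty$ endpoint is genuinely delicate: neither the Neumann Laplacian nor the Bogovskii operator provides it via classical Calder\'on--Zygmund estimates, and one is forced to invoke the Bourgain--Brezis construction (or its bounded-domain adaptation as exposed in the image-processing references \cite{Meyer01,AuAuV,DuvalAujoVese10}). In a clean exposition I would state the precise form of that result for bounded Lipschitz domains and quote it as a black box, since reproducing its proof lies well outside the scope of the present paper.
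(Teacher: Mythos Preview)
The paper does not supply its own proof of this proposition: it is stated among the ``main properties of the space $G$'' collected in Subsection~\ref{Gspa}, with proofs explicitly deferred to \cite{AuAuV,DuvalAujoVese10,Meyer01}. So there is no in-paper argument to compare against.

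That said, your sketch is correct and matches the route taken in those references. The forward inclusion is the divergence theorem applied to fields in $H({\rm div};\Omega)$ with vanishing normal trace. The reverse inclusion is indeed the nontrivial part, and your diagnosis is exactly right: the Neumann-Laplacian primitive just misses $L^\infty$ in dimension two, and the correct tool is the Bourgain--Brezis critical-exponent result on solving ${\rm div}\,\xi = f$ with $\xi \in L^\infty$ when $f \in L^N$ has zero mean. This is precisely what \cite{AuAuV} invokes (adapting the construction to bounded Lipschitz domains with the homogeneous normal-trace condition), so your proposal to quote that result as a black box is the appropriate level of detail here.
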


The topology induced by the $G$-norm is coarser than the one induced by the $L^2$-norm as there are sequences that converge to zero in the $G$-norm but not in the $L^2$-norm.
More generally, the following result  shows that   the $G$-norm is well adapted to capture oscillations of a function in an energy minimization method.
\begin{proposition}\label{normGto0}
Let
$\Omega\subset\RR^2$ be an open, bounded domain
with Lipschitz boundary and let \(p>2\). If
$\{ v_n\}_{n\in\NN}\subset G(\Omega;\RR^d)$ is
such
that $v_n\weakly 0$ weakly in $L^p(\Omega;\RR^d)$
as $n\to\infty$, then $\lim_{n\to\infty} \Vert v_n\Vert_{G(\Omega;\RR^d)}=0$.
\end{proposition}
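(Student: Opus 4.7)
The plan is to exploit the characterization of $G(\Omega;\RR^d)$ provided by Proposition~\ref{Galtern} together with elliptic regularity for the Neumann Laplacian in order to exhibit, for each $v_n$, an admissible vector field $\xi_n$ whose $L^\infty$ norm tends to zero. By Proposition~\ref{Galtern} each component $v_n^i$, $i=1,\dots,d$, has zero mean on $\Omega$, so for every $n$ and every $i$ one can solve the Neumann problem
\begin{equation*}
-\Delta \phi_n^i = v_n^i \ \text{in } \Omega, \quad \nabla \phi_n^i \cdot n = 0 \ \text{on } \partial \Omega, \quad \int_\Omega \phi_n^i\,\dx = 0,
\end{equation*}
which is solvable precisely because of the zero-mean condition. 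Setting $\xi_n^i := -\nabla \phi_n^i$ yields $\operatorname{div}\xi_n^i = v_n^i$ in $\Omega$ and $\xi_n^i\cdot n = 0$ on $\partial\Omega$, so once we know $\xi_n^i\in L^\infty$ this field is admissible in the definition of the $G$-norm, giving the bound $\|v_n\|_{G(\Omega;\RR^d)} \lii \max_i\|\nabla\phi_n^i\|_{L^\infty(\Omega;\RR^2)}$.

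Next I would pass to the limit. Since $v_n \weakly 0$ in $L^p(\Omega;\RR^d)$ with $p>2$, the sequence is bounded in $L^p$, and elliptic regularity for the Neumann problem gives $\|\phi_n^i\|_{W^{2,p}(\Omega)} \lii C\|v_n^i\|_{L^p(\Omega)}$, so $\{\phi_n^i\}$ is bounded in $W^{2,p}(\Omega)$. Because $p>N=2$, the embedding $W^{2,p}(\Omega) \hookrightarrow C^1(\overline{\Omega})$ is compact (it factors through the H\"older space $C^{1,1-2/p}(\overline\Omega)$), so from any subsequence one extracts a further subsequence with $\phi_{n_k}^i \to \phi_*^i$ in $C^1(\overline{\Omega})$. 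Testing the equation $-\Delta \phi_{n_k}^i = v_{n_k}^i$ against any $\psi\in C^\infty(\overline{\Omega})$ and passing to the limit — using the $C^1$ convergence on the left and the weak $L^p$ convergence of $v_{n_k}^i$ paired with $\psi\in L^{p'}$ on the right — shows that $\phi_*^i$ is harmonic with vanishing Neumann trace and zero mean, hence $\phi_*^i\equiv 0$. As the cluster point is unique, the whole sequence $\phi_n^i$ converges to $0$ in $C^1(\overline{\Omega})$, so $\|\nabla \phi_n^i\|_{L^\infty}\to 0$, and the proposition follows.

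The main delicate point I anticipate is the global $W^{2,p}$ regularity for the Neumann problem on a merely Lipschitz domain, which is classical for smoother (say $C^{1,1}$) boundaries but is available on Lipschitz domains only for a restricted range of exponents around $p=2$. If the given $p$ falls outside this range one can instead use the weaker Meyers-type regularity $\phi_n^i\in W^{1,q}(\Omega)$ for some $q$ slightly larger than $2$, combined with Morrey's embedding into a H\"older space and a duality argument, or approximate $\Omega$ from the inside by smooth subdomains while exploiting the vanishing normal trace of $\xi_n^i$ to control the thin boundary layer. The conceptual core — compactness of a suitable Sobolev embedding together with identification of the weak limit of $-\Delta\phi_n^i$ as zero — is unaffected by this technical choice.
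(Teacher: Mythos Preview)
The paper does not supply its own proof of this proposition: it is stated in Subsection~\ref{Gspa} with the blanket remark that ``for the proofs and for more considerations on the space $G$, we refer to \cite{AuAuV,DuvalAujoVese10,Meyer01}.'' So there is no in-paper argument to compare against; your task is really to reconstruct the argument from the cited literature.

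Your approach---solve the Neumann problem for each component, take $\xi_n^i=-\nabla\phi_n^i$ as the admissible field, and use a compact Sobolev embedding to force $\|\nabla\phi_n^i\|_{L^\infty}\to0$---is exactly the standard route (and the hypothesis $p>2=N$ is tailored precisely to make $W^{2,p}\hookrightarrow C^1$ work). The identification of the limit via the weak equation and uniqueness for the homogeneous Neumann problem is clean and correct.

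You have already put your finger on the only genuine soft spot: global $W^{2,p}$ estimates for the Neumann Laplacian on a \emph{Lipschitz} domain are not available for arbitrary $p$. Your proposed fallbacks are reasonable; one further remark is that in the imaging context $\Omega$ is typically a rectangle, hence convex, and on convex domains the $W^{2,p}$ estimate does hold for all $1<p<\infty$ (Grisvard), which disposes of the issue in the cases of interest. For a general Lipschitz $\Omega$ the cited references either impose slightly stronger boundary regularity or work within the restricted exponent range where the Jerison--Kenig theory applies; your write-up would be complete if you commit to one of these options rather than leaving it open.
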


\subsection{The Space $BV$ of Functions of Bounded Variation and Sets of Finite
Perimeter}\label{onBV}

We will adopt the notations of \cite{AFPMM} regarding
functions of bounded
variation and sets of finite
perimeter.

In what follows, $N,d\in\NN$,  and $\Omega\subset\RR^N$
is an open set. 
Let $\rho\in C^\infty_c(\RR^n)$ be a nonnegative
function such that
\begin{eqnarray*}
\int_{\RR^N}\rho(x)\,\dx = 1,\quad \supp\rho
= \overline{B(0,1)},\quad \rho(x)= \rho(-x)
\enspace \hbox{for all $x\in\RR^N$.}
\end{eqnarray*}
For $u\in L^1_{\rm loc}(\Omega;\RR^d)$ and
$\delta>0$, we set 
\begin{eqnarray}\label{smoothmoll}
\rho_\delta(x):=\frac{1}{\delta^N}\rho\Big(\frac{x}{\delta}\Big),\enspace
x\in\RR^N,
\end{eqnarray}
 and
\begin{eqnarray}\label{defmollfct}
u_\delta(x):=(u*\rho_\delta)(x)=\int_{\Omega}
u(y)\rho_\delta(x-y)\,\dy,\enspace  x\in \Omega_{\delta}:=\{x\in\Omega\!:\,
\dist(x,\partial\Omega)>\delta\}.
\end{eqnarray}

We observe that $\supp\rho_\delta\subset 
\overline{B(0,\delta)}$,
and we recall that
$u_\delta\in C^\infty(\Omega_\delta)$ and $\Vert
u_\delta\Vert_{L^1(\Omega_\delta;\RR^d)}\lii
\Vert u\Vert_{L^1(\Omega;\RR^d)}$.

\begin{definition}\label{appnotions}
Let $u\in L^1_{\rm loc}(\Omega;\RR^d)$.
\begin{itemize}
\item[(a)] We define the set $A_u$ as the set
of points $x\in\Omega$ for which there exists
a vector $z\in\RR^d$ such that
\begin{eqnarray}\label{applim}
\lim_{\delta\to0^+} \dashint_{B(x,\delta)}
|u(y) - z|\,\dy =0,
\end{eqnarray}
in which case we say that $u$ has an approximate
limit at $x$, and the vector $z$, uniquely
determined by \eqref{applim},  is represented
by $\tilde u(x)$. The set $S_u:=\Omega\setminus
A_u$ is called the approximate discontinuity
set. We say that $u$ is approximately continuous
at $x$ if $x\in A_u$ and $\tilde u(x) = u(x)$,
i.e., $x$ is a Lebesgue point of $u$.

\item[(b)] We define the set of approximate
jump points of $u$, represented by $J_u$, as
the set of points $x\in\Omega$ for which there
exist vectors $a,b\in\RR^d$, $a\not= b$, and
$\nu\in S^{N-1}$ such that
\begin{eqnarray}\label{appjp}
\lim_{\delta\to0^+} \dashint_{B^+_\nu(x,\delta)}
|u(y) - a|\,\dy =0, \quad \lim_{\delta\to0^+}
\dashint_{B^-_\nu(x,\delta)} |u(y) - b|\,\dy
=0.
\end{eqnarray}
A point $x\in J_u$ is called an approximate
jump point of $u$; the associated triplet $(a,b,\nu)$,
uniquely determined by \eqref{appjp} up to
a permutation of $(a,b)$ and a change of sign
of $\nu$, is denoted by $(u^+(x), u^-(x),\nu_u(x))$.

\item[(c)] We say that u is approximately differentiable
at $x\in A_u$ if there exists a $d\times N$
 matrix $L$ such that
\begin{eqnarray}\label{appdiff}
\lim_{\delta\to0^+}\dashint_{B(x,\delta)} \frac{|u(y)
- \tilde u(x) - L(y-x)|}{\delta}\, \dy =0,
\end{eqnarray}
in which case we denote the matrix $L$, uniquely
determined by \eqref{appdiff}, by $\grad u(x)$.
The set of approximate differentiability points
is denoted by ${\cal D}_u$.
\end{itemize}

\end{definition}

\begin{remark}\label{applim&ec}
The set $A_u$ does not depend on the representative
in the equivalent class of $u$, i.e., if $v=u$
$\calL^N$-\aev\ in $\Omega$ then $A_v= A_u=:A$
and $\tilde v (x) = \tilde u(x)$ for all $x\in
A$. In contrast, the property of being approximately
continuous at $x$ depends on the value of $u$
at $x$, thus on the representative in the equivalent
class of $u$.
\end{remark}

The proof of the following  result may be found
in  \cite[Prop.~{3.64}, Prop.~{3.69}, Prop.~{3.71}]{AFPMM}.

\begin{proposition}\label{appppties}
Let $u\in L^1_{\rm loc}(\Omega;\RR^d)$, let
$u_\delta\in C^\infty(\Omega_\delta)$ be given
by \eqref{defmollfct}, let $\phi:\RR^d\to\RR^m$
be a Lipschitz map, and let  $v:=\phi\circ
u$. Then
\begin{itemize}
\item [(a)]
\begin{itemize}
\item[i)] $S_u$ is a $\calL^N$-negligible Borel
set and $\tilde u: A_u\to\RR^d$ is a Borel
function, coinciding $\calL^N$-\aev\ in $A_u$
with $u$;

\item[ii)] $\lim_{\delta\to0^+} u_\delta(x)=\tilde
u(x)$ for all $x\in A_u$;

\item[iii)]  $S_v\subset S_u$ and $\tilde v(x)
= \phi(\tilde u(x))$ for all $x\in A_u$.
\end{itemize}

\item [(b)]
\begin{itemize}
\item[i)] $J_u$ is a Borel subset of $S_u$
and there exist Borel functions $(u^+, u^-,\nu_u):
J_u\to\RR^d\times \RR^d\times S^{N-1}$ such
that \eqref{appjp} holds for all $x\in J_u$;

\item[ii)] $\lim_{\delta\to0^+} u_\delta(x)=\frac{u^+(x)
+ u^-(x)}{2}$ for all $x\in J_u$;

\item[iii)] if  $x\in J_u$, then $x\in J_v$
if and only if $\phi(u^+(x))\not=\phi(u^-(x))$,
in which case  $(v^+(x), v^-(x),\nu_v(x))=(\phi(u^+(x)),
\phi(u^-(x)),\nu_u(x))$; otherwise, $x\in A_v$
and $\tilde v(x)= \phi(u^+(x))= \phi(u^-(x))$.
\end{itemize}

\item [(c)]
\begin{itemize}
\item[i)] ${\cal D}_u$ is a Borel subset and
$\grad u: {\cal D}_u\to\RR^{d\times N}$ is
a Borel map;

\item[ii)] if $x\in {\cal D}_u$ and, in addition,
 $\ffi$ has linear growth at infinity and is
differentiable at $\tilde u(x)$, then $v$ is
approximately differentiable at $x$ and $\grad
v(x) = \grad \phi(\tilde u(x))\grad u(x)$.

\end{itemize}

\end{itemize}
\end{proposition}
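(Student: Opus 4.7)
The plan is to prove the three parts (a), (b), (c) in order by reducing each to elementary measure-theoretic facts about averages over balls and half-balls, together with the fundamental theorem of calculus for $L^1_{\mathrm{loc}}$ functions (the Lebesgue differentiation theorem).

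For part (a), the $\calL^N$-negligibility of $S_u$ in (a-i) is precisely the Lebesgue differentiation theorem. For the Borel measurability of $\tilde u$, I would exhibit it as a pointwise limit along a countable sequence $\delta_k \to 0^+$: the averages $x \mapsto \dashint_{B(x,\delta_k)} u\,\dy$ are continuous functions of $x$, so the set where the limit exists is Borel and the limit is a Borel function. For (a-ii), I would write
\begin{eqnarray*}
u_\delta(x) - \tilde u(x) = \int_{B(x,\delta)} \rho_\delta(x-y)\bigl(u(y) - \tilde u(x)\bigr)\,\dy
\end{eqnarray*}
and bound the right-hand side by $\|\rho\|_\infty \, \omega_N^{-1} \dashint_{B(x,\delta)} |u(y) - \tilde u(x)|\,\dy$, which tends to $0$ by definition of $\tilde u(x)$ as an approximate limit. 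For (a-iii), the Lipschitz estimate $|\phi(u(y)) - \phi(\tilde u(x))| \lii \mathrm{Lip}(\phi)\,|u(y) - \tilde u(x)|$ immediately transfers the averaging condition from $u$ to $v = \phi\circ u$.

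For part (b), the Borel character of $J_u$ in (b-i) follows the same recipe, replacing the balls $B(x,\delta)$ by the half-balls $B_\nu^\pm(x,\delta)$ and using a countable dense family of directions $\nu$ in $S^{N-1}$; a measurable selection argument (or the explicit construction via countable orthonormal frames) produces Borel representatives $u^\pm, \nu_u$. For (b-ii), the symmetry $\rho(x) = \rho(-x)$ is crucial: it implies $\int_{B_\nu^+(x,\delta)} \rho_\delta(x-y)\,\dy = \int_{B_\nu^-(x,\delta)} \rho_\delta(x-y)\,\dy = \tfrac{1}{2}$, so splitting
\begin{eqnarray*}
u_\delta(x) = \int_{B_\nu^+(x,\delta)} \rho_\delta(x-y)\,u(y)\,\dy + \int_{B_\nu^-(x,\delta)} \rho_\delta(x-y)\,u(y)\,\dy
\end{eqnarray*}
and applying the half-ball average conditions \eqref{appjp} to each piece yields the limit $\tfrac{1}{2}(u^+(x) + u^-(x))$. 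Statement (b-iii) is again pure Lipschitz bookkeeping: if $\phi(u^+(x)) = \phi(u^-(x))$, the two half-ball averages coincide in the limit and $x$ becomes an approximate continuity point of $v$; otherwise the Lipschitz inequality preserves the jump triple.

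For part (c), the Borel character of $\calD_u$ and $\grad u$ in (c-i) follows by writing the approximate differential condition \eqref{appdiff} through countable $\delta_k$ and a countable dense collection of candidate matrices $L$, using measurability of the resulting double limit. The main content is (c-ii): assuming $\phi$ has linear growth and is differentiable at $\tilde u(x)$, one expands
\begin{eqnarray*}
v(y) - \tilde v(x) - \grad\phi(\tilde u(x))\grad u(x)(y-x) = \grad\phi(\tilde u(x))\bigl(u(y) - \tilde u(x) - \grad u(x)(y-x)\bigr) + r(y),
\end{eqnarray*}
where $|r(y)| = o(|u(y) - \tilde u(x)|)$ as $y \to x$ in the $\calL^N$-approximate sense. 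The first term is controlled directly by \eqref{appdiff}. The remainder $r$ requires splitting the ball $B(x,\delta)$ into the set where $|u(y) - \tilde u(x)|$ is small (there the differentiability of $\phi$ gives the true $o$-estimate) and its complement, whose $\calL^N$-measure is controlled by Chebyshev using the approximate-limit property. I expect this splitting argument — matching the two asymptotic regimes so that the $o$ estimate on $\phi$ at $\tilde u(x)$ combines correctly with the $o(\delta)$ estimate on $u$ — to be the only nonroutine step; the remainder of the proof is a sequence of mollification and Lipschitz-composition manipulations that are standard once the measurability skeleton in (a-i), (b-i), (c-i) is in place.
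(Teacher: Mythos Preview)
The paper does not actually prove this proposition: immediately before its statement it says ``The proof of the following result may be found in [Prop.~3.64, Prop.~3.69, Prop.~3.71]{AFPMM}'' and gives no argument of its own. Your sketch is a faithful outline of how these facts are established in that reference (Ambrosio--Fusco--Pallara), so there is nothing to compare beyond noting that you have supplied what the paper deliberately omits.
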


A function $u:\Omega\to\RR^d$ is said to be
a function of \textit{bounded variation}, and we write
$u\in BV\big(\Omega;\RR^d\big)$, if $u\in L^1\big(\Omega;\RR^d\big)$
and its distributional derivative, $Du$, belongs
to ${\cal M}\big(\Omega;\RR^{d\times N}\big)$;
that is, if there exists a measure $Du\in{\cal
M}\big(\Omega;\RR^{d\times N}\big)$ such that
for all $\phi\in C_c(\Omega)$, $j\in\{1,\cdots,d\}$
and $i\in\{1,\cdots,N\}$, one has
\begin{eqnarray*}
\intO u_j(x){\partial\phi\over\partial
x_i}(x)\,\dx = -\intO\phi(x)\,\d
D_i u_j(x),
\end{eqnarray*}
where $u=(u_1,\cdots,u_d)$ and $Du_j=(D_1 u_j,\cdots,D_N
u_j)$. 
The space  $BV\big(\Omega;\RR^d\big)$ is a
Banach space when endowed with the norm $\Vert
u\Vert_{BV(\Omega;\RR^d)}:=\Vert u\Vert_{L^1(\Omega;\RR^d)}
+ |Du|(\Omega)$.

We recall that
$\{u_j\}_{j\in\NN}\subset BV\big(\Omega;\RR^d\big)$
is said to weakly-$\star$ converge in $BV\big(\Omega;\RR^d\big)$
to some $u\in BV\big(\Omega;\RR^d\big)$ if
$u_j\to u$ (strongly) in $L^1\big(\Omega;\RR^d)$
and $Du_j\weaklystar Du$ weakly-$\star$ in
${\cal M}\big(\Omega;\RR^{d\times N}\big)$.

The proof of the following  result may be found
in  \cite[Prop.~{3.7}]{AFPMM} and \cite[Lemma~{4.5}]{AMTXCI}.

\begin{lemma}\label{lemmaAMT}
Let  $u\in BV(\Omega;\RR^d)$ and let $u_\delta\in
C^\infty(\Omega_\delta)$ be given by \eqref{defmollfct}.
Then,
\begin{itemize}
\item[i)] $u_\delta\weaklystar u$ weakly-$\star$
in $BV(\Omega';\RR^d)$ as $\delta\to0^+$, for
all $\Omega'\subset\subset\Omega$. Moreover,
if $\calL^N(\partial\Omega')=0$, then
\begin{eqnarray*}
\lim_{\delta\to0^+}|D u_\delta|(\Omega') =
\lim_{\delta\to0^+}\int_{\Omega'} |\grad u_\delta(x)|\,\dx
= |Du|(\Omega');
\end{eqnarray*}
\item[ii)] 
\begin{eqnarray*}
\int_{B(x_0,\epsi)} \mathfrak{h}(x) |\grad u_\delta(x)|\,\dx
\lii \int_{B(x_0,\epsi + \delta)} (\mathfrak{h}*\rho_\delta)(x)\,\d|Du|(x)
\end{eqnarray*}
whenever $\dist(x_0,\partial\Omega)>\epsi +
\delta$ and $\mathfrak{h}$ is a nonnegative Borel function;

\item[iii)] 
\begin{eqnarray*}
\lim_{\delta\to0^+}\int_{B(x_0,\epsi)} \theta(\grad
u_\delta(x))\,\dx =  \int_{B(x_0,\epsi)} \theta\Big({\d
Du\over \d |Du|}(x)\Big)\,\d|Du|(x)
\end{eqnarray*}
for every positively 1-homogeneous continuous
function $\theta$ and for every $\epsi\in (0,\dist(x_0,\partial\Omega))$
such that $|Du|(\partial B(x_0,\epsi)) = 0$;

\item[iv)] 
\begin{eqnarray*}
\lim_{\delta\to0^+}(|u_\delta - u|*\rho_\delta)(x)
= 0\enspace \hbox{for all } x\in A_u
\end{eqnarray*}
if, in addition,  $u\in L^\infty(\Omega;\RR^d)$.
\end{itemize}
\end{lemma}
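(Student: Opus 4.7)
The lemma is essentially a standard compendium of mollification properties for $BV$ functions (cited from Ambrosio--Fusco--Pallara and Ambrosio--Mortola--Tortorelli), so my plan is to reduce everything to the identity
\[
\nabla u_\delta(x)=(Du*\rho_\delta)(x)=\int_{\Omega}\rho_\delta(x-y)\,\d Du(y),
\]
valid for $x\in\Omega_\delta$. From this identity alone one obtains $|\nabla u_\delta(x)|\lii(|Du|*\rho_\delta)(x)$, which is the engine behind (i)--(iii).

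For (i), I would first note that $u_\delta\to u$ in $L^1(\Omega';\RR^d)$ by classical properties of convolution, and that $|Du_\delta|(\Omega')=\int_{\Omega'}|\nabla u_\delta|\,\dx\lii(|Du|*\rho_\delta)(\Omega')\lii|Du|(\Omega)$ uniformly for small $\delta$, giving weak-$\star$ compactness in $BV(\Omega';\RR^d)$. The limit measure must be $Du$ by distributional uniqueness, so $u_\delta\weaklystar u$. For the total-variation convergence when $\calL^N(\partial\Omega')=0$, lower semicontinuity gives $|Du|(\Omega')\lii\liminf_{\delta}\int_{\Omega'}|\nabla u_\delta|\,\dx$; the reverse inequality comes from the bound $\int_{\Omega'}|\nabla u_\delta|\dx\lii(|Du|*\rho_\delta)(\Omega')$, passing to the limit via the outer regularity of $|Du|$ and the negligibility of $\partial\Omega'$.

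For (ii), I would substitute the representation of $|\nabla u_\delta|$ above, use Tonelli's theorem to swap the order of integration, and exploit the symmetry $\rho_\delta(x-y)=\rho_\delta(y-x)$ together with $\supp\rho_\delta\subset\overline{B(0,\delta)}$: writing
\[
\int_{B(x_0,\epsi)}\mathfrak h(x)|\nabla u_\delta(x)|\,\dx\lii\int_{B(x_0,\epsi)}\mathfrak h(x)\int\rho_\delta(x-y)\,\d|Du|(y)\,\dx,
\]
and swapping, one gets $\int_{B(x_0,\epsi+\delta)}(\mathfrak h*\rho_\delta)(y)\,\d|Du|(y)$, which is exactly the claim. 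For (iii), I would apply Reshetnyak's continuity theorem to the positively 1-homogeneous continuous integrand $\theta$: the vector measures $\nabla u_\delta\,\calL^N\weaklystar Du$ weakly-$\star$, and by (i) their total variations converge to $|Du|(B(x_0,\epsi))$ provided $|Du|(\partial B(x_0,\epsi))=0$, so Reshetnyak's theorem yields convergence of $\int\theta(\d\mu/\d|\mu|)\d|\mu|$ for these measures on $B(x_0,\epsi)$.

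The main obstacle is (iv), since it requires a genuine pointwise statement at approximate continuity points, not a measure-theoretic one. My approach is to split
\[
(|u_\delta-u|*\rho_\delta)(x)\lii\int\rho_\delta(x-y)|u(y)-\tilde u(x)|\,\dy+\int\rho_\delta(x-y)|u_\delta(y)-\tilde u(x)|\,\dy.
\]
The first term tends to $0$ since $x\in A_u$. For the second, plug in $u_\delta(y)=\int\rho_\delta(y-z)u(z)\,\dz$, bound $|u_\delta(y)-\tilde u(x)|\lii\int\rho_\delta(y-z)|u(z)-\tilde u(x)|\,\dz$, apply Fubini, and observe that the supports force $z\in B(x,2\delta)$, so everything is dominated by a multiple of $\dashint_{B(x,2\delta)}|u(z)-\tilde u(x)|\,\dz$, which vanishes by \eqref{applim}. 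The $L^\infty$ hypothesis is used to justify the dominated convergence needed to exchange limit and integral and to ensure all quantities are finite; without it one would need truncation. Once the obvious inequalities and Fubini are in place, the result follows.
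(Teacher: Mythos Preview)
Your proof is correct and follows the standard approach; the paper itself does not prove this lemma but simply cites \cite[Prop.~3.7]{AFPMM} and \cite[Lemma~4.5]{AMTXCI}, and your arguments are precisely the ones found in those references. One minor remark on (iv): your pointwise argument via the double convolution $\rho_\delta*\rho_\delta$ actually goes through without invoking dominated convergence or any limit--integral exchange, since the estimate $\big(|u_\delta-u|*\rho_\delta\big)(x)\lii C'\dashint_{B(x,2\delta)}|u(z)-\tilde u(x)|\,\dz$ is obtained directly; the $L^\infty$ hypothesis is therefore not essential to your argument (it is a convenience assumption carried over from \cite{AMTXCI}), and your stated justification for needing it is slightly off the mark, but this does not affect the validity of your proof.
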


In what follows, $Du=D^au + D^su$ is the Radon-Nikodym
decomposition of $Du$ in absolutely continuous
and singular parts with respect to 
${{\cal L}^N}_{\!\lfloor\Omega}$.
The proof of the following  results may be
found in   \cite[Rmk.~{3.93}, Thm.~{3.83},
Thm.~{3.78}]{AFPMM}.

\begin{lemma}\label{loctyDu}
Let $u_1, u_2\in BV(\Omega;\RR^d)$ and $A:=\{x\in
A_{u_1}\cap A_{u_2}\!: \, \tilde u_1(x) = \tilde
u_2(x)\}$. Then ${D{u_1}}_{\lfloor A}={D{u_2}}_{\lfloor
A}$.
\end{lemma}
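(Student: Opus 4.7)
The plan is to reduce the two-function statement to a single-function one by introducing $v := u_1 - u_2 \in BV(\Omega;\RR^d)$. For each $x \in A \subset A_{u_1} \cap A_{u_2}$, one has $x \in A_v$ with $\tilde v(x) = \tilde u_1(x) - \tilde u_2(x) = 0$, so the identity ${D{u_1}}_{\lfloor A} = {D{u_2}}_{\lfloor A}$ is equivalent to ${Dv}_{\lfloor A} = 0$. I would then split $Dv = D^a v + D^j v + D^c v$ via the Radon--Nikodym decomposition \cite[Thm.~3.78]{AFPMM} and argue that each summand vanishes on $A$.

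Two of the three summands are easy. The jump measure $|D^j v|$ is concentrated on $J_v \subset S_v = \Omega \setminus A_v$, which is disjoint from $A \subset A_v$, so $|D^j v|(A) = 0$. For the absolutely continuous part $D^a v = \nabla v\,\calL^N$, Proposition~\ref{appppties}(a)(i) gives $\tilde v = v$ $\calL^N$-a.e.\ on $A_v$; combining with $\tilde v \equiv 0$ on $A$ yields $v = 0$ $\calL^N$-a.e.\ on $A$, and the standard locality of the approximate gradient on level sets then forces $\nabla v = 0$ $\calL^N$-a.e.\ on $A$, so $D^a v(A) = 0$.

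The main obstacle is the Cantor part $D^c v$, which, as the Cantor staircase example illustrates, can concentrate on subsets of $A_v$ in general. To handle it I would apply the $BV$ chain rule \cite[Thm.~3.83]{AFPMM} componentwise with a sequence of smooth truncations $\phi_\epsi \in C^1(\RR)$ satisfying $\phi_\epsi \equiv 0$ on $[-\epsi,\epsi]$, $\phi_\epsi(t) \to t$ uniformly, $\phi_\epsi'(t) \to 1$ pointwise for $t \neq 0$, and $|\phi_\epsi'| \lii 1$. For each scalar component $v^i$ the chain rule gives $D^c(\phi_\epsi \circ v^i) = \phi_\epsi'(\tilde v^i)\,D^c v^i$, which vanishes wherever $|\tilde v^i| \lii \epsi$. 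Letting $\epsi \to 0^+$---using that $\phi_\epsi \circ v^i \to v^i$ in $L^1$ together with $|D(\phi_\epsi \circ v^i)| \lii |Dv^i|$ to justify term-by-term convergence of the Radon--Nikodym decomposition---yields that $D^c v^i$ is concentrated on $\{\tilde v^i \neq 0\}$, which is essentially the content of \cite[Rmk.~3.93]{AFPMM}. Since $A \subset \{\tilde v = 0\} \subset \{\tilde v^i = 0\}$ for every $i$, we conclude $|D^c v|(A) = 0$. Combining the three vanishing statements yields ${Dv}_{\lfloor A} = 0$, as claimed.
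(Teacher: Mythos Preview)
Your proposal is correct. The paper does not give its own proof here but simply cites \cite[Rmk.~3.93]{AFPMM}, and your plan---pass to $v=u_1-u_2$, note $A\subset A_v\cap\{\tilde v=0\}$, and kill $D^a v$, $D^j v$, $D^c v$ separately on $A$, handling the Cantor part componentwise via the chain rule with truncations $\phi_\epsi$ and a limit---is a valid route to that remark, which you explicitly acknowledge.

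Two minor points. Your AFP pointers are off: the three-part decomposition of $Dv$ is around \cite[Def.~3.91, Prop.~3.92]{AFPMM} (Thm.~3.78 there is Federer--Vol'pert), and the chain rule is \cite[Thm.~3.96]{AFPMM} (Thm.~3.83 is Calder\'on--Zygmund on approximate differentiability). Also, AFP's own argument for Rmk.~3.93 is more direct than yours: it rests on \cite[Prop.~3.92]{AFPMM}, which uses the coarea formula to show in one stroke that $|\tilde D v^i|$ vanishes on $\{\tilde v^i=0\}$, so no $\epsi\to0$ passage is needed. Your limiting step does go through (dominated convergence on each of the three mutually singular pieces of the chain-rule identity, compared with the weak-$\star$ limit $Dv^i$), but the coarea route is cleaner.
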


\begin{theorem}\label{appdiffisac}
Let $u\in BV(\Omega;\RR^d)$. Then,
\begin{itemize}
\item [(a)] $u$ is approximately differentiable
at $\calL^N$-\aev\ point of $\Omega$, and the
approximate differential $\grad u$ is the density
of the absolutely continuous part of  $Du$
with respect to ${{\cal L}^N}_{\!\lfloor\Omega}$; that is, $D^a u=\grad
u{{\cal L}^N}_{\!\lfloor\Omega}$;

\item[(b)] the set $S_u$ is countably $\calH^{N-1}$-rectifiable
and $\calH^{N-1}(S_u\setminus J_u)=0$. Moreover,
$Du_{\lfloor J_u} = (u^+-u^-)\otimes \nu_u
{\calH^{N-1}}_{\lfloor J_u}$.
\end{itemize}
\end{theorem}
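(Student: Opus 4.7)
The plan for part (a) is to combine Radon-Nikodym, Besicovitch differentiation, and the $BV$ Poincar\'e-Wirtinger inequality. Let $g := \d D^a u/\d\calL^N$ denote the density of the absolutely continuous part, defined $\calL^N$-a.e. By standard measure-derivation theory applied to the decomposition $Du = D^a u + D^s u$, at $\calL^N$-a.e. $x\in\Omega$ the following hold simultaneously: $x\in A_u$ is a Lebesgue point of $u$, so $\tilde u(x)$ is defined; $x$ is a Lebesgue point of $g$; and the singular density vanishes, $|D^s u|(B(x,r))/r^N\to0$ as $r\to0^+$. For such $x$, set $v(y) := u(y) - \tilde u(x) - g(x)(y-x)$, so that $v\in BV(\Omega;\RR^d)$ with $Dv = Du - g(x)\calL^N$ and $|Dv|(B(x,r))/r^N\to0$. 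Applying the $BV$ Poincar\'e-Wirtinger inequality on $B(x,r)$ gives $\dashint_{B(x,r)}|v(y) - \bar v_{B(x,r)}|\,\dy \lii C\, r\cdot |Dv|(B(x,r))/r^N$, and since $\bar v_{B(x,r)}\to\tilde v(x)=0$, dividing by $r$ yields exactly condition \eqref{appdiff} with $\grad u(x)=g(x)$. Hence $D^a u = \grad u \,\calL^N$.

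Part (b) I would organize into three stages. \emph{Stage 1 (finite density).} A Chebyshev-type estimate shows $\calH^{N-1}(\{x : \limsup_{r\to0^+} |Du|(B(x,r))/r^{N-1} = +\infty\}) = 0$, and on the complement a comparison of $|Du|$ with $\calH^{N-1}_{\!\lfloor S_u}$ yields that $S_u$ has $\sigma$-finite $\calH^{N-1}$-measure and that $|Du|_{\!\lfloor S_u}\ll\calH^{N-1}_{\!\lfloor S_u}$. \emph{Stage 2 (rectifiability and $\calH^{N-1}(S_u\setminus J_u)=0$).} At $\calH^{N-1}$-a.e. $x\in S_u$ with positive $(N-1)$-density, perform the blow-up $u_r(y):=u(x+ry)$; after subtracting suitable constants, $\{u_r\}$ is bounded in $BV(B(0,1);\RR^d)$ and produces, up to subsequences, a limit $u_0$ whose derivative is concentrated on a hyperplane through $0$. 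The rigidity statement---that $u_0$ is piecewise constant with two distinct values $a,b$ on the two half-spaces---is obtained by applying De Giorgi's structure theorem for sets of finite perimeter to the super-level sets $\{u>t\}$ and reconstructing $u$ via the coarea formula; that $a\ne b$ is forced by $x\in S_u$. This realizes the triple in Definition~\ref{appnotions}(b), placing $x$ in $J_u$, and the hyperplane supplies the approximate tangent, giving countable $\calH^{N-1}$-rectifiability of $S_u$. \emph{Stage 3 (jump formula).} Having identified $|Du|_{\!\lfloor J_u}\ll\calH^{N-1}_{\!\lfloor J_u}$ with both measures rectifiable and $\sigma$-finite, the Radon-Nikodym derivative is computed pointwise by the same blow-up; using Proposition~\ref{appppties}(b)(ii) to identify mollification limits at jump points, the vectorial density turns out to be $(u^+(x)-u^-(x))\otimes \nu_u(x)$, which delivers $Du_{\!\lfloor J_u}=(u^+-u^-)\otimes\nu_u\,\calH^{N-1}_{\!\lfloor J_u}$.

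The main obstacle is Stage 2: the passage from ``$|Du|$ has positive finite $(N-1)$-density at $x$'' to ``the blow-up of $u$ at $x$ is a two-valued step function across a hyperplane''. The Besicovitch/Radon-Nikodym input for part (a) and the Chebyshev density bounds of Stage 1 are routine; the $BV$ Poincar\'e-Wirtinger inequality is a standard compactness tool. What is genuinely deep is the rigidity of blow-up limits, which in the Federer-Vol'pert approach is imported from the scalar theory of Caccioppoli sets applied to super-level sets $\{u>t\}$ and then reassembled vectorially via coarea; it is precisely this step that simultaneously produces the normal direction, the two one-sided traces, and the rectifiable tangent structure needed to conclude.
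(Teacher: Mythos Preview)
The paper does not prove this statement at all: Theorem~\ref{appdiffisac} is a classical structure theorem for $BV$ functions, and the paper simply records it in the preliminaries with the reference ``The proof of the following results may be found in \cite[Rmk.~3.93, Thm.~3.83, Thm.~3.78]{AFPMM}.'' There is therefore no proof in the paper to compare your proposal against.

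That said, your sketch is a faithful outline of the classical arguments in Ambrosio--Fusco--Pallara. Part~(a) via Besicovitch differentiation and the $BV$ Poincar\'e--Wirtinger inequality is exactly Calder\'on--Zygmund's approach as presented there. For part~(b), your Stages~1--3 match the Federer--Vol'pert strategy: density bounds to get $\sigma$-finiteness, blow-up combined with the De~Giorgi structure theorem (imported to the vectorial case through coarea on super-level sets) to obtain rectifiability and $\calH^{N-1}(S_u\setminus J_u)=0$, and a second blow-up to read off the jump density. Your identification of Stage~2 as the deep step is accurate. One minor caution: in Stage~1 the implication ``finite upper $(N{-}1)$-density of $|Du|$ at $\calH^{N-1}$-a.e.\ point of $S_u$'' $\Rightarrow$ ``$S_u$ has $\sigma$-finite $\calH^{N-1}$-measure'' also requires the lower bound $\liminf_{r\to0^+}|Du|(B(x,r))/r^{N-1}>0$ for $x\in S_u$, which comes from the Poincar\'e inequality and the failure of the approximate-limit condition; you should make that explicit if you flesh out the argument.
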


\begin{definition}\label{DefDjDc}
Given $u\in BV(\Omega;\RR^d)$, the measures
\begin{eqnarray*}
D^j u:= D^su_{\lfloor J_u} \enspace{ and }\enspace
D^cu:= D^su_{\lfloor A_u}
\end{eqnarray*}
are called the jump part of the derivative
and the Cantor part of the derivative, respectively.
The sum $D^a u + D^cu$ is called the diffuse
part of the derivative and is denoted by $\tilde
Du$.
\end{definition}

\begin{remark}\label{decforDu}
It can be proved that $D^ju = Du_{\lfloor J_u}$
(see \cite[Prop.~{3.92}]{AFPMM}). In view
of Definition~\ref{DefDjDc} and Theorem~\ref{appdiffisac},
we have the following decompositions for $Du$:
\begin{eqnarray*}
Du= D^a u + D^su = \grad u{\calL^N}_{\!\lfloor
\Omega} + (u^+-u^-)\otimes
\nu_u {\calH^{N-1}}_{\lfloor J_u} + D^cu =\tilde
Du + D^cu.
\end{eqnarray*}
\end{remark}

The next result is due to G. Alberti (see \cite{AlXCIII}).

\begin{theorem}\label{rank1Alberti}  If $u\in
BV(\Omega;\RR^d)$ and $Du = h|Du|$, then $h$
has rank one for $(|D^ju| + |D^cu|)$-\aev\
point of $\Omega$.
\end{theorem}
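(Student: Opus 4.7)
The plan is to decompose the singular part as $D^s u = D^j u + D^c u$ (see Remark~\ref{decforDu}) and to establish the rank-one property on each piece separately. The jump component follows immediately from material already collected in this section: by Theorem~\ref{appdiffisac}(b) one has $Du_{\lfloor J_u} = (u^+-u^-)\otimes \nu_u \,\calH^{N-1}_{\lfloor J_u}$, so that for $\calH^{N-1}$-\aev\ $x\in J_u$ the polar $h(x)$ of $Du$ is a positive scalar multiple of the outer product $(u^+(x)-u^-(x))\otimes \nu_u(x)$, which is manifestly rank one. Since $|D^j u|$ is concentrated on $J_u$ by Definition~\ref{DefDjDc}, this finishes the jump regime without further work.

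The hard piece is the Cantor part $D^c u$, which is precisely Alberti's original result from \cite{AlXCIII}. I would argue by contradiction via a blow-up strategy: assume $h$ fails to be rank one on a Borel set $E$ with $|D^c u|(E)>0$, and fix a point $x_0\in E$ at which $x_0$ is a Lebesgue point of $h$ with respect to $|D^c u|$, $x_0$ lies in the approximate continuity set $A_u$, and tangent measures of $D^c u$ at $x_0$ exist in the sense of Preiss. Rescaling $u$ around $x_0$ at the natural scale dictated by $|D^c u|(B(x_0,r))$ and passing to a subsequence, one extracts a limit measure $\sigma$ whose polar is the constant matrix $h(x_0)$ and which arises as the derivative of a limiting BV map with purely Cantor-type singular part.

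The key obstacle — and the technical heart of Alberti's theorem — is converting the assumption that $h(x_0)$ has rank at least two into a contradiction with this Cantor structure. My plan is to pick two vectors $e_1,e_2\in\RR^d$ such that $h(x_0)^T e_1$ and $h(x_0)^T e_2$ are linearly independent in $\RR^N$, then examine the one-dimensional slices of the scalar BV components $u\cdot e_1$ and $u\cdot e_2$ along two transverse families of lines. A Fubini/coarea disintegration of $|D^c u|$ reduces matters to a one-dimensional rigidity assertion on $\calH^1$-\aev\ slice, where Besicovitch differentiation on $\RR$ forbids two linearly independent directional Cantor measures from being jointly concentrated on a common positive-measure set; lifting this one-dimensional obstruction back through the disintegration produces the desired contradiction. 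A conceptually lighter alternative is to invoke the De~Philippis--Rindler structure theorem for $\calA$-free measures: since $Du$ is curl-free, its singular part must concentrate on the wave cone of the curl operator, which is exactly the set of rank-one matrices, yielding the conclusion at once. Either way, all the real difficulty sits in the Cantor regime — in promoting the constant polar of the blow-up to the rigidity that forces rank one — while the jump regime requires only the bookkeeping of the first paragraph.
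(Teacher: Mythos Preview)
The paper does not prove this statement; it is simply stated as a known result with the attribution ``The next result is due to G.~Alberti (see \cite{AlXCIII})'' and no argument is given. So there is no proof in the paper to compare against --- the theorem is imported wholesale from the literature, as is standard for Alberti's rank-one theorem in papers that merely use it.

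Your treatment of the jump part is correct and essentially self-contained from the material in Subsection~\ref{onBV}. For the Cantor part, your blow-up sketch is a plausible heuristic but is not Alberti's actual argument, and the step where you ``lift a one-dimensional obstruction back through the disintegration'' is where all the difficulty hides --- this is not something that follows from routine Besicovitch differentiation, and making it rigorous is precisely the content of \cite{AlXCIII}. Alberti's original proof proceeds instead via a representation of the derivative of a BV function as a superposition of measures carried by $C^1$ hypersurfaces (his ``rank-one decomposition''), not via blow-up and tangent measures in the way you outline. Your alternative of invoking the De~Philippis--Rindler $\calA$-free structure theorem is mathematically valid and does give the result cleanly, but that theorem postdates \cite{AlXCIII} by two decades and is a far heavier tool than what the paper is citing. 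In short: your jump paragraph is fine, your Cantor paragraph is a sketch of a strategy rather than a proof, and the paper itself makes no attempt at either --- it just cites.
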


We now state a result regarding the chain rule
in $BV$, which proof may be found in \cite[Thm.~{3.96}]{AFPMM}.

\begin{theorem}\label{chainruleBV}
Let $u\in BV(\Omega;\RR^d)$ and $\phi\in C^1(\RR^d;\RR^m)$
be a Lipschitz function satisfying $\phi(0)=0$
if $\calL^N(\Omega)=+\infty$. Then $v:= \phi\circ
u$ belongs to $BV(\Omega;\RR^m)$, and 
\begin{eqnarray}\label{chruBV}
\begin{aligned}
& \tilde D v = \grad \phi(u)\grad u{\calL^N}_{\!\lfloor
\Omega}
+ \grad \phi(\tilde u) D^cu = \grad \phi(\tilde
u) \tilde D u, \qquad D^j v = \big(\phi(u^+)
- \phi(u^-)\big)\otimes \nu_u{\calH^{N-1}}_{\lfloor
J_u}.
\end{aligned}%
\end{eqnarray}
\end{theorem}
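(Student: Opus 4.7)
The plan is to first establish $v \in BV(\Omega;\RR^m)$ via mollification and then verify the identity \eqref{chruBV} separately on the three natural pieces of the decomposition $Dv = D^av + D^jv + D^cv$ recorded in Remark~\ref{decforDu}. First I would set $u_\delta := u \ast \rho_\delta$ as in \eqref{defmollfct} and $v_\delta := \phi\circ u_\delta \in C^\infty(\Omega_\delta;\RR^m)$. The classical chain rule yields $\nabla v_\delta = \grad\phi(u_\delta)\nabla u_\delta$, and since $\phi$ is $L$-Lipschitz this gives $|\nabla v_\delta|\lii L|\nabla u_\delta|$. Lemma~\ref{lemmaAMT}(i) furnishes $\lim_{\delta\to 0^+}|Du_\delta|(\Omega')=|Du|(\Omega')$ on every $\Omega'\subset\subset\Omega$ with $\calL^N(\partial\Omega')=0$, while the assumption $\phi(0)=0$ in the unbounded case ensures $v_\delta\to v$ in $L^1_{\rm loc}(\Omega;\RR^m)$. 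Lower semicontinuity of the total variation then yields $v\in BV(\Omega;\RR^m)$ with $|Dv|\lii L|Du|$ as measures.

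For the absolutely continuous part, Proposition~\ref{appppties}(c)(ii) shows that at every $x\in\calD_u$ the function $v$ is approximately differentiable with $\grad v(x)=\grad\phi(\tilde u(x))\grad u(x)$; since Theorem~\ref{appdiffisac}(a) gives $\calL^N(\Omega\setminus\calD_u)=0$ and $\tilde u=u$ holds $\calL^N$-a.e., this immediately produces $D^av=\grad\phi(u)\grad u\,{\calL^N}_{\!\lfloor\Omega}$. For the jump part, Proposition~\ref{appppties}(b)(iii) describes $J_v$ as the subset of $J_u$ on which $\phi(u^+)\neq\phi(u^-)$ and identifies the triplet $(v^+,v^-,\nu_v)=(\phi(u^+),\phi(u^-),\nu_u)$; combined with the representation $D^jv=(v^+-v^-)\otimes\nu_v\,{\calH^{N-1}}_{\lfloor J_v}$ from Theorem~\ref{appdiffisac}(b) (with the integrand understood to vanish on $J_u\setminus J_v$), this yields the second identity in \eqref{chruBV}.

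The main obstacle is the Cantor part, for which I would verify $D^cv=\grad\phi(\tilde u)\,D^cu$. The bound $|Dv|\lii L|Du|$ forces $D^cv\ll|D^cu|$, so by the Besicovitch differentiation theorem it suffices to identify the Radon--Nikodym density at $|D^cu|$-a.e.\ point $x_0$. By Definition~\ref{DefDjDc} such a point lies in $A_u$, and by Alberti's Theorem~\ref{rank1Alberti} the polar $\frac{dD^cu}{d|D^cu|}(x_0)=a\otimes\nu$ has rank one. I would pass to the limit $\delta\to 0^+$ in the smooth identity $\int_{B(x_0,r)}\nabla v_\delta\,\dx=\int_{B(x_0,r)}\grad\phi(u_\delta)\nabla u_\delta\,\dx$ on balls of radius $r$ satisfying $|Du|(\partial B(x_0,r))=0$, exploiting Lemma~\ref{lemmaAMT}(iv) (after truncation to reduce to the $L^\infty$ setting if necessary) to replace $\grad\phi(u_\delta)$ by $\grad\phi(\tilde u(x_0))$ up to an $\epsi$-error controlled by continuity of $\grad\phi$ and the vanishing Lebesgue density of $\{|u-\tilde u(x_0)|>\epsi\}$ at $x_0$. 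This gives $Dv(B(x_0,r))=\grad\phi(\tilde u(x_0))\,Du(B(x_0,r))+o(|Du|(B(x_0,r)))$, and after subtracting the $D^av$ and $D^jv$ contributions (both $o(|D^cu|(B(x_0,r)))$ at $|D^cu|$-a.e.\ point) and dividing by $|D^cu|(B(x_0,r))$, letting $r\to 0^+$ delivers the required density identity.

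Combining the three computations with the decomposition $\tilde Dv=D^av+D^cv$, and invoking Lemma~\ref{loctyDu} to ensure that the manipulations of $\tilde u$ at $|D^cu|$-a.e.\ point are independent of the representative of $u$, I obtain both equalities in \eqref{chruBV}. The delicate step will be the Cantor part: one must balance three scales (the mollification parameter $\delta$, the ball radius $r$, and the tolerance $\epsi$ in the approximate continuity at $x_0$) so that all three limiting procedures are compatible and Alberti's rank-one structure is genuinely used to rule out pathological oscillations of $\nabla u_\delta$ that could otherwise spoil the replacement of $\grad\phi(u_\delta)$ by $\grad\phi(\tilde u(x_0))$.
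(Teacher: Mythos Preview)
The paper does not prove this theorem at all: immediately before the statement it writes ``which proof may be found in \cite[Thm.~{3.96}]{AFPMM}'' and then moves on, so there is no in-paper argument to compare against. Your sketch is therefore being measured against the standard Ambrosio--Fusco--Pallara proof rather than anything original to this paper.

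That said, your outline is broadly along the right lines---the mollification argument for $v\in BV$, the use of Proposition~\ref{appppties}(c)(ii) for $D^av$, and the use of Proposition~\ref{appppties}(b)(iii) together with Theorem~\ref{appdiffisac}(b) for $D^jv$ are exactly how the absolutely continuous and jump parts are handled. One remark on the Cantor part: your invocation of Alberti's rank-one theorem (Theorem~\ref{rank1Alberti}) is unnecessary. The chain rule in \cite{AFPMM} is proved \emph{before} and \emph{independently of} Alberti's theorem; the identity $D^cv=\grad\phi(\tilde u)\,D^cu$ is a statement about matrix-valued measures and holds regardless of the rank of the polar $\frac{dD^cu}{d|D^cu|}$. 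The actual mechanism that makes your mollification-and-replace argument work at $|D^cu|$-a.e.\ $x_0\in A_u$ is just approximate continuity of $u$ at $x_0$ (so $u_\delta\to\tilde u(x_0)$ locally in measure, hence $\grad\phi(u_\delta)\to\grad\phi(\tilde u(x_0))$ can be substituted up to a set of small $|Du|$-mass) together with the fact that the diffuse and jump parts of $Du$ are mutually singular; no rank-one structure is needed to ``rule out pathological oscillations.'' If you lean on Alberti here you are importing a much deeper result than the chain rule itself warrants, and a grader following \cite{AFPMM} would flag this as circular in spirit even if not literally so.
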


As a consequence of Lebesgue-Besicovitch Differentiation Theorem, we have the following.

\begin{theorem}\label{conseqLBDT}
If $\mu$ is a nonnegative Radon measure and if $v\in L^1_{\rm loc}(\Omega,\mu;\RR^d)$, then
\begin{eqnarray*}
\lim_{\epsilon\to0^+}\dashint_{x_0 + \epsilon C} |v(y) -v(x_0)|\,\d\mu(y)=0
\end{eqnarray*}
for $\mu$-\aev\ $x_0\in\Omega$ and for every bounded, convex, open set $C$ containing the origin.
\end{theorem}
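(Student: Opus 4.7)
The plan is to reduce the convergence for dilates of $C$ to the classical Lebesgue--Besicovitch differentiation theorem for concentric balls, using that any bounded, convex, open neighborhood of the origin is sandwiched between two balls.

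Fix such a $C$; since $C$ is open and bounded with $0\in C$, there exist $0<r<R$ with $B(0,r)\subset C\subset B(0,R)$, hence
$$B(x_0,\epsilon r)\subset x_0+\epsilon C\subset B(x_0,\epsilon R)$$
for every $x_0\in\Omega$ and every $\epsilon>0$. The classical Lebesgue--Besicovitch differentiation theorem, applied to the function $y\mapsto|v(y)-v(x_0)|\in L^1_{\rm loc}(\Omega,\mu)$, produces a $\mu$-negligible set $N_v\subset\Omega$, independent of $C$, such that
$$\lim_{\delta\to0^+}\dashint_{B(x_0,\delta)}|v(y)-v(x_0)|\,\d\mu(y)=0\qquad \text{for every }x_0\in\Omega\setminus N_v,$$
in particular with $\delta=\epsilon R$. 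Using the upper inclusion to bound the integral and the lower inclusion to bound the mass from below, one gets
$$\dashint_{x_0+\epsilon C}|v(y)-v(x_0)|\,\d\mu(y)\le \frac{\mu(B(x_0,\epsilon R))}{\mu(B(x_0,\epsilon r))}\,\dashint_{B(x_0,\epsilon R)}|v(y)-v(x_0)|\,\d\mu(y),$$
so the result follows once the ratio on the right is controlled as $\epsilon\to0^+$.

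The remaining ingredient is the \emph{asymptotic doubling} property of Radon measures on $\RR^N$: there exists a $\mu$-negligible set $N_\mu\subset\Omega$ such that for every $x_0\in\Omega\setminus N_\mu$ and every $\lambda>1$,
$$\limsup_{\rho\to0^+}\frac{\mu(B(x_0,\lambda\rho))}{\mu(B(x_0,\rho))}<+\infty.$$
This is a standard fact in geometric measure theory, derivable from the Besicovitch covering theorem (see, e.g., Federer's or Mattila's textbooks). Applying it with $\lambda=R/r$ and $\rho=\epsilon r$ bounds the crucial ratio by a finite constant depending only on $x_0$, and combining this with the previous step yields the conclusion for every $x_0\in\Omega\setminus(N_v\cup N_\mu)$ and every admissible $C$; note that the exceptional set $N_v\cup N_\mu$ is $C$-independent, which accommodates the universal quantifier on $C$ in the statement. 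The only genuine obstacle is the asymptotic doubling statement, but this is classical and independent of the specific setting of our problem.
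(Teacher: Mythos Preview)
The paper does not supply a proof of this statement; it is simply recorded in the preliminaries as a known consequence of the Lebesgue--Besicovitch differentiation theorem and then invoked later in the blow-up arguments. Your argument is a correct way to make that derivation explicit: the inclusions $B(0,r)\subset C\subset B(0,R)$ together with the asymptotic doubling property of Radon measures on $\RR^N$ (valid at $\mu$-a.e.\ point as a consequence of Besicovitch's covering theorem) reduce the averages over $x_0+\epsilon C$ to the classical ball version, and your observation that the exceptional set $N_v\cup N_\mu$ is independent of $C$ correctly handles the order of quantifiers in the statement.
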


In the remainder of this subsection  $\Omega$ denotes an open
subset of $\RR^N$ and
$E$ a $\calL^N$-measurable subset of $\RR^N$. 

\begin{definition}\label{Perimeter}
The perimeter of $E$ in
$\Omega$ is represented
by ${\rm Per}_\Omega(E)$
and  defined by
\begin{eqnarray*}
\begin{aligned}
& {\rm Per}_\Omega(E):=
\sup \bigg\{ \int_E {\rm
div}\,\ffi\,\dx\!:\,
\ffi\in C^1_c(\Omega;\RR^N),\,
\Vert\ffi\Vert_{L^\infty(\Omega)}\lii
1\bigg\}. 
\end{aligned}
\end{eqnarray*}
We say that $E$ is a
set of finite perimeter
in $\Omega$ if ${\rm
Per}_\Omega(E)<+\infty$.
\end{definition}

The proof of the following
 result may be found
in  \cite[Thm.~{3.36}]{AFPMM}.

\begin{theorem}\label{ThmSFP}
Assume that $E$ is a set of finite perimeter
in $\Omega$. Then the
distributional derivative
of $\chi_E$, $D\chi_E$, belongs to $\calM(\Omega;\RR^N)$
and $|D\chi_E| (\Omega)=
{\rm Per}_\Omega(E)$.
Moreover, the following
generalized Gauss--Green
formula holds
\begin{eqnarray*}
\begin{aligned}
& \int_E {\rm div}\,
\ffi \,\dx = - \int_\Omega
 \nu_E\cdot\ffi\,
\d|D\chi_E|,\quad \hbox{for all }\ffi\in
C^1_c(\Omega;\RR^N),
\end{aligned}
\end{eqnarray*}
where $D\chi_E = \nu_E
|D\chi_E|$ is the polar
decomposition of $D\chi_E$.
\end{theorem}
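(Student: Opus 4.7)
The plan is to obtain $D\chi_E$ as a vector-valued Radon measure via the Riesz representation theorem, using the finiteness of the perimeter as the crucial a-priori bound, and then to read off both the generalized Gauss--Green formula and the identity $|D\chi_E|(\Omega) = {\rm Per}_\Omega(E)$ as essentially tautological consequences of that identification.

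First, I would define the linear functional
\begin{eqnarray*}
L:C^1_c(\Omega;\RR^N)\to\RR,\qquad L(\ffi):=-\int_E {\rm div}\,\ffi\,\dx.
\end{eqnarray*}
By the very definition of distributional derivative (and integration by parts component-wise), $L$ coincides with the action of the vector distribution $D\chi_E=(D_1\chi_E,\dots,D_N\chi_E)$ on $\ffi$. Now the hypothesis ${\rm Per}_\Omega(E)<+\infty$ translates, via Definition \ref{Perimeter} applied to $\pm\ffi/\|\ffi\|_{L^\infty}$, into the estimate $|L(\ffi)|\lii {\rm Per}_\Omega(E)\,\|\ffi\|_{L^\infty(\Omega;\RR^N)}$ for every $\ffi\in C^1_c(\Omega;\RR^N)$.

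Second, since $C^1_c(\Omega;\RR^N)$ is dense in $C_c(\Omega;\RR^N)$ in the uniform norm on each compact subset of $\Omega$, the operator $L$ extends uniquely to a continuous linear functional on $C_c(\Omega;\RR^N)$ with the same bound. Applying the Riesz representation theorem for $\RR^N$-valued Radon measures on the locally compact Hausdorff space $\Omega$, I obtain a unique $\mu\in\calM(\Omega;\RR^N)$ with $|\mu|(\Omega)\lii {\rm Per}_\Omega(E)$ and
\begin{eqnarray*}
L(\ffi)=\int_\Omega \ffi\cdot\d\mu\qquad\text{for every }\ffi\in C_c(\Omega;\RR^N).
\end{eqnarray*}
By the identification in the first step, $\mu=-D\chi_E$, so $D\chi_E\in\calM(\Omega;\RR^N)$. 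Writing the polar decomposition $D\chi_E=\nu_E|D\chi_E|$ (which exists because $D\chi_E$ is a finite vector measure; $\nu_E$ is the Radon--Nikodym derivative of $D\chi_E$ with respect to its total variation, so $|\nu_E|=1$ for $|D\chi_E|$-a.e.\ point), the representation of $L$ becomes exactly
\begin{eqnarray*}
\int_E {\rm div}\,\ffi\,\dx=-\int_\Omega \nu_E\cdot\ffi\,\d|D\chi_E|\qquad\text{for all }\ffi\in C^1_c(\Omega;\RR^N),
\end{eqnarray*}
which is the generalized Gauss--Green formula.

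Finally, to establish $|D\chi_E|(\Omega)={\rm Per}_\Omega(E)$, I would use the standard variational characterization of the total variation of a vector measure: $|D\chi_E|(\Omega)$ equals the supremum of $\int_\Omega \ffi\cdot\d D\chi_E$ over $\ffi\in C_c(\Omega;\RR^N)$ with $\|\ffi\|_{L^\infty}\lii 1$, and by the Gauss--Green formula this supremum is the same as that of $-\int_E{\rm div}\,\ffi\,\dx$. A mollification/truncation argument shows one may restrict to $\ffi\in C^1_c(\Omega;\RR^N)$ without changing the supremum, and symmetry under $\ffi\mapsto-\ffi$ absorbs the sign; what remains is precisely Definition \ref{Perimeter}, giving the two inequalities $|D\chi_E|(\Omega)\lii{\rm Per}_\Omega(E)$ and ${\rm Per}_\Omega(E)\lii|D\chi_E|(\Omega)$.

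I expect the main technical point to be the careful application of the Riesz representation theorem for vector-valued measures on the open set $\Omega$ (ensuring local finiteness and hence that the limit functional produces a genuine element of $\calM(\Omega;\RR^N)$ rather than merely a locally finite measure), together with the routine but necessary density/approximation argument that allows one to pass between $C^1_c$ and $C_c$ test fields without losing the sharp constant ${\rm Per}_\Omega(E)$ in the norm estimate.
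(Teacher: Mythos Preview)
The paper does not actually prove this theorem: it is stated as a preliminary result with the proof referred to \cite[Thm.~3.36]{AFPMM}. Your argument is the standard one (and essentially the one in that reference): bound the linear functional $\ffi\mapsto -\int_E{\rm div}\,\ffi\,\dx$ using the finiteness of the perimeter, invoke Riesz representation, and then match the total variation with the supremum in Definition~\ref{Perimeter}. So there is nothing to compare, and your approach is correct.

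One small slip: after applying Riesz you write ``$\mu=-D\chi_E$'', but in fact $\mu=D\chi_E$. Indeed you already observed in the first step that $L(\ffi)=\langle D\chi_E,\ffi\rangle$, and Riesz gives $L(\ffi)=\int_\Omega\ffi\cdot\d\mu$, so $\mu=D\chi_E$ directly. This does not affect the rest of your argument, since the Gauss--Green formula you state afterwards has the correct sign.
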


\begin{definition}\label{RedBound}
Let $E$ be a set of finite
perimeter in $\Omega$.
The reduced boundary
of $E$, denoted by $\calF
^*E$, is the set of all
points $x\in \Omega$
such that for all $\epsilon>0$,
\begin{eqnarray*}
\begin{aligned}
& |D\chi_E|(B(x,\epsilon)\cap
\Omega)>0,
\end{aligned}
\end{eqnarray*}
and such that the limit
\begin{eqnarray*}
\begin{aligned}
& \nu_E(x):=\lim_{\epsilon\to0^+}
\frac{D\chi_E (B(x,\epsilon))}{|D\chi_E| (B(x,\epsilon))}
\end{aligned}
\end{eqnarray*}
exists in $\RR^N$ and
satisfies $|\nu_E(x)|=1$.
The function $\nu_E:\calF^*E
\to S^{N-1}$ is called
the generalised inner
normal to $E$.
\end{definition}

\begin{definition}\label{tDensPts}
Given $t\in[0,1]$, we
represent by $E^t$ the
set of all points where
$E$ has density $t$,
i.e.,
\begin{eqnarray*}
\begin{aligned}
& E^t:= \bigg \{ x\in\RR^N\!:\,
\lim_{\epsilon\to0^+}
\frac{\calL^N (E\cap
B(x,\epsilon))}{\calL^N
(B(x,\epsilon))}=t\bigg\}.
\end{aligned}
\end{eqnarray*}
The set $\partial^*E:=
\RR^N\backslash (E^0
\cup E^1)$ is called
the essential boundary
of $E$.
\end{definition}

The proof of the following
 theorem may be found
in  \cite[Thm.~{3.59,
Thm.~{3.61}, Example~{3.68}}]{AFPMM}.

\begin{theorem}\label{DG&FE}
Let $E$ be a set of finite
perimeter in $\Omega$.
\begin{itemize}
\item [(i)] (De Giorgi)
The set $\calF^*E$ is
contained, up to $\calH^{N-1}$
negligible sets, in a
countable union of $C^1$
hypersurfaces, and
\begin{eqnarray*}
\begin{aligned}
& D\chi_E = \nu_E \,{\calH^{N-1}}_{\lfloor
\calF^*E},\quad  |D\chi_E| = {\calH^{N-1}}_{\lfloor
\calF^*E},
\end{aligned}
\end{eqnarray*}
where $\nu_E$ is the
generalised inner
normal to $E$.

\item [(ii)] (Federer)
It holds
\begin{eqnarray*}
\begin{aligned}
& \calF^*E \subset E^{1/2}
\subset \partial^*E,\quad
\calH^{N-1}(\Omega\backslash
(E^0 \cup \calF^*E \cup
E^1)) =0.
\end{aligned}
\end{eqnarray*}
In particular, $E$ has
density
either $0$ or $1/2$ or
$1$ at $\calH^{N-1}$-\aev\
$x\in\Omega$ and $\calH^{N-1}$-\aev\
$x\in \partial^*E \cap \Omega$ belongs to $\calF^*E$.

\item [(iii)] Setting
$u:=\chi_E$, then $u\in
BV(\Omega)$ with $S_u
= \partial^*E \cap \Omega$, $\calF^*E
\subset J_u \subset E^{1/2}$,
and $\{u^+(x), u^-(x)\}=
\{0,1\}$ for all $x\in
J_u$.
\end{itemize}
\end{theorem}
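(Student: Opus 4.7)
My plan is to establish (i) through a blow-up analysis at points of the reduced boundary $\calF^*E$, then deduce (ii) by a covering argument that leverages the measure identification from (i), and finally translate (i)-(ii) into statements about the approximate discontinuity and jump sets of $u:=\chi_E$ to obtain (iii). The two workhorses throughout will be the polar decomposition $D\chi_E=\nu_E|D\chi_E|$ already recorded in Theorem~\ref{ThmSFP} and the relative isoperimetric inequality on balls.

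For (i), I would fix $x_0\in\calF^*E$ and study the rescalings $E_r:=(E-x_0)/r$ as $r\to0^+$. The definition of reduced boundary, together with the isoperimetric inequality and BV-compactness applied to $\chi_{E_r}$, forces $\chi_{E_r}\to\chi_H$ in $L^1_{\rm loc}(\RR^N)$, where $H:=\{y\in\RR^N\!:\, y\cdot\nu_E(x_0)>0\}$. This half-space blow-up yields the two density statements
\begin{eqnarray*}
\begin{aligned}
&\lim_{r\to0^+}\frac{\calL^N(E\cap B(x_0,r))}{\omega_N r^N}=\frac{1}{2},\quad \lim_{r\to0^+}\frac{|D\chi_E|(B(x_0,r))}{\omega_{N-1}\, r^{N-1}}=1.
\end{aligned}
\end{eqnarray*}
The first immediately gives $\calF^*E\subset E^{1/2}$; the second, combined with Besicovitch differentiation, identifies $|D\chi_E|$ with ${\calH^{N-1}}_{\lfloor\calF^*E}$, and the polar decomposition then delivers the formula for $D\chi_E$. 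Rectifiability of $\calF^*E$ follows because the half-space tangents furnish a Lipschitz graph approximation of $\calF^*E$ near each of its points; a countable covering together with a Whitney-type regularization upgrades these pieces to the claimed $C^1$ hypersurface decomposition.

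For (ii), the inclusion $E^{1/2}\subset\partial^*E$ is immediate from the definitions (density $1/2$ excludes densities $0$ and $1$), and $\calF^*E\subset E^{1/2}$ is already in (i). The substantive content is $\calH^{N-1}(\Omega\setminus(E^0\cup\calF^*E\cup E^1))=0$. My plan is to observe that on $\Omega\setminus(E^0\cup E^1)$, both $\calL^N(E\cap B(x,r))$ and $\calL^N(B(x,r)\setminus E)$ remain bounded below by a fixed fraction of $r^N$ along a sequence of radii; the relative isoperimetric inequality then forces $|D\chi_E|(B(x,r))\gtrsim r^{N-1}$. A Vitali-type covering of the residual set $\Omega\setminus(E^0\cup E^1\cup\calF^*E)$ by such balls yields a uniform positive lower bound for the $\calH^{N-1}$-upper density of $|D\chi_E|$ there, which, since $|D\chi_E|$ is concentrated on $\calF^*E$ by (i), forces the residual set to be $\calH^{N-1}$-null.

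For (iii), set $u:=\chi_E\in BV(\Omega)$ (finite perimeter). From Definition~\ref{appnotions}(a), $x\in A_u$ iff $\lim_{r\to0^+}\dashint_{B(x,r)}|\chi_E-\tilde u(x)|\,\dy=0$, and since $\chi_E$ takes only the values $0$ and $1$, this forces $\tilde u(x)\in\{0,1\}$ and $x\in E^0\cup E^1$; hence $S_u=\Omega\setminus(E^0\cup E^1)=\partial^*E\cap\Omega$ by (ii). If $x\in J_u$, the one-sided averages in \eqref{appjp} must each equal $0$ or $1$ and be distinct, so $\{u^+(x),u^-(x)\}=\{0,1\}$, and combining the half-ball limits gives overall density $\frac{1}{2}$, whence $J_u\subset E^{1/2}$. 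Conversely, the half-space blow-up at $x\in\calF^*E$ obtained in (i) provides the triplet $(u^+(x),u^-(x),\nu_u(x))=(1,0,\nu_E(x))$ verifying \eqref{appjp}, so $\calF^*E\subset J_u$. The hard part will be the rectifiability portion of (i) and the isoperimetric/covering argument in (ii): passing from the weak half-space blow-up to a uniform Lipschitz approximation of $\calF^*E$, and producing non-degenerate isoperimetric lower bounds on small balls throughout the residual set, constitute the technical core of the classical De Giorgi and Federer theorems.
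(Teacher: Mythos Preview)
The paper does not actually prove this theorem: it is stated as a preliminary result with the proof deferred to \cite[Thm.~3.59, Thm.~3.61, Example~3.68]{AFPMM}. Your proposal outlines the classical De~Giorgi--Federer argument (blow-up to half-spaces at points of $\calF^*E$, identification of $|D\chi_E|$ with ${\calH^{N-1}}_{\lfloor\calF^*E}$, relative isoperimetric inequality plus covering for the residual set), which is precisely the strategy carried out in the cited reference, so your approach is correct and aligned with the source the paper relies on.
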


\begin{remark}\label{AppByPoly}
Another property of 
sets of finite perimeter
in $\RR^N$, which is
due to De Giorgi \cite{DeGiorgi54}, is
the following. If $E$
is a set of finite perimeter
in $\Omega$, then there
exists a sequence of
open sets $\{E_n\}_{n\in\NN}$
such that each set $\partial
E_n$ is contained in
a finite number of hyperplanes
and 
\begin{eqnarray*}
\begin{aligned}
& \lim_{n\to\infty} \calL^N(
E_n \Delta E) =0, \quad
\lim_{n\to\infty} |D\chi_{E_n}|(\Omega)
= |D\chi_{E}|(\Omega).
\end{aligned}
\end{eqnarray*}
\end{remark}

\subsection{Quasiconvex  Functions
}\label{qqcxfct}

We say that a Borel function $h:\RR^N\times \RR^{d\times N}\to\RR$ is quasiconvex if for all $(\xi,\eta)\in\RR^N\times\RR^{d\times N}$, $\ffi\in W^{1,\infty}_0(Q)$, and $\psi\in
W^{1,\infty}_0(Q;\RR^d)$, we have
{\setlength\arraycolsep{0.5pt}%
\begin{eqnarray*}
\begin{aligned}
h(\xi,\eta)\lii\int_Q h(\xi + \grad\ffi(x),\eta
+ \grad\psi(x))\,\dx.
\end{aligned}
\end{eqnarray*}}%
\begin{remark}\label{relusualqcx}
Consider the mapping that to each matrix $A=(a_{ij})_{1\lii i\lii d+1, 1\lii j\lii N}\in \RR^{(d+1)\times N}$ associates the pair $(\xi_A,\eta_A)\in \RR^N\times \RR^{d\times N}$, where $\xi_A:=(a_{(d+1)j})_{1\lii j\lii N}$ is the last row of $A$, and $\eta_A:=(a_{ij})_{1\lii i\lii d, 1\lii j\lii N}$ is obtained from $A$ by erasing its last row. Then, to a Borel function $h:\RR^N\times \RR^{d\times N}\to\RR$ we may associate the Borel function $\bar h:\RR^{(d+1)\times N}\to\RR$ defined by
{\setlength\arraycolsep{0.5pt}%
\begin{eqnarray*}
\begin{aligned}
\bar h(A):=h(\xi_A,\eta_A), \quad A\in \RR^{(d+1)\times N}.
\end{aligned}
\end{eqnarray*}}%
In this setting, $h$ is a quasiconvex  function if and only if $\bar h$ is a quasiconvex function in the \textit{usual} sense; that is, for all $A \in \RR^{(d+1)\times N}$ and $\vartheta\in W^{1,\infty}_0(Q;\RR^{(d+1)})$,
{\setlength\arraycolsep{0.5pt}%
\begin{eqnarray*}
\begin{aligned}
\bar h(A)\lii \int_Q \bar h(A+\grad \vartheta(x))\,\dx.
\end{aligned}
\end{eqnarray*}}%
\end{remark}
In view of Remark~\ref{relusualqcx} and well-know
results concerning the \textit{usual} notion
of quasiconvexity, if $h:\RR^N\times
\RR^{d\times N}\to\RR$ is a quasiconvex for
which there exists  a positive constant $C$ such that
for
all $(\xi,\eta)\in \RR^N \times \RR^{d\times N}$,
 \begin{eqnarray*}
0\lii h(\xi,\eta)\lii C(1+|(\xi,\eta)|),
\end{eqnarray*}
then \(h\) is Lipschitz; i.e., there exists a constant $L>0$, only depending on $C$, $N$, and $d$, such that
for all $(\xi,\eta), (\xi',\eta') \in  \RR^N \times \RR^{d\times
N}$,
\begin{eqnarray}\label{qcxisLip}
|h(\xi,\eta) - h(\xi',\eta')|\lii L|(\xi,\eta) -(\xi',\eta')|.
\end{eqnarray}

\begin{remark}
We finish this subsection by noting that for all
$(r,s)\in [\alpha,\beta] \times S^2$, the function
$f(r, s, \cdot, \cdot)$ introduced in \eqref{defoff}
is not quasiconvex in $\RR^2 \times \RR^{3 \times
2}$. In fact, if it were, then so would be its
recession function $f^\infty(r,s,\cdot,\cdot)$
introduced in \eqref{frec} (see \cite[Rmk.~2.2]{FMXCIII}).
In turn, by \eqref{qcxisLip},
$f^\infty(r,s,\cdot,\cdot)$ would be 
continuous in $\RR^2\times\RR^{3\times2}$.
However, taking $\xi_n\in\RR^2
\backslash \{0\}$
 and $\eta\in\RR^{3\times2}\backslash
\{0\}$
 such that $\xi_n\to0$
 as $n\to\infty$, we
 conclude that $\lim_{n\to\infty}
 f^\infty(r,s,\xi_n,\eta)
 = |r\eta| \not= |r\eta|
 +|\eta| =f^\infty(r,s,0,\eta)$. Thus, neither
$f(r, s, \cdot, \cdot)$ nor $f^\infty(r, s, \cdot,
\cdot)$ are quasiconvex functions.
\end{remark}

\section{Proof of Theorem~\ref{mainthm}}\label{proofmainthm}

This subsection is devoted to the proof of 
Theorem~\ref{mainthm}, under the assumption that
Theorem~\ref{relaxthm} holds. We start by observing
that there are admissible fields as introduced
in   \eqref{defsetX}.

\begin{lemma}\label{Xnotempty}
The set 
\begin{eqnarray*}
\begin{aligned}
 X=\big\{(u_b,u_c)\in BV(\Omega;[\alpha,\beta])
\times BV(\Omega;S^2)\!:\, &\,u_b-(u_0)_b\in
G(\Omega), \, u_b u_c -
u_0 \in G(\Omega;\RR^3)\big\}
\end{aligned}
\end{eqnarray*}
is nonempty.
\end{lemma}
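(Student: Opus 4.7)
The plan is to exhibit an explicit pair in $X$. By Proposition~\ref{Galtern}, and since $\Omega$ is bounded and all candidate functions will be bounded, the $G$-membership conditions $u_b-(u_0)_b\in G(\Omega)$ and $u_b u_c - u_0\in G(\Omega;\RR^3)$ reduce to the two zero-average conditions
\begin{eqnarray*}
\intO (u_b-(u_0)_b)\,\dx = 0, \qquad \intO (u_b u_c - u_0)\,\dx = 0.
\end{eqnarray*}

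First I would take $u_b$ to be the constant $c:=\dashint_\Omega (u_0)_b\,\dx$. Because $(u_0)_b\in[\alpha,\beta]$ \aev, we have $c\in[\alpha,\beta]$, so $u_b\in BV(\Omega;[\alpha,\beta])$ trivially, and the first average condition holds by construction.

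For the chromaticity component, set
\begin{eqnarray*}
M:= \frac{1}{c\,\calL^2(\Omega)}\intO u_0(x)\,\dx \in \RR^3.
\end{eqnarray*}
Using $u_0=(u_0)_b (u_0)_c$ with $|(u_0)_c|=1$ and $(u_0)_b\gii 0$, one has
\begin{eqnarray*}
|M| \lii \frac{1}{c\,\calL^2(\Omega)}\intO |u_0|\,\dx = \frac{1}{c\,\calL^2(\Omega)}\intO (u_0)_b\,\dx = 1,
\end{eqnarray*}
so $M$ lies in the closed unit ball. The task is then to construct $u_c\in BV(\Omega;S^2)$ with $\dashint_\Omega u_c\,\dx = M$, because then $\intO(u_b u_c - u_0)\,\dx = c\intO u_c\,\dx - \intO u_0\,\dx = 0$.

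Since the closed unit ball in $\RR^3$ is the convex hull of $S^2$, I would write $M$ explicitly as the midpoint of two antipodally placed points on $S^2$. Concretely, pick any $v\in S^2$ with $v\perp M$ (arbitrary if $M=0$), and define
\begin{eqnarray*}
p_\pm := M \pm \sqrt{1-|M|^2}\,v,
\end{eqnarray*}
so that $|p_\pm|^2 = |M|^2 + (1-|M|^2) = 1$. Partition $\Omega$ into two sets $\Omega_+$ and $\Omega_-$ of equal Lebesgue measure with finite perimeter (for instance, by cutting with an affine hyperplane), and set $u_c:= p_+ \chi_{\Omega_+} + p_-\chi_{\Omega_-}$. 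Then $u_c$ is an $S^2$-valued $BV$-function, and
\begin{eqnarray*}
\dashint_\Omega u_c\,\dx = \tfrac{1}{2}(p_+ + p_-) = M,
\end{eqnarray*}
as required. There is no real obstacle here; the only minor verification is the $L^2$-integrability implicit in the definition of $G$, which is automatic since $u_b$, $u_c$, $(u_0)_b$, $(u_0)_c$ are all in $L^\infty(\Omega)$ and $\calL^2(\Omega)<+\infty$. This shows $(u_b,u_c)\in X$ and hence $X\neq\emptyset$.
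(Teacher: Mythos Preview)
Your proof is correct and follows essentially the same approach as the paper: take $u_b$ to be the constant average of $(u_0)_b$, then write the target vector $\dashint_\Omega u_0/c$ as a convex combination of two unit vectors and realize $u_c$ as a two-valued piecewise constant function on a finite-perimeter partition of $\Omega$. The only cosmetic difference is that the paper uses a general convex combination $\theta s_1+(1-\theta)s_2$ with $s_1,s_2\in S^2$ and a partition with measures $\theta\calL^2(\Omega)$ and $(1-\theta)\calL^2(\Omega)$, whereas you fix $\theta=\tfrac12$ via the explicit midpoint formula $p_\pm=M\pm\sqrt{1-|M|^2}\,v$ and an equal-measure partition.
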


\begin{proof}
Recall that
\aev\ in $\Omega$,
\begin{eqnarray*}
\begin{aligned}
(u_0)_b = |u_0|\in [\alpha,\beta], \quad (u_0)_c =  {u_0\over|u_0|}= {u_0\over (u_0)_b}\in S^2, \quad u_0 = (u_0)_b (u_0)_c,
\end{aligned}
\end{eqnarray*}
 and $0<\alpha\lii\beta$.

Let $\displaystyle c_0:=\dashint_\Omega (u_0)_b\,\dx$ and set
\begin{eqnarray*}
\begin{aligned}
u_b(x):= c_0 \quad \hbox{for all } x \in \Omega.
\end{aligned}
\end{eqnarray*}
Clearly, $u_b \in BV(\Omega; [\alpha,\beta])$ and $
\int_\Omega u_b\,\dx = \int_\Omega (u_0)_b\,\dx$;
 since $u_b - (u_0)_b \in L^\infty (\Omega) \subset L^2(\Omega)$,  it follows that $u_b-(u_0)_b\in
G(\Omega)$ by Proposition~\ref{Galtern}.
 
Because
\begin{eqnarray*}
\begin{aligned}
\bigg|\dashint_\Omega (u_0)_b (u_0)_c\,\dx \bigg| \lii \dashint_\Omega (u_0)_b \, \dx = c_0,
\end{aligned}
\end{eqnarray*}
we have $\dashint_\Omega (u_0)_b (u_0)_c\,\dx \in \overline{B(0, c_0)}\subset \RR^3$; thus, there exist $\theta \in [0,1]$ and $s_1$, $s_2 \in \partial B(0,c_0)$ such that
\begin{eqnarray*}
\begin{aligned}
\dashint_\Omega (u_0)_b (u_0)_c\,\dx = \theta s_1 + (1-\theta) s_2.
\end{aligned}
\end{eqnarray*}
Let $\{\Omega_1,\Omega_2\}$ be a Lipschitz partition of $\Omega$ satisfying  $\calL^2(\Omega_1) = \theta \calL^2(\Omega)$, $\calL^2(\Omega_2)
= (1-\theta) \calL^2(\Omega)$, and consider the function $u_c$ defined, for $x\in\Omega$,
by
\begin{eqnarray*}
\begin{aligned}
u_c(x):= \begin{cases}
\displaystyle\frac{s_1}{c_0} & \hbox{if } x\in \Omega_1,\\ \\
 \displaystyle\frac{s_2}{c_0} & \hbox{if } x\in
\Omega_2.
\end{cases}
\end{aligned}
\end{eqnarray*}
Then, $u_c \in BV(\Omega; S^2)$, $u_b u_c - u_0 \in L^\infty(\Omega;\RR^3)$, and  
$\int_\Omega u_b u_c \, \dx = \int_\Omega (u_0)_b (u_0)_c\,\dx = \int_\Omega u_0\,\dx$. Thus, $u_b u_c -
u_0 \in G(\Omega;\RR^3)$, and this completes the proof.  
\end{proof}

\begin{proof}[Proof of Theorem~\ref{mainthm}] Fix sequences $\{\epsi_n\}_{n\in\NN}$ and 
$\{\delta_n\}_{n\in\NN}$
  as in the statement.
Let $G_n$,
$G_0: L^1(\Omega) \times L^1(\Omega;\RR^3)
\to [0,+\infty]$ be the functionals defined, for $(u_b,u_c) \in
L^1(\Omega) \times L^1(\Omega;\RR^3),$ by
\begin{eqnarray*}
\begin{aligned}
G_n (u_b, u_c):= 
\begin{cases}
F^{reg} (u_b, u_c) + F^{fid}_{\epsi_n} (u_b, u_c)
 & \hbox{if } (u_b, u_c) \in W^{1,1}(\Omega;
 [\alpha,\beta]) \times  W^{1,1}(\Omega; S^2),\\
 +\infty & \hbox{otherwise},
\end{cases}
\end{aligned}
\end{eqnarray*}
and
\begin{eqnarray*}
\begin{aligned}
G_0 (u_b, u_c):= 
\begin{cases}
F^{reg,sc^-} (u_b, u_c) + F^{fid} (u_b, u_c)
 & \hbox{if } (u_b, u_c) \in X,\\
 +\infty & \hbox{otherwise},
\end{cases}
\end{aligned}
\end{eqnarray*}
respectively.

We claim that $\{G_n \}_{n\in\NN}$ $\Gamma$-converges to $G_0$ in $L^1(\Omega) \times L^1(\Omega;\RR^3)
$.
Invoking \cite[Prop.~8.1]{DMXCIII}, this claim follows from Steps~1 and 2 below.

\underbar{Step 1.} \textit{(liminf inequality)} Fix $(u_b, u_c) \in L^1(\Omega) \times L^1(\Omega;\RR^3),$
and let $\{( u_b^n, u_c^n)\}_{n\in\NN}
\subset L^1(\Omega) \times L^1(\Omega;\RR^3)$
be an arbitrary sequence converging to $(u_b, u_c)$ in $L^1(\Omega)
\times L^1(\Omega;\RR^3)$. We claim that
\begin{eqnarray}\label{liineq}
\begin{aligned}
G_0(u_b, u_c) \lii \liminf_{n\to\infty} G_n
( u_b^n, u_c^n).
\end{aligned}
\end{eqnarray}
Without loss of generality, we assume that the limit inferior on the right-hand side of \eqref{liineq} is  a limit and  is finite, with $\sup_{n\in\NN} G_n( u_b^n, u_c^n) <+\infty$. Then, $\{ (u_b^n, u_c^n)\}_{n\in\NN} \subset W^{1,1}(\Omega;
 [\alpha,\beta]) \times  W^{1,1}(\Omega; S^2)$ and there exists a positive constant, $C$, independent of $n\in\NN$, such that
\begin{eqnarray*}
\begin{aligned}
& C \gii F^{reg} (u_b^n, u_c^n) = \int_\Omega
f(u_b^n, u_c^n, \grad u_b^n, \grad u_c^n) \,
\dx \gii \frac{1}{2}\int_\Omega |\grad u_b^n|\,\dx
+ \frac{\alpha}{2}\int_\Omega |\grad u_c^n|\,\dx, \\
&C \gii F^{fid} _{\epsi_n}(u_b^n, u_c^n)  \gii \frac{1}{\epsi_n} \bigg| \int_\Omega
(u_b^n u_c^n - u_0)\,\dx\bigg| + \frac{1}{\epsi_n}
\bigg| \int_\Omega
(u_b^n  - (u_0)_b)\,\dx\bigg|,
\end{aligned}
\end{eqnarray*}
where we used \eqref{boundsf} together with the fact that $u_b^n \gii \alpha$ and $u_c^n\in
S^2$ \aev\ in $\Omega$. Consequently,
{\setlength\arraycolsep{0.5pt}
\begin{eqnarray}
&&u_b^n \weaklystar u_b \hbox{ weakly-$\star$  in $BV(\Omega)$}, \quad u_c^n \weaklystar u_c \hbox{ weakly-$\star$
 in $BV(\Omega;\RR^3)$}, \quad \hbox{as $n\to\infty$,}\nonumber \\
&&\lim_{n\to\infty} \int_\Omega u_b^n u_c^n \, \dx = \int_\Omega u_0 \, \dx,   \quad \lim_{n\to\infty} \int_\Omega u_b^n 
\, \dx = \int_\Omega (u_0)_b \, \dx,\label{=avgu0}
\end{eqnarray}}%
and, up to a (not relabeled) subsequence,
\begin{eqnarray*}
\begin{aligned}
u^n_b \to u_b \hbox{ \aev\ in $\Omega$},\quad u_c^n \to u_c \hbox{ \aev\ in $\Omega$}, \quad \hbox{as $n\to\infty$.}
\end{aligned}
\end{eqnarray*}
These two last convergences yield $u_b\in[\alpha,\beta]$ and $u_c \in S^2$ \aev\ in $\Omega$. By Lebesgue's
Dominated Convergence Theorem, we have
{\setlength\arraycolsep{0.5pt}
\begin{eqnarray*}
\begin{aligned}
\lim_{n\to\infty} \int_\Omega u_b^n u_c^n
\, \dx = \int_\Omega u_b u_c\dx,   \quad \lim_{n\to\infty}
\int_\Omega u_b^n 
\, \dx = \int_\Omega u_b\dx,
\end{aligned}
\end{eqnarray*}}%
which, together with \eqref{=avgu0} and in view of Proposition~\ref{Galtern}, implies $(u_b , u_c) \in X$. 
Furthermore, by Theorem~\ref{relaxthm},
{\setlength\arraycolsep{0.5pt}%
\begin{eqnarray}\label{liminfFreg}
\begin{aligned}
F^{reg, sc^-}(u_b, u_c) \lii \liminf_{n\to\infty} F^{reg} ( u_b^n, u_c^n).
\end{aligned}
\end{eqnarray}}%
Finally, we prove that
{\setlength\arraycolsep{0.5pt}%
\begin{eqnarray}\label{liminfFfid}
\begin{aligned}
F^{fid}(u_b, u_c) \lii \liminf_{n\to\infty}
F^{fid}_{\epsi_n} ( u_b^n, u_c^n),
\end{aligned}
\end{eqnarray}}%
which, together with \eqref{liminfFreg}, yields \eqref{liineq}.

The sequence $\{ u_b^n u_c^n - u_0 - \dashint_\Omega
(u_b^n u_c^n \,  - u_0 ) \, \dx \}_{n\in\NN}\subset
G(\Omega;\RR^3)$   converges \aev\ in $\Omega$
 to $u_b u_c- u_0$ and is  bounded  in
 $L^\infty(\Omega;\RR^3)$; hence, the convergence
 holds weakly in
\(L^p(\Omega)\) for any \(p>2\). Consequently,
by Proposition~\ref{normGto0},
we have
{\setlength\arraycolsep{0.5pt}%
\begin{eqnarray*}
\begin{aligned}
\lim_{n\to\infty}  \bigg\Vert
u_b^n u_c^n -
u_0  - \dashint_\Omega
(u_b^n u_c^n - u_0)\,\dx \bigg \Vert_{G(\Omega;\RR^3)} =  \Vert
u_b u_c -
u_0  \Vert_{G(\Omega;\RR^3)}.
\end{aligned}
\end{eqnarray*}}%
Similarly,
{\setlength\arraycolsep{0.5pt}%
\begin{eqnarray*}
\begin{aligned}
\lim_{n\to\infty}  \bigg\Vert
u_b^n  -
(u_0)_b - \dashint_\Omega
(u_b^n  - (u_0)_b)\,\dx \bigg \Vert_{G(\Omega)}
=  \Vert
u_b  -
(u_0)_b  \Vert_{G(\Omega)}.
\end{aligned}
\end{eqnarray*}}%
Moreover, because $u_c^n, \,(u_0)_c \in S^2$ and $u_c^n \to u_c$ \aev\ in $\Omega$, as $n\to \infty$, 
{\setlength\arraycolsep{0.5pt}%
\begin{eqnarray*}
\begin{aligned}
\lim_{n\to\infty} \int_\Omega | u_c^n - (u_0)_c|^2\,\dx = \int_\Omega | u_c - (u_0)_c|^2\,\dx.
\end{aligned}
\end{eqnarray*}}%
Thus,
{\setlength\arraycolsep{0.5pt}%
\begin{eqnarray*}
\begin{aligned}
\liminf_{n\to\infty}
F^{fid}_{\epsi_n} ( u_b^n, u_c^n) &\gii \liminf_{n\to\infty} \bigg(\lambda_v \bigg\Vert
u_b^n u_c^n -
u_0  - \dashint_\Omega
(u_b^n u_c^n - u_0)\,\dx \bigg \Vert_{G(\Omega;\RR^3)}\\
&\qquad + \lambda_b \bigg\Vert
u_b^n  -
(u_0)_b - \dashint_\Omega
(u_b^n  - (u_0)_b)\,\dx \bigg \Vert_{G(\Omega)} +\lambda_c  \int_\Omega | u_c^n - (u_0)_c|^2\,\dx\bigg)\\
&= F^{fid} (u_b,u_c),
\end{aligned}
\end{eqnarray*}}%
which proves \eqref{liminfFfid}. This concludes Step~1.

\underbar{Step 2.} \textit{(limsup inequality)}
 Fix $(u_b, u_c) \in L^1(\Omega) \times
  L^1(\Omega;\RR^3)$.
We claim that
there exists a sequence $\{( u_b^n, u_c^n)\}_{n\in\NN}
\subset L^1(\Omega) \times L^1(\Omega;\RR^3)$
converging to $(u_b, u_c)$ in $L^1(\Omega)
\times L^1(\Omega;\RR^3)$ that satisfies
\begin{eqnarray}\label{lsineq}
\begin{aligned}
G_0(u_b, u_c) \gii \limsup_{n\to\infty} G_n
( u_b^n, u_c^n).
\end{aligned}
\end{eqnarray}
Without loss of generality, we may assume that $(u_b,u_c) \in X$. By Theorem~\ref{relaxthm}, and recalling the bounds \eqref{boundsf}
 for $f$ (see also \eqref{Freg}), we can find a sequence $\{(u_b^j, u_c^j)\}_{j\in \NN} \subset W^{1,1}(\Omega;[\alpha,\beta]) \times W^{1,1}(\Omega; S^2)$ such that
{\setlength\arraycolsep{0.5pt}%
\begin{eqnarray}
&& u_b^j \weaklystar u_b \hbox{ weakly-$\star$
 in $BV(\Omega)$}, \enspace u_c^j \weaklystar
u_c \enspace \hbox{and}\enspace  u_b^j u_c^j \weaklystar
u_b u_c \enspace\hbox{weakly-$\star$
 in $BV(\Omega;\RR^3)$}, \quad \hbox{as $j\to\infty$;} \nonumber\\
&& u^j_b \to u_b,\enspace
u_c^j \to u_c,  \enspace \hbox{and} \enspace\ u_b^j u_c^j \to u_b u_c \enspace \hbox{\aev\ in $\Omega$}, \quad
\hbox{as $j\to\infty$;} \nonumber\\
&& F^{reg, sc^-}(u_b, u_c) = \lim_{j\to\infty}
F^{reg} ( u_b^j, u_c^j). \label{limFreg}
\end{eqnarray}}%
In particular, arguing as in Step~1,
{\setlength\arraycolsep{0.5pt}%
\begin{eqnarray}
F^{fid} (u_b,u_c) &&=  \lim_{j\to\infty}
\bigg(\lambda_v \bigg\Vert
u_b^j u_c^j -
u_0  -  \dashint_\Omega
(u_b^j u_c^j - u_0)\,\dx \bigg \Vert_{G(\Omega;\RR^3)} \nonumber\\
&&\qquad\qquad + \lambda_b \bigg\Vert
u_b^j  -
(u_0)_b - \dashint_\Omega
(u_b^j  - (u_0)_b)\,\dx \bigg \Vert_{G(\Omega)}
+\lambda_c  \int_\Omega | u_c^j - (u_0)_c|^2\,\dx\bigg). \label{limFfid1}
\end{eqnarray}}%
Moreover, recalling Proposition~\ref{Galtern} and the fact that $(u_b,u_c)\in X$,
{\setlength\arraycolsep{0.5pt}%
\begin{eqnarray*}
\begin{aligned}
\lim_{j\to\infty}  \int_\Omega u_b^j u_c^j
\, \dx = \int_\Omega u_b u_c\,\dx = \int_\Omega u_0 \, \dx ,   \quad \lim_{j\to\infty}
\int_\Omega u_b^j 
\, \dx = \int_\Omega u_b\,\dx = \int_\Omega (u_0 \, )_b \,\dx. 
\end{aligned}
\end{eqnarray*}}%
Hence,  we can find a subsequence $j_n\preceq j$ such that
{\setlength\arraycolsep{0.5pt}%
\begin{eqnarray}\label{limFfid2}
\begin{aligned}
\bigg|\int_\Omega u^{j_n}_b u^{j_n}_c\, \dx -   \int_\Omega
u_0 \, \dx \bigg| \lii \epsi_n^2, \quad \bigg|\int_\Omega u^{j_n}_b\, \dx
-   \int_\Omega
(u_0)_b \, \dx \bigg| \lii \epsi_n^2.
\end{aligned}
\end{eqnarray}}%
From \eqref{limFreg}, \eqref{limFfid1}, and \eqref{limFfid2}, we obtain \eqref{lsineq} for $\{(u_b^{j_n}, u_c^{j_n})\}_{n\in\NN}$, which concludes Step~2.

We now observe that $\{G_n\}_{n\in\NN}$ is equi-coercive in $L^1(\Omega) \times L^1(\Omega;\RR^3)$. In fact, arguing as in Step~1 above, given $C\in\RR$ we can find a positive constant  $c = c(C, \alpha)$ such that
for all $n\in\NN$,
{\setlength\arraycolsep{0.5pt}%
\begin{eqnarray}
&&\big\{ (u_b, u_c) \in L^1(\Omega) \times L^1(\Omega;\RR^3) \!: \, G_n(u_b, u_c) \lii C \big\} \nonumber \\ 
&&\qquad \subset \big\{ (u_b, u_c) \in W^{1,1}(\Omega) \times W^{1,1}(\Omega;\RR^3)
\!: \,\Vert (u_b, u_c)\Vert_{ W^{1,1}(\Omega) \times
W^{1,1}(\Omega;\RR^3)} \lii c \big\}, \label{Gnequic}
\end{eqnarray}}%
which, together with the compact injection of $ W^{1,1}(\Omega)
\times
W^{1,1}(\Omega;\RR^3)$ into $L^1(\Omega) \times L^1(\Omega;\RR^3)$, yields the conclusion.

We have just proved that $\{G_n\}_{n\in\NN}$ is an
equi-coercive
sequence that $\Gamma$-converges to $G_0$ in $L^1(\Omega) \times L^1(\Omega;\RR^3)$. Therefore, by \cite[Thm~7.8, Cor.~7.20]{DMXCIII}, we have
{\setlength\arraycolsep{0.5pt}%
\begin{eqnarray}\label{convinfGn}
\begin{aligned}
\min_{ (u_b, u_c) \in L^1(\Omega) \times
L^1(\Omega;\RR^3)} G_0 (u_b, u_c) = \lim_{n\to\infty } \inf_{ (u_b, u_c) \in L^1(\Omega) \times
L^1(\Omega;\RR^3)} G_n (u_b, u_c).
\end{aligned}
\end{eqnarray}}%
Note that by Lemma~\ref{Xnotempty} and
\eqref{boundsf},
the minimum on the left-hand side of \eqref{convinfGn}
is
finite, and \eqref{convinfGn} is equivalent
to saying
that
\begin{equation}
\label{convinf}
\begin{aligned}
&\min_{(u_b,u_c)\in  X} \big (F^{reg,
sc^-}(u_b,u_c) + F^{fid}(u_b,u_c)\big)
= \lim_{n\to\infty} \inf_{(u_b,u_c)\in
W^{1,1}(\Omega;[\alpha,\beta])\times
W^{1,1}(\Omega;S^2)} \big(F^{reg}(u_b,u_c)+
F^{fid}_{\epsi_n}(u_b,u_c)\big).
\end{aligned}
\end{equation}
Let
 $( u_b^n,
u_c^n)\in W^{1,1}(\Omega;[\alpha,\beta])\times
W^{1,1}(\Omega;S^2)$ be a $\delta_n$-minimizer
of the functional $\big(F^{reg}+F^{fid}_{\epsi_n}\big)$
 in $W^{1,1}(\Omega;[\alpha,\beta])\times
W^{1,1}(\Omega;S^2)$. Observe that \((
u_b^n,
u_c^n)\)  is also a $\delta_n$-minimizer
of $G_n$ in $L^1(\Omega) \times
L^1(\Omega;\RR^3)$,  $G_n( u_b^n,
u_c^n) = F^{reg}( u_b^n,
u_c^n)\allowbreak + F^{fid}_{\epsi_n}(
u_b^n,
u_c^n)$, and, by \eqref{convinf},
\begin{equation}\label{minseqsc-}
\begin{aligned}
\min_{(u_b,u_c)\in  X} \big (F^{reg,
sc^-}(u_b,u_c) + F^{fid}(u_b,u_c)\big)
= \lim_{n\to\infty} \Big(
F^{reg}( u_b^n, u_c^n) + F^{fid}_{\epsi_n}(
u_b^n, u_c^n)\Big). 
\end{aligned}
\end{equation}
Using \eqref{Gnequic} and   the fact that
the minimum on the left-hand side of 
\eqref{minseqsc-} is finite, we deduce
that $\{( u_b^n, u_c^n)\}_{n\in\NN}$
is sequentially, relatively compact with
respect
to the weak-$\star$ convergence in $BV(\Omega)\times
BV(\Omega;\RR^3)$ and its cluster points
belong to \(X\).
Let  $(\bar u_b, \bar u_c)\in X $ be a
cluster point of $\{ (u_b^n,
u_c^n) \}_{n\in\NN}$. Then, by Step~1
and \eqref{minseqsc-},
\begin{equation*}
\begin{aligned}
F^{reg,
sc^-}(\bar u_b, \bar u_c) + F^{fid}(\bar
u_b, \bar u_c) &= G_0(\bar u_b, \bar u_c)
 \lii
\liminf_{n\to\infty} G_n ( u_b^n,
u_c^n) = \liminf_{n\to\infty} \big( F^{reg}(
u_b^n,
u_c^n)+F^{fid}_{\epsi_n}( u_b^n,
u_c^n) \big)\\
&= \min_{(u_b,u_c)\in  X} \big (F^{reg,
sc^-}(u_b,u_c) + F^{fid}(u_b,u_c)\big)
\lii F^{reg,
sc^-}(\bar u_b, \bar u_c) + F^{fid}(\bar
u_b, \bar u_c).
\end{aligned}
\end{equation*}
Thus,   $(\bar u_b, \bar u_c) $
is a minimizer of $(F^{reg,sc^-} + F^{fid})$
in $X$ and
\(\displaystyle F^{reg, sc^-}(\bar u_b,
\bar u_c) + F^{fid}(\bar u_b, \bar u_c)
= \lim_{n\to\infty} \big(
F^{reg}( u_b^n, u_c^n) + F^{fid}_{\epsi_n}(
u_b^n, u_c^n)\big)\). This concludes the
proof.
\end{proof}

\section{Proof of Theorem~\ref{relaxthm}}
\label{proofrelaxthm}

This section is devoted to the proof of Theorem~\ref{relaxthm} and is organized as follows. In Subsection~\ref{integrands}, we state some properties concerning the densities $Q_Tf$ and $K$ characterizing the functional in \eqref{lscFreg}. In Subsection~\ref{auxlem}, we collect several auxiliary results, which will be used to establish the integral representation for $\calF$ stated in Theorem~\ref{relaxthm}. A lower bound for the latter is proved in Subsection~\ref{lowerbound} and an upper bound in Subsection~\ref{upperbound}.

To simplify the notation, throughout the present section we will drop the indices $b$ and $c$, referring to brightness and chromaticity, respectively, and we replace $u_b$ by $u$, $u_c$ by $v$, $W^c_b$ by $W^c_u$, and  $W^c_c$ by $W^c_v$. Also, we recall that $\Omega\subset\RR^2$ is an open, bounded set with Lipschitz boundary $\partial\Omega$.

\subsection{Properties of $Q_Tf$ and $K$ }\label{integrands}

We start this subsection by  proving some properties of $\calQ_T f$ (see \eqref{defQTf}). Given $s\in S^2$ and $\eta\in\RR^{3\times 2}$ (respectively, $\eta\in\RR^3$),  set (cf. \cite{DFMTXCIX})
\begin{eqnarray}\label{defPDetall}
P_s \eta:=(\II_{3\times3}
- s\otimes s)\eta,
\end{eqnarray}
which defines a projection of $\RR^{3\times 2}$ onto $[T_s(S^2)]^2$ (respectively,
of  $\RR^3$
onto $T_s(S^2)$). Note that if
 $\eta\in T_s(S^2) \cup [T_s(S^2)]^2$, then $P_s\eta=\eta$ and
$|P_s\eta|\lii \sqrt2
|\eta|$.
Let $\tilde f:\RR\times \RR^3\times \RR^2\times \RR^{3\times 2}\to [0,\infty)$ be the function defined, for $(r,s,\xi,\eta)\in \RR\times \RR^3\times \RR^2\times \RR^{3\times 2}$, by
\begin{eqnarray}\label{deftildef}
\begin{aligned}
& \tilde f(r,s,\xi,\eta):=
\begin{cases}
f(\tilde r, \tilde s, \xi, P_{\tilde s}\eta)\,\phi(|s|) & \hbox{if } s\in \RR^3\backslash\{0\},\\
0 & \hbox{otherwise},
\end{cases}
\end{aligned}
\end{eqnarray}
where
\begin{eqnarray}\label{deftilders}
\begin{aligned}
&\tilde r := 
\begin{cases}
\alpha & \hbox{if } r\lii \alpha,\\
r & \hbox{if } \alpha\lii r\lii \beta,\\
\beta & \hbox{if } r\gii \beta,
\end{cases}
\qquad \tilde s := \frac{s}{|s|},
\end{aligned}
\end{eqnarray}
and $\phi\in C^\infty(\RR;[0,1])$ is a cut-off function such that
\begin{eqnarray}\label{defcutofftf}
\begin{aligned}
&\phi(t) = 1 \hbox{ if } t\gii 1,\qquad \phi(t) = 0 \hbox{ if } t\lii \frac{3}{4}.
\end{aligned}
\end{eqnarray}
Note that for all $r\in[\alpha,\beta]$, $s\in S^2$, $\xi\in\RR^2$, and $\eta\in [T_s(S^2)]^2$, we have that
\begin{eqnarray}\label{fandtildef}
\begin{aligned}
\tilde f(r,s,\xi,\eta) = f(r,s,\xi,\eta).
\end{aligned}
\end{eqnarray}
We observe also that $\tilde f_{|[\alpha,\beta]\times S^2\times \RR^2\times \RR^{3\times 2}}$ plays the role of the function introduced in  
\cite[(1.4)]{DFMTXCIX} and, as stated next, an analogous result to \cite[Prop.~2.2 (ii)]{DFMTXCIX} providing an alternative characterization of $\calQ_T f$ holds.

\begin{lemma}\label{onQQf}
For all $r\in[\alpha,\beta]$, $s\in S^2$, $\xi\in\RR^2$, and $\eta\in [T_s(S^2)]^2$, we have that
\begin{eqnarray}\label{qcxfqcxbarf}
\calQ_T f(r,s,\xi,\eta) = \calQ \tilde f(r,s,\xi,\eta),
\end{eqnarray}
where
\begin{eqnarray*}
\begin{aligned}
\calQ \tilde f(r,s,\xi,\eta):=\inf\bigg\{\int_Q \tilde f(r,s,\xi + \grad\ffi(y),\eta+\grad\psi(y))\,\dy\!:\,&\,\,\ffi\in W^{1,\infty}_0(Q), \,\,\psi\in W^{1,\infty}_0(Q;\RR^3)\bigg\}.\end{aligned}%
\end{eqnarray*}
\end{lemma}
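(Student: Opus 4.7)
The plan is to prove the equality $\calQ_T f = \calQ\tilde f$ by establishing the two inequalities separately, exploiting the key algebraic observation that for $(r,s) \in [\alpha,\beta]\times S^2$ one has $\tilde r = r$, $\tilde s = s$, and $\phi(|s|) = \phi(1) = 1$, so definition \eqref{deftildef} reduces to
\begin{equation*}
\tilde f(r,s,\xi',\eta') = f(r,s,\xi',P_s\eta') \quad \text{for every } (\xi',\eta')\in\RR^2\times\RR^{3\times 2}.
\end{equation*}
Recall also that $P_s$ is linear, that $P_s\eta = \eta$ whenever $\eta\in [T_s(S^2)]^2$, and that $P_s$ maps $\RR^{3\times 2}$ into $[T_s(S^2)]^2$.

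For the inequality $\calQ\tilde f(r,s,\xi,\eta) \lii \calQ_T f(r,s,\xi,\eta)$, I would take any admissible pair $(\ffi,\psi) \in W^{1,\infty}_0(Q)\times W^{1,\infty}_0(Q;T_s(S^2))$ for $\calQ_T f$. Such a pair is automatically admissible for $\calQ\tilde f$ because $T_s(S^2)\subset\RR^3$. Moreover, $\eta + \grad\psi(y) \in [T_s(S^2)]^2$ a.e., hence $P_s(\eta + \grad\psi(y)) = \eta + \grad\psi(y)$, so by the observation above $\tilde f(r,s,\xi+\grad\ffi,\eta+\grad\psi) = f(r,s,\xi+\grad\ffi,\eta+\grad\psi)$, and passing to the infimum over $(\ffi,\psi)$ gives the desired inequality.

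For the reverse inequality $\calQ_T f \lii \calQ\tilde f$, given an admissible pair $(\ffi,\psi)\in W^{1,\infty}_0(Q)\times W^{1,\infty}_0(Q;\RR^3)$ for $\calQ\tilde f$, I would project: set $\psi'(y) := P_s\psi(y)$, which lies in $W^{1,\infty}_0(Q;T_s(S^2))$ because $P_s$ is a linear map with image $T_s(S^2)$ and preserves the zero boundary values. By linearity, $\grad\psi'(y) = P_s\grad\psi(y)$, and since $\eta \in [T_s(S^2)]^2$ so that $P_s\eta = \eta$,
\begin{equation*}
\eta + \grad\psi'(y) = P_s\eta + P_s\grad\psi(y) = P_s\bigl(\eta + \grad\psi(y)\bigr).
\end{equation*}
Consequently, $f(r,s,\xi+\grad\ffi,\eta+\grad\psi') = f(r,s,\xi+\grad\ffi,P_s(\eta+\grad\psi)) = \tilde f(r,s,\xi+\grad\ffi,\eta+\grad\psi)$. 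Using $(\ffi,\psi')$ as a competitor for $\calQ_T f$ and passing to the infimum yields $\calQ_T f \lii \calQ\tilde f$, completing the proof.

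No substantial obstacle is expected; the argument is essentially bookkeeping around the linearity of the projection $P_s$ and the fact that $\tilde f$ was designed precisely so that $P_s$-invariance in the second gradient slot matches the tangential constraint in $\calQ_T$. The only care needed is to check that admissibility (boundary conditions and $W^{1,\infty}$ regularity) is preserved under $P_s$, which is immediate because $P_s$ is a bounded linear operator.
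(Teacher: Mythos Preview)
Your proposal is correct and follows essentially the same approach as the paper: both directions are proved by exploiting the identity $\tilde f(r,s,\xi',\eta') = f(r,s,\xi',P_s\eta')$ for $(r,s)\in[\alpha,\beta]\times S^2$, using that tangent-valued test functions are automatically $\RR^3$-valued for one inequality, and projecting via $P_s$ to produce a tangent-valued competitor for the reverse inequality.
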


\begin{proof} Fix $r\in[\alpha,\beta]$, $s\in S^2$, $\xi\in\RR^2$, and $\eta\in [T_s(S^2)]^2$.

Let $\ffi \in W^{1,\infty}_0(Q)$ and  $\psi\in W^{1,\infty}_0(Q;T_s(S^2))$ be given. Then, in particular,  $\psi\in W^{1,\infty}_0(Q;\RR^3)$ and $P_s\circ\grad\psi =\grad\psi$; hence
{\setlength\arraycolsep{0.5pt}
\begin{eqnarray*}
\int_Q f(r,s,\xi + \grad
\ffi,\eta+ \grad\psi)\,\dy\,
&&= \int_Q f(r,s,\xi +
\grad \ffi,P_s\circ(\eta+
\grad\psi))\,\dy\\
&&= \int_Q \tilde f(r,s,\xi
+ \grad \ffi,\eta+ \grad\psi)\,\dy
\gii \calQ\tilde f(r,s,\xi,\eta).
\end{eqnarray*}}%
Taking the infimum over all admissible $\psi$ and $\ffi$, we get $\calQ_T f(r,s,\xi,\eta) \gii \calQ \tilde f(r,s,\xi,\eta)$.

Conversely, let $\ffi \in W^{1,\infty}_0(Q)$ and  $\bar\psi\in W^{1,\infty}_0(Q;\RR^3)$ be given. Define $\psi:=P_s \circ\bar \psi$. Then, $\psi\in W^{1,\infty}_0(Q;T_s(S^2))$ and $\grad \psi = P_s \circ \grad \bar \psi$. Thus, 
{\setlength\arraycolsep{0.5pt}
\begin{eqnarray*}
\int_Q \tilde f(r,s,\xi
+ \grad \ffi,\eta+ \grad\bar\psi)
\,\dy\,
&&= \int_Q f(r,s,\xi +
\grad \ffi,P_s\circ(\eta+
\grad\bar\psi))\,\dy\\
&&= \int_Q  f(r,s,\xi
+ \grad \ffi,\eta+ \grad\psi)\,\dy
\gii \calQ_T f(r,s,\xi,\eta).
\end{eqnarray*}}%
Taking the infimum over all such  $\ffi$ and $\bar \psi$, we get $\calQ \tilde f(r,s,\xi,\eta) \gii \calQ_T f(r,s,\xi,\eta)$, which concludes the proof of \eqref{qcxfqcxbarf}.
\end{proof}

\begin{remark}\label{fnotTqcx}
Arguing exactly as at the end of Subsection~\ref{qqcxfct}, there does not exist $(r,s)\in [\alpha, \beta] \times S^2$ for which
{\setlength\arraycolsep{0.5pt}%
\begin{eqnarray*}
\begin{aligned}
(\xi,\eta) \in \RR^2 \times \RR^{3\times 2} \mapsto \tilde f (r,s, \xi, \eta)\in\RR^+
\end{aligned}
\end{eqnarray*}}%
is quasiconvex. Consequently, given $(r,s)\in [\alpha, \beta]
\times S^2$, 
{\setlength\arraycolsep{0.5pt}%
\begin{eqnarray*}
\begin{aligned}
(\xi,\eta) \in \RR^2 \times  [T_s(S^2)]^2
\mapsto  f (r,s, \xi, \eta)\in\RR^+
\end{aligned}
\end{eqnarray*}}%
is not tangential quasiconvex; that is, there exists $(\bar \xi,\bar \eta) \in \RR^2 \times  [T_s(S^2)]^2
$ such that $f(r,s,\bar\xi,\bar\eta) \not = \calQ_T f(r,s,\bar \xi,\bar \eta)$. In fact, if $(\xi,\eta) \in \RR^2 \times \RR^{3\times 2}$ and $(\ffi,\psi) \in W^{1,\infty}_0(Q)
 \times W^{1,\infty}_0(Q;\RR^3)$ are such that $\tilde f(r,s,\xi,\eta) > \int_Q \tilde f (r,s,\xi + \grad \ffi(y), \eta + \grad \psi(y))\,\dy$, 
with \((r,s) \in [\alpha,\beta] \times S^2\), then $(\bar\xi,\bar\eta):= (\xi, P_s \eta) \in \RR^2 \times\ [T_s(S^2)]^2$ and $(\bar\ffi,\bar\psi):=(\ffi,P_s \circ\psi) \in W^{1,\infty}_0(Q)
 \times W^{1,\infty}_0(Q;T_s(S^2))$ are such that
$ f(r,s,\bar\xi,\bar\eta) > \int_Q  f (r,s,\bar\xi
+ \grad \bar\ffi(y), \bar\eta + \grad \bar\psi(y))\,\dy$.
\end{remark}

We now establish some properties of  $\tilde f$, $\calQ\tilde f$, and $(\calQ\tilde f)^\infty$ that will be useful in what follows, where
\begin{eqnarray*}
\begin{aligned}
(\calQ\tilde f)^\infty(r,s,\xi,\eta):=&\limsup_{t\to+\infty} {\calQ \tilde f(r,s,t\xi,t\eta)\over t}
\end{aligned}
\end{eqnarray*}
for $(r,s,\xi,\eta)\in \RR\times \RR^3\times \RR^{2}\times \RR^{3\times 2}$. We first observe that since the application $s\in\RR^3\mapsto s\otimes s\in\RR^{3\times 3}$ is locally Lipschitz, there exists a positive constant, $c_\otimes$, such that for all $s,\bar s\in \overline{B(0,1)}$, it holds
\begin{eqnarray}\label{Lipotimes}
\begin{aligned}
& |s\otimes s - \bar
s\otimes\bar s|\lii c_\otimes|s-\bar
s|.
\end{aligned}
\end{eqnarray}

\begin{lemma}\label{onbarfQQbarf}
For all $(r,s,\xi,\eta)\in
[\alpha,\beta]\times
S^2\times \RR^2\times
\RR^{3\times 2}$, we
have that
{\setlength\arraycolsep{0.5pt}
\begin{eqnarray}
&& {1\over 2}|\xi| +{\alpha\over
2} |P_s\eta|  \lii \tilde
f(r,s,\xi,\eta) \lii
2|\xi| +
\sqrt2(1+\beta)|\eta|,\label{boundsbarf}
\\
&& {1\over 2}|\xi| +{\alpha\over
2} |P_s\eta| \lii \calQ\tilde
f(r,s,\xi,\eta) \lii
2|\xi| +
\sqrt2(1+\beta)|\eta|.\label{boundsQQbarf}
\end{eqnarray}}%
Moreover, there exists
a positive constant, $c$,
 depending only on $\alpha
$, $\beta$, $c_\otimes$,
and $Lip(g)$,  such that
for all $r,\, \bar r\in
[\alpha,\beta]$, $s,\,\bar
s\in S^2$, $\xi,\,\bar\xi\in\RR^2$,
 $\eta\in T_s(S^2)$,
and $\bar\eta\in T_{\bar
s}(S^2)$, one has
{\setlength\arraycolsep{0.5pt}
\begin{eqnarray}
&&\calQ\tilde
f( r, s,\xi,\eta)
\lii \calQ \tilde f(\bar
r,\bar s,\xi,\eta)   + c(|r-\bar
r| +|s-\bar s | )(|\xi|+|\eta|),
\label{QQbarfrbarrsbars}
\\
&& \big | \calQ\tilde
f(r,s,\xi,\eta) -
\calQ \tilde f(\bar
r,\bar s,\bar\xi,\bar\eta)\big|\nonumber\\
&&\hskip20mm\lii c|\eta-\bar
\eta| + c(|r-\bar
r| +|s-\bar s |+|\xi-\bar
\xi |  )(1+|\xi|+|\bar \xi|+|\eta|+|\bar
\eta|),\quad\qquad\label{contQtildef}
\\
&&(\calQ\tilde
f)^\infty( r, s,\xi,\eta)
\lii (\calQ\tilde f)^\infty
(\bar
r,\bar s,\xi,\eta) 
 +  c(|r-\bar
r| +|s-\bar s |)(|\xi|+|\bar\eta|).
\qquad\label{contrecQtildef}
\end{eqnarray}}%
\end{lemma}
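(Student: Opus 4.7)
The proof naturally splits according to the four estimates, with the bounds \eqref{boundsbarf}--\eqref{boundsQQbarf} being the easiest. For \eqref{boundsbarf}, I would first note that on $[\alpha,\beta]\times S^2\times\RR^2\times\RR^{3\times 2}$ the definition \eqref{deftildef} reduces to $\tilde f(r,s,\xi,\eta)=f(r,s,\xi,P_s\eta)$, since $\tilde r=r$, $\tilde s=s$, and $\phi(|s|)=1$; plugging this into \eqref{boundsf}, using $|P_s\eta|\lii\sqrt{2}|\eta|$ together with $|r|\in[\alpha,\beta]$, gives both inequalities of \eqref{boundsbarf}. For \eqref{boundsQQbarf} the upper bound is immediate from $\ffi\equiv 0$, $\psi\equiv 0$ in the definition of $\calQ\tilde f$, while the lower bound follows by applying Jensen's inequality to the convex minorant $(\xi',\eta')\mapsto\tfrac12|\xi'|+\tfrac{\alpha}{2}|P_s\eta'|$ and using that gradients of $W^{1,\infty}_0(Q)$-functions have zero mean.

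The continuity estimates rest on a preliminary Lipschitz bound for $\tilde f$ itself in the $(r,s)$-variable: on $[\alpha,\beta]\times S^2\times\RR^2\times\RR^{3\times 2}$ one has
\begin{equation*}
|\tilde f(r,s,\xi,\eta)-\tilde f(\bar r,\bar s,\xi,\eta)|\lii C(|r-\bar r|+|s-\bar s|)(|\xi|+|\eta|),
\end{equation*}
which I would derive directly from $\tilde f=|\xi|+g(|\xi|)|P_s\eta|+|rP_s\eta+s\otimes\xi|$, using \eqref{Lipotimes} to compare $P_s\eta$ with $P_{\bar s}\eta$ and the triangle inequality for the last term.

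To prove \eqref{QQbarfrbarrsbars}, I would fix $\epsi>0$ and, invoking Lemma~\ref{onQQf} to restrict the infimum to competitors $\psi\in W^{1,\infty}_0(Q;T_{\bar s}(S^2))$, choose $(\ffi,\psi)$ with $\int_Q\tilde f(\bar r,\bar s,\xi+\grad\ffi,\eta+\grad\psi)\,\dy\lii\calQ\tilde f(\bar r,\bar s,\xi,\eta)+\epsi$. Using the same pair in the infimum defining $\calQ\tilde f(r,s,\xi,\eta)$ and applying the preliminary Lipschitz estimate yields
\begin{equation*}
\calQ\tilde f(r,s,\xi,\eta)\lii\calQ\tilde f(\bar r,\bar s,\xi,\eta)+\epsi+C(|r-\bar r|+|s-\bar s|)\int_Q\bigl(|\xi+\grad\ffi|+|\eta+\grad\psi|\bigr)\dy.
\end{equation*}
The $|\xi+\grad\ffi|$-integral is controlled by $\calQ\tilde f(\bar r,\bar s,\xi,\eta)+\epsi$ via the lower bound in \eqref{boundsbarf}, and then by $C(|\xi|+|\eta|)$ through the upper bound in \eqref{boundsQQbarf}. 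The main obstacle is the $|\eta+\grad\psi|$-integral, because $\tilde f$ only senses $P_{\bar s}(\eta+\grad\psi)$; this is precisely where the restriction $\grad\psi\in T_{\bar s}(S^2)$ pays off, since it yields the Pythagorean identity $|\eta+\grad\psi|^2=|P_{\bar s}(\eta+\grad\psi)|^2+|(I-P_{\bar s})\eta|^2$. The first summand is absorbed into $\calQ\tilde f(\bar r,\bar s,\xi,\eta)$ through \eqref{boundsbarf}, and the second is bounded by $|\eta|$. Letting $\epsi\to 0^+$ establishes \eqref{QQbarfrbarrsbars}.

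Finally, \eqref{contQtildef} follows by combining \eqref{QQbarfrbarrsbars} (applied symmetrically) with uniform Lipschitz continuity of $\calQ\tilde f(r,s,\cdot,\cdot)$ in $(\xi,\eta)$: since $\calQ\tilde f$ is quasiconvex as an envelope and non-negative with linear upper bound by \eqref{boundsQQbarf}, Remark~\ref{relusualqcx} together with \eqref{qcxisLip} yields a global Lipschitz constant, and the extra factor $(1+|\xi|+|\bar\xi|+|\eta|+|\bar\eta|)$ in the claimed bound absorbs the growth already present in \eqref{QQbarfrbarrsbars}. The estimate \eqref{contrecQtildef} for the recession function is then obtained by applying \eqref{QQbarfrbarrsbars} at the rescaled arguments $(r,s,t\xi,t\eta)$ and $(\bar r,\bar s,t\xi,t\eta)$, dividing by $t$, and passing to $\limsup_{t\to+\infty}$.
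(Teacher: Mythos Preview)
Your argument is correct and follows essentially the same strategy as the paper's: pick near-optimal competitors for $\calQ\tilde f(\bar r,\bar s,\cdot,\cdot)$, compare integrands via a Lipschitz-type estimate on $\tilde f$, and use the growth bounds \eqref{boundsbarf}--\eqref{boundsQQbarf} to control the competitor integrals, with \eqref{contrecQtildef} then following from \eqref{QQbarfrbarrsbars} by rescaling. Your route to \eqref{contQtildef} via the Lipschitz bound \eqref{qcxisLip} for quasiconvex functions is a mild shortcut over the paper, which instead repeats the competitor argument separately for the $\xi$- and $\eta$-variables; one small remark is that Lemma~\ref{onQQf} as stated assumes $\eta\in[T_{\bar s}(S^2)]^2$, so to justify restricting to $T_{\bar s}$-valued $\psi$ at $(\bar r,\bar s,\xi,\eta)$ with $\eta\in[T_s(S^2)]^2$ you should either first note (as the paper does in a separate preliminary step) that $\calQ\tilde f(\bar r,\bar s,\xi,\eta)=\calQ\tilde f(\bar r,\bar s,\xi,P_{\bar s}\eta)$, or observe that the proof of Lemma~\ref{onQQf} never actually uses that hypothesis on $\eta$.
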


\begin{proof} Fix $(r,s,\xi,\eta)\in
[\alpha,\beta]\times
S^2\times \RR^2\times
\RR^{3\times 2}$. We
have that
{\setlength\arraycolsep{0.5pt}
\begin{eqnarray*}
\tilde f(r,s,\xi,\eta)&&=|\xi|
+ g(|\xi|)|P_s\eta| + |rP_s\eta
+ s\otimes\xi|\lii |\xi|
+ 
\sqrt2|\eta| + 
\sqrt2|r||\eta|
+ |\xi|\lii 2|\xi| +
\sqrt2(1+\beta)|\eta|,
\end{eqnarray*}}%
where we used the fact
that $g\lii1$. On the
other hand,
\begin{eqnarray*}
\begin{aligned}
&\alpha|P_s\eta|\lii
|rP_s\eta + s\otimes\xi|
+ |s\otimes\xi|= |rP_s\eta
+ s\otimes\xi| + |\xi|,
\end{aligned}
\end{eqnarray*}
which, together with
the fact that $g>0$,
yields 
\begin{eqnarray*}
\begin{aligned}
&\tilde f (r,s,\xi,\eta)\gii
 |\xi| +  |rP_s\eta +
s\otimes\xi|\gii {\alpha\over
2} |P_s\eta| + {1\over
2}|\xi|.
\end{aligned}
\end{eqnarray*}
This concludes the
proof of \eqref{boundsbarf}.
Then, \eqref{boundsQQbarf}
follows from \eqref{boundsbarf}
taking into account that
the lower and upper bounds
for $\tilde f$ in \eqref{boundsbarf}
are quasiconvex functions
(with respect to the
pair $(\xi,\eta)$).

Next, we establish 
\eqref{QQbarfrbarrsbars}--\eqref{contrecQtildef}.
Let $r,\, \bar r\in [\alpha,\beta]$,
$s,\,\bar s\in S^2$,
$\xi,\,\bar\xi\in\RR^2$,
 $\tilde \eta \in\RR^{3\times
2}$, $\eta\in T_s(S^2)$,
and $\bar\eta\in T_{\bar
s}(S^2)$  be given.
 To simplify the notation,
\(c\) represents a positive constant that depends
only on $\alpha
$, $\beta$, $c_\otimes$,
and $Lip(g)$ and whose value may change from one
instance to another. We divide the proof
into three steps.

{\sl Step 1.} We show
that 
{\setlength\arraycolsep{0.5pt}
\begin{eqnarray}
&& \big | \calQ\tilde
f(r,s,\xi,P_s\tilde\eta) -
\calQ \tilde f( r, s,\xi,P_{\bar
s}\tilde\eta)\big| \lii c|s-\bar
s ||\tilde\eta|.\label{contQtildef0}
\end{eqnarray}}%

Fix $\epsi>0,$ and let
$\ffi_\epsi\in W^{1,\infty}_0(Q)$
and $\psi_\epsi\in W^{1,\infty}_0(Q;\RR^3)$
be such that
\begin{eqnarray*}
\calQ \tilde f(r,s,\xi,P_{\bar
s}\tilde\eta) + \epsi \gii
\int_Q \tilde f(r,s,\xi
+ \grad \ffi_\epsi,P_{\bar
s}\tilde\eta+ \grad\psi_\epsi)\,\dy.\end{eqnarray*}
Using the facts
that $0< g(\cdot)\lii1$,
 $0<\alpha\lii r \lii
\beta$, the linearity
of $P_s\cdot$, and the
estimate $|P_s\eta|\lii
\sqrt2|\eta|$,  
in this order, we get
{\setlength\arraycolsep{0.5pt}
\begin{eqnarray*}
&&\calQ \tilde f(r,s,\xi,P_s\tilde\eta)
- \calQ \tilde f(r,s,\xi,P_{\bar
s}\tilde\eta)\\
&&\quad \lii \int_Q \tilde
f(r,s,\xi + \grad \ffi_\epsi,P_s\tilde\eta+
\grad\psi_\epsi)\,\dy
- \int_Q \tilde f(r,s,\xi
+ \grad \ffi_\epsi,P_{\bar
s}\tilde\eta+ \grad\psi_\epsi)\,\dy
+ \epsi\\
&& \quad =  \int_Q  \Big[f(r,s,\xi
+ \grad \ffi_\epsi,P_s\circ(P_s\tilde\eta+
\grad\psi_\epsi)) - f(r,s,\xi
+ \grad \ffi_\epsi, P_s\circ(P_{\bar
s}\tilde\eta+ \grad\psi_\epsi)) \Big]\,\dy
+ \epsi\\
&& \quad \lii \int_Q (1+\beta)\big|P_s\circ(P_s\tilde\eta+
\grad\psi_\epsi)) - P_s\circ(P_{\bar
s}\tilde\eta+ \grad\psi_\epsi))\big|
\,\dy +\epsi = \int_Q (1+\beta)\big|P_s(P_s\tilde\eta
- P_{\bar s}\tilde\eta)\big|
\,\dy +\epsi\\
&& \quad \lii 
\sqrt2(1+\beta)
|P_s\tilde\eta - P_{\bar
s}\tilde\eta|
+\epsi = 
\sqrt2(1+\beta) |(s\otimes
s - \bar s\otimes \bar
s)\tilde\eta| +\epsi
\lii
\sqrt2 c_\otimes(1+\beta)|s-\bar
s||\tilde\eta| + \epsi.
\end{eqnarray*}}%
Letting $\epsi\to0^+ $ first
and then interchanging the roles of \(s\) and \(\bar
s\), we conclude  \eqref{contQtildef0}.


{\sl Step 2.} We establish
\eqref{QQbarfrbarrsbars}
and \eqref{contrecQtildef}.

By Step~1, applied to
$\tilde\eta := \eta =
P_s\eta$,
\begin{eqnarray}\label{QQbarfPsPbars00}
\begin{aligned}
&\calQ\tilde
f( r, s,\xi,\eta)
\lii \calQ \tilde f(
r, s,\xi,P_{\bar s}\eta)
+ c|s-\bar
s ||\eta|. \end{aligned}
\end{eqnarray}
Next, we estimate $\calQ \tilde f(
r, s,\xi,P_{\bar s}\eta)$
in terms of $\calQ \tilde f(\bar r,\bar s,\xi,P_{\bar s}\eta)$.
 Using \eqref{qcxfqcxbarf},
for all $\epsi>0$, we
can find $\ffi_\epsi\in
W^{1,\infty}_0(Q)$ and
$\psi_\epsi\in W^{1,\infty}_0(Q;T_{\bar
s}(S^2))$ such that
{\setlength\arraycolsep{0.5pt}
\begin{eqnarray*}
\calQ \tilde f(\bar
r,\bar s,\xi,P_{\bar
s}\eta) + \epsi &&= \calQ_T
f(\bar
r,\bar s,\xi,P_{\bar
s}\eta) + \epsi \gii
\int_Q f(\bar r,\bar
s,\xi + \grad \ffi_\epsi,P_{\bar
s}\eta+ \grad\psi_\epsi)\,\dy\\
&&=\int_Q \tilde f(\bar
r,\bar s,\xi + \grad
\ffi_\epsi,P_{\bar s}\eta+
\grad\psi_\epsi)\,\dy,
\end{eqnarray*}}%
where in the last equality
we used the fact that
$P_{\bar s}\circ (P_{\bar
s}\eta + \grad \psi_\epsi)=
P_{\bar s}\eta + \grad
\psi_\epsi$.
In particular, in view
of  \eqref{boundsbarf},
 \eqref{boundsQQbarf},
and the inequality $|P_{\bar s}\eta|\lii
\sqrt2|\eta|$, we get
\begin{eqnarray*}
\begin{aligned}
& 2|\xi| + \sqrt2(1+\beta)|\eta|
+ \epsi\gii \int_Q {1\over
2} |\xi + \grad \ffi_\epsi|
+{\alpha\over 2}|P_{\bar
s}\eta + \grad \psi_\epsi|
\,\dy.
\end{aligned}
\end{eqnarray*}
Thus,
\begin{eqnarray}\label{psifficompact}
\max\bigg\{\int_Q |\xi
+ \grad \ffi_\epsi|\,\dy,\int_Q
|P_{\bar s}\eta + \grad
\psi_\epsi|\,\dy\bigg\}\lii
c(|\xi| + |\eta| + \epsi).\end{eqnarray}
Moreover, using the fact
that $0< g(\cdot)\lii1$,
the estimates 
$|P_s\eta - P_{\bar s}\eta|\lii
c_\otimes|s-\bar s||\eta|$
and \eqref{psifficompact},
and the identity $P_{\bar
s}\circ(P_{\bar s}\eta+
\grad\psi_\epsi)=P_{\bar
s}\eta+ \grad\psi_\epsi$,
in this order, we obtain
{\setlength\arraycolsep{0.5pt}
\begin{eqnarray*}
&&\calQ \tilde f(r,s,\xi,P_{\bar
s}\eta) - \calQ\tilde
f(\bar r,\bar s,\xi,P_{\bar
s}\eta)\\
&&\quad\lii \int_Q \tilde
f(r,s,\xi + \grad \ffi_\epsi,P_{\bar
s}\eta+ \grad\psi_\epsi)\,\dy
- \int_Q \tilde f(\bar
r,\bar s,\xi + \grad
\ffi_\epsi,P_{\bar s}\eta+
\grad\psi_\epsi)\,\dy
+ \epsi\\
&&\quad=  \int_Q  f(r,s,\xi
+ \grad \ffi_\epsi,P_s\circ(P_{\bar
s}\eta+ \grad\psi_\epsi))
- f(\bar r,\bar s,\xi
+ \grad \ffi_\epsi, P_{\bar
s}\circ(P_{\bar s}\eta+
\grad\psi_\epsi))\,\dy
+ \epsi\\
&&\quad \lii \int_Q c_\otimes|s-\bar
s||P_{\bar s}\eta+ \grad\psi_\epsi|
+ \big|rP_s\circ(P_{\bar
s}\eta+ \grad\psi_\epsi))
 -\bar r P_{\bar s}\circ(P_{\bar
s}\eta+ \grad\psi_\epsi))
 + (s-\bar
s)\otimes(\xi+\grad\ffi_\epsi)\big|\,\dy
+\epsi\\
&&\quad \lii  c(c_\otimes
+ 1)|s-\bar s|(|\xi|
+ |\eta|+\epsi) + \int_Q
 \big|rP_s\circ(P_{\bar
s}\eta+ \grad\psi_\epsi))
\mp rP_{\bar s}\circ(P_{\bar
s}\eta+ \grad\psi_\epsi))
\\
&&\hskip115mm -\bar r P_{\bar
s}\circ(P_{\bar s}\eta+
\grad\psi_\epsi))\big|\,\dy
+\epsi\\
&&\quad \lii c|s-\bar s|(|\xi|
+ |\eta|+\epsi) +c|s-\bar s|\int_Q
|P_{\bar s}\eta+ \grad\psi_\epsi|\,\dy
+ |r-\bar
r|\int_Q|P_{\bar s}\eta+
\grad\psi_\epsi|\,\dy
+ \epsi\\
&&\quad \lii c(|r-\bar
r| +|s-\bar s | )(|\xi|+|\eta|
+ \epsi) +\epsi.
\end{eqnarray*}}%
Letting $\epsi\to0^+$, we conclude that
\begin{eqnarray}\label{QQbarfrbarrsbars1}
\begin{aligned}
\calQ \tilde f(r,s,\xi,P_{\bar
s}\eta) \lii \calQ \tilde f(\bar r,\bar s,\xi,P_{\bar
s}\eta) + c(|r-\bar r| +|s-\bar s | )(|\xi|+|\eta|).
\end{aligned}
\end{eqnarray}
Finally,  interchanging
the roles of $(r,s)$
and $(\bar r,\bar s)$, we conclude \eqref{QQbarfrbarrsbars}.

Property \eqref{contrecQtildef}
follows from \eqref{QQbarfrbarrsbars}
and the definition
of $(\calQ\tilde f)^\infty$.

{\sl Step 3.} We show
that \eqref{contQtildef}
holds true.

Arguing as in the previous
steps, we have
\begin{eqnarray*}
\begin{aligned}
& |Q\tilde f(\bar r,
\bar s,\xi, \eta) -  Q\tilde f(\bar r,
\bar s,\bar \xi, \eta)|
\lii  c |\xi - \bar
\xi| (1+|\xi| +|\bar\xi| + |\eta|)
\end{aligned}
\end{eqnarray*}
and
\begin{eqnarray*}
\begin{aligned}
& |Q\tilde f(\bar r,
\bar s, \bar \xi, \eta) - 
Q\tilde f(\bar r,
\bar s,\bar \xi, \bar \eta)|
\lii 
c|\eta
- \bar \eta|.
\end{aligned}
\end{eqnarray*}
Using these two estimates
together with \eqref{QQbarfrbarrsbars},
we obtain  \eqref{contQtildef}.
\end{proof}

Next, we show that for each $\epsi>0$, the function $H_\epsi$ defined by
\begin{eqnarray}\label{defofhepsi}
\begin{aligned}
&  H_\epsi(r,s,\xi,\eta):= \calQ \tilde f(r,s,\xi,\eta) + \epsi(|\xi| + |\eta|),\quad (r,s,\xi,\eta)\in \RR\times \RR^3\times \RR^2\times \RR^{3\times 2},
\end{aligned}%
\end{eqnarray}
satisfies hypotheses (H1)--(H4) of \cite{FMXCIII}. These integrands will play an important role in the proof of the lower bound for $\calF$.

\begin{proposition}\label{hepsiasFM} The function $H_\epsi:\RR\times \RR^3\times \RR^2\times \RR^{3\times 2}\to[0,+\infty)$ defined in \eqref{defofhepsi} satisfies the following conditions:
\begin{itemize}
\item [(i)] $H_\epsi$ is continuous;
\item [(ii)] $H_\epsi(r,s,\cdot,\cdot)$ is quasiconvex for all $(r,s)\in\RR\times\RR^3$;
\item [(iii)] there exists a positive constant, $C$,  depending only on $\beta$, such that for all $0<\epsi\lii 1$ and  $(r,s,\xi,\eta)\in
\RR\times \RR^3\times
\RR^2\times \RR^{3\times
2}$,
\begin{eqnarray*}
\begin{aligned}
\epsi(|\xi| + |\eta|)\lii H_\epsi(r,s,\xi,\eta)\lii C(|\xi| + |\eta|);
\end{aligned}
\end{eqnarray*}
\item[(iv)] for every compact set $\mathfrak{V}\subset\RR\times \RR^3$, there exists a positive constant, $C_\mathfrak{V}$,  depending only on $\mathfrak{V}$, such that for all $(r,s,\xi,\eta),\,
(\bar r,\bar s,\xi,\eta)\in
\mathfrak{V}\times \RR^2\times \RR^{3\times
2}$,
\begin{eqnarray*}
\begin{aligned}
|H_\epsi(r,s,\xi,\eta) - H_\epsi(\bar r, \bar s,\xi,\eta)|\lii C_\mathfrak{V}(|r-\bar r| + |s-\bar s|)(1+|\xi| + |\eta|).
\end{aligned}
\end{eqnarray*}
\end{itemize}

\end{proposition}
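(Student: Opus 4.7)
The plan is to verify (ii), (iii), and (iv) in turn, and then extract (i) from these together with \eqref{qcxisLip}. Property (ii) is immediate: $\calQ\tilde f(r,s,\cdot,\cdot)$ is a quasiconvex envelope by Lemma~\ref{onQQf}, while $(\xi,\eta)\mapsto\epsi(|\xi|+|\eta|)$ is convex, hence quasiconvex in the sense of Remark~\ref{relusualqcx}; sums of quasiconvex functions are quasiconvex. For (iii), the lower bound follows from $\calQ\tilde f\gii0$. For the upper bound, I would first establish the pointwise majoration
$\tilde f(r,s,\xi',\eta')\lii\phi(|s|)\bigl(2|\xi'|+\sqrt{2}(1+\beta)|\eta'|\bigr)$
valid for every $(r,s,\xi',\eta')$ (invoking \eqref{boundsbarf} at $(\tilde r,\tilde s)\in[\alpha,\beta]\times S^2$ when $|s|>3/4$, and triviality otherwise). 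Since the majorant is convex in $(\xi',\eta')$, it passes to the quasiconvex envelope, giving $\calQ\tilde f(r,s,\xi,\eta)\lii \phi(|s|)\bigl(2|\xi|+\sqrt{2}(1+\beta)|\eta|\bigr)$, from which (iii) is straightforward (recalling $\epsi\lii 1$) with $C$ depending only on $\beta$.

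The core of the proof is (iv). The decisive observation is that for $|s|>0$, the integrand $\tilde f(r,s,\xi',\eta')$ depends on $\eta'$ only through $P_{\tilde s}\eta'$; since $P_{\tilde s}(\eta+\grad\psi)=P_{\tilde s}\eta+P_{\tilde s}\grad\psi$, this descends to the envelope:
\[\calQ\tilde f(r,s,\xi,\eta)=\calQ\tilde f(r,s,\xi,P_{\tilde s}\eta).\]
Combined with $\phi(|\tilde s|)=\phi(1)=1$ and the overall factor $\phi(|s|)$ in the definition of $\tilde f$, a direct rescaling inside the infimum yields the factorisation
\[\calQ\tilde f(r,s,\xi,\eta)=\phi(|s|)\,\calQ\tilde f(\tilde r,\tilde s,\xi,\eta)\quad\text{whenever }|s|>3/4,\]
with $(\tilde r,\tilde s)\in[\alpha,\beta]\times S^2$ on the right. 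I would then split into cases according to whether $|s|$ and $|\bar s|$ exceed $3/4$. If, say, $|\bar s|\lii3/4$, then $\calQ\tilde f(\bar r,\bar s,\xi,\eta)=0$ by the upper bound, so
$|H_\epsi(r,s,\xi,\eta)-H_\epsi(\bar r,\bar s,\xi,\eta)|=\calQ\tilde f(r,s,\xi,\eta)\lii C\phi(|s|)(|\xi|+|\eta|)$,
and the claim follows from $\phi(|s|)=\phi(|s|)-\phi(|\bar s|)\lii\mathrm{Lip}(\phi)|s-\bar s|$. If instead $|s|,|\bar s|>3/4$, the factorisation splits the difference as
\[\bigl[\phi(|s|)-\phi(|\bar s|)\bigr]\calQ\tilde f(\tilde r,\tilde s,\xi,\eta)+\phi(|\bar s|)\bigl[\calQ\tilde f(\tilde r,\tilde s,\xi,\eta)-\calQ\tilde f(\tilde{\bar r},\tilde{\bar s},\xi,\eta)\bigr];\]
the first summand is controlled by $\mathrm{Lip}(\phi)$ combined with (iii), and for the second the projection identity places us at arguments $P_{\tilde s}\eta\in T_{\tilde s}(S^2)$ and $P_{\tilde{\bar s}}\eta\in T_{\tilde{\bar s}}(S^2)$, to which \eqref{contQtildef} applies. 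The $1$-Lipschitz continuity of $r\mapsto\tilde r$, the Lipschitz continuity of $s\mapsto s/|s|$ on $\{|s|\gii 3/4\}\cap\mathfrak{V}$ (with constant depending on $\mathfrak{V}$), and \eqref{Lipotimes} then absorb the remaining constants into a single $C_\mathfrak{V}$.

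Finally, (i) follows by combination: by (ii) and (iii), $H_\epsi(r,s,\cdot,\cdot)$ is quasiconvex with linear growth constant depending only on $\beta$, so \eqref{qcxisLip} renders it globally Lipschitz in $(\xi,\eta)$ uniformly in $(r,s)$; together with the locally uniform $(r,s)$-continuity from (iv), a standard triangle inequality delivers joint continuity of $H_\epsi$. The hard part will be (iv): the admissibility hypotheses of Lemma~\ref{onbarfQQbarf} are stated only for $(r,s)\in[\alpha,\beta]\times S^2$ and $\eta\in T_s(S^2)$, whereas here $(r,s,\eta)$ ranges over the unconstrained $\mathfrak{V}\times\RR^{3\times2}$. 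The factorisation and projection identities above are precisely what bridge this gap and reduce matters to the regime where the earlier lemma applies.
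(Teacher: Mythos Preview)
Your proof is correct and follows essentially the same route as the paper: the key identities $\calQ\tilde f(r,s,\xi,\eta)=0$ for $|s|\lii 3/4$ and $\calQ\tilde f(r,s,\xi,\eta)=\phi(|s|)\,\calQ\tilde f(\tilde r,\tilde s,\xi,P_{\tilde s}\eta)$ for $s\neq 0$, together with \eqref{contQtildef}, the local Lipschitz continuity of $(r,s)\mapsto(\tilde r,\tilde s)$, the estimates $|P_{\tilde s}\eta-P_{\tilde{\bar s}}\eta|\lii c_\otimes|\tilde s-\tilde{\bar s}||\eta|$ and $|P_{\tilde s}\eta|\lii\sqrt2|\eta|$, and the Lipschitz continuity of $\phi$, are exactly the ingredients the paper invokes. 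The only cosmetic difference is that the paper treats (i) and (iv) together (and states the factorisation for $|s|>1/4$ so as to overlap with the vanishing region), whereas you extract (i) at the end via \eqref{qcxisLip}; both arguments are valid and equivalent in content.
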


\begin{proof}
Conditions {\it (ii)} and  {\it (iii)} follow from the definition of $H_\epsi$ and from \eqref{boundsQQbarf}. To deduce {\it (i)} and  {\it (iv)} it suffices to observe that on the one hand, 
\begin{eqnarray*}
\begin{aligned}
\calQ \tilde f(r,s,\xi,\eta) =0
\end{aligned}
\end{eqnarray*}
for all $(r,s,\xi,\eta)\in \RR\times \RR^3\times \RR^2\times \RR^{3\times 2}$ such that $|s|\lii \frac{3}{4}$ in view of the definition of $\tilde f$ (see \eqref{deftildef} and \eqref{defcutofftf}). On the other hand, if $(r,s,\xi,\eta)\in \RR\times \RR^3\times \RR^2\times \RR^{3\times 2}$ is such that $|s|> \frac{1}{4}$, then
\begin{eqnarray*}
\begin{aligned}
\calQ \tilde f(r,s,\xi,\eta) = \phi(|s|)\calQ \tilde f(\tilde r, \tilde s, \xi,\eta) = \phi(|s|)\calQ \tilde f(\tilde r, \tilde s, \xi,P_{\tilde s}\eta).
\end{aligned}
\end{eqnarray*}
Hence, to conclude, it suffices to use the local
Lipschitz
continuity in $\RR\times
\big(\RR^3\backslash\{0\}\big)$
of $(\tilde r,\tilde
s)$ defined in \eqref{deftilders}
as function of $(r,s)$,
 \eqref{contQtildef},
 the
estimates $|P_{\tilde
s}\eta - P_{\tilde {\bar
s}}\eta|\lii
c_\otimes|\tilde s-\tilde
{\bar
s}||\eta|$ and $|P_{\tilde
s}\eta|\lii \sqrt2|\eta|$,
  the Lipschitz continuity of $\phi$, and  
\eqref{boundsQQbarf}.
\end{proof}

We now turn our attention to the {\sl jump integrand} $K$ defined by \eqref{defK}--\eqref{defP}.
We  prove that an
analogous result to \cite[Lemma~{2.15}]{FMXCIII} (see also \cite[Lemma~{4.1}]{ACELVII})
holds even though our
functions $f$ and $f^\infty$
do not  satisfy some of
the hypotheses assumed
in  \cite{ACELVII,FMXCIII},
such as quasiconvexity.  

\begin{lemma}\label{pptyrecfct}
Let $K:([\alpha,\beta]\times
S^2) \times ([\alpha,\beta]\times
S^2) \times S^1 \to [0,\infty)$
be the function defined
 by \eqref{defK}--\,\eqref{defP}.
Then,
\begin{itemize}
\item [(a)] There exists
a positive constant, $C,$
such that for all $a$,
$a'$, $b$, $b'\in [\alpha,\beta]\times
S^2$, and $\nu\in S^1$, it
holds
\begin{eqnarray*}
\begin{aligned}
& |K(a,b,\nu) - K(a',b',\nu)|
\lii C(|a-a'| + |b-b'|).
\end{aligned}
\end{eqnarray*}

\item [(b)] For all  $a$, $b \in [\alpha,\beta]\times
S^2$, the map $\nu\in S^1
\mapsto K(a,b,\nu)$ is
upper semicontinuous.

\item [(c)] $K$ is upper semicontinuous
in $([\alpha,\beta]\times
S^2) \times ([\alpha,\beta]\times
S^2) \times S^1$.

\item [(d)] There is
a positive constant, $C,$
such that for all $a$,
 $b\in [\alpha,\beta]\times
S^2$ and $\nu\in S^1$,
we have
\begin{eqnarray*}
\begin{aligned}
& K(a,b,\nu) \lii C|a-b|.
\end{aligned}
\end{eqnarray*}
\end{itemize}
\end{lemma}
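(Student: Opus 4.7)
The four properties are standard ingredients for a jump density in relaxation theory, and the plan is to follow the arguments of \cite{ACELVII,FMXCIII,DFMTXCIX}, adapted to the specific form of $f^\infty$ and to the manifold-with-boundary $[\alpha,\beta]\times S^2$. I will prove (d) first by an explicit one-dimensional profile, then use a boundary-layer construction for (a), deduce (b) from a rotation-invariance identity for $f^\infty$, and combine (a) and (b) to obtain (c).

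For (d), given $a,b\in[\alpha,\beta]\times S^2$ and $\nu\in S^1$, take a constant-speed absolutely continuous path $\gamma=(\gamma_1,\gamma_2)\colon[0,1]\to[\alpha,\beta]\times S^2$ connecting $a$ to $b$, using linear interpolation in the $r$-component and a great-circle arc in the $s$-component; the chord-arc inequality on $S^2$ gives $|\gamma'(t)|\lii C\,|a-b|$ for a.e.\ $t$, with $C$ depending only on $\beta$. Set $\vartheta(y):=\gamma(y\cdot\nu+\tfrac{1}{2})$, so that $\vartheta\in\calP(a,b,\nu)$ with $\nabla\varphi(y)=\gamma_1'(y\cdot\nu+\tfrac{1}{2})\,\nu$ and $\nabla\psi(y)=\gamma_2'(y\cdot\nu+\tfrac{1}{2})\otimes\nu$; a direct computation (or the bound \eqref{upborecfct}) yields $f^\infty(\vartheta,\nabla\vartheta)\lii (3+\beta)\,|\gamma'|\lii C'\,|a-b|$ pointwise, and integration over $Q_\nu$ (unit volume) gives (d).

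For (a), fix $\epsi>0$ and choose $\vartheta'\in\calP(a',b',\nu)$ with $\int_{Q_\nu}f^\infty(\vartheta',\nabla\vartheta')\,\dy\lii K(a',b',\nu)+\epsi$. Construct $\vartheta\in\calP(a,b,\nu)$ by replacing $\vartheta'$ in two slabs of small width $\delta>0$ adjacent to $\{y\cdot\nu=\pm\tfrac{1}{2}\}$ with the one-dimensional geodesic profile of (d) joining $a$ to $a'$ (resp.\ $b'$ to $b$), and by compressing $\vartheta'$ into the central slab $\{y\cdot\nu\in[-\tfrac{1}{2}+\delta,\tfrac{1}{2}-\delta]\}$ via a linear rescaling in the $\nu$-direction. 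Using positive $1$-homogeneity of $f^\infty$ in $(\xi,\eta)$ together with the uniform upper bound \eqref{upborecfct}, the central-slab contribution is shown to be at most $\int_{Q_\nu}f^\infty(\vartheta',\nabla\vartheta')\,\dy+O(\delta)$, while each boundary-slab contribution is bounded by $C(|a-a'|+|b-b'|)$ uniformly in $\delta$ (a gradient of size $1/\delta$ integrated over a slab of area $\delta$). Sending $\delta\to0^+$ then $\epsi\to0^+$ and swapping the roles of $(a,b)$ and $(a',b')$ establishes (a). For (b), a direct computation shows that for any $R\in SO(2)$ one has $|R^{\mathrm T}\xi|=|\xi|$, $|\eta R|=|\eta|$, and $s\otimes R^{\mathrm T}\xi=(s\otimes\xi)R$, so that $r\eta R+s\otimes R^{\mathrm T}\xi=(r\eta+s\otimes\xi)R$ and hence $f^\infty(r,s,R^{\mathrm T}\xi,\eta R)=f^\infty(r,s,\xi,\eta)$. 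Consequently, pre-composition with a rotation mapping $\nu'$ to $\nu$ is a cost-preserving bijection $\calP(a,b,\nu)\to\calP(a,b,\nu')$; thus $K(a,b,\cdot)$ is constant on $S^1$ and in particular upper semicontinuous, yielding (b). Part (c) then follows from (a) and (b):
\begin{equation*}
\limsup_{n\to\infty} K(a_n,b_n,\nu_n)\lii\limsup_{n\to\infty}\big(K(a,b,\nu_n)+C(|a_n-a|+|b_n-b|)\big)= K(a,b,\nu).
\end{equation*}

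The main technical delicacy lies in the boundary-layer construction in (a): the interpolants must stay inside $[\alpha,\beta]\times S^2$ (a manifold with boundary), and the non-uniform $\nu$-rescaling does not a priori preserve the cost of $\vartheta'$. The manifold constraint is handled by linear interpolation in the $r$-component (which remains in $[\alpha,\beta]$) and great-circle arcs in the $s$-component; the rescaling is controlled by positive $1$-homogeneity of $f^\infty$ in $(\xi,\eta)$ together with \eqref{upborecfct}. The discontinuity of $f^\infty$ coming from the $\chi_{_{\{0\}}}(|\xi|)|\eta|$ term causes no difficulty, since only the linear upper bound \eqref{upborecfct} is needed in these estimates.
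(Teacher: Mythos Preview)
Your proposal is correct and, for parts (d) and (c), proceeds exactly as the paper does. The substantive differences are in (a) and (b).

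\textbf{On (a).} Both you and the paper use the same boundary-layer idea: attach geodesic connectors in slabs and compress the near-optimal competitor into a central slab. The paper, however, uses a \emph{fixed} slab width $\tfrac14$ and the \emph{isotropic} dilation $\vartheta^*(y)=\vartheta'(2y)$; then positive $1$-homogeneity together with $1$-periodicity in the $\nu_1$-direction gives the central-slab contribution equal to $\int_{Q_\nu}f^\infty(\vartheta',\nabla\vartheta')\,\dy$ exactly, with no $O(\delta)$ error and no limit in $\delta$. Your anisotropic rescaling (only in the $\nu$-direction) also works, but the justification you give---``$1$-homogeneity plus \eqref{upborecfct}''---is not quite the right reason: $1$-homogeneity only controls isotropic scalings. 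What makes your step go through is that each term of $f^\infty$ has the form $|A\,M|$ for a matrix $M=\mathrm{diag}(1,\tfrac{1}{1-2\delta})$, and $|A\,M|\le \|M\|_{\mathrm{op}}\,|A|$; the Jacobian factor $(1-2\delta)$ then cancels $\|M\|_{\mathrm{op}}=\tfrac{1}{1-2\delta}$, so the central-slab cost is in fact at most $\int_{Q_\nu}f^\infty(\vartheta',\nabla\vartheta')\,\dy$ with no error. (Alternatively, if you insist on an $O(\delta)$ bound, you need the coercivity $f^\infty(r,s,\xi,\eta)\gii c(|\xi|+|\eta|)$ on $[\alpha,\beta]\times S^2$ to control $\int_{Q_\nu}|\nabla\vartheta'|$.)

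\textbf{On (b).} Here your argument is genuinely different and \emph{stronger} than the paper's. The paper takes $\nu_n\to\nu$, transfers a near-optimal $\vartheta$ for $\nu$ to $\vartheta_n\in\calP(a,b,\nu_n)$ by rotation, and then uses upper semicontinuity of $f^\infty$ together with Fatou's Lemma to obtain only $\limsup_n K(a,b,\nu_n)\lii K(a,b,\nu)$. Your observation that $f^\infty(r,s,R^{\mathrm T}\xi,\eta R)=f^\infty(r,s,\xi,\eta)$ for $R\in SO(2)$ exploits the specific isotropic structure of this particular $f^\infty$ and yields that $\nu\mapsto K(a,b,\nu)$ is \emph{constant} on $S^1$, which is both simpler and more informative. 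The paper's Fatou argument is more robust (it would work for any upper semicontinuous recession function), but for the integrand at hand your route is preferable.
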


\begin{proof} {\it (a)}
We start by proving that there is a positive constant,
\(C\), such that for all \(a=(r_1,s_1),
\, b =(r_2,s_2)\in [\alpha,\beta]\times
S^2\), one has
\begin{eqnarray}\label{dgbddbyeuc}
\begin{aligned}
& d_{[\alpha,
\beta] \times S^2}( a, b) \lii C|
a -  b|,
\end{aligned}
\end{eqnarray}
where
{\setlength\arraycolsep{0.5pt}%
\begin{eqnarray*}
&&d_{[\alpha,
\beta] \times S^2}( a, b):= \inf\bigg\{ \int_0^1 |\gamma'(t)|
\,\dt\!: \, \gamma \in W^{1,1}((0,1); [\alpha,
\beta] \times S^2), \, \gamma(0)= a, \, \gamma(1)
=  b \bigg\}
\end{eqnarray*}}%
is the geodesic
distance between $ a$
and $ b$ on  $[\alpha,\beta]\times
S^2$. 

We claim that to prove \eqref{dgbddbyeuc}
it suffices to prove that there is a positive constant,
\(C\), independent of \(s_1\) and \(s_2\), such that

\begin{equation}
\label{equivmetS}
\begin{aligned}
 d_{ S^2}( s_1, s_2) \lii C|
s_1 -  s_2|, 
\end{aligned}
\end{equation}
where  \(d_{S^2} ( s_1, s_2):= \inf\{ \int_0^1
|\gamma'(t)|
\,\dt\!: \, \gamma \in W^{1,1}((0,1);  S^2), \, \gamma(0)=
s_1, \,
\gamma(1)
=  s_2 \}\)
is the geodesic
distance between $ s_1$
and $ s_2$ on $S^2$. Indeed, let \(\gamma \in W^{1,1}((0,1);
 S^2)\) be such that \(\gamma(0)=
s_1\) and \(
\gamma(1)
=  s_2 \). Then, \(\bar \gamma:[0,1] \to [\alpha,\beta]\times
S^2\)  defined  by \(\bar\gamma(t)
:= ((1-t) r_1 + t r_2, \gamma(t))\), \(t\in [0,1]\),
belongs to \(W^{1,1}((0,1); [\alpha,
\beta] \times S^2)\) and satisfies \(\bar\gamma(0)=
a\) and \(
\bar\gamma(1)
=  b \). Moreover,
\begin{equation*}
\begin{aligned}
d_{[\alpha,
\beta] \times S^2}( a, b) \lii \int_0^1
|\bar \gamma'(t)|
\,\dt\lii|r_1 - r_2| +  \int_0^1
|\gamma'(t)|
\,\dt. 
\end{aligned}
\end{equation*}
Thus, taking the infimum over all  \(\gamma \in W^{1,1}((0,1);
 S^2)\) with \(\gamma(0)=
s_1\) and \(
\gamma(1)
=  s_2 \) in this estimate, \eqref{dgbddbyeuc} follows
from \eqref{equivmetS}.

To prove \eqref{equivmetS}, we show first that if \(|s_1
- s_2| \lii \frac{1}{2}\), then \(d_{S^2}(s_1,s_2)\lii
4|s_1 -s_2|\). To prove this implication, assume that
 \(|s_1 - s_2| \lii \frac{1}{2}\), and let 
 \[\gamma(t):=
\frac{(1-t)s_1 +
ts_2}{|(1-t)s_1 +
ts_2|}\]
 for \(t\in [0,1]\). Note that \(|(1-t)s_1 +
ts_2| = |s_1 - t(s_1-s_2)| \gii 1 - |s_1-s_2| \gii \frac{1}{2}\).
Moreover, \(\gamma\) is an admissible parameterization
for \(d_{S^2} ( s_1, s_2)\). Hence,
\[d_{S^2} ( s_1, s_2) \lii   \int_0^1
|\gamma'(t)|
\,\dt \lii   \int_0^1
\frac{2|s_1 - s_2|}{|(1-t)s_1 +
ts_2|} \,\dt \lii 4|s_1 -s_2|.\]
Therefore, if the claim \eqref{equivmetS} would fail,
then for all \(n\in\NN\), there would exist \(s_1^n,\,
s_2^n \in S^2\), \(s_1^n \not= s_2^n\), such that \(d_{S^2}
( s_1^n, s_2^n) >
n |s_1^n - s_2^n|\). Then, because
\(d_{S^2}
( s_1^n, s_2^n)\lii \pi \), we would have \( |s_1^n -
s_2^n| \lii \frac{1}{2}\) for all \(n\in\NN\) sufficiently
large. In turn, by the implication proved above, for
all
such \(n\in\NN\), we would
also have to have \(d_{S^2}
( s_1^n, s_2^n) \lii
4 |s_1^n - s_2^n|\). We are thus led to a contradiction.
Hence, \eqref{equivmetS} holds, and so does \eqref{dgbddbyeuc}.

Note that the infimum defining  \(d_{[\alpha,
\beta] \times S^2}( a, b)\) does not
change if instead of the interval \([0,1]\) we
consider any interval \([t_1, t_2] \subset \RR\)
with \(t_1< t_2\) as the domain of the parameterizations
$\gamma$.
Fix $\epsi>0$, and let
$\gamma_1$, $\gamma_2\in
W^{1,1} ((\frac{1}{4},
\frac{1}{2}); [\alpha,\beta]\times
S^2)$ be such that
\begin{eqnarray}\label{pathonman}
\begin{aligned}
& \gamma_1\Big(\frac{1}{4}\Big)
= b,\quad \gamma_1\Big(\frac{1}{2}\Big)
= b',\quad \int_{\frac{1}{4}}^{\frac{1}{2}}
|\gamma_1'(t)|\,\dt -\epsi
\lii d_{[\alpha,
\beta] \times S^2}(b,b')\lii C|b-b'|,\\
& \gamma_2\Big(\frac{1}{4}\Big)
= a,\quad \gamma_2\Big(\frac{1}{2}\Big)
= a',\quad \int_{\frac{1}{4}}^{\frac{1}{2}}
|\gamma_2'(t)|\,\dt -\epsi
\lii d_{[\alpha,
\beta] \times S^2}(a,a')\lii C|a-a'|.
\end{aligned}
\end{eqnarray}
Let $\vartheta = (\ffi,\psi)\in \calP(a,b,\nu)$,
and define $\vartheta^*
= (\ffi^*,\psi^*)\in \calP(a',b',\nu)$
by setting,
for $y\in Q_\nu$,
\begin{eqnarray*}
\begin{aligned}
& \vartheta^*(y):= 
       \begin{cases}
       \gamma_1(y\cdot
       \nu) & \hbox{if
       } \frac{1}{4}
       < y\cdot \nu <
       \frac{1}{2},\\
       \vartheta (2y)
       & \hbox{if } |y\cdot
       \nu| < \frac{1}{4},\\
       \gamma_2(-y\cdot
       \nu) & \hbox{if
       } -\frac{1}{2}
       < y\cdot \nu <
       -\frac{1}{4}.
       \end{cases}
\end{aligned}
\end{eqnarray*}
Denoting by $\nu_1\in S^1$ a fixed vector such that
 $\{\nu_1,\nu\}$ is an orthonormal basis of $\RR^2$,
 we have that
{\setlength\arraycolsep{0.5pt}
\begin{eqnarray*}
K(a',b',\nu) && \lii \int_{Q_\nu}
f^\infty (\vartheta^*(y),
\grad \vartheta^*(y))\,\dy\\
&& = \int_{|y\cdot \nu_1|
< \frac{1}{2}} \int_{|y\cdot
\nu|
< \frac{1}{4}} f^\infty
(\vartheta(2y),
2\grad \vartheta(2y))\,\dy\\
&&\qquad + \int_{|y\cdot
\nu_1|
< \frac{1}{2}} \int_{\frac{1}{4}
       < y\cdot \nu <
       \frac{1}{2}} f^\infty
(\gamma_1(y\cdot\nu),
\gamma_1'(y\cdot\nu)\otimes\nu)\,\dy\\
&&\qquad + \int_{|y\cdot
\nu_1|
< \frac{1}{2}} \int_{-\frac{1}{2}
       < y\cdot \nu <
       -\frac{1}{4}}
f^\infty
(\gamma_2(-y\cdot\nu),
-\gamma_2'(-y\cdot\nu)\otimes\nu)
\,\dy. 
\end{eqnarray*}}%
Hence, using \eqref{upborecfct}, 
\eqref{pathonman}, the 1-homogeneity of
$f^\infty(r,s,\cdot,\cdot)$,
and the 1-periodicity of
$\vartheta$ in the $\nu_1$-direction, we have
{\setlength\arraycolsep{0.5pt}
\begin{eqnarray*}
K(a',b',\nu) && \lii 
\frac{1}{2} \int_{|y\cdot
\nu_1|
< 1} \int_{|y\cdot
\nu|
< \frac{1}{2}} f^\infty
(\vartheta(y),
\grad \vartheta(y))\,\dy+
   (3+\beta)\big(
C|b-b'|+ C|a-a'| +2\epsi
\big)\\
&& = \int_{Q_\nu}  f^\infty
(\vartheta(y),
\grad \vartheta(y))\,\dy+
  (3+\beta)\big(
C|b-b'|+ C|a-a'| +2\epsi
\big).
\end{eqnarray*}}%
Letting $\epsi\to0^+$
and taking the infimum
over $\vartheta = (\ffi,\psi)\in
\calP(a,b,\nu),$ we deduce
that
\begin{eqnarray*}
\begin{aligned}
& K(a,b,\nu) \lii K(a',b',\nu)
+ C   (3+\beta)\big(
|b-b'|+|a-a'| \big).
\end{aligned}
\end{eqnarray*}
Interchanging the roles
between $(a,b)$ and $(a',b'),$
 assertion {\it (a)}
 follows.

{\it (b)} Let $\nu_n$,
$\nu\in S^1$, $n\in\NN$,
be such that $\lim_{n\to\infty}
|\nu_n - \nu| =0$.
Fix $\epsi>0,$
and let $\vartheta
= (\ffi,\psi)
\in \calP(a,b,\nu)$ be
such that 
\begin{eqnarray*}
\begin{aligned}
& \int_{Q_\nu}
f^\infty(\vartheta(y),\grad\vartheta
(y))\,\dy \lii  K(a,b,\nu) + \epsi.
\end{aligned}
\end{eqnarray*}
Let $R$ be a rotation  such
that $R e_2 = \nu$, and
choose rotations $R_n$
with $\lim_{n\to\infty}
|R_n - R| =0$ and $R_n
e_2 = \nu_n$. Define $\vartheta_n\in
\calP(a,b,\nu_n)$ by
setting 
\begin{eqnarray*}
\begin{aligned}
& \vartheta_n(y):= \vartheta(R
R_n^T y) \hbox{ for } y\in Q_{\nu_n}.
\end{aligned}
\end{eqnarray*}
Then,
{\setlength\arraycolsep{0.5pt}
\begin{eqnarray*}
 K(a,b,\nu_n) && \lii
 \int_{Q_{\nu_n}} f^\infty
 (\vartheta_n(y),\grad\vartheta_n(y))
 \,\dy
 = \int _{Q_{\nu_n}}
f^\infty
 (\vartheta(R
R_n^T y),\grad\vartheta(R
R_n^T y)R
R_n^T)
 \,\dy\\
&&= \int _{Q_{\nu}}
f^\infty
 (\vartheta( z),\grad\vartheta(
z)R
R_n^T)
 \,\d z.
\end{eqnarray*}}%
Since $f^\infty$ is upper
semicontinuous, in view
of \eqref{upborecfct}
and Fatou's Lemma, we
obtain
\begin{eqnarray*}
\begin{aligned}
 \limsup_{n\to\infty}
K(a,b,\nu_n) &\lii \int _{Q_{\nu}}
 \limsup_{n\to\infty}
 f^\infty
 (\vartheta( z),\grad\vartheta(
z)R
R_n^T)
 \,\d z \lii \int_{Q_\nu}
f^\infty(\vartheta(z),\grad\vartheta
(z))\,\d z \lii  K(a,b,\nu)
+ \epsi.
\end{aligned}
\end{eqnarray*}
To conclude, let $\epsi\to0^+$.

{\it (c)} It follows
from  {\it (a)}
 and {\it (b)}.

{\it (d)} Fix $\epsi>0$,
and let $\gamma \in
W^{1,1}((-\frac{1}{2},
\frac{1}{2}); [\alpha,\beta]\times
S^2)$ be such that
\begin{eqnarray}\label{almostd}
\begin{aligned}
& \gamma\Big(-\frac{1}{2}\Big)
= a,\quad \gamma\Big(\frac{1}{2}\Big)
= b,\quad \int_{-\frac{1}{2}}^{\frac{1}{2}}
|\gamma'(t)|\,\dt -\epsi
\lii d_{[\alpha,
\beta] \times S^2}(a,b)\lii C|a-b|,
\end{aligned}
\end{eqnarray}
where $C$ is the constant
in \eqref{dgbddbyeuc},
and define  $\vartheta(y):=
\gamma(y\cdot\nu)$ for $y\in
Q_\nu$. Then,
$\vartheta \in \calP(a,b,\nu)$
and, arguing as in {\it (a)}, 
\begin{eqnarray*}
\begin{aligned}
 K(a,b,\nu) & \lii \int_{Q_\nu}
 f^\infty(\vartheta( y),\grad\vartheta(
y))\, \dy =  \int_{Q_\nu}
f^\infty
(\gamma(y\cdot\nu),
\gamma'(y\cdot\nu)\otimes\nu)\,\dy\lii   (3+\beta)\int_{-\frac{1}{2}}^{\frac{1}{2}}
|\gamma'(t)|\,\dt. 
\end{aligned}
\end{eqnarray*}
This estimate, together with
\eqref{almostd}, yields
the conclusion.
\end{proof}

\subsection{Auxiliary Lemmas}\label{auxlem}

As in \cite{ACELVII}, given $y\in B(0,\frac{1}{2}) \subset\RR^3,$ we define the projection function $\pi_y\!: \overline{B(0,1)}\backslash\{y\}\to S^2$  by setting
{\setlength\arraycolsep{0.5pt}
\begin{eqnarray*}
&&\pi_y(s):= y + {-y\cdot
(s-y) + \sqrt{(y\cdot
(s-y))^2 + |s-y|^2(1-|y|^2)}\over
|s-y|^2}(s-y),
\end{eqnarray*}}%
which projects each $s\in \overline{B(0,1)}\backslash\{y\}$ onto $S^2$ along the direction $s-y$. We have that
\begin{eqnarray}\label{projonSisId}
{\pi_y}_{|S^2} = {\rm Id}_{S^2}, \quad \grad \pi_y
(s) = \II_{3 \times 3} + (s-y) \otimes \frac{1}{|s- y |^2}
\Big( \frac{|y|^2 - 1}{s \cdot y -1} - 2 \Big)
s \hbox{ if  } s\in S^2. \end{eqnarray}
Note that by \eqref{projonSisId}, if  \(s\in S^2\)
and \(w \in T_s(S^2) \), then 
{\setlength\arraycolsep{0.5pt}%
\begin{eqnarray}\label{gradprojonS}
&&\grad \pi_y (s) w  = w.
\end{eqnarray}}%
Furthermore,
there exists a positive constant, \(\bar C,\) independent
of $y\in B(0,\frac{1}{2})$, such that for all $s\in \overline{B(0,1)}\backslash\{y\}$,
we have
\begin{eqnarray}\label{estGradHess}
|\grad \pi_y(s)|\lii {\bar C\over |s-y|}, \qquad |\grad^2 \pi_y(s)|\lii {\bar C\over |s-y|^2}.
\end{eqnarray}
Consequently, there exists a positive constant, \(\overline
C,\)
independent
of $y\in B(0,\frac{1}{2})$, such that for all $s_1,\,s_2\in
\big\{s\in \overline{B(0,1)}\!:\,
\dist(s,S^2)\lii {1\over 4}\big\}$,
we have
\begin{eqnarray}\label{uniflipballtf}
|\pi_y(s_1)- \pi_y(s_2)|\lii \overline C|s_1-s_2|,\qquad |\grad\pi_y(s_1)- \grad\pi_y(s_2)|\lii \overline C|s_1-s_2|.
\end{eqnarray}
The following result holds (see also 
\cite[Lem.~5.2 and Lem.~6.1]{ACELVII}).

\begin{lemma}\label{seqonman}
Let $A\in\calA(\Omega)$, let  $v\in W^{1,1}(A;\overline{B(0,1)})\cap C^\infty(A;\RR^3)$, and  let $A'$ be an open subset of $A$. 
Then, there exists $y\in B(0,\frac{1}{2})$, depending on $v$ and $A'$, such that $\pi_y\circ v\in W^{1,1}(A';S^2) \cap C^\infty(A;S^2) $ and
\begin{eqnarray}\label{approxonMan}
\begin{aligned}
&\int_{A'} |\grad (\pi_y\circ v)|\,\dx\lii  C_\star\int_{A'}|\grad v|\,\dx,
\end{aligned}
\end{eqnarray}
where $C_\star$ is a positive constant independent of $A$, $A'$, $v$, and $y$.
\end{lemma}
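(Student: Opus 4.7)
The plan is to use a projection-averaging argument in the spirit of Hardt--Kinderlehrer--Lin and \cite{ACELVII}. The starting point is the pointwise bound coming from \eqref{estGradHess}: whenever $y\in B(0,\frac{1}{2})$ and $v(x)\neq y$,
\begin{eqnarray*}
|\grad(\pi_y\circ v)(x)| = |\grad \pi_y(v(x))\grad v(x)| \lii \frac{\bar C}{|v(x)-y|}|\grad v(x)|.
\end{eqnarray*}
First I would integrate this bound over $A'$, then average in $y$ over $B(0,\frac{1}{2})$ and apply Fubini:
\begin{eqnarray*}
\int_{B(0,1/2)}\int_{A'}|\grad(\pi_y\circ v)|\,\dx\,\dy \lii \bar C \int_{A'}|\grad v(x)|\bigg(\int_{B(0,1/2)}\frac{\,\dy}{|v(x)-y|}\bigg)\dx.
\end{eqnarray*}
Since $v(x)\in\overline{B(0,1)}$, for every such point one has $\int_{B(0,1/2)} |v(x)-y|^{-1}\,\dy\lii \int_{B(v(x),3/2)} |z|^{-1}\,\d z=:M$, a dimensional constant. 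Hence the double integral is bounded by $\bar C M \int_{A'}|\grad v|\,\dx$.

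Next, by Chebyshev's inequality applied in the variable $y$, the set
\begin{eqnarray*}
E:=\bigg\{y\in B(0,\tfrac12)\!:\, \int_{A'}|\grad(\pi_y\circ v)|\,\dx > \frac{2\bar C M}{|B(0,\frac12)|}\int_{A'}|\grad v|\,\dx\bigg\}
\end{eqnarray*}
satisfies $|E|\lii \frac{1}{2}|B(0,\frac12)|$. To guarantee that the composition $\pi_y\circ v$ is smooth (not merely a.e.\ defined) on all of $A$, I must choose $y$ outside the image $v(A)$. Here I would invoke the standard fact that a $C^1$ map from an open subset of $\RR^2$ into $\RR^3$ has image of Lebesgue measure zero (cover $A$ by cubes and compare volumes, or use Sard's theorem). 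Therefore $|B(0,\frac12)\setminus(E\cup v(A))|\gii \frac12|B(0,\frac12)|>0$, and any $y$ in this set works.

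For such a $y$, the point $y$ lies at positive distance from $v(K)$ for every compact $K\subset A$ (by continuity of $v$ and smoothness in $A$), so $\pi_y\circ v\in C^\infty(A;S^2)$, and by \eqref{gradprojonS} the image lies in $S^2$ with a well-defined tangential gradient; the $W^{1,1}(A';S^2)$ bound \eqref{approxonMan} is then exactly the inequality defining the complement of $E$, with $C_\star:=2\bar C M/|B(0,\frac12)|$, a universal constant. The one delicate point in the argument is the smoothness requirement on all of $A$: the integrable bound alone gives an a.e.\ defined projection, and it is the measure-zero property of $v(A)\subset\RR^3$ that allows one to pick $y$ so that the singularity of $\pi_y$ is avoided everywhere, not merely on a set of full measure.
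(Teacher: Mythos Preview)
Your proposal is correct and follows essentially the same route as the paper: average the pointwise bound from \eqref{estGradHess} in $y$ over $B(0,\tfrac12)$, use Fubini and the change of variables $z=y-v(x)$ to bound the inner integral by a universal constant, and invoke the measure-zero property of $v(A)\subset\RR^3$ (the paper calls this ``Mini-Sard'') to choose $y$ avoiding the image. The only cosmetic difference is that the paper selects $y$ directly from the inequality ``some value is at most the average'' (giving $C_\star=c_1\bar C/\calL^3(B(0,\tfrac12))$), whereas you pass through Chebyshev, which costs an extra factor of $2$ in the constant but exhibits a set of good $y$'s of positive measure.
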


\begin{proof}
Since $v:A\subset\RR^2\to\RR^3$ is smooth,  {\sl Mini-Sard}'s Theorem (see \cite{GuPo74})  yields $\calL^3(v(A))=0$. In particular, setting $G:=\{y\in B(0,\frac{1}{2})\!: \,\hbox{there exists } x\in A \hbox{ such that } v(x)=y\}$, then  $\calL^3(G)=0$.  Moreover, for all $y\in B(0,\frac{1}{2})\backslash G$, the function $\pi_y\circ v$ belongs to $C^\infty (A;S^2)$ and, by Fubini's Theorem and the first estimate in \eqref{estGradHess},
{\setlength\arraycolsep{0.5pt}
\begin{eqnarray}\label{gradproj1}
\int_{B(0,\frac{1}{2})}\int_{A'}
|\grad (\pi_y(v(x)))|\,\dx\,\dy
&&= \int_{B(0,\frac{1}{2})}\int_{A'}
|(\grad \pi_y)(v(x))\grad
v(x)|\,\dx\,\dy \nonumber \\
&&\lii \bar C \int_{A'}
\bigg(|\grad v(x)|\int_{B(0,\frac{1}{2})}
\frac{1}{| v(x) - y|}\,\dy\bigg)\,\dx.
\end{eqnarray}}%
For fixed $x\in {A'}$, use the change of variables $z= y - v(x)$ to get
\begin{eqnarray}\label{gradproj2}
\begin{aligned}
\int_{B(0,\frac{1}{2})} \frac{1}{| v(x) - y|}\,\dy&= \int_{B(-v(x),\frac{1}{2})} \frac{1}{| z|}\,\d z \lii\int_{B(0,\frac{3}{2})} \frac{1}{|z|}\,\d z =:c_1\in\RR,
\end{aligned}
\end{eqnarray}
where we used the fact that $\Vert v\Vert_{L^\infty(A)}\lii 1$. From \eqref{gradproj1} and \eqref{gradproj2}, we conclude that
\begin{eqnarray*}
\begin{aligned}
\int_{B(0,\frac{1}{2})}\int_{A'} |\grad (\pi_y(v(x)))|\,\dx\,\dy\lii c_1\bar C \int_{A'} |\grad v(x)|\,\dx.
\end{aligned}
\end{eqnarray*}
Consequently, we can find $y\in B(0,\frac{1}{2})\backslash G$ such that \eqref{approxonMan} holds with $C_\star:=c_1\bar C/\calL^3(B(0,\frac{1}{2})) $. Finally, we observe that for such $y$, we have  $\pi_y\circ v\in W^{1,1}(A';S^2)  \cap C^\infty(A;S^2)$.
\end{proof}

\begin{lemma}\label{man+tr}
Let $A\in\calA_\infty(\Omega)$ and  $w = (u,v)
\in BV(A;[\alpha,\beta]\times S^2)$.
Then, there exists a sequence $\{\bar w_n\}_{n\in\NN}\subset W^{1,1} (A; [\alpha,\beta] \times S^2)\cap C^\infty (A;\RR \times \RR^3)$ such that $\bar w_n = w$ on $\partial A$ for all $n\in\NN$, $\lim_{n\to\infty}\Vert \bar w_n - w\Vert_{L^1(A;\RR \times \RR^3)}=0$, and $\limsup_{n\to\infty}\int_A|\grad\bar w_n(x)|\,\dx\lii \widetilde C |Dw|(A)$, where $\widetilde C$ is a positive constant only depending on $\bar C$,
\(\overline C\), and $C_\star$. 
\end{lemma}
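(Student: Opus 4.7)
The plan is to first obtain a trace-preserving smooth approximation $\tilde w_n=(\tilde u_n,\tilde v_n)$ of $w$ with values in $[\alpha,\beta]\times\overline{B(0,1)}$, and then to project its second component onto $S^2$ using the maps $\pi_y$ from Lemma~\ref{seqonman}.

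For the first step, I would rely on the classical smooth-approximation-with-preserved-trace theorem for $BV$-functions on Lipschitz domains, which produces, for any $w\in BV(A;\RR^m)$, a sequence $\tilde w_n\in W^{1,1}(A;\RR^m)\cap C^\infty(A;\RR^m)$ satisfying $\tilde w_n=w$ on $\partial A$ in the sense of traces, $\tilde w_n\to w$ in $L^1(A;\RR^m)$, and $\limsup_{n}\int_A|\grad\tilde w_n|\,\dx\lii C_1|Dw|(A)$ for a constant $C_1$ depending only on the Lipschitz character of $\partial A$. Applying this componentwise to $w=(u,v)$, and using that convolution with a probability kernel is a convex combination, one may further arrange $\tilde u_n(A)\subset[\alpha,\beta]$ and $\tilde v_n(A)\subset\overline{B(0,1)}$.

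Next, I would apply Lemma~\ref{seqonman} with $A'=A$ to the smooth, $\overline{B(0,1)}$-valued function $\tilde v_n$, obtaining $y_n\in B(0,\tfrac{1}{2})\setminus\tilde v_n(A)$ such that $\pi_{y_n}\circ\tilde v_n\in W^{1,1}(A;S^2)\cap C^\infty(A;S^2)$ and
\[
\int_A|\grad(\pi_{y_n}\circ\tilde v_n)|\,\dx\lii C_\star\int_A|\grad\tilde v_n|\,\dx.
\]
Setting $\bar w_n:=(\tilde u_n,\pi_{y_n}\circ\tilde v_n)$ places $\bar w_n$ in $W^{1,1}(A;[\alpha,\beta]\times S^2)\cap C^\infty(A;\RR\times\RR^3)$, and the gradient estimate $\int_A|\grad\bar w_n|\,\dx\lii\max(1,C_\star)\int_A|\grad\tilde w_n|\,\dx$, passed to the limit, yields the required bound with $\widetilde C:=\max(1,C_\star)\,C_1$.

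The remaining verifications rest on the fact that $\pi_{y_n}$ is the identity on $S^2$, see \eqref{projonSisId}. For the trace match, for $\calH^1$-a.e.\ $x_0\in\partial A$ the trace of $\tilde v_n$ at $x_0$ coincides with $v(x_0)\in S^2$ and satisfies $|v(x_0)-y_n|\gii\tfrac{1}{2}$, so $\pi_{y_n}$ is continuous at this value and the trace of $\pi_{y_n}\circ\tilde v_n$ equals $\pi_{y_n}(v)=v$; together with $\tilde u_n=u$ on $\partial A$, this yields $\bar w_n=w$ on $\partial A$. For the $L^1$-convergence of the second component, I would split $A$ into $E_n:=\{x\in A\!:\,|\tilde v_n(x)|\gii\tfrac{3}{4}\}$ and its complement: on $E_n$ one has $\dist(\tilde v_n,S^2)\lii\tfrac{1}{4}$, so the uniform Lipschitz bound \eqref{uniflipballtf} yields $|\pi_{y_n}(\tilde v_n)-v|=|\pi_{y_n}(\tilde v_n)-\pi_{y_n}(v)|\lii\overline C|\tilde v_n-v|$; on $A\setminus E_n$, the inequality $|\tilde v_n-v|\gii\tfrac{1}{4}$ forces $\calL^2(A\setminus E_n)\lii 4\|\tilde v_n-v\|_{L^1(A;\RR^3)}\to 0$ while the integrand remains bounded by $2$. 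The main obstacle is arranging the first step so that the trace is matched exactly (not only in the limit) while simultaneously preserving the closed-ball value constraint and controlling the total variation by a linear multiple of $|Dw|(A)$; once this is in hand, the remainder of the argument reduces to algebraic manipulations of $\pi_{y_n}$ combined with the quantitative projection estimate of Lemma~\ref{seqonman}.
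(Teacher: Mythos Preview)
Your approach is essentially the paper's: first obtain a trace-preserving smooth approximation with values in the convex hull $[\alpha,\beta]\times\overline{B(0,1)}$, then project the second component onto $S^2$ via Lemma~\ref{seqonman}. The verifications you sketch for the trace and the $L^1$-convergence are correct and mirror the paper's.

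There is one real issue. You state that the approximation step yields a constant $C_1$ depending on the Lipschitz character of $\partial A$, but the lemma requires $\widetilde C$ to depend \emph{only} on $\bar C$, $\overline C$, and $C_\star$ --- not on $A$. This matters downstream, since Lemma~\ref{man+tr} is invoked on varying subdomains in Lemmas~\ref{slicingMan}, \ref{CalFmeasure}, and \ref{jumppartCalF}, and a domain-dependent constant would propagate through those arguments. The fix is to cite the sharp strict-approximation result (Anzellotti--Giaquinta, or \cite[Thm.~2.17, Rmk.~1.18]{GiLXXXIV} as the paper does), which gives $\lim_{n}\int_A|\grad w_n|\,\dx=|Dw|(A)$, i.e.\ effectively $C_1=1$ universally. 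With that in hand, your constant $\widetilde C=\max(1,C_\star)$ is admissible.

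One minor difference worth noting: you apply Lemma~\ref{seqonman} with $A'=A$ directly, obtaining the global gradient estimate with constant $C_\star$ in one stroke. The paper instead applies it only on the ``bad'' set $A_n:=\{x:\dist(v_n(x),S^2)>\delta_0\}$ and then uses the pointwise bound $|\grad\pi_{y_n}(s)|\lii 4\bar C$ from \eqref{estGradHess} on the complement (where $|v_n-y_n|\gii\tfrac14$), arriving at $\max\{C_\star,4\bar C\}$. Your route is slightly cleaner for the gradient bound; both are valid.
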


\begin{proof}
Because \(w = (u,v)\)  takes values on $[\alpha,\beta] \times S^2$,   its mollification (see \eqref{defmollfct}) takes values on $[\alpha,\beta] \times \overline{B(0,1)}$.
Thus, by \cite[Thm.~{2.17}, Rmk.~{1.18}]{GiLXXXIV}  (see also \cite[Rmk.~{2.12}]{GiLXXXIV}), there exists a sequence $\{w_n\}_{n\in\NN}\subset W^{1,1} (A; [\alpha,\beta]
\times \overline{B(0,1)})\cap C^\infty (A;\RR \times \RR^3)$ such that
\begin{eqnarray}\label{man+tr1}
\begin{aligned}
& w_n = w \hbox{ on $\partial A$ for all $n\in\NN$}, \quad w_n\weaklystar w \hbox{ weakly-$\star$ in $BV(A;\RR \times \RR^3)$ as  $n\to\infty$,}\\
&\lim_{n\to\infty} \int_A|\grad w_n(x)|\,\dx = |Dw|(A).
\end{aligned}
\end{eqnarray}
We write $w_n(\cdot)=(u_n(\cdot),v_n(\cdot))\in [\alpha,\beta]\times   \overline{B(0,1)}$ $\calL^2$-\aev\ in $A$.
Fix $\delta_0\in (0,\frac{1}{4})$, and set $A_n:=\{x\in A\!:\, \dist(v_n(x), S^2)>\delta_0\}$. By Lemma~\ref{seqonman} applied to $A$, $v_n$, and $A_n$, we can find $y_n\in B(0,\frac{1}{2})$ such that $\pi_{y_n}\circ v_n\in   C^\infty(A;S^2) $ and
\begin{eqnarray*}
\begin{aligned}
&\int_{A_n} |\grad (\pi_{y_n}\circ v_n)|\,\dx\lii  C_\star\int_{A_n}|\grad v_n|\,\dx.
\end{aligned}
\end{eqnarray*}
Using the first estimate in \eqref{estGradHess}, we obtain 
\begin{eqnarray*}
\begin{aligned}
&\int_{A\backslash A_n} |\grad (\pi_{y_n}\circ v_n)|\,\dx\lii  \int_{A\backslash A_n}\frac{\bar C}{|v_n - y_n|}|\grad v_n|\,\dx \lii 4\bar C \int_{A\backslash A_n}|\grad v_n|\,\dx,
\end{aligned}
\end{eqnarray*}
and so
\begin{eqnarray}\label{man+tr2}
\begin{aligned}
&\int_{A} |\grad (\pi_{y_n}\circ v_n)|\,\dx\lii \max\{C_\star, 4\bar C \}\int_{A}|\grad v_n|\,\dx.
\end{aligned}
\end{eqnarray}
Setting $\bar w_n:=(u_n, \pi_{y_n}\circ v_n)$, we have  that $\{\bar w_n\}_{n\in\NN}\subset W^{1,1} (A; [\alpha,\beta] \times S^2)\cap C^\infty (A;\RR \times \RR^3)$ is a bounded sequence in  $W^{1,1} (A; [\alpha,\beta] \times S^2)$. Moreover, using \eqref{projonSisId}, the first estimate in  \eqref{uniflipballtf}, and the fact that $v(\cdot)\in S^2$ (so that \(\pi_{y_n}\circ
 v= v\)) for $\calL^2$-\aev\ in $A$,
and the estimate \(|\pi_{y_n}\circ  v_n -
v| \lii 2 \lii 2/\delta_0 | v_n - v| \) in \(A_n\),
we obtain
{\setlength\arraycolsep{0.5pt}
\begin{eqnarray}\label{man+tr3}
\int_A|\pi_{y_n}\circ
v_n - v|\,\dx && = \int_{A_n}
|\pi_{y_n}\circ v_n -
v|\,\dx  + \int_{A\backslash
A_n} |\pi_{y_n}\circ
v_n - \pi_{y_n}\circ
v|\,\dx \nonumber\\
&&\lii 2\calL^2(A_n)
+ \overline C\int_{A\backslash
A_n} | v_n - v|\,\dx
\lii \frac{2}{\delta_0}
\int_{A_n} | v_n - v|\,\dx
+ \overline C\int_{A\backslash
A_n} | v_n - v|\,\dx
\nonumber\\
&& \lii \max\Big\{\frac{2}{\delta_0},
\overline C\Big\} \int_{A}
| v_n - v|\,\dx.
\end{eqnarray}}%
In view of   \eqref{man+tr1}--\eqref{man+tr3}, we conclude that $\{\bar w_n\}_{n\in\NN}$ satisfies the requirements stated in Lemma~\ref{man+tr}.
\end{proof}

\begin{remark}\label{man+trhalflip}
If $A\in\calA(\Omega)$ is of the form $A=A_1\sminus \overline A_0$, where $A_1\in\calA(\Omega)$,  $A_0\in\calA_\infty(\Omega)$, and $A_0\subset\subset A_1$, then a simple adaptation of the proof above yields the existence of a sequence as in Lemma~\ref{man+tr} with the trace condition only holding on $\partial A_0$; that is, the trace condition becomes ``$\bar w_n = w$ on $\partial A_0$'.
\end{remark}

The next lemma is a simplified version of a result proved in \cite{BZLXXXVIII} (see also \cite[Thm.~2.2]{ACELVII}), which will be useful in the subsequent slicing result.

\begin{lemma}\label{denssmoothS}
Let $\Omega$ be an open subset of $\RR^2$. The space $W^{1,1}(\Omega;S^2) \cap C^\infty(\Omega;S^2)$ is dense in $W^{1,1}(\Omega;S^2)$ with respect to the $W^{1,1}(\Omega;\RR^3)$-norm.
\end{lemma}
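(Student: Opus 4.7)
The plan is to combine a Meyers--Serrin type smoothing with the projection technique already used in Lemma~\ref{seqonman}, and then choose the projection parameter $y_n$ via a multiple Fubini averaging so that not only is the gradient of the projection controlled, but actually $\pi_{y_n}\circ u_n \to u$ in $W^{1,1}$.

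First, given $u\in W^{1,1}(\Omega;S^2)$, I would construct a sequence $\{u_n\}_{n\in\NN}\subset C^\infty(\Omega;\overline{B(0,1)})$ with $u_n\to u$ in $W^{1,1}(\Omega;\RR^3)$ by the usual Meyers--Serrin construction: a partition of unity subordinate to a locally finite cover of $\Omega$ by sets compactly contained in $\Omega$, mollified with diagonally chosen scales. Since the mollifier is a nonnegative kernel of integral one and $\overline{B(0,1)}$ is convex, $u_n$ remains $\overline{B(0,1)}$-valued. Passing to a subsequence, $u_n\to u$ $\calL^2$-\aev\ in $\Omega$, and hence $\calL^2(\Omega\setminus E_n)\to 0$, where $E_n:=\{x\in\Omega:|u_n(x)|\gii\tfrac{3}{4}\}$.

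Next, for each $n$, I would apply the Sard/Fubini argument from the proof of Lemma~\ref{seqonman} on $A=A'=\Omega$, using $\calL^3(u_n(\Omega))=0$ together with the bound $|\grad\pi_y(s)|\lii \bar C/|s-y|$ and the uniform $L^1$-bound $\int_{B(0,1/2)}\frac{1}{|s-y|}\,\dy\lii c_1$ for $s\in\overline{B(0,1)}$, to select $y_n\in B(0,\tfrac{1}{2})\setminus u_n(\Omega)$ so that $v_n:=\pi_{y_n}\circ u_n\in C^\infty(\Omega;S^2)\cap W^{1,1}(\Omega;S^2)$ satisfies
\begin{equation*}
\int_\Omega |\grad v_n|\,\dx \lii C_\star \int_\Omega |\grad u_n|\,\dx,
\qquad
\int_{\Omega\setminus E_n}|\grad v_n|\,\dx \lii C \int_{\Omega\setminus E_n}|\grad u_n|\,\dx.
\end{equation*}
Both inequalities hold simultaneously on a set of $y_n$ of positive measure by a standard layer-cake/Chebyshev argument.

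Finally, to show $v_n\to u$ in $W^{1,1}(\Omega;\RR^3)$, I would split the gradient error as $\int_{E_n}+\int_{\Omega\setminus E_n}$. On $\Omega\setminus E_n$, the chosen $y_n$ gives $\int_{\Omega\setminus E_n}|\grad v_n|\lii C\int_{\Omega\setminus E_n}|\grad u_n|\to 0$ because $\{\grad u_n\}$ is equi-integrable (being $L^1$-convergent) and $\calL^2(\Omega\setminus E_n)\to 0$; the term $\int_{\Omega\setminus E_n}|\grad u|\to 0$ by absolute continuity. On $E_n$, $|u_n-y_n|\gii \tfrac{1}{4}$, so $|\grad\pi_{y_n}(u_n)|\lii 4\bar C$ uniformly in $n$, and by \eqref{uniflipballtf}, $|\grad\pi_{y_n}(u_n)-\grad\pi_{y_n}(u)|\lii\overline{C}|u_n-u|$. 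Using \eqref{gradprojonS} and the fact that $\grad u\in [T_uS^2]^2$ \aev\ gives $\grad\pi_{y_n}(u)\grad u=\grad u$ \aev\ on $E_n$, whence the add/subtract estimate
\begin{equation*}
|\grad v_n-\grad u| \lii 4\bar C\,|\grad u_n-\grad u| + \overline{C}\,|u_n-u|\,|\grad u|
\end{equation*}
on $E_n$ yields convergence to zero by $L^1$-convergence of $\grad u_n$ and dominated convergence, respectively.

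The main obstacle is Step 2: the Jacobian of $\pi_y$ blows up like $1/|s-y|$, so naive mollify--then--project would increase gradients uncontrollably on sets where $u_n$ passes close to $y$. The Fubini averaging over $y\in B(0,\tfrac{1}{2})$, combined with the integrability of $1/|s-y|$ in $\RR^3$ and the equi-integrability of $\{\grad u_n\}$, is precisely what delivers the two simultaneous bounds above and, together with the pointwise identity on $S^2$ from \eqref{gradprojonS}, constitutes the heart of the proof.
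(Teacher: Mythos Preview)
Your proposal is correct. The paper does not prove this lemma at all: it simply states it as ``a simplified version of a result proved in \cite{BZLXXXVIII} (see also \cite[Thm.~2.2]{ACELVII})'' and moves on. What you have written is precisely the classical Bethuel--Zheng argument (mollify, then project via $\pi_y$ with $y$ chosen by Fubini averaging), specialized to the present setting and using the tools the paper has already set up in Lemma~\ref{seqonman} and \eqref{estGradHess}--\eqref{uniflipballtf}. The simultaneous choice of $y_n$ satisfying both gradient bounds via Chebyshev on the two Fubini averages is standard, and your splitting on $E_n$ versus $\Omega\setminus E_n$ together with the identity $\grad\pi_{y_n}(u)\grad u=\grad u$ from \eqref{gradprojonS} cleanly closes the $W^{1,1}$ convergence. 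One small remark: if $\calL^2(\Omega)=\infty$ then $W^{1,1}(\Omega;S^2)$ is empty and the statement is vacuous, so you may assume finite measure throughout; this also handles the $L^1$-convergence of $v_n$ to $u$, which you did not write out explicitly but which follows from $|v_n-u|\lii\overline C|u_n-u|$ on $E_n$ and $|v_n-u|\lii 2$ on $\Omega\setminus E_n$ with $\calL^2(\Omega\setminus E_n)\lii 4\Vert u_n-u\Vert_{L^1}\to 0$.
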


\begin{lemma}\label{slicingMan}
Let $\Omega\subset\RR^2$ be an open, bounded set and  $\mathfrak{h}:[\alpha,\beta]\times S^2\times\RR^2\times\RR^{3\times 2}\to[0,+\infty)$ be an upper semicontinuous function satisfying, for some  $C>0$ and
for all \((r,s,\xi,\eta)\in [\alpha,\beta]\times
S^2\times\RR^2\times\RR^{3\times 2}\),
\begin{eqnarray}\label{suffbounds}
\begin{aligned}
0\lii \mathfrak{h}(r,s,\xi,\eta)\lii C(1+ |\xi|+|\eta|).
\end{aligned}
\end{eqnarray}
Let $A\in \calA_\infty(\Omega)$, $w=(u,v)\in BV(A;[\alpha,\beta]\times S^2)$, and $w_n=(u_n,v_n)\in W^{1,1}(A;[\alpha,\beta]\times S^2)$, $n\in\NN$, be such that $\lim_{n\to\infty}\Vert w_n- w\Vert_{L^1(A;\RR
\times \RR^3)} =0$. Then, for all  $n\in\NN$, there exists $\tilde w_n=(\tilde u_n,\tilde v_n)\in W^{1,1}(A;[\alpha,\beta]\times S^2)$ satisfying 
{\setlength\arraycolsep{0.5pt}
\begin{eqnarray}
&&\lim_{n\to\infty}\Vert\tilde w_n- w\Vert_{L^1(A;\RR
\times \RR^3)} =0, \quad \tilde w_n= w \hbox{ on } \partial A, \label{L1andTr}\\
&& \limsup_{n\to\infty}\int_A \mathfrak{h}(\tilde u_n, \tilde v_n, \grad\tilde u_n, \grad\tilde v_n)\,\dx \lii \liminf_{n\to\infty}\int_A \mathfrak{h}( u_n,  v_n, \grad u_n, \grad v_n)\,\dx. \label{notincenergy}
\end{eqnarray}}%

\begin{proof}
In  view of the hypotheses on $\mathfrak{h}$, by Fatou's Lemma, Lemma~\ref{denssmoothS}, and using a diagonalization argument, we may assume  that the component $v_n$ of $w_n$ belongs to $W^{1,1}(A;S^2) \cap C^\infty(A;S^2)$.

Extracting a subsequence, if needed, we may assume without
loss of generality that the limit
inferior on the right-hand
side of \eqref{notincenergy} is a limit. By Lemma~\ref{man+tr}, there exists a sequence $\{\bar w_n\}_{n\in\NN}=\{(\bar u_n,\bar v_n)\}_{n\in\NN}
\subset W^{1,1} (A; [\alpha,\beta] \times S^2)\cap C^\infty (A;\RR \times \RR^3)$ such that  
\begin{eqnarray}\label{slicing}
\begin{aligned}
&\bar w_n = w \hbox{ on $\partial A$ for all $n\in\NN$}, \quad \bar w_n\weaklystar w \hbox{ weakly-$\star$ in $BV(A;\RR \times \RR^3)$ as  $n\to\infty$}.
\end{aligned}
\end{eqnarray}
For each $n\in\NN$, let $\displaystyle \kappa_n:=\frac{a_n}{b_n}$, where %
{\setlength\arraycolsep{0.5pt}
\begin{eqnarray*}
&& a_n:=\sqrt{\Vert u_n-\bar u_n\Vert_{L^1(A)} + \Vert v_n-\bar v_n\Vert_{L^1(A;\RR^3)}}\,,\\
&& b_n:=n\bigg[\!\Big| 1 +\Vert \grad u_n\Vert_{L^1(A;\RR^2)} +\Vert \grad \bar u_n\Vert_{L^1(A;\RR^2)} + \Vert \grad v_n\Vert_{L^1(A;\RR^{3\times 2})} + \Vert \grad\bar v_n\Vert_{L^1(A;\RR^{3\times 2})}\Big|\!\bigg], 
\end{eqnarray*}}%
with $\big[\!|b|\!\big]$ denoting the integer part of $b$. Clearly, \(\kappa_n\to0^+\) as \(n\to\infty\). For $i\in\{1,\cdots, b_n\}$, define
{\setlength\arraycolsep{0.5pt}
\begin{eqnarray*}
&& A_{n,0}:=\{ x\in A\!:\, \dist(x,\partial A)>a_n\},\quad A_{n,i}:=\{ x\in A\!:\, \dist(x,\partial A)>a_n - i\kappa_n\}. 
\end{eqnarray*}}%
We have that $A_{n,0}\subset A_{n,1}\subset\cdots\subset A_{n,b_n}$ and, for all $n$ large enough, $A_{n,0}\not=
\emptyset$
since \(a_n \to 0\) as \(n\to\infty\). Fix any such $n$, and let $\ffi_i\in C^\infty_c(\RR^2;[0,1])$ be a cut-off function such that $\ffi_i =  1$ in $A_{n,i-1}$, $\ffi_i=0$ in  $\RR^2\backslash A_{n,i}$, and $\Vert \grad \ffi_i\Vert_\infty\lii  \frac{c}{\kappa_n}$, being $c$ a positive constant independent of $i$ and $n$, and set
\begin{eqnarray*}
\begin{aligned}
w_n^{i}=(u_n^{i}, v_n^{i}):= \ffi_i\, (u_n,v_n) + (1-\ffi_i)\, (\bar u_n,\bar v_n) = \ffi_i\, w_n + (1-\ffi_i)\, \bar w_n.
\end{aligned}
\end{eqnarray*}
We have that $w_n^{i}\in W^{1,1}(A;[\alpha,\beta] \times \overline{B(0,1)})$, $v_n^{i}\in W^{1,1}(A;\overline{B(0,1)})\cap C^\infty(A;\RR^3)$, 
{\setlength\arraycolsep{0.5pt}
\begin{eqnarray}
&& \hbox{$w_n^{i} = w_n$ in $A_{n,i-1}$,\quad $w_n^{i} =\bar w_n$ on $A\backslash A_{n,i}$} , \quad\lim_{n\to\infty}\Vert w_n^{i}- w\Vert_{L^1(A;\RR \times \RR^3)} =0, \label{onlayers} 
\end{eqnarray}}%
and, since $\grad w_n^{i} = \ffi_i\grad w_n +(1-\ffi_i)\grad\bar w_n + (w_n-\bar w_n)\otimes\grad \ffi_i$,
\begin{eqnarray}\label{slicing1}
\begin{aligned}
& \int_{A_{n,i}\backslash \overline A_{n,i-1}}|\grad w_n^{i}|\,\dx \lii \int_{A_{n,i}\backslash \overline A_{n,i-1}} \Big(|\grad w_n| + |\grad\bar w_n| + \frac{c}{\kappa_n}|w_n -\bar w_n|\Big)\,\dx.
\end{aligned}
\end{eqnarray}
We now apply Lemma~\ref{seqonman} to $A$, $v_n^i$, and $A_{n,i}\backslash\overline A_{n,i-1}$, to
find a point $y_n^i\in B(0,\frac{1}{2})$ such that  $\pi_{y_n^i}\circ v_n^i\in W^{1,1}(A_{n,i}\backslash\overline A_{n,i-1};S^2) \cap C^\infty(A;S^2)$ and
\begin{eqnarray}\label{slicing2}
\begin{aligned}
&\int_{A_{n,i}\backslash\overline A_{n,i-1}} |\grad (\pi_{y_n^i}\circ v_n^i)|\,\dx\lii  C_\star\int_{A_{n,i}\backslash\overline A_{n,i-1}}|\grad v_n^i|\,\dx.
\end{aligned}
\end{eqnarray}
In view of \eqref{onlayers} and \eqref{projonSisId}, since $v_n$ and $\bar v_n$ take values in $S^2$ $\calL^2$-\aev\ in $A$, we get
\begin{eqnarray}\label{slicing3}
\begin{aligned}
&\int_{A_{n,i-1}}|\grad (\pi_{y_n^i}\circ v_n^i)|\,\dx = \int_{A_{n,i-1}}|\grad (\pi_{y_n^i}\circ v_n)|\,\dx = \int_{A_{n,i-1}}|\grad v_n|\,\dx,\\
&\int_{A\backslash A_{n,i}}|\grad (\pi_{y_n^i}\circ v_n^i)|\,\dx = \int_{A\backslash A_{n,i}}|\grad (\pi_{y_n^i}\circ\bar  v_n)|\,\dx = \int_{A\backslash A_{n,i}}|\grad\bar v_n|\,\dx.
\end{aligned}
\end{eqnarray}
Thus, \eqref{slicing2} and \eqref{slicing3} yield $\tilde w_n^i=(\tilde u_n^i,\tilde v_n^i):= (u_n^i,\pi_{y_n^i}\circ v_n^i)\in W^{1,1}(A;[\alpha,\beta]\times S^2)$. Moreover, the first condition in \eqref{slicing}, \eqref{projonSisId}, and the second condition in \eqref{onlayers} ensure that
\begin{eqnarray}\label{trtildew}
\begin{aligned}
&\tilde w_n^i = w \hbox{ on } \partial A.
\end{aligned}
\end{eqnarray}
Next, we  prove that for fixed $i$,
\begin{eqnarray}\label{L1tildew}
\begin{aligned}
&\lim_{n\to\infty}\Vert \tilde w_n^i - w\Vert_{L^1(A;\RR \times \RR^3)}=0.
\end{aligned}
\end{eqnarray}
We have 
\begin{eqnarray*}
\begin{aligned}
& \int_A |u_n^i - u|\,\dx = \int_A |\ffi_i\, u_n + (1-\ffi_i)\, \bar u_n - \ffi_i\,u - (1-\ffi_i)\, u|\,\dx \lii \int_A \big(| u_n -u| + | \bar u_n - u|\big)\,\dx
\end{aligned}
\end{eqnarray*}
and, arguing as in \eqref{man+tr3} with $A_n$ replaced by the open set $\{x\in A\!: \, \dist(v_n^i,S^2)>\delta_0\}$, where $\delta_0\in(0,\frac{1}{4})$ is fixed, 
\begin{eqnarray*}
\begin{aligned}
&\int_A |\pi_{y_n^i}\circ  v_n^i - v|\,\dx \lii \max\Big\{\frac{2}{\delta_0}, \overline C\Big\}\int_A |v_n^i - v|\,\dx\lii \max\Big\{\frac{2}{\delta_0}, \overline C\Big\}\int_A \big(| v_n -v| + | \bar v_n - v|\big)\,\dx.
\end{aligned}
\end{eqnarray*}
This yields
 \eqref{L1tildew} because $\{u_n\}_{n\in\NN}$ and $\{\bar u_n\}_{n\in\NN}$ are sequences converging to $u$ in $L^1(A)$, while $\{v_n\}_{n\in\NN}$ and $\{\bar v_n\}_{n\in\NN}$ are sequences converging to $v$ in $L^1(A;\RR^3)$.

We now estimate the functional evaluated at $\tilde w^i_n$. Using the bounds in \eqref{suffbounds}, \eqref{slicing2}, and \eqref{slicing1}, in this order, we deduce that
{\setlength\arraycolsep{0.5pt}
\begin{eqnarray}\label{inttildew}
&& \int_A \mathfrak{h}(\tilde u_n^i,
\tilde v_n^i, \grad\tilde
u_n^i, \grad\tilde v_n^i)\,\dx
\nonumber\\
&&\quad\lii \int_{\overline
A_{n,i-1}} \mathfrak{h}( u_n,  v_n,
\grad u_n, \grad v_n)\,\dx
+ \int_{A_{n,i}\backslash\overline
A_{n,i-1}} C(1+|\grad
\tilde u_n^i| + |\grad\tilde
v_n^i|)\,\dx\nonumber\\
&&\hskip20mm + \int_{A\backslash
A_{n,i}} C(1 + |\grad
\bar u_n| + |\grad \bar v_n|)\,\dx
\nonumber\\
&&\quad \lii \int_{A}
\mathfrak{h}( u_n,  v_n, \grad u_n,
\grad v_n)\,\dx + \tilde
C\int_{A_{n,i}\backslash\overline
A_{n,i-1}} \Big(1+|\grad
w_n| + |\grad\bar w_n|
+ \frac{c}{\kappa_n}|w_n
-\bar w_n|\Big)\,\dx
\nonumber\\
&&\hskip20mm +\, 2C\int_{A\backslash
A_{n,0}} (1 + |\grad
\bar w_n| )\,\dx,
\end{eqnarray}}%
where $\tilde C$ is a positive constant only depending on $C$ and $C_\star$.
Furthermore, using the definition of $\kappa_n$,
{\setlength\arraycolsep{0.5pt}
\begin{eqnarray*}
&& \frac{1}{b_n}\sum_{i=1}^{b_n}
\int_{A_{n,i}\backslash\overline
A_{n,i-1}} \Big(1+|\grad
w_n| + |\grad\bar w_n|
+ \frac{c}{\kappa_n}|w_n
-\bar w_n|\Big)\,\dx\\
&&\quad \lii  \frac{1}{b_n}
\int_A \Big(1+|\grad
w_n| + |\grad\bar w_n|
+ \frac{c}{\kappa_n}|w_n
-\bar w_n|\Big)\,\dx
\lii \frac{\calL^2(A)}{b_n}
+ \frac{1}{n} + c\Vert
w_n - \bar w_n 
\Vert_{L^1(A;\RR \times \RR^3)}^{\frac{1}{2}}.
\end{eqnarray*}}%
Thus, there exists $i_n\in\{1,\cdots,b_n\}$ such that
{\setlength\arraycolsep{0.5pt}
\begin{eqnarray}\label{inttildew1}
&& \int_{A_{n,i_n}\backslash\overline
A_{n,i_n-1}} \Big(1+|\grad
w_n| + |\grad\bar w_n|
+ \frac{c}{\kappa_n}|w_n
-\bar w_n|\Big)\,\dx
\nonumber\\
&&\quad\lii \frac{\calL^2(A)}{b_n}
+ \frac{1}{n} + c\Vert
w_n - \bar w_n\Vert_{L^1(A;\RR \times \RR^3)}^{\frac{1}{2}}
= o(1) \enspace \hbox{as $n\to\infty$.}
\end{eqnarray}}%
Fixing  $j\in\NN$ and defining $\tilde A_j:=\{ x\in A\!:\, \dist(x,\partial A)>1/j\}$, we have  $A\backslash \tilde A_j\supset A\backslash A_{n,0}$ for all $n$ large enough because $a_n\to0$ as $n\to\infty$. Hence, using the fact that $A\backslash \tilde A_j$ is a closed subset of $A$ and $D\bar w_n\weaklystar Dw$ weakly-$\star$ in ${\cal M}(A;\RR^2 \times \RR^{3\times 2})$, we get, for fixed $j$,
\begin{eqnarray}\label{inttildew2}
\begin{aligned}
& \limsup_{n\to\infty}\int_{A\backslash A_{n,0}} (1 + |\grad \bar w_n| )\,\dx \lii \limsup_{n\to\infty}\int_{A\backslash \tilde A_j} (1 + |\grad \bar w_n| )\,\dx \lii  \calL^2(A\backslash \tilde A_j) + |Dw|(A\backslash \tilde A_j).
\end{aligned}
\end{eqnarray}
Observing that $\{A\backslash \tilde A_j\}_{j\in\NN}$ is a decreasing sequence of $(\calL^2 + |Dw|)$-finite measure sets whose intersection is the empty set, letting $j\to\infty$ in \eqref{inttildew2}, we conclude that
\begin{eqnarray}\label{inttildew3}
\begin{aligned}
& \lim_{n\to\infty}\int_{A\backslash A_{n,0}} (1 + |\grad \bar w_n| )\,\dx =0.
\end{aligned}
\end{eqnarray}
Finally, setting $\tilde w_n:=\tilde w_n^{i_n}$, in view of \eqref{trtildew}--\eqref{inttildew1} and \eqref{inttildew3}, the sequence $\{\tilde w_n\}_{n\in\NN}\subset W^{1,1}(A;[\alpha,\beta]\times S^2)$ satisfies \eqref{L1andTr} and \eqref{notincenergy}.
\end{proof}

\begin{remark}\label{slicinghalflip}
If $A\in\calA(\Omega)$ is of the form $A=A_1\sminus \overline A_0$, where $A_1\in\calA(\Omega)$,  $A_0\in\calA_\infty(\Omega)$, and $A_0\subset\subset A_1$, then in view of Remark~\ref{man+trhalflip}, Lemma~\ref{slicingMan} holds for all such open sets $A$ as long as we replace the trace condition in \eqref{L1andTr} by  ``$\bar w_n = w$ on $\partial A_0$''.
\end{remark}

\begin{lemma}\label{CalFmeasure}
For every $(u,v)\in BV(\Omega;[\alpha,\beta])\times BV(\Omega;S^2)$, the set function 
{\setlength\arraycolsep{0.5pt}
\begin{eqnarray}\label{locrelaxfct}
A\in \calA(\Omega)\mapsto
\calF(u,v;A):=\inf\Big\{
&& \liminf_{n\to+\infty}
\int_A f(u_n(x), v_n(x),\grad
u_n(x),\grad v_n(x))\,\dx\!:
\nonumber\\
&& \hskip25mm n\in\NN, (u_n,v_n)\in W^{1,1}(A;[\alpha,\beta])\times
W^{1,1}(A;S^2),
\nonumber\\
&& \hskip25mm u_n\to u \hbox{
in } L^1(A), v_n\to v
\hbox{ in } L^1(A;\RR^3)\Big\}
\end{eqnarray}}%
is the restriction of a Radon measure on $\Omega$ to $\calA(\Omega)$.
\end{lemma}

\begin{proof}
Fix $w=(u,v)\in BV(\Omega;[\alpha,\beta]\times S^2)$. Using the bounds \eqref{boundsf} and a
diagonalization argument, we can find a sequence $\{w_n\}_{n\in\NN}= \{(u_n,v_n)\}_{n\in\NN}\subset W^{1,1}(\Omega;[\alpha,\beta]\times S^2)$ converging to $w$ in $L^1(\Omega;\RR \times \RR^3)$ such that
{\setlength\arraycolsep{0.5pt}
\begin{eqnarray*}
&& \calF(w;\Omega)=\lim_{n\to\infty}\int_\Omega
f(u_n(x), v_n(x),\grad
u_n(x),\grad v_n(x))\,\dx,\\
&& \mu_n:= f(u_n,v_n,\grad
u_n,\grad v_n)\calL^2_{\lfloor\Omega}\weaklystar\mu
\hbox{ weakly-$\star$
in ${\cal M}(\Omega)$}
\end{eqnarray*}}%
for some nonnegative Radon measure $\mu\in\calM(\Omega)$.


We claim that for all $A\in\calA(\Omega)$, 
\begin{eqnarray}\label{calF=mu}
\begin{aligned}
& \calF(w;A) = \mu(A).
\end{aligned}
\end{eqnarray}
We will proceed in  three steps.

\underbar{Step1.} We prove that for all $A\in\calA(\Omega)$,
\begin{eqnarray}\label{ubcalF}
\begin{aligned}
& \calF(w;A)\lii (3+\beta)\widetilde C|Dw|(A),
\end{aligned}
\end{eqnarray}
where $\widetilde C$ is a positive constant only depending on $\bar C$ and $C_\star$. 

Arguing as in Lemma~\ref{man+tr}, we can find a sequence $\{\bar w_n\}_{n\in\NN}= \{(\bar u_n,\bar v_n)\}_{n\in\NN}\subset W^{1,1} (A; [\alpha,\beta] \times S^2)\cap C^\infty (A;\RR \times \RR^3)$ such that $\lim_{n\to\infty}\Vert \bar w_n - w\Vert_{L^1(A;\RR \times \RR^3)}=0$ and $\limsup_{n\to\infty}\int_A|\grad\bar w_n(x)|\,\dx\lii \widetilde C |Dw|(A)$, where $\widetilde C$ is a positive constant only depending on $\bar C$ and $C_\star$. Then,
by \eqref{boundsf},
{\setlength\arraycolsep{0.5pt}
\begin{eqnarray*}
\calF(w;A)&&\lii \liminf_{n\to\infty}\int_A
f(\bar u_n(x), \bar v_n(x),
\grad\bar u_n(x), \grad
\bar v_n(x))\,\dx\\
&&\lii \liminf_{n\to\infty}\int_A
(3+\beta)|\grad \bar
w_n(x)|\,\dx\lii (3+\beta)\widetilde
C|Dw|(A).
\end{eqnarray*}}%

\underbar{Step 2.} We claim that for all $A_1, A_2, A_3\in \calA(\Omega)$ such that $A_1\subset\subset A_2\subset A_3$, the following inequality holds
\begin{eqnarray}\label{subcalF}
\begin{aligned}
&\calF(w;A_3)\lii \calF(w;A_2) + \calF(w;A_3\sminus \overline A_1).
\end{aligned}
\end{eqnarray}
Let $U\in \calA_\infty(\Omega)$ be such that $A_1\subset\subset U\subset\subset A_2$. Let $\{w_n^{1}\}_{n\in\NN}= \{(u_n^1,v_n^1)\}_{n\in\NN}\subset W^{1,1}(A_3\sminus \overline A_1;[\alpha,\beta]\times S^2)$ and $\{w_n^{2}\}_{n\in\NN}= \{(u_n^2,v_n^2)\}_{n\in\NN}\subset W^{1,1}(A_2;[\alpha,\beta]\times S^2)$ be sequences converging to $w$ in $L^1(A_3\sminus \overline A_1;\RR \times \RR^3)$ and $L^1(A_2;\RR \times \RR^3)$, respectively, and such that
\begin{eqnarray}\label{subcalF1}
\begin{aligned}
&\calF(w;A_3\sminus \overline A_1)=\lim_{n\to\infty}\int_{A_3\sminus \overline A_1} f(u_n^1(x), v_n^1(x),\grad u_n^1(x),\grad v_n^1(x))\,\dx,\\
& \calF(w;A_2)=\lim_{n\to\infty}\int_{A_2} f(u_n^2(x), v_n^2(x),\grad u_n^2(x),\grad v_n^2(x))\,\dx.
\end{aligned}
\end{eqnarray}  
In view of Lemma~\ref{slicingMan} and 
Remark~\ref{slicinghalflip}, we can find sequences
$\{\tilde w_n^{1}\}_{n\in\NN}= \{(\tilde u_n^1,\tilde v_n^1)\}_{n\in\NN}\subset W^{1,1}(A_3\sminus \overline U;[\alpha,\beta]\times S^2)$ and $\{\tilde w_n^{2}\}_{n\in\NN}= \{(\tilde u_n^2,\tilde v_n^2)\}_{n\in\NN}\subset W^{1,1}(U;[\alpha,\beta]\times S^2)$ \ converging to $w$ in $L^1(A_3\sminus \overline U;\RR \times \RR^3)$ and $L^1(U;\RR \times \RR^3)$, respectively, and such that
\begin{eqnarray}\label{subcalF2}
\begin{aligned}
& \tilde w^1_n=w \hbox{ on } \partial U,\quad  \tilde w^2_n=w \hbox{ on } \partial U,\\
&\limsup_{n\to\infty}\int_{A_3\sminus \overline U} f(u_n^1, v_n^1,\grad u_n^1,\grad v_n^1)\,\dx \gii \limsup_{n\to\infty}\int_{A_3\sminus \overline U} f(\tilde u_n^1,\tilde  v_n^1,\grad\tilde  u_n^1,\grad\tilde  v_n^1)\,\dx,\\
& \limsup_{n\to\infty}\int_{U} f(u_n^2, v_n^2,\grad u_n^2,\grad v_n^2)\,\dx\gii \limsup_{n\to\infty}\int_{U} f(\tilde u_n^2,\tilde  v_n^2,\grad\tilde  u_n^2,\grad\tilde  v_n^2)\,\dx.
\end{aligned}
\end{eqnarray}  

Define for $n\in\NN$,
\begin{eqnarray*}
\begin{aligned}
&\tilde w_n:=\begin{cases}
\tilde w_n^{1} & \hbox{in } A_3\sminus \overline U,\\
\tilde w_n^2  & \hbox{in } U.
\end{cases}
\end{aligned}
\end{eqnarray*}
Then, $\tilde w_n=(\tilde u_n,\tilde v_n)\in W^{1,1}(A_3;[\alpha,\beta]\times S^2)$ and $\{\tilde w_n\}_{n\in\NN}$ is a sequence converging to $w$ in $L^1(A_3;\RR \times \RR^3)$. Moreover, using \eqref{subcalF1} and \eqref{subcalF2}, together with the set inclusions $A_3\sminus\overline U \subset A_3\sminus\overline A_1$ and $U\subset A_2$ and the non-negativeness of $f$,
{\setlength\arraycolsep{0.5pt}
\begin{eqnarray*}
\calF(w;A_3)&&\lii \liminf_{n\to\infty}
\int_{A_3} f(\tilde u_n,\tilde
 v_n,\grad \tilde u_n,\grad
\tilde v_n)\,\dx\\
&& = \liminf_{n\to\infty}\bigg(
\int_{A_3\sminus \overline
U} f(\tilde u_n^1,\tilde
 v_n^1,\grad\tilde  u_n^1,\grad\tilde
 v_n^1)\,\dx + \int_{U}
f(\tilde u_n^2,\tilde
 v_n^2,\grad\tilde  u_n^2,\grad\tilde
 v_n^2)\,\dx\bigg)\\
&&\lii \limsup_{n\to\infty}\int_{A_3\sminus
\overline U} f(\tilde
u_n^1,\tilde  v_n^1,\grad\tilde
 u_n^1,\grad\tilde  v_n^1)\,\dx
+ \limsup_{n\to\infty}\int_{U}
f(\tilde u_n^2,\tilde
 v_n^2,\grad\tilde  u_n^2,\grad\tilde
 v_n^2)\,\dx\\
&&\lii \calF(w;A_3\sminus \overline
A_1)  + \calF(w;A_2),
\end{eqnarray*}}%
which concludes Step~2.

\underbar{Step 3.} We establish \eqref{calF=mu}.
Fix $A\in \calA(\Omega)$.

{\sl Substep 3.1.} We prove that $\calF(w;A)\lii \mu(A)$.

Using the upper  semicontinuity of the weak-$\star$ convergence in $\calM(\Omega)$ with respect to compact sets and the fact that $\{w_n\}_{n\in\NN}$ is an admissible sequence for $\calF(w;A)$, we conclude that
\begin{eqnarray}\label{caF=mu1}
\begin{aligned}
& \calF(w;A)\lii \liminf_{n\to\infty}\int_A f(u_n, v_n,\grad u_n,\grad v_n)\,\dx = \liminf_{n\to\infty}\mu_n(A) \lii \limsup_{n\to\infty} \mu_n(\overline A)\lii \mu(\overline A).
\end{aligned}
\end{eqnarray}
Fix $\epsi>0$ and let $A_\epsi'$, $A_\epsi''\in\calA(\Omega)$ be such that $A_\epsi'\subset\subset A_\epsi''\subset\subset A$ and $|Dw|(A\sminus\overline A_\epsi')<\epsi/\tilde c$, where $\tilde c:= (3+\beta)\widetilde C$. Using \eqref{subcalF}, \eqref{ubcalF}, and \eqref{caF=mu1}, in this order, we obtain
\begin{eqnarray*}
\begin{aligned}
\calF(w;A)\lii \calF(w; A_\epsi'') + \calF(w; A\sminus\overline A_\epsi') \lii \calF(w; A_\epsi'') + \epsi \lii \mu(\overline A_\epsi'') + \epsi \lii \mu(A) + \epsi,
\end{aligned}
\end{eqnarray*}
from which Substep~{3.1} follows by letting $\epsi\to0^+$.

{\sl Substep 3.2.} We prove that $\calF(w;A)\gii \mu(A)$.

Fix $\epsi>0$, and let $A_\epsi\in\calA(\Omega)$ be such that $A_\epsi\subset\subset  A$ and $\mu(A\sminus\overline A_\epsi)<\epsi$. Using the equality $\calF(w;\Omega)=\mu(\Omega)$, from Substep~{3.1} (applied to $\Omega\sminus\overline A_\epsi$) and Step~2 (applied to $A_\epsi\subset\subset A\subset \Omega$), it follows that
{\setlength\arraycolsep{0.5pt}
\begin{eqnarray*}
 \mu(A) &&= \mu(A\sminus
\overline A_\epsi) +
\mu(\overline A_\epsi)
<\epsi + \mu(\overline
A_\epsi)  = \epsi + \mu(\Omega)
- \mu(\Omega\sminus\overline
A_\epsi)\\
&&\lii \epsi + \calF(w;\Omega)
- \calF(w;\Omega\sminus\overline
A_\epsi) \lii 
\epsi + \calF(w;A).
\end{eqnarray*}}%
Letting $\epsi\to0^+$, we conclude the proof of Substep~{3.2} as well as of Lemma~\ref{CalFmeasure}.
\end{proof}
\end{lemma}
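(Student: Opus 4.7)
The plan is a De Giorgi-style slicing argument between the given sequence $w_n$ and a smooth companion $\bar w_n$ whose trace already matches $w$ on $\partial A$, followed by a projection step that restores the manifold constraint lost during the interpolation. First, I would reduce, via Lemma~\ref{denssmoothS} and a diagonal argument, to the case in which each $v_n$ is smooth, i.e., $v_n \in W^{1,1}(A;S^2)\cap C^\infty(A;S^2)$; this is legitimate because the upper semi-continuity of $\mathfrak{h}$ together with the linear growth bound lets one apply a reverse Fatou argument to $C(1+|\xi|+|\eta|)-\mathfrak{h}$ and thereby preserve \eqref{notincenergy}. Then, applying Lemma~\ref{man+tr} to $w$ itself, I obtain $\bar w_n = (\bar u_n,\bar v_n) \in W^{1,1}(A;[\alpha,\beta]\times S^2)\cap C^\infty(A;\RR\times\RR^3)$ with $\bar w_n = w$ on $\partial A$, $\bar w_n \to w$ in $L^1$, and $\limsup_n \int_A |\grad \bar w_n|\,\dx \lii \widetilde C |Dw|(A)$.

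Next, set $a_n := \|w_n-\bar w_n\|_{L^1(A;\RR\times\RR^3)}^{1/2}$ so $a_n\to 0$, pick $b_n$ to be a large integer of order $n(1+\|\grad w_n\|_{L^1}+\|\grad\bar w_n\|_{L^1})$, and set $\kappa_n := a_n/b_n$. Form the nested layers $A_{n,i} := \{x\in A\!:\dist(x,\partial A) > a_n-i\kappa_n\}$ for $i = 0,\ldots,b_n$, and choose smooth cut-offs $\varphi_i \in C_c^\infty(\RR^2;[0,1])$ with $\varphi_i \equiv 1$ on $A_{n,i-1}$, $\varphi_i \equiv 0$ outside $A_{n,i}$, and $\|\grad\varphi_i\|_\infty \lii c/\kappa_n$. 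Define $w_n^i := \varphi_i w_n + (1-\varphi_i)\bar w_n = (u_n^i,v_n^i)$, which lies in $[\alpha,\beta]\times \overline{B(0,1)}$ but may fail the $S^2$ constraint on the transition strip $A_{n,i}\setminus \overline A_{n,i-1}$. Apply Lemma~\ref{seqonman} to $v_n^i$ on that strip to extract $y_n^i\in B(0,\tfrac12)$ with $\pi_{y_n^i}\circ v_n^i\in C^\infty(A;S^2)$ and $\int_{A_{n,i}\setminus \overline A_{n,i-1}}|\grad(\pi_{y_n^i}\circ v_n^i)|\,\dx \lii C_\star \int_{A_{n,i}\setminus \overline A_{n,i-1}}|\grad v_n^i|\,\dx$. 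Because $v_n$ and $\bar v_n$ take values in $S^2$ and $\pi_y$ fixes $S^2$, the candidate $\tilde w_n^i := (u_n^i,\pi_{y_n^i}\circ v_n^i)$ equals $w_n$ on $A_{n,i-1}$, equals $\bar w_n$ on $A\setminus A_{n,i}$, and therefore matches the trace $w$ on $\partial A$.

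Using the growth bound $\mathfrak{h}(r,s,\xi,\eta)\lii C(1+|\xi|+|\eta|)$ together with $|\grad w_n^i| \lii |\grad w_n| + |\grad\bar w_n| + (c/\kappa_n)|w_n - \bar w_n|$, I would split $\int_A \mathfrak{h}(\tilde u_n^i,\tilde v_n^i,\grad \tilde u_n^i,\grad\tilde v_n^i)\,\dx$ into three contributions: (i) the interior piece on $A_{n,i-1}$, which equals $\int_{A_{n,i-1}}\mathfrak{h}(u_n,v_n,\grad u_n,\grad v_n)\,\dx\lii \int_A\mathfrak{h}(u_n,v_n,\grad u_n,\grad v_n)\,\dx$; (ii) an annular-strip defect over $A_{n,i}\setminus \overline A_{n,i-1}$, bounded by $\widetilde C\int_{A_{n,i}\setminus \overline A_{n,i-1}}(1+|\grad w_n|+|\grad\bar w_n|+(c/\kappa_n)|w_n-\bar w_n|)\,\dx$; and (iii) a boundary-strip defect $\lii 2C\int_{A\setminus A_{n,0}}(1+|\grad\bar w_n|)\,\dx$. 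Averaging (ii) over $i = 1,\ldots,b_n$ and applying the pigeonhole principle produces an index $i_n$ with annular defect at most $\calL^2(A)/b_n + 1/n + c\|w_n-\bar w_n\|_{L^1}^{1/2} = o(1)$ by construction of $a_n,b_n,\kappa_n$. Part (iii) tends to zero via a shrinking-neighborhood argument: for each fixed $j$, setting $\tilde A_j := \{x\in A\!:\dist(x,\partial A)>1/j\}$, one has $A\setminus A_{n,0}\subset A\setminus \tilde A_j$ for $n$ large, so $\limsup_n\int_{A\setminus A_{n,0}}(1+|\grad\bar w_n|)\,\dx\lii \calL^2(A\setminus \tilde A_j) + |Dw|(A\setminus \tilde A_j)$, and $j\to\infty$ yields zero because these sets shrink to $\partial A$ which is $(\calL^2+|Dw|)$-null. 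Setting $\tilde w_n := \tilde w_n^{i_n}$, the $L^1$ convergence $\tilde w_n\to w$ follows by splitting $A$ into the subset where $v_n^{i_n}$ stays $\delta_0$-close to $S^2$ (controlled via the uniform Lipschitz bound \eqref{uniflipballtf}) and its complement (on which the trivial estimate $|\pi_{y_n^{i_n}}(v_n^{i_n})-v|\lii 2$ is integrated against a set of measure $\lii \|v_n^{i_n}-v\|_{L^1}/\delta_0$).

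The main obstacle is the projection step: $\pi_y$ has gradient that blows up like $|s-y|^{-1}$ as $s\to y$, so a careless choice of $y$ would ruin the energy bound in the transition strip. This is circumvented by the Mini-Sard/Fubini argument inside Lemma~\ref{seqonman}, which averages over $y\in B(0,\tfrac12)$ to produce a good $y_n^i$ satisfying the uniform linear bound $C_\star\int|\grad v_n^i|$; crucially, this $y_n^i$ depends on both $n$ and $i$, but that is harmless since only a single pair $(n,i_n)$ is retained. A secondary delicate point is that the trace matches $w$ rather than some trace of $w_n$: this is automatic because $\varphi_{i_n}\equiv 0$ on a neighborhood of $\partial A$, where $\tilde w_n^{i_n}$ reduces to $\bar w_n = w$.
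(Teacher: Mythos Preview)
Your proposal is correct and follows essentially the same approach as the paper: reduce to smooth $v_n$ via Lemma~\ref{denssmoothS} and reverse Fatou, invoke Lemma~\ref{man+tr} for the companion sequence $\bar w_n$ with the correct trace, perform De Giorgi slicing with layers $A_{n,i}$ and cut-offs $\varphi_i$, restore the $S^2$ constraint on the transition strip via the projection $\pi_{y_n^i}$ from Lemma~\ref{seqonman}, split the energy into interior/strip/boundary pieces, pick a good layer index $i_n$ by averaging, and kill the boundary-strip term by the shrinking-neighborhood argument. The choices of $a_n$, $b_n$, $\kappa_n$ and the three-piece estimate match the paper's proof almost verbatim.
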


\begin{lemma}\label{onManae}
Let $w\in BV(\Omega;[\alpha,\beta]\times S^2)$. Then\footnote{We refer to Subsection~\ref{onBV} for the notation concerning $BV$ functions.}, 
\begin{itemize}
\item [(a)] $\tilde w(x)\in [\alpha,\beta]\times S^2$ for all $x\in A_w=\Omega\sminus S_w$;

\item [(b)] $w^\pm(x)\in [\alpha,\beta]\times
S^2$ for all $x\in J_w$;

\item[(c)] $\grad w(x)\in \big[T_{w(x)}([\alpha,\beta]\times S^2) \big]^2$ for $\calL^2$-\aev\ $x\in\Omega$;

\item[(d)] $\displaystyle W^c(x):={\d D^cw\over \d|D^cw|}(x)\in \big[T_{\tilde w(x)}([\alpha,\beta]\times S^2)\big]^2$ for $|D^c w|$-\aev\ $x\in\Omega$.
\end{itemize}

\end{lemma}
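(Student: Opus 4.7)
The plan is to dispatch (a)--(b) via a closedness argument, then obtain (c)--(d) from a single application of the $BV$ chain rule, differentiating the constraint $|v|^2=1$.

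Parts (a) and (b) follow immediately from the closedness of $[\alpha,\beta]\times S^2\subset\RR\times\RR^3$. Indeed, because $w(y)\in [\alpha,\beta]\times S^2$ for $\calL^2$-a.e.\ $y\in\Omega$, for each $x\in A_w$ the identity \eqref{applim} forces the existence of a sequence $y_n\to x$ with $w(y_n)\in [\alpha,\beta]\times S^2$ and $w(y_n)\to\tilde w(x)$; closedness then yields $\tilde w(x)\in [\alpha,\beta]\times S^2$. Applying the same reasoning to the half-balls $B^\pm_{\nu_w(x)}(x,\delta)$ in \eqref{appjp} handles $w^\pm(x)$ for $x\in J_w$.

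For (c) and (d), the key observation is that the constraint $v\in S^2$ translates into the identity $\phi\circ w=0$ $\calL^2$-a.e.\ in $\Omega$, where $\phi\in C^1(\RR\times\RR^3;\RR)$ is the Lipschitz map $\phi(r,s):=|s|^2-1$. Thus $\phi\circ w\equiv 0$ as an element of $L^1(\Omega)$, and consequently $D(\phi\circ w)=0$ as a Radon measure. Applying the chain rule (Theorem~\ref{chainruleBV}), together with the decomposition $\tilde Dw=\grad w\,\calL^2_{\lfloor\Omega}+W^c|D^cw|$ from Remark~\ref{decforDu}, gives
$$0=\tilde D(\phi\circ w)=\grad\phi(\tilde w)\grad w\,\calL^2_{\lfloor\Omega}+\grad\phi(\tilde w)\,W^c\,|D^cw|.$$
Since $\calL^2_{\lfloor\Omega}$ and $|D^cw|$ are mutually singular, both terms vanish separately, and because $\grad\phi(r,s)=(0,2s)$, the two equations read $\tilde v(x)\cdot\grad v(x)=0$ for $\calL^2$-a.e.\ $x\in\Omega$ and $\tilde v(x)\cdot W^c_v(x)=0$ for $|D^cw|$-a.e.\ $x\in\Omega$, with the inner product taken column by column. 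Equivalently, every column of $\grad v(x)$ lies in $T_{v(x)}(S^2)$ and every column of $W^c_v(x)$ lies in $T_{\tilde v(x)}(S^2)$.

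Since $[\alpha,\beta]$ is a $1$-dimensional manifold with boundary and $T_r([\alpha,\beta])=\RR$ for every $r\in[\alpha,\beta]$, no additional constraint is imposed on the first component, so $\grad u(x)\in [T_{u(x)}([\alpha,\beta])]^2=\RR^2$ and $W^c_u(x)\in T_{\tilde u(x)}([\alpha,\beta])=\RR$ automatically. Combining these facts with the tangential information on the second component yields conclusions (c) and (d). The only delicate point in this plan is the splitting of the chain-rule identity into its absolutely continuous and Cantor components, which is legitimate precisely because these measures are mutually singular.
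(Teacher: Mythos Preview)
Your strategy for (c) and (d) is exactly the paper's: differentiate the constraint via the $BV$ chain rule and use mutual singularity of $\calL^2$ and $|D^cw|$ to separate the two conclusions. However, there is a technical slip. You claim that $\phi(r,s):=|s|^2-1$ is a Lipschitz map on $\RR\times\RR^3$, but $\grad_s\phi(r,s)=2s$ is unbounded, so $\phi$ is only locally Lipschitz. Theorem~\ref{chainruleBV} as stated requires a globally Lipschitz $C^1$ function, and thus does not apply to your $\phi$ directly.

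The paper handles precisely this point by multiplying $|s|^2-1$ by a smooth cut-off $\theta\in C^\infty_c(\RR\times\RR^3;[0,1])$ with $\theta\equiv 1$ on $[\alpha,\beta]\times S^2$, so that the modified $\phi(r,s):=\theta(r,s)(|s|^2-1)$ lies in $C^1_c$, hence is Lipschitz, while still satisfying $\phi\circ w=0$ a.e.\ and having the same gradient on $[\alpha,\beta]\times S^2$. Since $w$ is $L^\infty$, this localization costs nothing. Once you insert such a cut-off (or invoke a chain rule valid for locally Lipschitz $\phi$ composed with bounded $BV$ maps), your argument goes through verbatim and coincides with the paper's. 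Your treatment of (a)--(b) via closedness is equivalent to the paper's distance argument.
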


\begin{proof}
We start by proving {\it (a)} and {\it (b)}. Let $x_0\in A_w$. Because $w(\cdot)\in [\alpha,\beta]\times S^2$  $\calL^2$-\aev\ in $\Omega$, we have $|w(\cdot) - \tilde w(x_0)|\gii\dist\big(\tilde w(x_0), [\alpha,\beta]\times S^2\big)$ $\calL^2$-\aev\ in $\Omega$, and so
{\setlength\arraycolsep{0.5pt}
\begin{eqnarray*}
&& 0=\lim_{\epsilon\to0^+}\dashint_{B(x_0,\epsilon)}|w(y)
- \tilde w(x_0)|\,\dy
\gii\dist\big(\tilde
w(x_0), [\alpha,\beta]\times
S^2\big).
\end{eqnarray*}}%
 This implies that $\tilde w(x_0)\in [\alpha,\beta]\times S^2$. Similarly, if $x_0\in J_w$, then
{\setlength\arraycolsep{0.5pt}
\begin{eqnarray*}
&&0=\lim_{\epsilon\to0^+}\dashint_{B^\pm_{\nu_w(x_0)}(x_0,\epsilon)}|w(y)
-  w^\pm(x_0)|\,\dy \gii\dist\big(
w^\pm(x_0), [\alpha,\beta]\times
S^2\big),
\end{eqnarray*}}%
from which we conclude that $w^\pm(x_0)\in [\alpha,\beta]\times S^2$.

In order to prove {\it (c)} and {\it (d)}, we fix an open, bounded subset $U$ of $\RR\times\RR^3$ such that $U\supset[\alpha,\beta]\times S^2$, and we consider a cut-off function $\theta\in C^\infty_c(\RR\times\RR^3;[0,1])$ satisfying $\supp\theta\subset U$ and $\theta(r,s)=1$ for all $(r,s)\in[\alpha,\beta]\times S^2$. Finally, we define $\phi:\RR\times\RR^3\to\RR$ by setting $\phi(r,s):=\theta(r,s)(|s|^2-1)$.
Then, $\phi$ belongs to $C^1_c(\RR\times\RR^3)$ and
\begin{eqnarray*}
\begin{aligned}
& \frac{\partial\phi}{\partial r}(r,s)= 
\frac{\partial\theta}{\partial r}
(r,s)(|s|^2-1),\quad 
\frac{\partial\phi}{\partial s}(r,s)= 
\frac{\partial\theta}{\partial s}(r,s)(|s|^2-1) + 2\theta(r,s)s.
\end{aligned}
\end{eqnarray*}
Hence, if $(r,s)\in[\alpha,\beta]\times S^2$ and $h=(h_1,h')\in\RR\times\RR^3$, then
\begin{eqnarray}\label{kernelgradphi}
\begin{aligned}
& \grad\phi(r,s)\cdot h=0\, \Leftrightarrow\, h_1\in\RR\enspace \land\enspace h'\cdot s =0 \, \Leftrightarrow\, h\in T_{(r,s)}([\alpha,\beta]\times S^2).
\end{aligned}
\end{eqnarray}
Moreover, by Theorem~\ref{chainruleBV}, we have that $\phi\circ w\in BV(\Omega)$ and (see \eqref{chruBV})
{\setlength\arraycolsep{0.5pt}
\begin{eqnarray*}
D (\phi\circ w) &&= \grad
\phi(w)\grad w\calL^N
+ \big(\phi(w^+) - \ffi(w^-)\big)\otimes
\nu_w{\calH^{N-1}}_{\lfloor
J_w}+  \grad \phi(\tilde
w) D^cw\\
&&=\grad \phi(w)\grad
w\calL^N +  \grad \phi(\tilde
w) W^c |D^cw|,
\end{eqnarray*}}%
where we also used {\it (b)} together with the fact that $\phi(r,s)=0$ for $s\in S^2$.
Similarly, since  $w(\cdot)\in [\alpha,\beta]\times S^2$ for $\calL^2$-\aev\ in $\Omega$, it follows that $\phi\circ w=0$  for $\calL^2$-\aev\ in $\Omega$. Thus, $D(\phi\circ w)\equiv0$ and, because $\calL^2$ and $|D^cw|$ are mutually singular measures, we conclude that
{\setlength\arraycolsep{0.5pt}%
\begin{eqnarray*}
&& \grad \phi(w(x))\grad w(x) = 0 \hbox{ for } \calL^2\hbox{-\aev\ } x\in\Omega,\qquad \grad \phi(\tilde w(x)) W^c(x) = 0 \hbox{ for } |D^cw|\hbox{-\aev\ } x\in\Omega;
\end{eqnarray*}}%
that is,
{\setlength\arraycolsep{0.5pt}%
\begin{eqnarray*}
&& \grad \phi(w(x))\cdot(\grad w(x),0) = 0 \hbox{ for }
\calL^2\hbox{-\aev\ } x\in\Omega,\qquad \grad
\phi(\tilde w(x)) \cdot(0, W^c(x)) = 0 \hbox{ for } |D^cw|\hbox{-\aev\
} x\in\Omega,
\end{eqnarray*}}%

which, together with \eqref{kernelgradphi}, yields {\it (c)} and {\it (d)}.\end{proof}

\subsection{On the Lower Bound for ${\calF}$}\label{lowerbound}

Let $G$ denote the function on the right-hand side of \eqref{lscF1}. We claim that 
\begin{eqnarray*}
\begin{aligned}
\calF(u,v)\gii G(u,v)
\end{aligned}
\end{eqnarray*} 
for all $(u,v)\in L^1(\Omega)\times L^1(\Omega;\RR^3)$ or, equivalently,
\begin{eqnarray}\label{liminfineq}
\begin{aligned}
\liminf_{n\to+\infty} F(u_n,v_n)\gii G(u,v)
\end{aligned}
\end{eqnarray}
whenever $\{(u_n,v_n)\}_{n\in\NN}\subset L^1(\Omega)\times L^1(\Omega;\RR^3)$ is a  sequence converging to $(u,v)$ in $L^1(\Omega)\times L^1(\Omega;\RR^3)$. To prove \eqref{liminfineq}, we may assume without loss of generality that
\begin{eqnarray}\label{liminfineq1}
\begin{aligned}
\liminf_{n\to+\infty} F(u_n,v_n) = \lim_{n\to+\infty} F(u_n,v_n)\in\RR^+_0, 
\end{aligned}
\end{eqnarray}%
and for all $n\in\NN$,
\begin{eqnarray*}
\begin{aligned}
 (u_n,v_n)\in W^{1,1}(\Omega;[\alpha,\beta])\times
W^{1,1}(\Omega;S^2).
\end{aligned}
\end{eqnarray*}
In particular, \begin{eqnarray}\label{liminfineq2}
\begin{aligned}
F(u_n,v_n)&= \int_\Omega \big(|\grad u_n| + g(|\grad u_n|)|\grad v_n| + |\grad(u_n v_n)|
\big) \,\dx=\int_\Omega f(u_n,v_n,\grad u_n,\grad v_n)\,\dx\lii C,
\end{aligned}
\end{eqnarray}
for some positive constant $C$ independent of $n$. Hence,
\begin{eqnarray}\label{lbcompact1}
\begin{aligned}
&\int_\Omega \big( |\grad u_n|+ |\grad(u_n v_n)|\big)\,\dx \lii C,
\end{aligned}
\end{eqnarray}
and, in turn, 
\begin{eqnarray}\label{lbcompact2}
\begin{aligned}
&\alpha\int_\Omega |\grad v_n|\,\dx \lii  \int_\Omega |u_n\grad v_n + v_n\otimes\grad u_n|\,\dx  + \int_\Omega |v_n\otimes\grad u_n|\,\dx \lii C.
\end{aligned}
\end{eqnarray}
Thus,
 up to the extraction of a subsequence (not relabeled),
we have\begin{eqnarray*}
\begin{aligned}
& u_n\weaklystar u \hbox{ weakly-$\star$ in } BV(\Omega) \hbox{ and } v_n\weaklystar v \hbox{ weakly-$\star$ in } BV(\Omega;\RR^3) \hbox{ as $n\to+\infty$;}\\
&u(x)\in [\alpha,\beta] \hbox{ and } v(x)\in S^2 \hbox{ for $\calL^2$-\aev\ } x\in\Omega; \\
& \mu_n:=  f(u_n,v_n,\grad u_n,\grad v_n) 
\calL^2_{\lfloor\Omega} \weaklystar \mu \hbox{
weakly-$\star$ in } \calM(\Omega)
\end{aligned}
\end{eqnarray*}
for some nonnegative finite Radon measure 
$\mu\in\calM(\Omega)$. In view of the Radon-Nikodym Theorem, we can decompose $\mu$ into a sum of four mutually singular, nonnegative finite Radon measures as follows:
\begin{eqnarray*}
\begin{aligned}
\mu = \mu_a \calL^2_{\lfloor\Omega} + \mu_c|D^c(u,v)|
+\mu_j|(u,v)^+ - (u,v)^-|\calH^1_{\lfloor J_{(u,v)}} + \mu_s.
\end{aligned}
\end{eqnarray*}
We claim that
{\setlength\arraycolsep{0.5pt}
\begin{eqnarray}
&& \mu_a(x_0)\gii \calQ_T f(u(x_0), v(x_0),\grad u(x_0),\grad v(x_0))\enspace \hbox{ for $\calL^2$-\aev\ $x_0\in\Omega$;}\label{leblb} \\
&& \mu_c(x_0)\gii (\calQ_T f)^\infty\big(\tilde u(x_0) ,\tilde v(x_0), W^c_u(x_0), W^c_v(x_0)\big) \label{cantlb} \enspace \hbox{ for $|D^c(u,v)|$-\aev\ $x_0\in\Omega$;}\quad\qquad\\
&&\mu_j(x_0) \gii \frac{1}{|(u,v)^+(x_0) - (u,v)^-(x_0)|} K\big((u,v)^+(x_0), (u,v)^-(x_0), \nu_{(u,v)}(x_0)\big) \nonumber\\
&&\hskip60.85mm \hbox{ for $|(u,v)^+ - (u,v)^-|\calH^1_{\lfloor J_{(u,v)}}$-\aev\ $x_0\in\Omega$.} \label{jumplb} 
\end{eqnarray}}%

Assume that \eqref{leblb}, \eqref{cantlb}, and \eqref{jumplb} hold, and let $\{\phi_k\}_{k\in \NN}\subset C^\infty_0(\Omega;[0,1])$ be  an increasing
sequence    of smooth cut-off functions such that $\sup_{k\in\NN}\phi_k(x)=1$ for all $x\in\Omega$.
Then,   using the convergence \(\mu_n \weaklystar \mu\)
weakly-$\star$ in \( \calM(\Omega)\), 
{\setlength\arraycolsep{0.5pt}
\begin{eqnarray}\label{almostlb}
\lim_{n\to+\infty}\int_\Omega
f(u_n,v_n,\grad u_n,\grad
v_n)\,\dx &&\gii \liminf_{n\to+\infty}\int_\Omega
\phi_k(x) f(u_n,v_n,\grad
u_n,\grad v_n)\,\dx 
=\int_\Omega \phi_k(x)\,\d\mu
\nonumber \\
&&\gii\intO \phi_k(x)
\calQ_Tf(u(x), v(x),\grad
u(x),\grad v(x))\,\dx
\nonumber\\
&&\quad + \int_{S_{(u,v)}}
\phi_k(x) K\big((u,v)^+(x),(u,v)^-(x),\nu_{(u,v)}(x)\big)\,\d\calH^1(x)
\nonumber\\
&&\quad +\int_{\Omega}\phi_k(x)(\calQ_T
f)^\infty\big(\tilde
u(x) ,\tilde v(x), W^c_u(x),
W^c_v(x)\big)\,\d|D^c(u,v)|(x).
\end{eqnarray}}%
In view of  Lebesgue's Monotone Convergence  Theorem, \eqref{liminfineq1}, and \eqref{liminfineq2}, letting $k\to+\infty$ in \eqref{almostlb}, we obtain \eqref{liminfineq}. 

We start by proving \eqref{leblb} and \eqref{cantlb}.
Let $H_\epsi$ be the function defined in \eqref{defofhepsi}, and let $A\in\calA(\Omega)$. Because  $H_\epsi$ satisfies conditions (H1)--(H4) of \cite{FMXCIII}
by Proposition~\ref{hepsiasFM},
 we have
{\setlength\arraycolsep{0.5pt}
\begin{eqnarray}\label{byFMXCIII1}
&&\liminf_{n\to+\infty}\int_A
H_\epsi(u_n,v_n,\grad
u_n,\grad v_n)\,\dx \nonumber\\ 
&& \quad  \gii \int_A H_\epsi(u,v,\grad
u,\grad v)\,\dx + \int_A
(H_\epsi)^\infty(\tilde
u,\tilde v,W^c_u,W^c_v)\,
\d|D^c(u,v)|(x)\nonumber\\
&&\quad =\int_A \calQ \tilde f(u,v,\grad
u,\grad v)\,\dx + \int_A
(\calQ\tilde f)^\infty(\tilde
u,\tilde v,W^c_u,W^c_v)\,
\d|D^c(u,v)|(x) + O(\epsi)
\end{eqnarray}}%
as $\epsi\to0^+$, where in the last equality we also  used the identity
\begin{eqnarray*}
\begin{aligned}
(H_\epsi)^\infty(r,s,\xi,\eta) = (\calQ\tilde
f)^\infty(r,s,\xi,\eta) + \epsi(|\xi| + |\eta|).
\end{aligned}
\end{eqnarray*}
Recalling  that for $(r,s)\in[\alpha,\beta]\times S^2$, $T_{(r,s)}([\alpha,\beta]\times S^2) = \RR\times T_s(S^2)$, from Lemma~\ref{onManae}, \eqref{fandtildef},
\eqref{lbcompact1}, and \eqref{lbcompact2}, we conclude that
{\setlength\arraycolsep{0.5pt}
\begin{eqnarray*}
&&\liminf_{n\to+\infty}\int_A
f(u_n,v_n,\grad u_n,\grad
v_n)\,\dx =  \liminf_{n\to+\infty}\int_A
\tilde f(u_n,v_n,\grad
u_n,\grad v_n)\,\dx \nonumber \\
&&\quad\gii \liminf_{n\to+\infty}\int_A
\calQ \tilde f(u_n,v_n,\grad
u_n,\grad v_n)\,\dx \gii
\liminf_{n\to+\infty}\int_A
H_\epsi(u_n,v_n,\grad
u_n,\grad v_n)\,\dx -
\epsi C.
\end{eqnarray*}}%
These estimates and \eqref{byFMXCIII1} entail
{\setlength\arraycolsep{0.5pt}
\begin{eqnarray}\label{byFMXCIII2}
&&\liminf_{n\to+\infty}\int_A
f(u_n,v_n,\grad u_n,\grad
v_n)\,\dx \gii \int_A \calQ \tilde f(u,v,\grad
u,\grad v)\,\dx + \int_A
(\calQ\tilde f)^\infty(\tilde
u,\tilde v,W^c_u,W^c_v)\,
\d|D^c(u,v)|(x). \nonumber \\
\end{eqnarray}}%
 Since $\calL^N_{\lfloor\Omega}$ and $|D^c(u,v)|_{\lfloor\Omega}$ are mutually singular, \eqref{leblb} and \eqref{cantlb} are a consequence of \eqref{byFMXCIII2}.

We now establish \eqref{jumplb}. We start by recalling that if $\nu\in S^1,$ then $Q_\nu$ denotes the unit cube in $\RR^2$ centered at the origin and with two faces orthogonal to $\nu$. We set
\begin{eqnarray*}
\begin{aligned}
Q_\nu^+:= \big\{x\in Q_\nu\!:\,\, x\cdot \nu>0\big\}, \qquad Q_\nu^-:= \big\{x\in Q_\nu\!:\,\, x\cdot \nu<0\big\},
\end{aligned}
\end{eqnarray*}
and, for $x_0\in\RR^2$ and $\epsilon>0$,
\begin{eqnarray*}
\begin{aligned}
Q_\nu(x_0,\epsilon):= x_0 + \epsi Q_\nu,\qquad Q_\nu^\pm(x_0,\epsilon):= x_0 + \epsi Q_\nu^\pm.
\end{aligned}
\end{eqnarray*}
To simplify the notation, we further set $w:=(u,v)$ and $w_n:=(u_n,v_n)$, $n\in\NN$. Let $x_0\in J_w$ be such that
{\setlength\arraycolsep{0.5pt}
\begin{eqnarray}
&&\lim_{\epsilon\to0^+} \frac{1}{\epsilon^2}\int_{Q^\pm_{\nu_w(x_0)}(x_0,\epsilon)}\big|w(x) - w^\pm(x_0)\big|\,\dx = 0, \label{appjplb} \\
&& \lim_{\epsilon\to0^+} \frac{1}{\epsilon} \int_{S_w\cap Q_{\nu_w(x_0)}(x_0,\epsilon)} \big|w^+(x) - w^-(x)\big|\,\d\calH^1(x) =\lim_{\epsilon\to0^+} \frac{1}{\epsilon}|w^+
- w^-|\calH^1_{\lfloor(S_w\cap Q_{\nu_w(x_0)}(x_0,\epsilon))}
\nonumber \\
&&\hskip75.75mm=  \big|w^+(x_0) - w^-(x_0)\big|, \label{lebpjumplb}\\
&&\mu_j(x_0) = \lim_{\epsilon\to0^+} \frac{\mu(Q_{\nu_w(x_0)}(x_0,\epsilon))}{|w^+ - w^-|\calH^1_{\lfloor(S_w\cap Q_{\nu_w(x_0)}(x_0,\epsilon))}}\in\RR.\label{RDjumplb}
\end{eqnarray}}%
In view of Proposition~\ref{appppties}, Theorem~\ref{conseqLBDT}, and Besicovitch Differentiation Theorem, \eqref{appjplb}--\eqref{RDjumplb} hold for $|w^+ - w^-|\calH^1_{\lfloor J_w}$-\aev\ $x_0\in\Omega$.

Let $\{\epsilon_i\}_{i\in\NN}$ be a sequence of positive numbers converging to zero such that the boundary of each $Q_{\nu_w(x_0)}(x_0,\epsilon_i)$ has zero $\mu$--measure. Using \eqref{lebpjumplb}, \eqref{RDjumplb}, and the weak-$\star$ convergence $\mu_n\weaklystar\mu$ in $\calM(\Omega)$, we obtain
{\setlength\arraycolsep{0.5pt}
\begin{eqnarray}\label{jumplb1}
&& \big|w^+(x_0) - w^-(x_0)\big|
\mu_j(x_0) = \lim_{i\to+\infty}
\frac{1}{\epsilon_i}
\int_{Q_{\nu_w(x_0)}(x_0,\epsilon_i)}
\d\mu \nonumber\\
&&\quad= \lim_{i\to+\infty}
\lim_{n\to+\infty}
\frac{1}{\epsilon_i}
\int_{Q_{\nu_w(x_0)}(x_0,\epsilon_i)}
f(u_n(x),v_n(x),\grad
u_n(x),\grad v_n(x))\,\dx
\nonumber\\
&&\quad = \lim_{i\to+\infty}
\lim_{n\to+\infty}
\int_{Q_{\nu_w(x_0)}}\epsilon_i\,
f\Big(u_{n,\epsilon_i}(y),
v_{n,\epsilon_i}(y),
\frac{1}{\epsilon_i}\grad
u_{n,\epsilon_i}(y),
\frac{1}{\epsilon_i}\grad
v_{n,\epsilon_i}(y)\Big)\,\dy
\nonumber\\
&&\quad= \lim_{i\to+\infty}
\lim_{n\to+\infty}
\int_{Q_{\nu_w(x_0)}}\Big[|\grad
u_{n,\epsilon_i}(y)|
+  |\grad (u_{n,\epsilon_i}
v_{n,\epsilon_i})(y)|
+ g\Big(\frac{1}{\epsilon_i}|\grad
u_{n,\epsilon_i}(y)|\Big)|\grad
v_{n,\epsilon_i}(y)|\Big]\,\dy
,
\end{eqnarray}}%
where 
\begin{eqnarray*}
\begin{aligned}
u_{n,\epsilon_i}(y):= u_n(x_0 + \epsilon_i y),\quad v_{n,\epsilon_i}(y):= v_n(x_0 + \epsilon_i y),\quad y\in Q_{\nu_w(x_0)}.
\end{aligned}
\end{eqnarray*}

Setting $w_{n,\epsilon_i}:= (u_{n,\epsilon_i},
v_{n,\epsilon_i})$, we have that $w_{n,\epsilon_i}\in W^{1,1}(Q_{\nu_w(x_0)};[\alpha,\beta]\times S^2$,
and, in view of \eqref{appjplb},
\begin{eqnarray}\label{jumplb2}
\begin{aligned}
&  \lim_{i\to+\infty}\lim_{n\to+\infty}\int_{Q_{\nu_w(x_0)}}|w_{n,\epsilon_i}(y) - w_0(y)|\,\dy = 0,
\end{aligned}%
\end{eqnarray}
where
\begin{eqnarray*}
\begin{aligned}
w_0(y):=
\begin{cases}
 w^+(x_0) & \hbox{ if } y\cdot \nu_w(x_0) \gii 0,\\
 w^-(x_0) & \hbox{ if } y\cdot \nu_w(x_0) < 0.
\end{cases}
\end{aligned}
\end{eqnarray*}
By a standard diagonalization argument, from \eqref{jumplb1}, \eqref{jumplb2}, and Lemma~\ref{slicingMan}, we can construct a sequence $\{\bar w_k\}_{k\in\NN} = \{(\bar u_k,\bar v_k)\}_{k\in\NN}\subset W^{1,1}(Q_{\nu_w(x_0)};[\alpha,\beta]\times S^2)$ such that $\bar w_k = w_0$ on $\partial Q_{\nu_w}(x_0)$ for all $k\in\NN$, 
\begin{eqnarray*}
\begin{aligned}
\lim_{k\to+\infty}\Vert \bar w_k - w_0\Vert_{L^1(Q_{\nu_w(x_0)};
\RR \times \RR^3)}=0,
\end{aligned}
\end{eqnarray*}
and
{\setlength\arraycolsep{0.5pt}
\begin{eqnarray}\label{jumplb3}
&&\big|w^+(x_0) - w^-(x_0)\big|
\mu_j(x_0) \gii \limsup_{k\to+\infty}
\int_{Q_{\nu_w(x_0)}}\Big[|\grad
\bar u_k(y)| +  |\grad
(\bar u_k \bar v_k)(y)|+
g\Big(\frac{1}{\epsilon_{i_k}}|\grad\bar
u_k(y)|\Big)|\grad\bar
v_k(y)|\Big]\,\dy. \nonumber\\
\end{eqnarray}}%
Because
{\setlength\arraycolsep{0.5pt}
\begin{eqnarray*}
\int_{Q_{\nu_w(x_0)}}
g\Big(\frac{1}{\epsilon_{i_k}}
|\grad\bar
u_k(y)|\Big)|\grad\bar
v_k(y)|\,\dy&&\gii \int_{\{y\in
Q_{\nu_w(x_0)}: \grad
\bar u_k(y)=0\}} |\grad\bar
v_k(y)|\,\dy\\
&& = \int_{Q_{\nu_w(x_0)}}
\chi_{_{\{0\}}}(|\grad
\bar u_k(y)|)|\grad\bar
v_k(y)|\,\dy,
\end{eqnarray*}}%
from \eqref{jumplb3} and \eqref{defK}, and since $(\bar u_k,\bar v_k)\in \calP((u,v)^+(x_0), (u,v)^-(x_0),\nu_{(u,v)}(x_0))$ 
for all $k\in\NN$, we obtain  \eqref{jumplb}.

\subsection{On the Upper Bound for ${\calF}$}\label{upperbound}

We  identify the Radon measure on $\Omega$ given by Lemma~\ref{CalFmeasure} with its restriction $\calF(u,v;\cdot)$ to
$\calA(\Omega)$ introduced
in \eqref{locrelaxfct}.
In view of \eqref{ubcalF}
and \cite[Step~1 of Prop.~{4.4}]{AMTXCI},
we have that $\calF(u,v;\cdot)$
is
 local   in $\calB(\Omega)
$ in the following sense:
\begin{eqnarray*}
\begin{aligned}
&\calF(u,v;B)=\calF(u',v';B)
\end{aligned}
\end{eqnarray*}
for all $B\in\calB(\Omega)$
and $w:=(u,v)$, $
w':=(u',
 v')\in BV(\Omega;[\alpha,
\beta])\times BV(\Omega;S^2)$ such that
\begin{eqnarray*}
\begin{aligned}
& B\subset S_w \cap S_{w'},\qquad (w^+(x), w^-(x),\nu_w(x))
\sim ({w'}^+(x), {w'}^-(x),\nu_{w'}(x)) \,\, \hbox{ for all }
x\in B,
\end{aligned}
\end{eqnarray*}
where
\begin{eqnarray}\label{equivJump}
\begin{aligned}
& (a,b,\nu) \sim (a',b',\nu')
\Leftrightarrow (a=a'
\land b=b' \land \nu
= \nu') \lor (b=a'
\land a=b' \land \nu
= -\nu') .
\end{aligned}
\end{eqnarray}

\begin{lemma}\label{abscontpartCalF}
Let $(u,v)\in BV(\Omega;[\alpha,
\beta])\times BV(\Omega;S^2)$. Then, for $\calL^2$-\aev\ $x_0\in\Omega,$ we have
\begin{eqnarray}\label{ubabscontpart}
{\d\calF(u,v;\cdot)\over\d\calL^2}(x_0)\lii \calQ_T f(u(x_0),v(x_0),\grad u(x_0),\grad v(x_0)).
\end{eqnarray}
\end{lemma}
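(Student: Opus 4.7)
I will restrict attention to points $x_0\in\Omega$ satisfying simultaneously: (i) $u$ and $v$ are approximately differentiable at $x_0$ with $u(x_0)\in[\alpha,\beta]$, $v(x_0)\in S^2$; (ii) the approximate gradient $\grad v(x_0)$ lies in $[T_{v(x_0)}(S^2)]^2$, as provided by Lemma~\ref{onManae}\,(c); (iii) $x_0$ is a Lebesgue point of $(\grad u,\grad v)$; and (iv) the blow-up limit
\[
\frac{d\calF(u,v;\cdot)}{d\calL^2}(x_0)=\lim_{\epsi\to0^+}\frac{\calF(u,v;Q(x_0,\epsi))}{\epsi^2}
\]
exists. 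Such $x_0$ form a set of full $\calL^2$-measure in $\Omega$. It will suffice to build, for each such $\epsi>0$, an admissible competitor for $\calF(u,v;Q(x_0,\epsi))$ whose normalized energy matches the right-hand side of \eqref{ubabscontpart} plus an $o(1)$ error as $\epsi\to0^+$.

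Fixing $\delta>0$, I would select, by the definition of $\calQ_Tf$, a pair $(\ffi,\psi)\in W^{1,\infty}_0(Q)\times W^{1,\infty}_0(Q;T_{v(x_0)}(S^2))$ such that
\[
\int_Q f\bigl(u(x_0),v(x_0),\grad u(x_0)+\grad\ffi(y),\grad v(x_0)+\grad\psi(y)\bigr)\,dy\lii \calQ_T f\bigl(u(x_0),v(x_0),\grad u(x_0),\grad v(x_0)\bigr)+\delta,
\]
extend $(\ffi,\psi)$ $Q$-periodically, and set, on $Q(x_0,\epsi)$ and for $m\in\NN$,
\[
\bar u_m^\epsi(x):=u(x_0)+\grad u(x_0)\,(x-x_0)+\frac{\epsi}{m}\ffi\bigl(\tfrac{m}{\epsi}(x-x_0)\bigr),
\]
with an analogous definition of $\bar v_m^\epsi$ using $\psi$. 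Truncating $\bar u_m^\epsi$ into $[\alpha,\beta]$ is harmless since $u(x_0)\in[\alpha,\beta]$ and the oscillation is $O(\epsi/m)$. A change of variables, $Q$-periodicity of $(\grad\ffi,\grad\psi)$, and the continuity of $f$ in its $(r,s)$-arguments yield
\[
\tfrac{1}{\epsi^2}\int_{Q(x_0,\epsi)}f\bigl(\bar u_m^\epsi,\bar v_m^\epsi,\grad\bar u_m^\epsi,\grad\bar v_m^\epsi\bigr)\,dx \longrightarrow \int_Qf\bigl(u(x_0),v(x_0),\grad u(x_0)+\grad\ffi,\grad v(x_0)+\grad\psi\bigr)\,dy
\]
as $\epsi\to0^+$ and $m\to\infty$.

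Since $\bar v_m^\epsi$ does not take values on $S^2$, I would apply Lemma~\ref{seqonman} to select $y_m^\epsi\in B(0,\tfrac12)$ so that $\tilde v_m^\epsi:=\pi_{y_m^\epsi}\circ\bar v_m^\epsi\in W^{1,1}(Q(x_0,\epsi);S^2)$. Because $\bar v_m^\epsi(x)-v(x_0)\in T_{v(x_0)}(S^2)$ is of order $\epsi$ and by \eqref{projonSisId}--\eqref{gradprojonS} the differential $\grad\pi_{y_m^\epsi}(v(x_0))$ acts as the identity on $T_{v(x_0)}(S^2)$, combined with the Lipschitz control in \eqref{uniflipballtf}, one gets
\[
\grad\tilde v_m^\epsi(x)=\grad\pi_{y_m^\epsi}\bigl(\bar v_m^\epsi(x)\bigr)\grad\bar v_m^\epsi(x)=\grad\bar v_m^\epsi(x)+\omega_{\epsi,m}(x),
\]
where $|\omega_{\epsi,m}|\to 0$ uniformly as $\epsi/m\to0^+$ on the bounded set where $\grad\bar v_m^\epsi$ ranges. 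The Lipschitz bounds \eqref{boundsf} on $f$ in its matrix arguments then imply that the energy of $(\bar u_m^\epsi,\tilde v_m^\epsi)$ has the same limit as that of $(\bar u_m^\epsi,\bar v_m^\epsi)$.

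Finally, to make the pair $(\bar u_m^\epsi,\tilde v_m^\epsi)$ an admissible competitor, I would glue it to $(u,v)$ across a thin shell near $\partial Q(x_0,\epsi)$ by applying Lemma~\ref{slicingMan} (and Remark~\ref{slicinghalflip}) on the annular region between two concentric cubes. Approximate differentiability at $x_0$ guarantees that $(\bar u_m^\epsi,\bar v_m^\epsi)$ is $L^1$-close to $(u,v)$ on $Q(x_0,\epsi)$ with error $o(\epsi^2)$, so the gluing introduces a negligible contribution. Dividing by $\epsi^2$ and letting $m\to\infty$, then $\epsi\to0^+$, then $\delta\to0^+$ by a standard diagonal argument yields \eqref{ubabscontpart}. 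The principal obstacle is the projection step: it must preserve the $S^2$-constraint, keep the $[T_{v(x_0)}(S^2)]^2$-structure of the gradient to leading order, and not inflate the non-quasiconvex density $f$; this is reconciled through the combination of \eqref{gradprojonS}, the Lipschitz estimates \eqref{uniflipballtf}, and the continuity estimate \eqref{contQtildef} for $\calQ_Tf=\calQ\tilde f$.
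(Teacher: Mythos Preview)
Your strategy has a genuine gap at the gluing step. By definition, $\calF(u,v;Q(x_0,\epsi))$ is an infimum over sequences converging to $(u,v)$ in $L^1(Q(x_0,\epsi))$ \emph{for fixed} $\epsi$. Your competitors $(\bar u_m^\epsi,\tilde v_m^\epsi)$ converge, as $m\to\infty$, to the (projected) affine map, not to $(u,v)$; the $o(\epsi^2)$ closeness you invoke is a rate as $\epsi\to0^+$, not $L^1$-convergence at fixed $\epsi$, so the sequence is not admissible for $\calF(u,v;Q(x_0,\epsi))$. Your proposed fix via Lemma~\ref{slicingMan} on the annulus fails for the same reason: that lemma takes as input a sequence already converging in $L^1$ to the target $w=(u,v)$ and modifies its boundary trace, but your affine-plus-oscillation competitors do not converge to $(u,v)$ on the annulus either. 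Two smaller points: your $\bar v_m^\epsi$ actually lies \emph{outside} $\overline{B(0,1)}$ (since $|v(x_0)+t|^2=1+|t|^2>1$ for nonzero $t\in T_{v(x_0)}(S^2)$), so Lemma~\ref{seqonman} and the projections $\pi_y$ do not apply as stated; and when $u(x_0)\in\{\alpha,\beta\}$ the affine part itself exits $[\alpha,\beta]$, so truncation requires more than an $O(\epsi/m)$ argument.

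The paper avoids the gluing problem altogether by taking, as the base of the construction, the mollifications $u_n=u*\rho_n$ and $v_n=v*\rho_n$ (which \emph{do} converge to $(u,v)$ in $L^1$ on each $B(x_0,\varsigma_k)$), projecting $v_n$ onto $S^2$ via $\pi_{y_{n,k}}$ to get $v_{n,k}$, and then grafting the oscillation $\tfrac{1}{n}\ffi_\epsi(nx)$, $\tfrac{1}{n}\psi_\epsi(nx)$ \emph{only} on the set where $u_n$, $v_{n,k}$ are close to $u(x_0)$, $v(x_0)$, via smooth cut-offs $\zeta_1(u_n-u(x_0))$, $\zeta_2(v_{n,k}-v(x_0))$. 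A final projection (the contraction $\Phi_{n,\epsi}$ for the $[\alpha,\beta]$-constraint and the nearest-point map $\Pi$ for $S^2$) enforces the constraints. Because the oscillation is $O(1/n)$ and the base converges, the competitors $\bar u_{n,k}^\epsi$, $\bar v_{n,k}^\epsi$ converge to $(u,v)$ in $L^1$ with no gluing needed; the energy is then split between a ``good'' set $B_u\cap B_{\grad u}\cap B_v\cap B_{\grad v}$ (where the gradients match $\grad u(x_0)+\grad\ffi_\epsi$, $\grad v(x_0)+\grad\psi_\epsi$ up to a chosen tolerance $\ell_\epsi$, so $f$ is within $\epsi$ of its value at those test gradients) and its complement, whose normalized measure vanishes by the Lebesgue-point properties \eqref{lebu}--\eqref{lspiszerov}. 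This mollification-plus-localized-oscillation device is the missing idea in your sketch.
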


\begin{proof} Let $x_0\in\Omega$ be such that
{\setlength\arraycolsep{0.5pt}
\allowdisplaybreaks
\begin{eqnarray}
&&\alpha\lii u(x_0)\lii\beta, \label{uinconvex} \\
&&|v(x_0)|=1,\enspace \grad v(x_0)\in [T_{v(x_0)}(S^2)]^2, \label{vinstwo}\\
&&{\d\calF(u,v;\cdot)\over\d\calL^2}(x_0)  {\hbox{ exists and is finite}}, \label{rnforcalF}\\
&&\lim_{\epsilon\to0^+}
\dashint_{B(x_0,\epsilon)}|u(x)-u(x_0)|\,\dx = 0,\hskip45mm \label{lebu}\\
&&\lim_{\epsilon\to0^+}
\dashint_{B(x_0,\epsilon)}|\grad u(x)-\grad u(x_0)|\,\dx = 0, \label{lebgradu}\\
&&\lim_{\epsilon\to0^+}
\dashint_{B(x_0,\epsilon)}|v(x)-v(x_0)|\,\dx = 0, \label{lebv}\\
&&\lim_{\epsilon\to0^+}
\dashint_{B(x_0,\epsilon)}|\grad v(x)-\grad v(x_0)|\,\dx = 0, \label{lebgradv}\\
&&\lim_{\epsi\to0^+}{|D^s u|(B(x_0,\epsilon))\over \calL^2(B(x_0,\epsilon))}=0, \label{lspiszerou}\\
&&\lim_{\epsi\to0^+}{|D^s v|(B(x_0,\epsilon))\over \calL^2(B(x_0,\epsilon))}=0. \label{lspiszerov}
\end{eqnarray}}%

We observe that \eqref{uinconvex}--\eqref{lspiszerov} hold for $\calL^2$-\aev\ $x_0\in\Omega$. 

Fix $\epsi>0$. 
Let $\ffi_\epsi\in W^{1,\infty}_0(Q)$ and $\psi_\epsi\in W^{1,\infty}_0(Q;T_{v(x_0)}(S^2))$, extended by periodicity to the whole $\RR^2$, be such that%
{\setlength\arraycolsep{0.5pt}
\begin{eqnarray}\label{definfQQT}
&&\calQ_T f(u(x_0),v(x_0),\grad
u(x_0),\grad v(x_0))
+ \epsi \gii\int_Q f(u(x_0),v(x_0),
\grad
u(x_0) + \grad\ffi_\epsi(y),\grad
v(x_0)+\grad\psi_\epsi(y))\,\dy.\qquad\quad
\end{eqnarray}}%
For each $n\in\NN $ and \(\epsi>0\), consider the function  $\Phi_{n,\epsi}:\RR\to\RR$ defined by
{\setlength\arraycolsep{0.5pt}
\begin{eqnarray*}
&& \Phi_{n,\epsi}(r):={n(\beta-\alpha)
r + (\beta + \alpha)\Vert\ffi_\epsi\Vert_\infty\over
n(\beta-\alpha)  +
 2\Vert\ffi_\epsi\Vert_\infty}\cdot
\end{eqnarray*}}%
Then, $\Phi_{n,\epsi}\in C^\infty(\RR)$, $0<\Phi_{n,\epsi}'(r)\lii 1$ for all $r\in\RR$, and  $\{\Phi_{n,\epsi}'\}_{n\in\NN}$ converges uniformly to 1 in $\RR$. Observe  that
{\setlength\arraycolsep{0.5pt}
\begin{eqnarray}\label{Phinrest}
&& {\Phi_{n,\epsi}}|_{[\alpha-\Vert\ffi_\epsi\Vert_\infty/
n, \beta + \Vert\ffi_\epsi\Vert_\infty/n]}:
\Bigg[\alpha-{\Vert\ffi_\epsi\Vert_\infty\over
n}, \beta + {\Vert\ffi_\epsi\Vert_\infty\over
n}\Bigg]\to[\alpha,\beta]
\end{eqnarray}}%
defines a projection of $\big[\alpha-\Vert\ffi_\epsi\Vert_\infty/ n, \beta + \Vert\ffi_\epsi\Vert_\infty/n\big]$ onto $[\alpha,\beta]$.

Fix $\delta_0\in(0,1/2)$, and consider the nearest point  projection $\Pi:y\in B(v(x_0),\delta_0)\mapsto \frac{y}{|y|}\in S^2$ of $B(v(x_0),\delta_0)$ onto $S^2$, which defines a $C^\infty$ mapping.
Let
{\setlength\arraycolsep{0.5pt}
\begin{eqnarray*}
&&a_\epsi:=\max\Big\{2 + 2|\grad u(x_0)| + \Vert\grad\ffi_\epsi\Vert_\infty, (\Vert\grad\Pi\Vert_{\infty} + 1)(2 + 2|\grad v(x_0)| + \Vert\grad\psi_\epsi\Vert_\infty)\Big\}, \\
&&b_\epsi:=1 + |\grad v(x_0)| + \Vert\grad\psi_\epsi\Vert_\infty.
\end{eqnarray*}}%
In view of the continuity properties of $f$ and the regularity of $\Pi$, we can find $\ell_\epsi\in (0,1)$  such that
{\setlength\arraycolsep{0.5pt}
\begin{eqnarray}\label{bycontofff}
&& |\xi_1|, |\xi_2|,|\eta_1|,|\eta_2|\lii
a_\epsi,\, |\xi_1-\xi_2|,
|\eta_1-\eta_2|\lii \ell_\epsi
 \,\Rightarrow\, |f(u(x_0),v(x_0),
 \xi_1,\eta_1)-
f(u(x_0),v(x_0),\xi_2,\eta_2)|
\lii\epsi, \qquad\quad
\end{eqnarray}}%
and there exists $\delta_\epsi\in(0,\delta_0)$ such that
\begin{eqnarray}\label{byregofPi}
s_1, s_2\in B(v(x_0),\delta_\epsi)\Rightarrow |\grad\Pi(s_1)-\grad\Pi(s_2)|\lii{\ell_\epsi\over2b_\epsi}\cdot
\end{eqnarray}
Let $\{\varsigma_k\}_{k\in\NN}$ be a decreasing sequence of positive real numbers such that $B(x_0,2\varsigma_k)\subset\Omega$ and $|Du|(\partial B(x_0,\varsigma_k))=|Dv|(\partial B(x_0,\varsigma_k))=0$ for all $k\in\NN$. Let $\{\rho_n\}_{n\in\NN}$ be the sequence of standard mollifiers defined in \eqref{smoothmoll} for $\delta=1/n$. Choose $n_0=n_0(x_0)\in\NN$ such that for all $n\gii n_0,$ we have $B(x_0,2\varsigma_1)\subset \{x\in\Omega\!: \dist(x,\partial\Omega)>1/n\}$. For $n\gii n_0,$ we define (see \eqref{defmollfct})
{\setlength\arraycolsep{0.5pt}
\begin{eqnarray*}
&& u_n(x):= u*\rho_n,
\quad v_n:= v * \rho_n.
\end{eqnarray*}}%
Then (see Lemma~\ref{lemmaAMT}-{\sl
i)}), for all $k\in\NN$, $u_n\in W^{1,1}(B(x_0,\varsigma_k);[\alpha,\beta])\cap C^\infty(\overline{B(x_0,\varsigma_k)})$ converges
strictly to \(u\) in 
 $BV(B(x_0,\varsigma_k))$, and  $v_n\in W^{1,1}(B(x_0,\varsigma_k);\overline{B(0,1)})\cap C^\infty(\overline{B(x_0,\varsigma_k)};\RR^3)
$ converges
strictly to \(v\) in 
 $BV(B(x_0,\varsigma_k);\RR^3)$ as \(n\to \infty\);
i.e.,     for all $k\in\NN$,  $u_n\weaklystar u$ weakly-$\star$ in 
 $BV(B(x_0,\varsigma_k))$,  $v_n\weaklystar v$ weakly-$\star$ in $BV(B(x_0,\varsigma_k);\RR^3)$, as $n\to\infty$, and 
\begin{eqnarray}\label{approumanif}
\lim_{n\to\infty}\int_{B(x_0,\varsigma_k)} |\grad u_n|\,\dx = |Du|(B(x_0,\varsigma_k)), \qquad \lim_{n\to\infty}\int_{B(x_0,\varsigma_k)} |\grad v_n|\,\dx = |Dv|(B(x_0,\varsigma_k)).
\end{eqnarray}
Without loss of generality, we  assume that $n_0=1$. Also, in what follows, $C_\epsi$ represents a positive constant depending on $\epsi$ but independent of $n$ and $k$ and  whose value may change from one instance to another.

\underbar{Step 1.} We construct admissible sequences for $\calF(u,v;B(x_0,\varsigma_k))$.

Let $v_{n,k}:= \pi_{y_{n,k}}\circ v_n\in W^{1,1}(B(x_0,\varsigma_k);S^2)\cap
C^\infty(\overline{B(x_0,\varsigma_k)};\RR^3)$,
where $y_{n,k}\in B(0,1/2)$ is given by Lemma~\ref{seqonman}
applied to $B(x_0;\varsigma_k)$, $v_n$, and \(A^\epsi_{n,k}:=\{x\in B(x_0,\varsigma_k)\!:\,
\dist(v_n(x),S^2)> \delta_\epsi/2\}\), so that
{\setlength\arraycolsep{0.5pt}%
\begin{eqnarray}\label{inAenk}
&&\int_{A^\epsi_{n,k}} |\grad v_{n,k}(x)|\,\dx
\lii C_\star \int_{A^\epsi_{n,k}} |\grad v_n (x)
| \,\dx.
\end{eqnarray}}%
Because \(\frac{\delta_\epsi}{2} \lii \frac{1}{4}\),
 \(|v_n(x) - y_{n,k}| \gii
\frac{1}{4}\) whenever   \(x\in B(x_0, \varsigma_k) \backslash
A^\epsi_{n,k}\). Thus, by \eqref{estGradHess}, for   \(x\in B(x_0, \varsigma_k) \backslash
A^\epsi_{n,k}\), we have
{\setlength\arraycolsep{0.5pt}%
\begin{eqnarray}\label{outsideAenk1}
&&|\grad v_{n,k}(x)| = |\grad \pi_{y_{n,k}}(v_n(x))
\grad v_n(x)|  \lii   4 \bar C |\grad v_n(x)|.
\end{eqnarray}}%
Moreover, by \eqref{gradprojonS} and \eqref{vinstwo},
\(\grad \pi_{y_{n,k}}(v(x_0))
\grad v(x_0) = \grad v(x_0)\).
Hence, \eqref{estGradHess},  \eqref{uniflipballtf},
and the inequality \(|v(x_0) - y_{n,k}| \gii
\frac{1}{2}\)  yield,   for   \(x\in B(x_0, \varsigma_k) \backslash
A^\epsi_{n,k}\) and \(\calC:=\max \{ 2 \overline C + 2\bar
C, \overline C |\grad v(x_0)| 
\}, \)
{\setlength\arraycolsep{0.5pt}%
\begin{eqnarray}\label{outsideAenk2}
&& |\grad v_{n,k}(x) - \grad v(x_0)| \nonumber\\
&&\quad \lii |\grad
\pi_{y_{n,k}}(v_n(x))
\grad v_n(x) - \grad \pi_{y_{n,k}}(v(x_0))
\grad v_n(x)| + | \grad \pi_{y_{n,k}}(v(x_0))
\grad v_n(x)  - \grad \pi_{y_{n,k}}(v(x_0))
\grad v(x_0)| \nonumber\\
&&\quad \lii \overline C |v_n(x) -v(x_0)| |\grad
v_n(x)| + 2 \bar C|\grad v_{n}(x) - \grad v(x_0)| \nonumber\\
&&\quad \lii \calC \big( |\grad v_{n}(x) - \grad v(x_0)| + |v_n(x) -v(x_0)| \big).
\end{eqnarray}}%
We claim that
\begin{eqnarray}\label{approvmanif}
\lim_{n\to\infty}\int_{B(x_0,\varsigma_k)}|v_{n,k}(x)
- v(x)|\,\dx=0.
\end{eqnarray}
In fact, using the
condition $v(x)\in S^2$ for $\calL^2$-\aev\ $x\in\Omega$
and the convergence $v_n\to v$
in $L^1(B(x_0,\varsigma_k);\RR^3)$ as $n\to\infty$, we get
{\setlength\arraycolsep{0.5pt}
\begin{eqnarray*}
&& \limsup_{n\to\infty} \calL^2(A^\epsi_{n,k})
\lii \limsup_{n\to\infty} {2\over\delta_\epsi}\int_{A^\epsi_{n,k}}
\dist (v_n(x), S^2)\,\dx
\lii \limsup_{n\to\infty}  {2\over\delta_\epsi}\int_{B(x_0,\varsigma_k)}
|v_n(x)- v(x)|\,\dx = 0,
\end{eqnarray*}}%
and so
\begin{eqnarray}\label{measAnkzero}
\lim_{n\to\infty} \calL^2(A^\epsi_{n,k}) =0.
\end{eqnarray}
Observing that 
{\setlength\arraycolsep{0.5pt}
\begin{eqnarray*}
\int_{B(x_0,\varsigma_k)}|v_{n,k}(x)
- v(x)|\,\dx&&= \int_{A^\epsi_{n,k}}|v_{n,k}(x)
- v(x)|\,\dx + \int_{B(x_0,\varsigma_k)\backslash
A^\epsi_{n,k}}|v_{n,k}(x)
- v(x)|\,\dx\cr
&&\lii  2 \calL^2(A^\epsi_{n,k})
+\overline C \int_{B(x_0,\varsigma_k)}|v_n(x)
- v(x)|\,\dx,
\end{eqnarray*}}%
where we used \eqref{projonSisId}
and \eqref{uniflipballtf},
 invoking again   the convergence $v_n\to v$ in $L^1
(B(x_0,\varsigma_k);\RR^3)$  as $n\to\infty$ and
\eqref{measAnkzero}, we obtain
\eqref{approvmanif}.

Let $\zeta_1\in C_c^\infty(\RR;[0,1])$ and $\zeta_2\in C_c^\infty(\RR^3;[0,1])$ be  cut-off functions such that $\Vert\zeta_1'\Vert_\infty\lii 2/\delta_\epsi$, $\Vert\grad\zeta_2\Vert_\infty\lii 2/\delta_\epsi$, and
{\setlength\arraycolsep{0.5pt}
\begin{eqnarray*}
&&\zeta_1(r)=1 {\hbox{ if }} r\in \Big(-{\delta_\epsi\over 4},{\delta_\epsi\over 4}\Big),\quad  \zeta_1(r)=0 {\hbox{ if }} r\not\in \Big(-{\delta_\epsi\over 2},{\delta_\epsi\over 2}\Big), \\
&&\zeta_2(s)=1 {\hbox{ if }} s\in B\Big(0,{\delta_\epsi\over 4}\Big),\quad  \zeta_2(s)=0 {\hbox{ if }} s\not\in B\Big(0,{\delta_\epsi\over 2}\Big).
\end{eqnarray*}}%
Set 
{\setlength\arraycolsep{0.5pt}
\begin{eqnarray*}
&&u_{n,k}^\epsi(x):= u_n(x) + {1\over n}\,\zeta_1(u_n(x) - u(x_0))\,\ffi_\epsi(nx),\quad x\in B(x_0,\varsigma_k), \\
&&v_{n,k}^\epsi(x):= v_{n,k}(x) + {1\over n}\,\zeta_2(v_{n,k}(x) - v(x_0))\,\psi_\epsi(nx),\quad x\in B(x_0,\varsigma_k).
\end{eqnarray*}}%
Finally, for $n\in\NN$ such that $\displaystyle n>{2\over\delta_\epsi}\max\big\{\Vert\ffi_\epsi\Vert_\infty,\Vert\psi_\epsi\Vert_\infty\big\}$ and for $x\in B(x_0,\varsigma_k)$, define
{\setlength\arraycolsep{0.5pt}
\begin{eqnarray*}
&& \bar u_{n,k}^\epsi(x)
:=\Phi_{n,\epsi}(u_{n,k}^\epsi(x))
\end{eqnarray*}}%
and
\begin{eqnarray*}
\bar v_{n,k}^\epsi(x):=
\begin{cases} \displaystyle
v_{n,k}(x) & \hbox{ if } \displaystyle |v_{n,k}(x)-v(x_0)|\gii {\delta_\epsi\over 2},\\ \\
\displaystyle\Pi(v_{n,k}^\epsi(x)) & \hbox{ if } \displaystyle |v_{n,k}(x)-v(x_0)|< {\delta_\epsi\over 2}\cdot
\end{cases}
\end{eqnarray*}

By \eqref{Phinrest}, we have
  $\bar u_{n,k}^\epsi\in W^{1,1}(B(x_0,\varsigma_k);
 [\alpha, \beta])$  and $\bar v_{n,k}^\epsi\in W^{1,1}(B(x_0,\varsigma_k);S^2)$. 
We claim that
\begin{equation}
\label{convweakstarbars}
\begin{aligned}
& \hbox{$\{\bar u_{n,k}^\epsi\}_{n\in\NN}$ weakly-$\star$
converges to $u$ in 
$BV(B(x_0,\varsigma_k))$},\\
& \hbox{$\{\bar v_{n,k}^\epsi
\}_{n\in\NN}$ weakly-$\star$ converges to $v$
in $BV(B(x_0,\varsigma_k);\RR^3)$}.
\end{aligned}
\end{equation}
In fact, we have 
{\setlength\arraycolsep{0.5pt}
\begin{eqnarray*}
&& \int_{B(x_0,\varsigma_k)}
|\bar u_{n,k}^\epsi(x)
- u(x)|\,\dx
=  \int_{B(x_0,\varsigma_k)}\Big|
{n(\beta-\alpha)
u_{n,k}^\epsi(x) + (\beta
+ \alpha)\Vert\ffi_\epsi\Vert_\infty
\over
n(\beta-\alpha)  + 2\Vert\ffi_\epsi
\Vert_\infty}-
u(x)\Big|\,\dx \\
&&\quad\lii {n(\beta-\alpha)
\over n(\beta-\alpha)
 + 2\Vert\ffi_\epsi\Vert_\infty}
 \int_{B(x_0,\varsigma_k)}
|u_{n,k}^\epsi(x) - u(x)|\,\dx + \Big|{n(\beta-\alpha)
\over n(\beta-\alpha)
 + 2\Vert\ffi_\epsi\Vert_\infty}
- 1\Big|\int_{B(x_0,\varsigma_k)}
|u(x)|\,\dx\\
&&\qquad  +\, {(\beta +
\alpha)\Vert\ffi_\epsi\Vert_\infty
\over
n(\beta-\alpha)  + 2\Vert\ffi_\epsi
\Vert_\infty}\calL^2({B(x_0,
\varsigma_k)})\\
&&\quad\lii \int_{B(x_0,\varsigma_k)}
|u_n(x) - u(x)|\,\dx
+ {\Vert\ffi_\epsi\Vert_\infty\over
n}\calL^2({B(x_0,\varsigma_k)}) + {2\Vert\ffi_\epsi\Vert_\infty
\over n(\beta-\alpha)
 + 2\Vert\ffi_\epsi\Vert_\infty}
\int_{B(x_0,\varsigma_k)}
|u(x)|\,\dx\\
&&\qquad +\, {(\beta + \alpha)\Vert\ffi_\epsi
\Vert_\infty\over
n(\beta-\alpha)  + 2
\Vert\ffi_\epsi\Vert_\infty}
\calL^2({B(x_0,\varsigma_k)}).
\end{eqnarray*}}%
Letting $n\to\infty$, taking into account that $\{u_n\}_{n\in\NN}$ converges to $u$ in $L^1({B(x_0,\varsigma_k)})$, we conclude that $\{\bar u_{n,k}^\epsi\}_{n\in\NN}$ converges to $u$ in $L^1({B(x_0,\varsigma_k)})$. Furthermore, using the fact that $0<\Phi_{n,\epsi}'(r)\lii 1$ for all $r\in\RR$, we obtain for $\calL^2$-\aev\ $x\in{B(x_0,\varsigma_k)}$,
{\setlength\arraycolsep{0.5pt}
\begin{eqnarray}\label{estbaruf}
|\grad\bar u_{n,k}^\epsi(x)|&&=\big|\Phi'_{n,\epsi}(u_{n,k}^\epsi(x))\grad
u_{n,k}^\epsi(x)\big|
\nonumber\\
&&\lii \Big|\grad u_n(x)
+ \zeta_1(u_n(x) - u(x_0))\grad\ffi_\epsi(nx)
+ {1\over n}\ffi_\epsi(nx)\zeta_1'(u_n(x)
- u(x_0))\grad u_n(x)\Big|
\nonumber\\
&&\lii |\grad u_n(x) |
+ \Vert\grad\ffi_\epsi\Vert_\infty
+ {\Vert\ffi_\epsi\Vert_\infty\over
n}\Vert\zeta_1'\Vert_\infty
|\grad u_n(x)| \nonumber\\
&&\lii 2|\grad u_n(x)
| + \Vert\grad\ffi_\epsi\Vert_\infty,
\end{eqnarray}}%
provided that $n>2\Vert\ffi_\epsi\Vert_\infty/\delta_\epsi$,
where we used the fact that \(\Vert\zeta_1'\Vert_\infty\lii 2/\delta_\epsi\). 
This, together with \eqref{approumanif} and the convergence in $L^1({B(x_0,\varsigma_k)})$ proved above, allows us to conclude that $\{\bar u_{n,k}^\epsi\}_{n\in\NN}$ is a bounded sequence in $W^{1,1}({B(x_0,\varsigma_k)})$ weakly-$\star$ converging to $u$ in $BV({B(x_0,\varsigma_k)})$. We observe further that in view of \eqref{estbaruf} we have that
\begin{eqnarray}\label{estgradbaru}
|\grad\bar u_{n,k}^\epsi(x)|\lii C_\epsi(1+|\grad u_n(x) - \grad u(x_0)|)
\end{eqnarray}
for $\calL^2$-\aev\ $x\in{B(x_0,\varsigma_k)}$.

We now turn to the sequence $\{\bar v_{n,k}^\epsi\}_{n\in\NN}$. We start by proving  that $\bar v_{n,k}^\epsi\to v$ in $L^1(B(x_0,\varsigma_k);\RR^3)$  as $n\to\infty$.
Because $\Pi(v_{n,k}(\cdot))=v_{n,k}(\cdot)$  in $\{x\in{B(x_0,\varsigma_k)}\!:\, |v_{n,k}(x)-v(x_0)|
 <\delta_\epsi/2\}$, it follows that
{\setlength\arraycolsep{0.5pt}
\begin{eqnarray*}
 \int_{B(x_0,\varsigma_k)}
|\bar v_{n,k}^\epsi(x)
- v(x)|\,\dx &&= \int_{\{x\in{B(x_0,\varsigma_k)}:
|v_{n,k}(x)-v(x_0)|\gii{\delta_\epsi\over2}\}}
|v_{n,k}(x) - v(x)|\,\dx\\
&&\quad + \int_{\{x\in{B(x_0,\varsigma_k)}: 
|v_{n,k}(x)-v(x_0)|<{\delta_\epsi\over2}\}}
|\Pi(v_{n,k}^\epsi(x)) - \Pi(v_{n,k}(x)
+ \Pi(v_{n,k}(x)) -
v(x)|\,\dx\\
&&\lii\int_{B(x_0,\varsigma_k)}
|v_{n,k}(x) - v(x)|\,\dx
+ \Vert\Pi\Vert_{1,\infty}\int_{B(x_0,\varsigma_k)}
|v_{n,k}^\epsi(x) - v_{n,k}(x)|\,\dx
\\
&&\lii \int_{B(x_0,\varsigma_k)}
|v_{n,k}(x) - v(x)|\,\dx
+ \Vert\Pi\Vert_{1,\infty}{\Vert\psi_\epsi\Vert_\infty\over
n}\calL^2({B(x_0,\varsigma_k)}),
\end{eqnarray*}}%
which, together with \eqref{approvmanif}, implies the convergence in $L^1({B(x_0,\varsigma_k)};\RR^3)$ of $\{\bar v_{n,k}^\epsi\}_{n\in\NN}$ to $v$. To estimate the sequence $\{\grad\bar v_{n,k}^\epsi\}_{n\in\NN}$, we observe that if $|v_{n,k}(x) - v(x_0)|>\delta_\epsi/2,$ then 
\begin{eqnarray}\label{estgradbarveasy}
|\grad\bar v_{n,k}^\epsi(x)|= |\grad v_{n,k}(x)|.
\end{eqnarray}
If $|v_{n,k}(x) - v(x_0)|<\delta_\epsi/2,$ then, arguing as in \eqref{estbaruf},
 for  $n>2\Vert\psi_\epsi\Vert_\infty/\delta_\epsi,$
 we have
{\setlength\arraycolsep{0.5pt}
\begin{eqnarray}\label{estgradbarveasyI}
|\grad\bar v_{n,k}^\epsi(x)|&&
=\big|\grad\Pi(v_{n,k}^\epsi(x))\grad
v_{n,k}^\epsi(x)\big|
\nonumber\\
&&\lii \Vert\grad\Pi\Vert_\infty\Big|
\grad v_{n,k}(x) + \zeta_2(v_{n,k}(x)
- v(x_0))\grad\psi_\epsi(nx)
 + {1\over
n}\psi_\epsi(nx)\otimes\grad\zeta_2(v_{n,k}(x)
- v(x_0))\grad v_{n,k}(x)\Big|
\nonumber\\
&&\lii \Vert\grad\Pi\Vert_\infty\big(2|\grad
v_{n,k}(x) | + \Vert\grad\psi_\epsi\Vert_\infty\big).
\end{eqnarray}}%
Hence, in view of \eqref{inAenk}, \eqref{outsideAenk1}, \eqref{approumanif},  and the convergence in $L^1({B(x_0,\varsigma_k)};\RR^3)$ proved above, we infer that $\{\bar v_{n,k}^\epsi\}_{n\in\NN}$ is a bounded sequence in $W^{1,1}({B(x_0,\varsigma_k)};\RR^3)$ weakly-$\star$ converging to $v$ in $BV({B(x_0,\varsigma_k)};\RR^3)$. 
We observe further that from \eqref{estgradbarveasy} and \eqref{estgradbarveasyI}, we get
\begin{eqnarray}\label{estgradbarv}
|\grad\bar v_{n,k}^\epsi(x)|\lii C_\epsi(1+|\grad v_{n,k}(x) - \grad v(x_0)|)
\end{eqnarray}
for $\calL^2$-\aev\ $x\in{B(x_0,\varsigma_k)}$.

We have just proved that $\{\bar u_{n,k}^\epsi\}_{n\in\NN}$ and $\{\bar v_{n,k}^\epsi\}_{n\in\NN}$ are admissible sequences for $\calF(u,v;B(x_0;\varsigma_k))$, which concludes Step~1.

\underbar{Step 2.} We prove that the sequences $\{\bar u_{n,k}^\epsi\}_{n\in\NN}$ and $\{\bar v_{n,k}^\epsi\}_{n\in\NN}$ constructed in Step~1 satisfy\begin{eqnarray}\label{acp2}
{\d\calF(u,v;\cdot)\over\d\calL^2}(x_0)\lii \limsup_{k\to\infty}\limsup_{n\to\infty}\dashint_{B(x_0,\varsigma_k)} f(u(x_0), v(x_0), \grad \bar u_{n,k}^\epsi(x), \grad \bar v_{n,k}^\epsi(x))\,\dx.
\end{eqnarray}

By Step~1, \eqref{locrelaxfct}, and \eqref{rnforcalF}, we obtain
{\setlength\arraycolsep{0.5pt}
\begin{eqnarray*}
{\d\calF(u,v;\cdot)\over\d\calL^2}
(x_0) &&= \lim_{k\to\infty}
{\calF(u,v;B(x_0,\varsigma_k))
\over\calL^2(B(x_0,\varsigma_k))}
\nonumber\\
&&\lii\liminf_{k\to\infty}
\liminf_{n\to\infty}
\dashint_{B(x_0,\varsigma_k)}
f(\bar u_{n,k}^\epsi(x),
\bar v_{n,k}^\epsi(x),
\grad \bar u_{n,k}^\epsi(x),
\grad \bar v_{n,k}^\epsi(x))\,\dx.
\end{eqnarray*}}%
We claim that 
{\setlength\arraycolsep{0.5pt}
\begin{eqnarray}\label{justdepongrad}
&&\liminf_{k\to\infty}
\liminf_{n\to\infty}
\dashint_{B(x_0,\varsigma_k)}
f(\bar u_{n,k}^\epsi(x),
\bar v_{n,k}^\epsi(x),
\grad \bar u_{n,k}^\epsi(x),
\grad \bar v_{n,k}^\epsi(x))\,\dx
\nonumber\\
&&\quad\lii \limsup_{k\to\infty}\limsup_{n\to\infty}\dashint_{B(x_0,\varsigma_k)}
f(u(x_0), v(x_0), \grad
\bar u_{n,k}^\epsi(x),
\grad \bar v_{n,k}^\epsi(x))\,\dx,
\end{eqnarray}}%
from which \eqref{acp2} follows.
Using the definition of $f$ (see \eqref{defoff}), \eqref{estgradbaru},  and \eqref{estgradbarv}, we get
{\setlength\arraycolsep{0.5pt}
\begin{eqnarray}\label{justdepongradst}
&& \int_{B(x_0,\varsigma_k)}
\big|f(\bar u_{n,k}^\epsi(x),
\bar v_{n,k}^\epsi(x),
\grad \bar u_{n,k}^\epsi(x),
\grad \bar v_{n,k}^\epsi(x))
- f(u(x_0), v(x_0), \grad
\bar u_{n,k}^\epsi(x),
\grad \bar v_{n,k}^\epsi(x))\big|
\,\dx\nonumber\\
&&\quad\lii \int_{B(x_0,\varsigma_k)}
\big(|\bar u_{n,k}^\epsi(x)
- u(x_0)||\grad \bar
v_{n,k}^\epsi(x)| + |\bar
v_{n,k}^\epsi(x) - v(x_0)||\grad
\bar u_{n,k}^\epsi(x)|\big)\ \,\dx
\nonumber\\
&&\quad \lii  C_\epsi\bigg[
\int_{B(x_0,\varsigma_k)}
\big( |\bar u_{n,k}^\epsi(x)
- u(x_0)| + |\bar v_{n,k}^\epsi(x)
- v(x_0)|\big)\ \,\dx \nonumber\\
&&\qquad + \int_{B(x_0,
\varsigma_k)}
|\bar u_{n,k}^\epsi(x)
- u(x_0)||\grad v_{n,k}(x)|\,\dx+
\int_{B(x_0,\varsigma_k)}
|\bar v_{n,k}^\epsi(x)
- v(x_0)||\grad  u_n(x)|\,\dx\bigg].
\end{eqnarray}}%
By \eqref{convweakstarbars}, \eqref{lebu},
and \eqref{lebv},  we
deduce that
{\setlength\arraycolsep{0.5pt}
\begin{eqnarray}\label{justdepongradnd}
&& \lim_{k\to\infty}\lim_{n\to\infty} 
\dashint_{B(x_0,\varsigma_k)}
\big(|\bar u_{n,k}^\epsi(x)
- u(x_0)| + |\bar v_{n,k}^\epsi(x)
- v(x_0)|\big)\,\dx \nonumber\\
&&\quad = \lim_{k\to\infty}\dashint_{B(x_0,\varsigma_k)}
\big(|u(x) - u(x_0)| + |v(x)
- v(x_0)|\big)\,\dx=0.
\end{eqnarray}}%
We now estimate the  last two integrals in 
\eqref{justdepongradst}.
Since \(|\bar u^\epsi_{n,k}(\cdot) - u(x_0)|
\lii 2\beta\), \eqref{inAenk}, and \eqref{outsideAenk1},
we obtain
{\setlength\arraycolsep{0.5pt}%
\begin{eqnarray}\label{justdepongradfith}
&&\int_{B(x_0,\varsigma_k)}
|\bar u_{n,k}^\epsi(x)
- u(x_0)||\grad  v_{n,k}(x)|\,\dx\nonumber\\
&&\quad \lii 2\beta
  \int_{A^\epsi_{n,k}}
| \grad v_{n,k}(x)|\,\dx + 4\bar C 
\int_{B(x_0,\varsigma_k)\backslash
A^\epsi_{n,k}}  |\bar u_{n,k}^\epsi(x) - u(x_0)||
\grad v_n(x)|\,\dx \nonumber\\
&&\quad \lii 2\beta C_\star
  \int_{A^\epsi_{n,k}}
\big(| \grad v_{n}(x) - \grad v(x_0)| + |\grad v(x_0)| \big) \,\dx \nonumber\\
&&\qquad  + 4\bar C 
\int_{B(x_0,\varsigma_k)\backslash
A^\epsi_{n,k}} \big( 2\beta |
\grad v_n(x) - \grad v(x_0)| + |\bar u_{n,k}^\epsi(x) - u(x_0)||
\grad v(x_0)|\big)\,\dx
\nonumber\\
&&\quad\lii  \tilde C\bigg(
\int_{B(x_0,\varsigma_k)}
|\grad v_n(x) - \grad
v(x_0)|\,\dx +   \calL^2(A^\epsi_{n,k})
+   \int_{B(x_0,\varsigma_k)}
 |\bar u_{n,k}^\epsi(x)
- u(x_0)|\,\dx\bigg),
\end{eqnarray}}%
where $\tilde C:=\max\{2\beta(C_\star + 4\bar
C), 2\beta C_\star |\grad v(x_0)|, 4\bar C|\grad
v(x_0)|\}$.
 Because $v_n=v*\rho_n$ and $|Dv|(\partial B(x_0,\varsigma_k))=0$, we have 
\begin{eqnarray}\label{justdepongradsith}
\lim_{n\to\infty}\int_{B(x_0,\varsigma_k)} |\grad v_n(x) - \grad v(x_0)|\,\dx = \int_{B(x_0,\varsigma_k)} |\grad v(x) - \grad v(x_0)|\,\dx + |D^s v|(B(x_0,\varsigma_k)).
\end{eqnarray}
Recalling that $\{\bar u_{n,k}^\epsi\}_{n\in\NN}$ converges to $u$ in $L^1({B(x_0,\varsigma_k)})$
(see \eqref{convweakstarbars}), from  \eqref{justdepongradfith}, \eqref{justdepongradsith}, \eqref{measAnkzero}, \eqref{lebgradv}, \eqref{lspiszerov}, and \eqref{lebu}, we deduce that
\begin{eqnarray}\label{justdepongradteth}
\lim_{k\to\infty}\lim_{n\to\infty}\dashint_{B(x_0,\varsigma_k)} |\bar u_{n,k}^\epsi(x) - u(x_0)||\grad v_{n,k}(x)|\,\dx  =0.
\end{eqnarray}

Finally, we estimate the last integral in \eqref{justdepongradst}. We have that
{\setlength\arraycolsep{0.5pt}
\begin{eqnarray}\label{justdepongradseth}
&& \int_{B(x_0,\varsigma_k)}
|\bar v_{n,k}^\epsi(x)
- v(x_0)||\grad  u_n(x)|\,\dx
\nonumber\\
&&\quad\lii 2 \int_{B(x_0,\varsigma_k)}
|\grad  u_n(x) - \grad
 u(x_0)|\,\dx + \int_{B(x_0,\varsigma_k)}
|\bar v_{n,k}^\epsi(x)
- v(x_0)||\grad  u(x_0)|\,\dx.
\end{eqnarray}}%
Arguing as above, an equality for $u$ similar to that in \eqref{justdepongradsith} holds; that is,
\begin{eqnarray}\label{justdepongradeith}
\lim_{n\to\infty}\int_{B(x_0,\varsigma_k)} |\grad u_n(x) - \grad u(x_0)|\,\dx = \int_{B(x_0,\varsigma_k)} |\grad u(x) - \grad u(x_0)|\,\dx + |D^s u|(B(x_0,\varsigma_k)).
\end{eqnarray}
Recalling that $\{\bar v_{n,k}^\epsi\}_{n\in\NN}$ converges to $v$ in $L^1({B(x_0,\varsigma_k)};\RR^3)
$ (see \eqref{convweakstarbars}), from  \eqref{justdepongradseth}, \eqref{justdepongradeith}, \eqref{lebgradu}, \eqref{lspiszerou}, and \eqref{lebv}, we deduce that
\begin{eqnarray}\label{justdepongradnith}
\lim_{k\to\infty}\lim_{n\to\infty}\dashint_{B(x_0,\varsigma_k)} |\bar v_{n,k}^\epsi(x) - v(x_0)||\grad  u_n(x)|\,\dx  =0.
\end{eqnarray}
Hence, \eqref{justdepongrad} follows from \eqref{justdepongradst}, \eqref{justdepongradnd}, \eqref{justdepongradteth}, and \eqref{justdepongradnith}.

\underbar{Step 3.} We conclude the proof of Lemma~\ref{abscontpartCalF}.

Set
{\setlength\arraycolsep{0.5pt}
\begin{eqnarray*}
&& z_n^\epsi (x):= \grad
u(x_0)\, x + {1\over
n}\ffi_\epsi(nx),\quad
w_n^\epsi (x):= \grad
v(x_0)\, x + {1\over
n}\psi_\epsi(nx),
\end{eqnarray*}}%
and observe that
{\setlength\arraycolsep{0.5pt}
\begin{eqnarray*}
&& |\grad z_n^\epsi (x)|\lii
|\grad u(x_0)| + \Vert\grad\ffi_\epsi\Vert_\infty\lii
a_\epsi, \quad |\grad
w_n^\epsi (x)|\lii |\grad
v(x_0)| + \Vert\grad\psi_\epsi\Vert_\infty\lii
a_\epsi.
\end{eqnarray*}}%
Additionally, let
{\setlength\arraycolsep{0.5pt}
\begin{eqnarray*}
&& \gamma_\epsi:= 
{\ell_\epsi\over
2(\Vert\grad\Pi\Vert_\infty+1)},
\end{eqnarray*}}%
and define
{\setlength\arraycolsep{0.5pt}
\begin{eqnarray*}
&&B_{u}:=\Big\{x\in B(x_0,\varsigma_k)\!:\, |u_n(x) - u(x_0)|<{\delta_\epsi\over 4}\Big\}, \qquad
 B_{\grad u}:=\Big\{x\in B(x_0,\varsigma_k)\!:\, |\grad u_n(x) - \grad u(x_0)|<\gamma_\epsi\Big\},\\
&& B_{v}:=\Big\{x\in B(x_0,\varsigma_k)\!:\, |v_{n,k}(x) - v(x_0)|<{\delta_\epsi\over 4}\Big\}, \qquad B_{\grad v}:=\Big\{x\in B(x_0,\varsigma_k)\!:\, |\grad v_{n,k}(x) - \grad v(x_0)|<\gamma_\epsi\Big\}.
\end{eqnarray*}}%
If $x\in B_u\cap B_{\grad u}\cap B_{v}\cap B_{\grad v}$, then $\zeta_1(u_n(x) - u(x_0))=1$, $\zeta_2(v_{n,k}(x) - v(x_0))=1$, and, by \eqref{estbaruf}
and \eqref{estgradbarveasyI},
for   $\displaystyle n>2/\delta_\epsi\max\big 
\{\Vert\ffi_\epsi\Vert_\infty,\Vert\psi_\epsi
\Vert_\infty\big\}$, we have
{\setlength\arraycolsep{0.5pt}
\begin{eqnarray*}
&&|\grad\bar u_{n,k}^\epsi(x)|\lii 2\gamma_\epsi + 2|\grad u(x_0)| + \Vert\grad\ffi_\epsi\Vert_\infty\lii a_\epsi, \\
&&|\grad\bar v_{n,k}^\epsi(x)|\lii \Vert\grad\Pi\Vert_\infty \big(2\gamma_\epsi + 2|\grad v(x_0)| + \Vert\grad\psi_\epsi\Vert_\infty\big)\lii a_\epsi.
\end{eqnarray*}}%
Moreover, using the fact that $\Phi'_{n,\epsi}(r)= n(\beta-\alpha)
/(n(\beta-\alpha)  + 2\Vert\ffi_\epsi\Vert_\infty)\in(0,1]$ for all $r\in\RR$, we get
{\setlength\arraycolsep{0.5pt}
\begin{eqnarray*}
|\grad\bar u_{n,k}^\epsi(x) - \grad z_n^\epsi (x)| &&=\Big|\Phi_{n,\epsi}'\Big(u_n(x)+{1\over n}\ffi_\epsi(nx)\Big)\big(\grad u_n(x)\mp \grad u(x_0) + \grad\ffi_\epsi(nx)\big)- \big(\grad u(x_0) + \grad
\ffi_\epsi(nx)\big)\Big|\\
&&\lii |\grad u_n(x) - \grad u(x_0)| + \Big|\Phi_{n,\epsi}'\Big(u_n(x)+{1\over n}\ffi_\epsi(nx)\Big) - 1\Big||\grad u(x_0) + \grad\ffi_\epsi(nx)|\\
&&\lii \gamma_\epsi + {2\Vert\ffi_\epsi\Vert_\infty \over n(\beta-\alpha)  + 2\Vert\ffi_\epsi\Vert_\infty} \big(|\grad u(x_0)| + \Vert\grad\ffi_\epsi
\Vert_\infty\big)\\
&&\lii {\ell_\epsi\over 2} + {\ell_\epsi\over 2} = \ell_\epsi,
\end{eqnarray*}}%
provided that
{\setlength\arraycolsep{0.5pt}
\begin{eqnarray*}
&& n> {2\Vert\ffi_\epsi\Vert_\infty\big(2|\grad
u(x_0)| + 2\Vert\grad\ffi_\epsi\Vert_\infty
 -\ell_\epsi\big)\over
(\beta-\alpha)\ell_\epsi}\cdot
\end{eqnarray*}}%
Next, using the second condition in \eqref{vinstwo}
and the equality $\grad \Pi(v(x_0))\grad\psi_\epsi(\cdot)= \grad\psi_\epsi(\cdot)$, which holds for $\calL^2$-\aev\ in $\RR^2$ since $\grad\psi_\epsi(\cdot)\in [T_{v(x_0)}(S^2)]^2$ for $\calL^2$-\aev\ in $\RR^2$,  we obtain 
{\setlength\arraycolsep{0.5pt}
\begin{eqnarray*}
&& |\grad\bar v_{n,k}^\epsi(x)
- \grad w_n^\epsi (x)|\\
&&\quad=\Big|\grad\Pi\Big(v_{n,k}(x)+{1\over
n}\psi_\epsi(nx)\Big)\big(\grad
v_{n,k}(x)-\grad v(x_0) + \grad v(x_0)
+ \grad\psi_\epsi(nx)\big)-
\big(\grad v(x_0) + \grad\psi_\epsi
(nx)\big)\Big|\\
&&\quad\lii \Vert\grad\Pi\Vert_\infty|
\grad
v_{n,k}(x) - \grad v(x_0)|+
\Big|\grad\Pi\Big(v_{n,k}(x)+{1\over
n}\psi_\epsi(nx)\Big)
- \grad\Pi(v(x_0))\Big||\grad
v(x_0) + \grad\psi_\epsi(nx)|\\
&&\quad\lii \Vert\grad\Pi
\Vert_\infty\gamma_\epsi
+ {\ell_\epsi\over2b_\epsi}
\big(|\grad
v(x_0)| + \Vert\grad\psi_\epsi
\Vert_\infty\big)\lii {\ell_\epsi\over
2} + {\ell_\epsi\over
2} = \ell_\epsi,
\end{eqnarray*}}%
provided that $n>2\Vert\psi_\epsi\Vert_\infty/\delta_\epsi$, because for all such $n\in\NN,$ we have $|v_{n,k}(x)+{1\over n}\psi_\epsi(nx) - v(x_0)|\lii \delta_\epsi/4 + \delta_\epsi/2<\delta_\epsi$,
and so \eqref{byregofPi} applies.

Thus, using \eqref{bycontofff}, Riemann--Lebesgue's Lemma, and \eqref{definfQQT}, in this order, we conclude that
{\setlength\arraycolsep{0.5pt}
\begin{eqnarray}\label{ubonabsst}
&& \limsup_{k\to\infty}
\limsup_{n\to\infty}{1\over
\calL^2(B(x_0,\varsigma_k))}
\int_{B_u\cap
B_{\grad u}\cap B_{v}\cap
B_{\grad v}} f\big(u(x_0),
v(x_0), \grad \bar u_{n,k}^\epsi(x),
\grad \bar v_{n,k}^\epsi(x)\big)\,
\dx \nonumber\\
&&\quad\lii \limsup_{k\to\infty}\limsup_{n\to\infty}\dashint_{B(x_0,\varsigma_k)}
f\big(u(x_0), v(x_0),
\grad u(x_0) + \grad\ffi_\epsi(nx),
\grad v(x_0) + \grad\psi_\epsi(nx)
\big)\,\dx
+ \epsi \nonumber\\
&&\quad = \int_Q f\big(u(x_0),v(x_0),
\grad
u(x_0) + \grad\ffi_\epsi(y),\grad
v(x_0)+\grad\psi_\epsi(y)\big)\,\dy
+ \epsi \nonumber\\
&&\quad \lii \calQ_T
f\big(u(x_0),v(x_0),\grad
u(x_0),\grad v(x_0)\big)
+ 2\epsi.
\end{eqnarray}}%
Next, we observe that
from \eqref{inAenk} and \eqref{outsideAenk2}, we
have, for   \( c:= \max\{C_\star + 1,\calC, (C_\star
+1) |\grad
v(x_0)| \}\), 
{\setlength\arraycolsep{0.5pt}
\begin{eqnarray}\label{gvnk-gv0}
\int_{B(x_0,\varsigma_k)}  |\grad
 v_{n,k}(x) -\grad v(x_0)| \,\dx  &&\lii (C_\star
+1)  \int_{ A^\epsi_{n,k}} \big( |\grad
 v_{n}(x) - \grad v(x_0)| + |\grad v(x_0)|
\big)
\,\dx \nonumber\\
&&\qquad + \calC\bigg( \int_{B(x_0,\varsigma_k)
 \backslash  A^\epsi_{n,k}} \big(|\grad
 v_{n}(x) - \grad v(x_0)| + |v_n(x) - v(x_0)|
\,\big)\dx \bigg) \nonumber \\
&& \quad \lii c\bigg(
\int_{B(x_0,\varsigma_k)
 } \big(|\grad
 v_{n}(x) - \grad v(x_0)| + |v_n(x) - v(x_0)|
\, \big)\dx +  \calL^2(A^\epsi_{n,k})
 \bigg).\nonumber\\
\end{eqnarray}}%
Note also that 
\begin{eqnarray}\label{ubonabsrd}
{1\over \calL^2(B(x_0,\varsigma_k))}\int_{B(x_0,\varsigma_k)\backslash
B_u} 1\,\dx \lii {4\over \delta_\epsi } \dashint_{B(x_0,\varsigma_k)}
|u_n(x) - u(x_0)|\,\dx,
\end{eqnarray}
and,  by \eqref{boundsf}, \eqref{estgradbaru}, and \eqref{estgradbarv}, 
{\setlength\arraycolsep{0.5pt}
\begin{eqnarray}\label{ubonabsnd}
&& \limsup_{k\to\infty}\limsup_{n\to\infty}{1\over
\calL^2(B(x_0,\varsigma_k))}\int_{B(x_0,\varsigma_k)\backslash
B_u} f(u(x_0), v(x_0),
\grad \bar u_{n,k}^\epsi(x),
\grad \bar v_{n,k}^\epsi(x))\,\dx
\nonumber\\
&&\quad\lii (3+\beta)
\limsup_{k\to\infty}\limsup_{n\to\infty}{1\over
\calL^2(B(x_0,\varsigma_k))}\int_{B(x_0,\varsigma_k)\backslash
B_u}  \big(|\grad \bar u_{n,k}^\epsi(x)|
+ |\grad \bar v_{n,k}^\epsi(x)|\big)\,\dx
\nonumber\\
&&\quad\lii C_\epsi \limsup_{k\to\infty}\limsup_{n\to\infty}{1\over
\calL^2(B(x_0,\varsigma_k))}\int_{B(x_0,\varsigma_k)\backslash
B_u} \big(1+ |\grad u_n(x)
- \grad u(x_0)|  + |\grad
 v_{n,k}(x) -\grad v(x_0)|\big)\,\dx.\qquad\quad
\end{eqnarray}}%
 Plugging in \eqref{gvnk-gv0} and \eqref{ubonabsrd} in \eqref{ubonabsnd}, from the convergences $u_n\to u$ in $L^1(B(x_0,\varsigma_k))$ and \(v_n\to
v\) in \(L^1(B(x_0,\varsigma_k);\RR^3)\), as \(n\to\infty\),
and from \eqref{measAnkzero},   \eqref{justdepongradsith},  \eqref{justdepongradeith},  \eqref{lebu}, \eqref{lebgradu},
\eqref{lebgradv}, \eqref{lspiszerou},  and \eqref{lspiszerov}, it follows that
\begin{eqnarray}\label{ubonabsfoth}
\lim_{k\to\infty}\lim_{n\to\infty}{1\over \calL^2(B(x_0,\varsigma_k))}\int_{B(x_0,\varsigma_k)\backslash B_u} f(u(x_0), v(x_0), \grad \bar u_{n,k}^\epsi(x), \grad \bar v_{n,k}^\epsi(x))\,\dx = 0.
\end{eqnarray}
Similarly, using in addition \eqref{approvmanif} and \eqref{lebv},
\begin{eqnarray}\label{ubonabsfith}
\lim_{k\to\infty}\lim_{n\to\infty}{1\over \calL^2(B(x_0,\varsigma_k))}\int_{B(x_0,\varsigma_k)\backslash B_v} f(u(x_0), v(x_0), \grad \bar u_{n,k}^\epsi(x), \grad \bar v_{n,k}^\epsi(x))\,\dx = 0.
\end{eqnarray}
Also, since
{\setlength\arraycolsep{0.5pt}%
\begin{eqnarray*}
&&\int_{B(x_0,\varsigma_k)\backslash B_{\grad
u}} 1\,\dx \lii {1\over \gamma_\epsi } 
\int_{B(x_0,\varsigma_k)}
|\grad u_n(x) - \grad u(x_0)|\,\dx, \\
&& 
\int_{B(x_0,\varsigma_k)\backslash B_{\grad
v}} 1\,\dx \lii {1\over \gamma_\epsi } 
\int_{B(x_0,\varsigma_k) }
|\grad v_{n,k}(x) - \grad v(x_0)|\,\dx,
\end{eqnarray*}}%
we have
{\setlength\arraycolsep{0.5pt}
\begin{eqnarray}\label{ubonabsfsith}
&& \limsup_{k\to\infty}\limsup_{n\to\infty}{1\over
\calL^2(B(x_0,\varsigma_k))}\int_{(B_u\cap
B_v)\backslash B_{\grad
u}} f(u(x_0), v(x_0),
\grad \bar u_{n,k}^\epsi(x),
\grad \bar v_{n,k}^\epsi(x))\,\dx  = 0
\end{eqnarray}}%
and
{\setlength\arraycolsep{0.5pt}
\begin{eqnarray}\label{ubonabsfseth}
&& \limsup_{k\to\infty}\limsup_{n\to\infty}{1\over
\calL^2(B(x_0,\varsigma_k))}\int_{(B_u\cap
B_v)\backslash B_{\grad
v}} f(u(x_0), v(x_0),
\grad \bar u_{n,k}^\epsi(x),
\grad \bar v_{n,k}^\epsi(x))\,\dx  = 0.
\end{eqnarray}}%
Finally, owing to \eqref{acp2},  \eqref{ubonabsst}, and  \eqref{ubonabsfoth}--\eqref{ubonabsfseth}, we conclude that
{\setlength\arraycolsep{0.5pt}
\begin{eqnarray*}
&&{\d\calF(u,v;\cdot)\over\d\calL^2}(x_0)
\lii \calQ_T f(u(x_0),v(x_0),\grad
u(x_0),\grad v(x_0))
+ 2\epsi,
\end{eqnarray*}}%
and \eqref{ubabscontpart} follows by letting $\epsi\to0^+$.
\end{proof}

\begin{lemma}\label{inff=infQQf} 
The infimum in \eqref{locrelaxfct} does not change if we replace $f$ by $\calQ_T f$.
\end{lemma}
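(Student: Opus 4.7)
The plan is to reduce the claimed equality to the density upper bound of Lemma~\ref{abscontpartCalF} via a diagonal extraction. The direction $\calF(u,v;A) \gii \inf\{\liminf_n \int_A \calQ_T f(u_n,v_n,\grad u_n,\grad v_n)\,\dx\}$ (infimum over admissible sequences as in \eqref{locrelaxfct}) is immediate: for every admissible $(u_n,v_n)\in W^{1,1}(A;[\alpha,\beta]\times S^2)$, Lemma~\ref{onManae}(c) gives $\grad v_n(x)\in [T_{v_n(x)}(S^2)]^2$ for $\calL^2$-\aev\ $x\in A$, and taking $\ffi=\psi=0$ in \eqref{defQTf} yields $\calQ_T f\lii f$ at such points; integration preserves this pointwise bound.

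For the reverse direction, I would fix an admissible sequence $\{(u_n,v_n)\}_{n\in\NN}$ for the infimum built from $\calQ_T f$ and construct, by diagonalization, a new admissible sequence $\{(\tilde u_n,\tilde v_n)\}_{n\in\NN}$ whose $f$-energy is controlled by the $\calQ_T f$-energy of $\{(u_n,v_n)\}$ up to vanishing error. The key observation is that, for each fixed $n$, the function $(u_n,v_n)\in W^{1,1}$ has $|D(u_n,v_n)|\ll \calL^2$. The bound $\calF(w;A)\lii (3+\beta)\widetilde C|Dw|(A)$ proved as an intermediate step in Lemma~\ref{CalFmeasure} then forces the Radon measure $\calF(u_n,v_n;\cdot)$ to be absolutely continuous as well. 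Combining this with the density bound of Lemma~\ref{abscontpartCalF} yields
\[
\calF(u_n,v_n;A) \lii \int_A \calQ_T f(u_n,v_n,\grad u_n,\grad v_n)\,\dx \qquad\text{for each } n\in\NN.
\]

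To close the argument, I would unwind the definition of $\calF(u_n,v_n;A)$ as an infimum and pick, for each $n$, approximants $(u_n^k,v_n^k)\in W^{1,1}(A;[\alpha,\beta]\times S^2)$ converging to $(u_n,v_n)$ in $L^1$ as $k\to\infty$ with $\liminf_k \int_A f(u_n^k,v_n^k,\grad u_n^k,\grad v_n^k)\,\dx \lii \calF(u_n,v_n;A)+1/n$. A standard diagonal extraction then produces indices $k(n)$ such that $(\tilde u_n,\tilde v_n):=(u_n^{k(n)},v_n^{k(n)})$ is at $L^1$-distance at most $1/n$ from $(u_n,v_n)$ and satisfies $\int_A f(\tilde u_n,\tilde v_n,\grad\tilde u_n,\grad\tilde v_n)\,\dx \lii \int_A \calQ_T f(u_n,v_n,\grad u_n,\grad v_n)\,\dx + 2/n$; in particular, $(\tilde u_n,\tilde v_n)\to(u,v)$ in $L^1$. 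Taking $\liminf_n$ on both sides and infimizing over $\{(u_n,v_n)\}$ concludes the proof. The only nontrivial input is the absolute continuity of $\calF(u_n,v_n;\cdot)$, which is forced by the measure estimates already in place; once that is granted, the remainder is routine bookkeeping and no new delicate manifold-projection or covering construction is required here, since the heavy lifting has already been done in Lemmas~\ref{CalFmeasure} and \ref{abscontpartCalF}.
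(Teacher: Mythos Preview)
Your proposal is correct and follows essentially the same route as the paper. Both proofs hinge on the same two ingredients: (i) when $(\bar u,\bar v)\in W^{1,1}$, the localized envelope $\calF(\bar u,\bar v;\cdot)$ is absolutely continuous with respect to $\calL^2$ (you cite \eqref{ubcalF}; the paper uses the constant sequence together with \eqref{boundsf} to the same effect), and (ii) Lemma~\ref{abscontpartCalF} then yields $\calF(\bar u,\bar v;A)\lii \int_A \calQ_T f(\bar u,\bar v,\grad\bar u,\grad\bar v)\,\dx$. The only cosmetic difference is the final passage to general $(u,v)\in BV$: the paper invokes the $L^1$-lower semicontinuity of $\calF(\cdot,\cdot;A)$ (a defining property of the relaxed functional) to conclude $\calF(u,v;A)\lii\liminf_n\calF(u_n,v_n;A)\lii\liminf_n\int_A\calQ_T f(u_n,v_n,\grad u_n,\grad v_n)\,\dx$, whereas you unwind this same fact via an explicit diagonalization. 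Both are valid and equivalent.
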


\begin{proof} For $(u,v)\in BV(\Omega;[\alpha,\beta])\times BV(\Omega;S^2)$ and $A\in \calA(\Omega)$, set
{\setlength\arraycolsep{0.5pt}
\begin{eqnarray*}
\calQF(u,v;A)&&:=\inf\Big\{
 \liminf_{n\to+\infty}
\int_A \calQ_T f(u_n(x),
v_n(x),\grad u_n(x),\grad
v_n(x))\,\dx\!: \\
&&\quad  n\in\NN,\,
(u_n,v_n)\in W^{1,1}(A;[\alpha,\beta])\times
W^{1,1}(A;S^2),\,  u_n\to
u \hbox{ in } L^1(A),
v_n\to v \hbox{ in }
L^1(A;\RR^3)\Big\}.
\end{eqnarray*}}%
The inequality $\calQF(u,v;A)\lii \calF(u,v;A)$ follows from the fact that $\calQ_T f\lii f$. To prove the converse inequality, let \((\bar u,\bar v) \in W^{1,1}(A;[\alpha,\beta])\times
W^{1,1}(A;S^2) \). Using the growth conditions
\eqref{boundsf} satisfied by $f$, we conclude that
{\setlength\arraycolsep{0.5pt}
\begin{eqnarray*}
&&\calF(\bar u,\bar v;A)\lii \int_A f
(\bar u(x), \bar v(x),\grad \bar u(x),\grad
\bar v(x))\,\dx \lii 2\int_A |\grad \bar u(x)|\,\dx
+ (1+\beta) \int_A |\grad
\bar v(x)|\,\dx,
\end{eqnarray*}}%
which proves that $A\in \calA(\Omega)\mapsto \calF(\bar u,\bar v;A)$ is absolutely continuous with respect to the Lebesgue measure. Hence, by Lemma~\ref{abscontpartCalF}, we conclude that
\begin{eqnarray}\label{caseWoneone}
\calF(\bar u,\bar v;A)\lii \int_A \calQ_T f(\bar u(x), \bar v(x),\grad \bar u(x),\grad \bar v(x))\,\dx.
\end{eqnarray}

Fix $(u,v)\in BV(\Omega;[\alpha,\beta])\times BV(\Omega;S^2). $ Let $\{(u_n,v_n)\}_{n\in\NN}\subset W^{1,1}(A;[\alpha,\beta])\times
W^{1,1}(A;S^2)$ be such that $u_n\to u$ in $L^1(A)$ and $v_n\to v$  in $L^1(A;\RR^3)$. The sequential lower semicontinuity of $\calF(\cdot,\cdot;A)$ with respect to the strong
convergence in $L^1(A)\times L^1(A;\RR^3)$, together with \eqref{caseWoneone}, yields
{\setlength\arraycolsep{0.5pt}
\begin{eqnarray*}
&& \calF(u,v;A)\lii \liminf_{n\to\infty}
\calF(u_n,v_n;A)\lii
\liminf_{n\to\infty}
\int_A \calQ_T f(u_n(x),
v_n(x),\grad u_n(x),\grad
v_n(x))\,\dx.
\end{eqnarray*}}%
Taking the infimum over all admissible sequences, we conclude that $\calF(u,v;A)\lii \calQF(u,v;A)$.
\end{proof}

\begin{lemma}\label{cantorpartCalF}
Fix $(u,v)\in BV(\Omega;[\alpha,\beta])\times
BV(\Omega;S^2)$. Then,
\begin{eqnarray}\label{ubcantorpart}
{\d\calF(u,v;\cdot)\over\d |D^c(u,v)|}(x_0)\lii  (\calQ_T
f)^\infty\big(\tilde
u(x_0) ,\tilde v(x_0),
W^c_u(x_0), W^c_v(x_0)\big)
\end{eqnarray}
for $|D^c(u,v)|$-\aev\ $x_0\in\Omega$.
\end{lemma}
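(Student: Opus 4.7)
The plan is to adapt the construction of Lemma~\ref{abscontpartCalF} to a Cantor point, with the crucial new ingredients being Alberti's rank-one theorem (Theorem~\ref{rank1Alberti}) and a divergent scale factor that forces the appearance of the recession function. By Lemma~\ref{inff=infQQf} one may replace $f$ by $\calQ_T f=\calQ\tilde f$ throughout, so we work directly with the tangentially quasiconvex integrand whose continuity properties have been established in Lemma~\ref{onbarfQQbarf}. I would then fix $x_0\in\Omega$ at which simultaneously (each of these holds for $|D^c(u,v)|$-a.e.\ $x\in\Omega$ by Theorem~\ref{conseqLBDT}, Proposition~\ref{appppties}, and Theorem~\ref{rank1Alberti}): the Radon-Nikodym derivative in \eqref{ubcantorpart} exists and is finite; $\tilde u(x_0)\in[\alpha,\beta]$ and $\tilde v(x_0)\in S^2$; $W^c(x_0)=(a_u,a_v)\otimes\nu$ is rank-one, with $|a|=1$, $\nu\in S^1$, and $a_v\in T_{\tilde v(x_0)}(S^2)$ (by Lemma~\ref{onManae}(d)); Lebesgue-type averages of $\tilde u,\tilde v$ with respect to $|D^c(u,v)|$ concentrate to their values at $x_0$; and the measures $\calL^2$, $|\grad u|\calL^2$, $|\grad v|\calL^2$, $|D^j(u,v)|$ all have density zero with respect to $|D^c(u,v)|$ at $x_0$ (the last being immediate since $|D^j(u,v)|$ is concentrated on a countably $\calH^1$-rectifiable set disjoint from the support of $|D^c(u,v)|$).

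Second, I would select radii $\varsigma_k\searrow 0$ with $|D(u,v)|(\partial B(x_0,\varsigma_k))=0$ and introduce the divergent scale
$$t_k:=\frac{|D^c(u,v)|(B(x_0,\varsigma_k))}{\calL^2(B(x_0,\varsigma_k))}\longrightarrow+\infty,$$
and pick $\ffi_k\in W^{1,\infty}_0(Q)$, $\psi_k\in W^{1,\infty}_0(Q;T_{\tilde v(x_0)}(S^2))$ that are $1/k$-optimal for $\calQ_T f\bigl(\tilde u(x_0),\tilde v(x_0),t_k a_u\nu,t_k a_v\otimes\nu\bigr)$. The recovery sequences are then built exactly as in Step~1 of Lemma~\ref{abscontpartCalF}, using the mollifications $u_n=u*\rho_n$, $v_n=v*\rho_n$, the manifold projection $v_{n,k}=\pi_{y_{n,k}}\circ v_n$ from Lemma~\ref{seqonman}, and the perturbations
$$u_{n,k}^{\sharp}(x)=u_n(x)+\tfrac{1}{n t_k}\zeta_1(u_n-\tilde u(x_0))\,\ffi_k\bigl(nt_k(x-x_0)\bigr),\quad v_{n,k}^{\sharp}(x)=v_{n,k}(x)+\tfrac{1}{nt_k}\zeta_2(v_{n,k}-\tilde v(x_0))\,\psi_k\bigl(nt_k(x-x_0)\bigr),$$
followed by $\bar u_{n,k}=\Phi_{n,k}(u_{n,k}^{\sharp})$ and $\bar v_{n,k}=\Pi(v_{n,k}^{\sharp})$ to enforce the manifold constraint. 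A diagonal extraction in $n$ produces admissible test sequences for $\calF(u,v;B(x_0,\varsigma_k))$.

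Third, the estimate proceeds as in Steps~2--3 of Lemma~\ref{abscontpartCalF}. Using \eqref{contQtildef} to replace $(\bar u_{n,k},\bar v_{n,k})$ inside $\calQ_T f$ by $(\tilde u(x_0),\tilde v(x_0))$, Riemann-Lebesgue averaging of the $Q$-periodic oscillations at the scale $(nt_k)^{-1}$, and Lemma~\ref{lemmaAMT}(iii) together with the rank-one decomposition $W^c(x_0)=a\otimes\nu$ to identify the averaged gradient direction, one arrives at
$$\frac{\calF(u,v;B(x_0,\varsigma_k))}{|D^c(u,v)|(B(x_0,\varsigma_k))}\leq\frac{\calQ_T f\bigl(\tilde u(x_0),\tilde v(x_0),t_k a_u\nu,t_k a_v\otimes\nu\bigr)}{t_k}+o(1).$$
Passing to $\limsup_{k\to\infty}$ and using $t_k\to+\infty$, the very definition of $(\calQ_T f)^\infty$ gives \eqref{ubcantorpart}.

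The principal obstacle is the correct balancing of the oscillation scales. The gradient $\grad\psi_k$ is uncontrolled as $k\to\infty$, so the amplitude must be rescaled by $(nt_k)^{-1}$ rather than $n^{-1}$ in order to keep $v_{n,k}^{\sharp}$ within the $\delta_\epsi$-tubular neighborhood of $S^2$ where $\Pi$ is Lipschitz and the identity $\grad\Pi(\tilde v(x_0))\grad\psi_k=\grad\psi_k$ applies; meanwhile the frequency $nt_k$ must be coordinated so that the oscillations of $\grad\ffi_k(nt_k\cdot)$ produce exactly the desired $\calQ_T f$-value after averaging. A secondary subtlety is that the linear-in-$\xi,\eta$ continuity estimate \eqref{contQtildef} yields error terms of order $t_k$ when rescaled, which are then absorbed by the $|D^c(u,v)|$-density-zero properties of $\calL^2$ and by the Besicovitch-type differentiation of $\tilde u,\tilde v$ against $|D^c(u,v)|$ at $x_0$; controlling the analogous projection errors from $\pi_{y_{n,k}}$ uses the second estimate in \eqref{uniflipballtf} together with the fact that $|v_n-v(x_0)|$ is small $\calL^2$-a.e.\ on the region where the projection is active.
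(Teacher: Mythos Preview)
Your proposal has a genuine gap at its core. Once you have invoked Lemma~\ref{inff=infQQf} and are working with $\calQ_Tf=\calQ\tilde f$, the oscillatory perturbations $\ffi_k,\psi_k$ serve no purpose: since $\calQ\tilde f(\tilde u(x_0),\tilde v(x_0),\cdot,\cdot)$ is already quasiconvex, adding any $W^{1,\infty}_0$-perturbation to the gradient argument can only \emph{increase} the integral. So the optimal test sequence in your framework is just $(u_n,v_{n,k})$, and the entire ``frequency $nt_k$, amplitude $(nt_k)^{-1}$'' machinery collapses. What remains to be shown is
\[
\limsup_{k}\limsup_{n}\frac{1}{|D^cw|(B(x_0,\varsigma_k))}\int_{B(x_0,\varsigma_k)}\calQ\tilde f\bigl(\tilde u(x_0),\tilde v(x_0),\grad u_n,\grad v_{n,k}\bigr)\,\dx\lii (\calQ_Tf)^\infty\bigl(\tilde u(x_0),\tilde v(x_0),W^c_u(x_0),W^c_v(x_0)\bigr),
\]
and this is precisely the step your outline does not justify.

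The difficulty is that the mollified gradient $\grad w_n$ is \emph{not} close in $L^1$ to any constant on $B(x_0,\varsigma_k)$; it is concentrating. There is therefore no Riemann--Lebesgue or continuity argument that lets you replace $\grad u_n$ by $t_kW^c_u(x_0)$ inside the integrand, and Lemma~\ref{lemmaAMT}(iii) cannot be applied to $\calQ\tilde f(\tilde u(x_0),\tilde v(x_0),\cdot,\cdot)$ because that function is not positively $1$-homogeneous. The paper's device is to introduce the positively $1$-homogeneous majorant
\[
\mathfrak z(\zeta):=\sup_{t>0}\frac{\calQ\tilde f(\tilde u(x_0),\tilde v(x_0),t\xi,t\eta)}{t},
\]
bound $\calQ\tilde f$ by $\mathfrak z$, apply Lemma~\ref{lemmaAMT}(iii) to $\mathfrak z$, and \emph{then} use Alberti's rank-one theorem to identify $\mathfrak z(W^c(x_0))=(\calQ\tilde f)^\infty(\tilde u(x_0),\tilde v(x_0),W^c_u(x_0),W^c_v(x_0))$ since $W^c(x_0)$ is rank one. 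Your use of Alberti to ``identify the averaged gradient direction'' conflates this with something it does not do: Alberti gives the rank-one structure of $W^c$ pointwise, but it is only through the intermediary $\mathfrak z$ (which agrees with the recession function exactly on rank-one matrices, by rank-one convexity of $\calQ\tilde f$) that this structure converts a limit of integrals into the recession value. A separate argument, using \eqref{uniflipballtf} and \eqref{gradprojonS}, is also needed to show that the projection error $|\grad v_n-\grad v_{n,k}|$ is asymptotically negligible against $|D^cw|$; this is not covered by your final remark, which addresses only $|v_n-v(x_0)|$.
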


\begin{proof} Set $w:=(u,v),$ and define $\nu:= |Dw| - |D^cw|$. Let $x_0\in\Omega$ be such that
{\setlength\arraycolsep{0.5pt}
\begin{eqnarray}
&&\tilde w(x_0) = (\tilde u(x_0),\tilde v(x_0))\in [\alpha,\beta]\times S^2,\quad W^c_v(x_0)\in [T_{\tilde
v(x_0)}(S^2)]^2, \label{winmanifold} \\
&&{\d\calF(w;\cdot)\over\d |D^cw|}(x_0)  {\hbox{ exists and is finite}}, \label{rnforcalFCant} \\
&& \lim_{\epsilon\to0^+}\frac{\nu(B(x_0,\epsilon))}{|D^cw|(B(x_0,\epsilon))}=0, \label{nuCwarems}\\
&&\lim_{\epsilon\to0^+}
\dashint_{B(x_0,\epsilon)}|\tilde w(x) -\tilde w(x_0)|\,\d|D^cw|(x)=0, \label{LebPointwrtCw}\\
&&\lim_{\epsilon\to0^+}
\dashint_{B(x_0,\epsilon)}|{W^c}(x) - {W^c}(x_0)|\,\d|D^cw|(x)=0, \label{LebPointWcwrtCw}\\
&& \lim_{\epsilon\to0^+}\dashint_{B(x_0,\varsigma_k)}
\big|(\calQ \tilde
f)^\infty(\tilde
u(x),\tilde v(x),W_u^c(x),W_v^c(x))
\nonumber\\
&&\hskip35mm-(\calQ \tilde
f)^\infty(\tilde
u(x_0),\tilde v(x_0),W_u^c(x_0),W_v^c(x_0))
\,\big|\d|D^cw|(x)=0.\qquad\qquad\label{LebPointrecQtildef}
\end{eqnarray}}%
We observe that \eqref{winmanifold}--\eqref{LebPointrecQtildef} hold for $|D^cw|$-\aev\ $x_0\in\Omega$.

As in the proof of Lemma~\ref{abscontpartCalF}, let $\{\varsigma_k\}_{k\in\NN}$ be a decreasing sequence of positive real numbers such that $B(x_0,2\varsigma_k)\subset\Omega$ and $|Du|(\partial B(x_0,\varsigma_k))=|Dv|(\partial B(x_0,\varsigma_k))=0
=|Dw|(\partial B(x_0,\varsigma_k))
$ for all $k\in\NN$. Let $\{\rho_n\}_{n\in\NN}$ be the sequence of standard mollifiers defined in \eqref{smoothmoll} for $\delta=1/n$. Without
loss of generality, we
may assume that for all
$n\in\NN,$ we have $B(x_0,2\varsigma_1)\subset \{x\in\Omega\!: \dist(x,
\partial\Omega)>1/n\}$,
and  thus define (see \eqref{defmollfct})
{\setlength\arraycolsep{0.5pt}
\begin{eqnarray*}
&& u_n(x):= u*\rho_n,
\quad v_n:=  v* \rho_n, \quad  w_n:= w*\rho_n=(u_n,v_n).
\end{eqnarray*}}%
Then, $u_n\in W^{1,1}(B(x_0,\varsigma_k);[\alpha,\beta])\cap C^\infty(\overline{B(x_0,\varsigma_k)})$, $v_n\in W^{1,1}(B(x_0,\varsigma_k);\overline{B(0,1)})\cap C^\infty(\overline{B(x_0,\varsigma_k)};\RR^3)$,  $u_n\weaklystar u$ weakly-$\star$ in $BV(B(x_0,\varsigma_k))$, and $v_n\weaklystar v$ weakly-$\star$ in $BV(B(x_0,\varsigma_k);\RR^3)$, as $n\to\infty$, for all $k\in\NN$.  

Fix $\delta\in(0,1/4)$. For $n,k\in\NN$, consider the function   $v_{n,k}:= \pi_{y_{n,k}}\circ v_n\in W^{1,1}(B(x_0,\varsigma_k);S^2)\cap C^\infty(\overline{B(x_0,\varsigma_k)};\RR^3)$ with $y_{n,k}\in B(0,1/2)$ given by Lemma~\ref{seqonman} applied to $B(x_0;\varsigma_k)$,  $v_n$, and  $A_{n,k}:=\{x\in B(x_0,\varsigma_k)\!:\, \dist(v_n(x),S^2)> \delta\}$.
Then, arguing as in Step~1 of the proof of
Lemma~\ref{abscontpartCalF}, for all $
n,k\in\NN$ and $
x\in B(x_0,\varsigma_k)\backslash
A_{n,k} $,
  {\setlength\arraycolsep{0.5pt}
\begin{eqnarray}
&& \lim_{n\to\infty}\calL^2(A_{n,k})=0,\enspace \quad \int_{A_{n,k}}|\grad v_{n,k}(x)|\,\dx\lii C_\star\int_{A_{n,k}}|\grad v_n(x)|\,\dx,\enspace\label{onAnkCantor} \\
&&|\grad v_{n,k}(x)|\lii 4\bar C|\grad v_n(x)|,\quad |v_{n,k}(x) - \tilde v(x_0)|\lii \overline C |v_n(x) - \tilde v(x_0)|.\label{outsideAnkCantor}
\end{eqnarray}}%

\underbar{Step 1.} We prove that 
{\setlength\arraycolsep{0.5pt}
\begin{eqnarray}\label{ubcantor1}
&& {\d\calF(w;\cdot)\over\d
|D^cw|}(x_0) \nonumber\\
&&\quad\lii \limsup_{k\to\infty}\limsup_{n\to\infty}{1\over
|D^cw|(B(x_0,\varsigma_k))}\bigg(\int_{B(x_0,\varsigma_k)}
\calQ \tilde f(\tilde
u(x_0),\tilde v(x_0),
\grad  u_n(x), \grad
 v_{n,k}(x))\,\dx  +c\,I_{n,k}\bigg),
  \qquad\quad
\end{eqnarray}}%
where \(c\) is the constant in \eqref{QQbarfrbarrsbars}
and
\begin{eqnarray}\label{defIn}
I_{n,k}:= \int_{B(x_0,\varsigma_k)} \big(|u_n(x) - \tilde u(x_0)| + |v_{n,k}(x) - \tilde v(x_0)|\big)\big(|\grad u_n(x)| + |\grad v_{n,k}(x)|\big)\,\dx.
\end{eqnarray}

The sequence $\{(u_n,
v_{n,k})\}_{n\in\NN}$ is admissible  for $\calF(w;B(x_0;\varsigma_k))$, thus, in view of \eqref{rnforcalFCant} and Lemma~\ref{inff=infQQf}, we have
{\setlength\arraycolsep{0.5pt}
\begin{eqnarray*}
 {\d\calF(w;\cdot)\over\d|D^cw|}(x_0)
&&= \lim_{k\to\infty} {\calF(w;B(x_0,\varsigma_k))
\over|D^cw|(B(x_0,\varsigma_k))}\\
&&\lii\liminf_{k\to\infty}\liminf_{n\to\infty}{1\over
|D^cw|(B(x_0,\varsigma_k))}\int_{B(x_0,\varsigma_k)}
\calQ_Tf( u_n(x), 
v_{n,k}(x), \grad u_n(x),
\grad  v_{n,k}(x))\,\dx.
\end{eqnarray*}}%
Observing that $\grad v_{n,k}(\cdot)\in [T_{v_{n,k}(\cdot)}(S^2)]^2$ $\calL^2$-\aev\ in $B(x_0;\varsigma_k)$
and using \eqref{winmanifold},
  then \eqref{qcxfqcxbarf}
  and \eqref{QQbarfrbarrsbars}
applied to $r=u_{n,k}(x)$,
$\bar r= \tilde
u(x_0)$, $s= v_{n,k}(x)$,
$\bar s= \tilde
v(x_0)$, $\xi=\grad
u_n$, and $\eta=\grad
v_{n,k}$  entails
{\setlength\arraycolsep{0.5pt}
\begin{eqnarray*}
&& \calQ_Tf( u_n(x),
 v_{n,k}(x), \grad u_n(x),
\grad  v_{n,k}(x)) =
\calQ \tilde f( u_n(x),
 v_{n,k}(x), \grad u_n(x),
\grad  v_{n,k}(x)) \\
&&\quad\lii \calQ \tilde
f( \tilde u(x_0),  \tilde
v(x_0), \grad u_n(x),
\grad  v_{n,k}(x)) \\
&&\qquad +c\big(|u_n(x)
- \tilde u(x_0)| + |v_{n,k}(x)
- \tilde v(x_0)|\big)\big(|\grad
u_n(x)| + |\grad v_{n,k}(x)|\big),
\end{eqnarray*}}%
from which \eqref{ubcantor1} follows.

\underbar{Step 2.} We prove that
\begin{eqnarray}\label{Inktozero}
\lim_{k\to\infty}\lim_{n\to\infty}{1\over |D^cw|(B(x_0,\varsigma_k))}I_{n,k} = 0,
\end{eqnarray}
where $I_{n,k}$ is the integral defined in \eqref{defIn}.

In this step we will denote by $\calC_\delta$ any positive constant only depending on $\delta$, $\beta$,  $C_\star$, and $\bar C$ .
By \eqref{outsideAnkCantor},  recalling that $v_{n,k}(\cdot)$, $\tilde v(x_0)\in S^2$, $u_n(\cdot)$, $\tilde u(x_0)\in [\alpha,\beta]$, and using Lemma~\ref{lemmaAMT}-{\it ii)} applied to $u$ and $\mathfrak{h}(\cdot):=|u_n(\cdot) - \tilde u(x_0)| + |v_n(\cdot) - \tilde v(x_0)|$, we have
{\setlength\arraycolsep{0.5pt}
\begin{eqnarray*}
&& \int_{B(x_0,\varsigma_k)}
\big(|u_n(x) - \tilde
u(x_0)|+ |v_{n,k}(x)
- \tilde v(x_0)|\big)|\grad
u_n(x)|\,\dx  \\
&&\quad\lii (2\beta+2)\int_{A_{n,k}}
|\grad u_n(x)|\,\dx +  \int_{B(x_0,\varsigma_k)\backslash
A_{n,k}} \big(|u_n(x)
- \tilde u(x_0)|+ \overline C  |v_n(x) - \tilde v(x_0)|\big)|\grad
u_n(x)|\,\dx\\
&&\quad\lii \frac{2\beta+2}{\delta}\int_{A_{n,k}}
|v_n(x) - \tilde v(x_0)||\grad
u_n(x)|\,\dx \\
&&\hskip35mm+  \int_{B(x_0,\varsigma_k)\backslash
A_{n,k}} \big(|u_n(x)
- \tilde u(x_0)|+ \overline C  |v_n(x) - \tilde v(x_0)|\big)|\grad
u_n(x)|\,\dx\\
&&\quad\lii \calC_\delta\int_{B(x_0,\varsigma_k)}
\big(|u_n(x) - \tilde
u(x_0)|+   |v_n(x) -
\tilde v(x_0)|\big)|\grad
u_n(x)|\,\dx\\
&&\quad\lii \calC_\delta
\int_{B(x_0,\varsigma_k
+ \frac{1}{n})} \Big[
(|u_n - \tilde u(x_0)|*\rho_n)(x)+(|v_n
- \tilde v(x_0)|*\rho_n\big)(x)\Big] \,\d|Du|(x)\\
&&\quad\lii 2\calC_\delta
\int_{B(x_0,\varsigma_k
+ \frac{1}{n})}  (|w_n
- \tilde w(x_0)|*\rho_n)(x)\,\d|Du|(x).
\end{eqnarray*}}%
Similarly, 
{\setlength\arraycolsep{0.5pt}
\begin{eqnarray}
&& \int_{B(x_0,\varsigma_k)}
\big(|u_n(x) - \tilde
u(x_0)|+|v_{n,k}(x) -
\tilde v(x_0)|\big)|\grad
v_{n,k}(x)|\,\dx \nonumber  \\
&&\qquad\lii (2\beta+2)C_\star\int_{A_{n,k}}
|\grad v_n(x)|\,\dx +  4\bar C\int_{B(x_0,\varsigma_k)\backslash
A_{n,k}} \big(|u_n(x)
- \tilde u(x_0)|+\overline C|v_n(x) - \tilde v(x_0)|\big)|\grad
v_n(x)|\,\dx \nonumber\\
&&\qquad\lii \calC_\delta
\int_{B(x_0,\varsigma_k)}
\big(|u_n(x) - \tilde
u(x_0)| + |v_n(x) - \tilde
v(x_0)|\big)|\grad v_n(x)|\,\dx \nonumber\\
&&\qquad\lii 2\calC_\delta
\int_{B(x_0,\varsigma_k
+ \frac{1}{n})} (|w_n
- \tilde w(x_0)|*\rho_n)(x)\,\d|Dv|(x).\label{Inktozero1}
\end{eqnarray}}%

Hence, since $|Du| + |Dv|\lii 2|Dw|$ in ${\cal B}(\Omega)$, we deduce that
\begin{eqnarray*}
I_{n,k}\lii \calC_\delta\int_{B(x_0,\varsigma_k + \frac{1}{n})} (|w_n - \tilde w(x_0)|*\rho_n)(x)\,\d|Dw|(x).
\end{eqnarray*}

Using the estimate  $\Vert |w_n - \tilde w(x_0)|*\rho_n\Vert_{L^\infty(B(x_0,\varsigma_k + \frac{1}{n}))}\lii 2(\beta + 1)$, we obtain
{\setlength\arraycolsep{0.5pt}
\begin{eqnarray*}
I_{n,k}&&\lii  \calC_\delta
\int_{B(x_0,\varsigma_k
+ \frac{1}{n})\backslash
S_w} (|w_n - \tilde w(x_0)|*\rho_n)(x)\,\d|Dw|(x)
+ \calC_\delta|Dw|\Big(B\Big(x_0,\varsigma_k
+ \frac{1}{n}\Big)\cap
S_w\Big)\\
 &&\lii  \calC_\delta
\int_{B(x_0,\varsigma_k
+ \frac{1}{n})\backslash
S_w} \big((|w_n - w|*\rho_n)(x)
+ (| w- \tilde w(x_0)|*\rho_n)(x)\big)\,\d|Dw|(x)
\\
&&\hskip75mm + \calC_\delta|Dw|\Big(B\Big(x_0,\varsigma_k
+ \frac{1}{n}\Big)\cap
S_w\Big).
\end{eqnarray*}}%

Define $\bar w(\cdot):= |w(\cdot) - \tilde w(x_0)|$. By Proposition~\ref{appppties}~{\it (a)-ii)} and {\it (a)-iii)} applied to $\bar w$ and to $w$, respectively, we conclude that
\begin{eqnarray*}
 \lim_{n\to\infty}(| w- \tilde w(x_0)|*\rho_n)(x) =\tilde{\bar w}(x)=|\tilde w(x) - \tilde w(x_0)|\quad \hbox{ for all } x\in A_w=\Omega\backslash S_w,
\end{eqnarray*}
while in view of 
Lemma~\ref{lemmaAMT}-{\it iv)} applied to $w$,
\begin{eqnarray*}
\lim_{n\to\infty}(|w_n - w|*\rho_n)(x)=0 \quad \hbox{ for all } x\in A_w=\Omega\backslash
S_w.
\end{eqnarray*}
These last two limits, together with Lebesgue's Dominated and Monotone Convergence Theorems, yield
{\setlength\arraycolsep{0.5pt}
\begin{eqnarray*}
\limsup_{n\to\infty}
I_{n,k}&&\lii \calC_\delta
\int_{\overline{B(x_0,\varsigma_k
)}\backslash S_w} |\tilde
w(x)- \tilde w(x_0)|\,\d|Dw|(x)
+ \calC_\delta|Dw|\big(\overline{B(x_0,\varsigma_k
})\cap S_w\big)\\
&&\lii \calC_\delta \int_{{B(x_0,\varsigma_k
)}} |\tilde w(x)- \tilde
w(x_0)|\,\d|D^cw|(x)
+ \calC_\delta \nu({B(x_0,\varsigma_k
}))+\calC_\delta|Dw|({B(x_0,\varsigma_k
})\cap S_w),
\end{eqnarray*}}%
where we  used the fact that $|Dw|(\partial B(x_0,\varsigma_k))=0$ and the 
equality $|Dw|=|D^cw| +\nu$. We  observe that $|Dw|(B(x_0,\varsigma_k)\cap S_w)= \nu (B(x_0,\varsigma_k)\cap S_w) + |D^cw|(B(x_0,\varsigma_k)\cap S_w) = \nu (B(x_0,\varsigma_k)\cap S_w)\lii \nu (B(x_0,\varsigma_k))$; hence, by  \eqref{LebPointwrtCw} and \eqref{nuCwarems},
{\setlength\arraycolsep{0.5pt}
\begin{eqnarray*}
&& \limsup_{k\to\infty}\limsup_{n\to\infty}{1\over
|D^cw|(B(x_0,\varsigma_k))}I_{n,k}\\
&&\quad\lii\calC_\delta\limsup_{k\to\infty}
\bigg(
\dashint_{{B(x_0,\varsigma_k
)}} |\tilde w(x)- \tilde
w(x_0)|\,\d|D^cw|(x)
 +  \frac{\nu({B(x_0,\varsigma_k
}))}{|D^cw|(B(x_0,\varsigma_k))}
\bigg)=0,
\end{eqnarray*}}%
  and we conclude  \eqref{Inktozero}.

\underbar{Step 3.} We show that
{\setlength\arraycolsep{0.5pt}
\begin{eqnarray}\label{ubcantopart2}
&& \limsup_{k\to\infty}
\limsup_{n\to\infty}{1\over
|D^cw|(B(x_0,\varsigma_k))}
\int_{B(x_0,\varsigma_k)}
\calQ \tilde f(\tilde
u(x_0),\tilde v(x_0),
\grad  u_n(x), \grad
 v_{n,k}(x))\,\dx \nonumber\\
&&\quad\lii  (\calQ_T
f)^\infty\big(\tilde
u(x_0) ,\tilde v(x_0),
W^c_u(x_0), W^c_v(x_0)\big).
\end{eqnarray}}%
As in \cite[Prop.~{4.2}]{ADMXCII}, we define a function ${\mathfrak{\mathfrak{z}}}:\RR^{4\times2}\to[0,+\infty)$ by setting
\begin{eqnarray*}
\mathfrak{z}(\zeta):=\sup_{t>0}\frac{\calQ \tilde f(\tilde u(x_0),\tilde v(x_0), t\xi, t\eta)}{t},\quad \zeta\in \RR^{4\times2},
\end{eqnarray*}
where $\xi$ is the first row of $\zeta$ and $\eta$ is the $3\times2$ matrix obtained from $\zeta$ by erasing its first row. Observe that for all $r\in\RR$, $s\in\RR^3$, $\calQ \tilde f(r,s,0,0)=0$ 
since \(\calQ \tilde f \lii \tilde f\) and $\tilde f(r,s,0,0)=0$
by \eqref{deftildef}, \eqref{defPDetall}, and \eqref{defoff}. Note also that
\begin{eqnarray}\label{ggiiQQbarf}
\mathfrak{\mathfrak{z}}(\zeta)\gii  \calQ \tilde f(\tilde u(x_0),\tilde v(x_0), \xi, \eta)\quad\hbox{ for all } \zeta\in \RR^{4\times2}.
\end{eqnarray}
 Moreover (cf. \cite[Prop.~{4.2}]{ADMXCII}),  $\mathfrak{\mathfrak{z}}$ is a positively 1-homogeneous quasiconvex function satisfying \eqref{qcxisLip} and  the rank-one convexity of $\calQ \tilde f(\tilde u(x_0),\tilde v(x_0),\cdot,\cdot)$ implies that
\begin{eqnarray}\label{=onrank1}
\mathfrak{\mathfrak{z}}(\zeta) = (\calQ \tilde f)^\infty(\tilde u(x_0),\tilde v(x_0),\xi,\eta)\quad \hbox{ for all } \zeta\in\RR^{4\times 2},\,\, {\rm rank}(\zeta)\lii1.
\end{eqnarray}
In view of \eqref{ggiiQQbarf} and \eqref{qcxisLip},
we have
{\setlength\arraycolsep{0.5pt}
\begin{eqnarray}\label{ubcantopart3}
&& \int_{B(x_0,\varsigma_k)}
\calQ \tilde f(\tilde
u(x_0),\tilde v(x_0),
\grad  u_n(x), \grad
 v_{n,k}(x))\,\dx \nonumber\\
&&\quad\lii \int_{B(x_0,\varsigma_k)}
\mathfrak{\mathfrak{z}}(\grad w_n(x))\,\dx
+ L\int_{B(x_0,\varsigma_k)}
|\grad v_n(x) - \grad
v_{n,k}(x)|\,\dx.
\end{eqnarray}}%
We claim that
{\setlength\arraycolsep{0.5pt}
\begin{eqnarray}\label{ubcantopart4}
&& \limsup_{k\to\infty}
\limsup_{n\to\infty}{1\over
|D^cw|(B(x_0,\varsigma_k))}
\int_{B(x_0,\varsigma_k)}
\mathfrak{\mathfrak{z}}(\grad w_n(x))\,\dx
\lii  (\calQ_T
f)^\infty\big(\tilde
u(x_0) ,\tilde v(x_0),
W^c_u(x_0), W^c_v(x_0)\big)
\nonumber\\
\end{eqnarray}}%
and
that
\begin{eqnarray}\label{ubcantopart5}
\begin{aligned}
& \lim_{k\to\infty}\lim_{n\to\infty}{1\over |D^cw|(B(x_0,\varsigma_k))}\int_{B(x_0,\varsigma_k)} |\grad v_n(x) - \grad v_{n,k}(x)|\,\dx=0,
\end{aligned}
\end{eqnarray}
which, together with \eqref{ubcantopart3},  yield \eqref{ubcantopart2}.

We start by proving \eqref{ubcantopart4}. By Lemma~\ref{lemmaAMT}-{\it iii)}, we obtain
{\setlength\arraycolsep{0.5pt}
\begin{eqnarray*}
&& \lim_{n\to\infty}{1\over
|D^cw|(B(x_0,\varsigma_k))}\int_{B(x_0,\varsigma_k)}
\mathfrak{\mathfrak{z}}(\grad w_n(x))\,\dx = {1\over |D^cw|(B(x_0,
\varsigma_k))}\int_{B(x_0,
\varsigma_k)} \mathfrak{\mathfrak{z}} \Big(\frac{\d Dw}{\d |Dw|}(x)\Big)\,\d|Dw|(x)\\
&&\quad \lii {1\over
|D^cw|(B(x_0,\varsigma_k))}\int_{B(x_0,\varsigma_k)}
\mathfrak{\mathfrak{z}}(W^c(x))\,\d|D^cw|(x)
+ (2+\sqrt2(1+\beta)) {\nu(B(x_0,\varsigma_k))\over
|D^cw|(B(x_0,\varsigma_k))},
\end{eqnarray*}}%
where we also used \eqref{boundsQQbarf}. In view of 
Theorem~\ref{rank1Alberti},
\eqref{=onrank1}, \eqref{nuCwarems}, and \eqref{winmanifold}, in this order, we  have 
{\setlength\arraycolsep{0.5pt}
\begin{eqnarray}\label{ubcantopart10}
&& \limsup_{k\to\infty}
\limsup_{n\to\infty}{1\over
|D^cw|(B(x_0,\varsigma_k))}
\int_{B(x_0,\varsigma_k)}
\mathfrak{\mathfrak{z}}(\grad w_n(x))\,\dx
\nonumber\\
&&\quad \lii \limsup_{k\to\infty}
\dashint_{B(x_0,\varsigma_k)}
(\calQ \tilde f)^\infty(\tilde
u(x_0),\tilde v(x_0),W_u^c(x),
W_v^c(x))\,
\,\d|D^cw|(x)\nonumber\\
&&\quad\lii (\calQ \tilde
f)^\infty(\tilde
u(x_0),\tilde v(x_0),W_u^c(x_0),
W_v^c(x_0))
\nonumber\\
&&\qquad\enspace +\,\lim_{k\to\infty}
\dashint_{B(x_0,\varsigma_k)}
\big|(\calQ \tilde
f)^\infty(\tilde
u(x_0),\tilde v(x_0),W_u^c(x_0),
W_v^c(x_0))\nonumber\\
&&\hskip50mm-(\calQ \tilde
f)^\infty(\tilde
u(x),\tilde v(x),W_u^c(x),W_v^c(x))
\,\big|\,\d|D^cw|(x)
\nonumber\\
&&\qquad\enspace +\,
\limsup_{k\to\infty}
\dashint_{B(x_0,\varsigma_k)}
\big|(\calQ \tilde
f)^\infty(\tilde
u(x),\tilde v(x),W_u^c(x),W_v^c(x))
\nonumber\\
&&\hskip50mm-(\calQ \tilde
f)^\infty(\tilde
u(x_0),\tilde v(x_0),W_u^c(x),
W_v^c(x))
\,\big|\,\d|D^cw|(x)
\nonumber\\
&&\quad\lii (\calQ_T
f)^\infty(\tilde
u(x_0),\tilde v(x_0),W_u^c(x_0),
W_v^c(x_0))
\nonumber\\
&&\qquad\enspace +\,c
\limsup_{k\to\infty}
\dashint_{B(x_0,\varsigma_k)}
\big(\big|\tilde
u(x) - \tilde u(x_0)|
+|\tilde v(x)-\tilde
v(x_0)|\big)\big(|W_u^c(x)|
 + |W_v^c(x)|\big)\,\d|D^cw|(x),
\end{eqnarray}}%
where in the last inequality
we used \eqref{qcxfqcxbarf},
together with Lemma~\ref{onManae}  and  the definition
of the recession functions
of $\calQ_Tf$ and $\calQ
\tilde f$, \eqref{LebPointrecQtildef},
and \eqref{contrecQtildef}.
Furthermore,
{\setlength\arraycolsep{0.5pt}
\begin{eqnarray*}
&& \limsup_{k\to\infty}
\dashint_{B(x_0,\varsigma_k)}
\big(\big|\tilde
u(x) - \tilde u(x_0)|
+|\tilde v(x)-\tilde
v(x_0)|\big)\big(|W_u^c(x)|
 + |W_v^c(x)|\big)\,\d|D^cw|(x)\\
&&\quad\lii4\,\limsup_{k\to\infty}
\dashint_{B(x_0,\varsigma_k)}
\big[|W^c(x_0)||\tilde
w(x) - \tilde w(x_0)|
+(\beta+1)|W^c(x)-W^c(x_0)|\big]
\,\d|D^cw|(x),
\end{eqnarray*}}%
 which, together with
\eqref{ubcantopart10}, \eqref{LebPointwrtCw}, and
\eqref{LebPointWcwrtCw}, entails  \eqref{ubcantopart4}.

Finally, we establish \eqref{ubcantopart5}. Arguing as in Step~2, using \eqref{onAnkCantor}, \eqref{outsideAnkCantor}, and the second estimate in \eqref{uniflipballtf} applied to $y=y_{n,k}$, $s_1=v_n(x)$ (for $x\in B(x_0,\varsigma_k)\backslash  A_{n,k}$),  and $s_2=\tilde v(x_0)$, and recalling that $\grad v_{n,k}(\cdot) = \grad \pi_{y_{n,k}}(v_n(\cdot))\grad v_n(\cdot)$, we obtain
{\setlength\arraycolsep{0.5pt}
\begin{eqnarray}\label{ubcantopart6}
&&\int_{B(x_0,\varsigma_k)}
|\grad v_n(x) - \grad
v_{n,k}(x)|\,\dx \nonumber\\
&&\quad \lii \frac{1+
C_\star}{\delta}\int_{A_{n,k}}
|v_n(x) -\tilde v(x_0)|
 |\grad v_n(x)|\,\dx
+\overline C \int_{B(x_0,\varsigma_k)\backslash
 A_{n,k}} |v_n(x) -\tilde
v(x_0)|  |\grad v_n(x)|\,\dx
\nonumber \\
&&\qquad\enspace +\int_{B(x_0,
\varsigma_k)\backslash
 A_{n,k}} |\grad v_n(x)
- \grad \pi_{y_{n,k}}(\tilde
v(x_0))\grad v_n(x)|\,\dx.
\end{eqnarray}}%
Moreover (see \eqref{Inktozero1}),
\begin{eqnarray*}
\lim_{k\to\infty}\lim_{n\to\infty}{1\over |D^cw|(B(x_0,\varsigma_k))}\int_{B(x_0,\varsigma_k)}|v_n(x) -\tilde v(x_0)|  |\grad v_n(x)|\,\dx =0,
\end{eqnarray*}
and so, in view of \eqref{ubcantopart6}, to prove \eqref{ubcantopart5} it suffices to show that %
\begin{eqnarray}\label{ubcantopart7}
\lim_{k\to\infty}\lim_{n\to\infty}{1\over |D^cw|(B(x_0,\varsigma_k))}\int_{B(x_0,\varsigma_k)}|\grad v_n(x) - \grad \pi_{y_{n,k}}(\tilde v(x_0))\grad v_n(x)|\,\dx =0.
\end{eqnarray}

In the remaining part of the proof, $\II_{4\times 4}$ denotes the $4\times 4$ identity matrix and $B_{n,k}$ denotes the $4\times 4$  matrix whose last three rows and columns are those of $\grad\pi_{y_{n,k}}(\tilde v(x_0))$ and the first row and column are those of the identity matrix. We observe that $| \II_{4\times 4} - B_{n,k}| = | \II_{3\times 3} - \grad\pi_{y_{n,k}}(\tilde v(x_0))|\lii \sqrt{3}+ 2\bar C$. Moreover, 
\begin{eqnarray*}
\begin{aligned}
|\grad v_n(\cdot) - \grad \pi_{y_{n,k}}(\tilde v(x_0))\grad v_n(\cdot)| = |(\II_{4\times 4}- B_{n,k})\grad w_n(\cdot)| = |\grad(\vartheta_{n,k}*\rho_n)
(\cdot)|,
\end{aligned}
\end{eqnarray*}
where
\begin{eqnarray*}
\begin{aligned}
\vartheta_{n,k}(\cdot):=(\II_{4\times 4}- B_{n,k}) w.
\end{aligned}
\end{eqnarray*}
Since
\begin{eqnarray*}
\begin{aligned}
D\vartheta_{n,k} = (\II_{4\times 4}- B_{n,k})\grad w\calL^2_{\lfloor\Omega} + (\II_{4\times 4}- B_{n,k})(w^+ - w^-)\otimes \nu_w\calH^1_{\lfloor J_w} + (\II_{4\times 4}- B_{n,k}) W^c|D^c w|,
\end{aligned}
\end{eqnarray*}
using Lemma~\ref{lemmaAMT}-{\it ii)} with $\mathfrak{h}\equiv 1$ and observing that $1*\rho_n\equiv1$, we have
{\setlength\arraycolsep{0.5pt}
\begin{eqnarray}\label{ubcantopart8}
&& \int_{B(x_0,\varsigma_k)}|\grad
v_n(x) - \grad \pi_{y_{n,k}}(\tilde
v(x_0))\grad v_n(x)|\,\dx
\lii |D\vartheta_{n,k}|
\Big(B\Big(x_0,\varsigma_k
+ \frac{1}{n}\Big) \Big)
\nonumber\\
&&\qquad \lii(\sqrt{3}+ 2\bar C)\nu\Big(B\Big(x_0,\varsigma_k
+ \frac{1}{n}\Big) \Big)
+ \int_{B(x_0,\varsigma_k
+ \frac{1}{n})} |(\II_{4\times
4}- B_{n,k}) W^c(x)|\,\d|D^cw|(x).
\end{eqnarray}}%
By Lemma~\ref{onManae}, $v(x)\in S^2$ for all $x$ in $\Omega$ except possibly for $x$ belonging to the $\calH^1$-negligible set $S_w\backslash J_w$. Therefore, redefining $v$ on $S_w\backslash J_w$ so that $v(x)\in S^2$ for all $x\in\Omega$ if necessary, in view of \eqref{gradprojonS} and \eqref{winmanifold}, we conclude that
\begin{eqnarray*}
\begin{aligned}
\grad\pi_{y_{n,k}}(\tilde v(x))W_v^c(x) =  W_v^c(x)\quad \hbox{ for $|D^cw|$-\aev\ $x\in\Omega$}.
\end{aligned}
\end{eqnarray*}
Hence, using once more \eqref{uniflipballtf} and recalling that $|W^c(x)|= 1$ for $|D^cw|$-\aev\ $x\in\Omega$,
we deduce that
{\setlength\arraycolsep{0.5pt}
\begin{eqnarray}\label{ubcantopart9}
&& \int_{B(x_0,\varsigma_k
+ \frac{1}{n})} |(\II_{4\times
4}- B_{n,k}) W^c(x)|\,\d|D^cw|(x)
\nonumber\\
&&\quad = \int_{B(x_0,\varsigma_k
+ \frac{1}{n})} |\grad
\pi_{y_{n,k}}(\tilde
v(x))W_v^c(x)- \grad
\pi_{y_{n,k}}(\tilde
v(x_0)) W_v^c(x)|\,\d|D^cw|(x)
\nonumber\\
&&\quad\lii  \overline C
\int_{B(x_0,\varsigma_k
+ \frac{1}{n})} |\tilde
v(x) -  \tilde v(x_0)|\,\d|D^cw|(x)
\lii
 \overline C \int_{B(x_0,\varsigma_k
+ \frac{1}{n})} |\tilde
w(x) -  \tilde w(x_0)|\,\d|D^cw|(x).
\end{eqnarray}}%
Since $D^cw(\partial B(x_0,\varsigma_k))= \nu (\partial B(x_0,\varsigma_k))=0$, from \eqref{ubcantopart8}, \eqref{ubcantopart9}, \eqref{nuCwarems}, and  \eqref{LebPointwrtCw}, we infer \eqref{ubcantopart7}, which concludes Step~3.

Finally, \eqref{ubcantorpart} follows from \eqref{ubcantor1}, \eqref{Inktozero}, and \eqref{ubcantopart2}.
\end{proof}

\begin{lemma}\label{jumppartCalF}
If $(u,v)\in BV(\Omega;[\alpha,
\beta])\times BV(\Omega;S^2)$, then for all $A\in\calA(\Omega)$,
\begin{eqnarray}\label{ubjump}
\begin{aligned}
\calF(u,v;A \cap S_{(u,v)})\lii \int_{A\cap S_{(u,v)}}  K\big((u,v)^+(x),(u,v)^-(x),
 \nu_{(u,v)}(x)\big)\,\d\calH^1(x).
\end{aligned}
\end{eqnarray}
\end{lemma}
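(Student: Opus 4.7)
The plan is to apply the Besicovitch differentiation theorem to the Radon measure $\calF(u,v;\cdot)$ provided by Lemma~\ref{CalFmeasure}. Writing $w=(u,v)$, the bound \eqref{ubcalF} gives $\calF(u,v;\cdot)\lii \tilde c|Dw|$. Since $\calH^1(S_w\setminus J_w)=0$ by Theorem~\ref{appdiffisac} and $|\tilde Dw|$ is concentrated outside $S_w$ (Remark~\ref{decforDu}), the restriction of $\calF(u,v;\cdot)$ to $S_w$ is absolutely continuous with respect to $\calH^1\lfloor J_w$. Using the rectifiability of $J_w$ (so $\calH^1(Q_\nu(x_0,\rho)\cap J_w)\sim\rho$ as $\rho\to 0^+$) and the fact that $|\tilde Dw|(Q_\nu(x_0,\rho))=o(\rho)$ at $\calH^1\lfloor J_w$-a.e.~$x_0$, it will suffice to establish the pointwise estimate
\begin{equation*}
\limsup_{\rho\to0^+}\frac{\calF(u,v;Q_{\nu_w(x_0)}(x_0,\rho))}{\rho}\lii K\big(w^+(x_0),w^-(x_0),\nu_w(x_0)\big)
\end{equation*}
at $\calH^1$-a.e.~$x_0\in J_w$.

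Fix such a good $x_0$; by Lemma~\ref{onManae}, $a:=w^+(x_0)$ and $b:=w^-(x_0)$ belong to $[\alpha,\beta]\times S^2$. Set $\nu:=\nu_w(x_0)$. For $\epsi>0$, pick $\vartheta=(\ffi,\psi)\in\calP(a,b,\nu)$ (cf.~\eqref{defP}) satisfying $\int_{Q_\nu}f^\infty(\vartheta,\grad\vartheta)\,\dy\lii K(a,b,\nu)+\epsi$, and extend $\vartheta$ by $\nu^\perp$-periodicity to $\RR^2$. For $\rho>0$ small and $n\in\NN$ large, I build $w_{n,\rho}\in W^{1,1}(Q_\nu(x_0,\rho);[\alpha,\beta]\times S^2)$ by placing the rescaled profile $\vartheta(n(x-x_0)/\rho)$ inside the thin slab $\{|(x-x_0)\cdot\nu|<\rho/(2n)\}$, and letting $w_{n,\rho}$ take the constant values $a$, resp.~$b$, on the two complementary half-cubes---continuity across the slab's faces is automatic from \eqref{defP}. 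After the change of variables $y=n(x-x_0)/\rho$ and using the $\nu^\perp$-periodicity of $\vartheta$, the scaling identity $f(r,s,t\xi,t\eta)/t\to f^\infty(r,s,\xi,\eta)$ as $t\to+\infty$---whose delicate ingredient is that $g\big((n/\rho)|\grad\ffi|\big)\to\chi_{_{\{0\}}}(|\grad\ffi|)$ pointwise (since $g(0)=1$ and $g(t)\to0$ for $t>0$)---together with Lebesgue's dominated convergence ($g\lii 1$) yields
\begin{equation*}
\lim_{n\to\infty}\int_{Q_\nu(x_0,\rho)}f(w_{n,\rho},\grad w_{n,\rho})\,\dx=\rho\int_{Q_\nu}f^\infty(\vartheta,\grad\vartheta)\,\dy\lii\rho\big(K(a,b,\nu)+\epsi\big).
\end{equation*}

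Because $w_{n,\rho}$ converges in $L^1$ only to the piecewise constant jump profile, and not to $w$ itself, it is not directly admissible for $\calF(u,v;Q_\nu(x_0,\rho))$. To remedy this, I will insert a thin annular buffer $Q_\nu(x_0,\rho)\setminus Q_\nu(x_0,\rho(1-\delta))$ in which $w_{n,\rho}$ is bridged to $w$ using the projection (Lemma~\ref{man+tr}) and slicing (Lemma~\ref{slicingMan}) tools from Subsection~\ref{auxlem}, so that the modified sequence lies in $W^{1,1}(Q_\nu(x_0,\rho);[\alpha,\beta]\times S^2)$ and converges to $w$ in $L^1(Q_\nu(x_0,\rho);\RR\times\RR^3)$. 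By \eqref{ubcalF}, the buffer contribution is bounded by $\tilde c|Dw|\big(Q_\nu(x_0,\rho)\setminus Q_\nu(x_0,\rho(1-\delta))\big)$; using that $|\tilde Dw|(Q_\nu(x_0,\rho))/\rho\to 0$ and that the trace of $J_w$ on the annular buffer has $\calH^1$-measure $O(\delta\rho)$, this contribution is $o(\rho)$ upon sending $n\to\infty$, then $\rho\to 0^+$, then $\delta\to 0^+$. Combining the estimates gives $\limsup_{\rho\to 0^+}\calF(u,v;Q_\nu(x_0,\rho))/\rho\lii K(a,b,\nu)+\epsi$; letting $\epsi\to 0^+$ and integrating the resulting pointwise bound against $\calH^1\lfloor J_w$ then yields \eqref{ubjump}.

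The main obstacle is the bridging layer: the $[\alpha,\beta]\times S^2$ constraint precludes naive convex combinations between $w_{n,\rho}$ and $w$, forcing one to project onto $S^2$ via the map $\pi_y$ and to truncate the brightness component into $[\alpha,\beta]$ (using a scalar projection analogous to $\Phi_{n,\epsi}$ from the proof of Lemma~\ref{abscontpartCalF}), while simultaneously slicing à la Lemma~\ref{slicingMan} to avoid gradient concentration at the interface. A secondary but notable subtlety, confined to the energy computation in Paragraph~2, is the discontinuous term $\chi_{_{\{0\}}}(|\grad\ffi|)|\grad\psi|$ in $f^\infty$: it is recovered only as the pointwise $n\to\infty$ limit of $g((n/\rho)|\grad\ffi|)|\grad\psi|$, whose passage to the limit requires the uniform bound $g\lii 1$ as dominating integrand.
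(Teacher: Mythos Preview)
Your blow-up strategy is quite different from the paper's route. The paper never attempts a pointwise density estimate; instead it proves \eqref{ubjump} first for two-valued $w=a\chi_E+b\chi_{E^c}$ (flat interface, then polygonal $E$, then general finite-perimeter $E$ via Reshetnyak continuity and the upper semicontinuity of $K$), then for finite sums $\sum a_i\chi_{E_i}$, and finally for arbitrary $w$ by the approximation scheme of \cite{AMTXCI}. The crucial device in the last two steps is the \emph{locality} of $\calF(u,v;\cdot)$ on Borel sets, recorded at the start of Subsection~\ref{upperbound}: since $\calF(w;B)=\calF(w';B)$ whenever $B\subset S_w\cap S_{w'}$ and the jump triples agree, one may compute $\calF(w;\cdot)$ on pieces of $S_w$ by replacing $w$ with a two-valued surrogate---for which the jump profile \emph{is} the target function and your competitor $w_{n,\rho}$ is directly admissible.

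The buffer step in your argument has a genuine gap, and I do not see how to close it without essentially reintroducing the paper's machinery. You need a sequence in $W^{1,1}(Q_\nu(x_0,\rho);[\alpha,\beta]\times S^2)$ that converges to $w$ in $L^1$ \emph{for fixed} $\rho$, yet your $w_{n,\rho}$ converges to the jump profile $w_0$, and $w_0\neq w$ on $Q_\nu(x_0,\rho)$ in general. Lemma~\ref{slicingMan} cannot repair this: its hypothesis is precisely that the sequence and the target share the same $L^1$-limit. Lemma~\ref{man+tr} applied to the annulus produces a sequence with trace $w$ on the inner boundary, which does not match the trace $w_0$ carried by $w_{n,\rho}$, so the glued function is not in $W^{1,1}$. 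Interpolating by a cut-off $\phi_\delta w_{n,\rho}+(1-\phi_\delta)\bar w_m$ and projecting the $S^2$-component does not help either: as $n,m\to\infty$ the $L^1$-limit is the projection of $\phi_\delta w_0+(1-\phi_\delta)w$, not $w$, and no ordering of the limits $n,m\to\infty$, $\delta\to0^+$ (or $\delta\to1^-$) simultaneously yields convergence to $w$ and keeps the energy near $\rho K(a,b,\nu)$. Your invocation of \eqref{ubcalF} bounds $\calF(w;\cdot)$ on the annulus, but that is a statement about the infimum over \emph{all} admissible sequences, not about one that can be glued to $w_{n,\rho}$.

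The underlying obstruction is that for fixed $\rho$ there is no reason for $\calF(w;Q_\nu(x_0,\rho))$ to be realised by a sequence with energy close to $\rho K(a,b,\nu)$; only the ratio behaves correctly in the limit. Locality is exactly what lets the paper sidestep this: once $w$ is replaced on $S_w$ by a piecewise-constant surrogate, the competitor built from the profile $\vartheta$ already converges to the right target and no buffer is needed. To make your blow-up work you would have to either prove locality first, or reduce to piecewise-constant $w$---either way recovering the paper's argument.
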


\begin{proof} Let $A\in\calA(\Omega),$
and set $w:=(u,v)$.
We will proceed in three steps, and we closely follow
 the argument in \cite[Step~3 in Sect.~5.2]{FMXCIII}
(see also \cite[Lem.~6.5]{ACELVII}).

\underbar{Step 1.} We prove that \eqref{ubjump} holds whenever $w$ is of the form
\begin{eqnarray*}
\begin{aligned}
w(x)= a\chi_E(x) +b\chi_{E^c}(x),
\end{aligned}
\end{eqnarray*}
where $a,\,b\in [\alpha,\beta]\times S^2$ and $E\subset\Omega$ is a set of finite perimeter in $\Omega$.

{\sl Substep 1.1.} We start by considering the case in which $A= \kappa + \lambda
Q_\nu$ and
\begin{eqnarray}\label{purejump}
\begin{aligned}
w(x)=
\begin{cases}
b & \hbox{if } x\cdot\nu>\sigma
\hbox{ and } x\in A,\\
a &\hbox{if } x\cdot\nu<\sigma
\hbox{ and } x\in A,
\end{cases}
\end{aligned}
\end{eqnarray}
for some $\kappa \in\RR^2$,  $\lambda \in\RR^+$,
 $\nu\in S^1
$, and $\sigma\in\RR$.

Without loss of generality,
we may assume that $
A\cap\{x\in\RR^2\!:\,
x\cdot\nu=\sigma\}\not=\emptyset$. 
Fix $\epsi>0,$ and let $\vartheta=(\ffi,\psi)\in \calP(a,b,\nu)$, depending on $\epsi$, be such that 
\begin{eqnarray}\label{ubjump0}
\begin{aligned}
 K(a,b,\nu) + \epsi>\int_{Q_\nu}
 f^\infty(\vartheta(y),
 \grad\vartheta(y))\,\dy.
\end{aligned}
\end{eqnarray}
{\sl Substep 1.1.1. Assume
that $\nu=e_2$.
}

Set $Q:=Q_{e_2}$, and
 for $n\in\NN$,
define $
w_n\in W^{1,1}_{\rm loc}(\RR^2;
[\alpha,\beta]\times
S^2)$ as 
\begin{eqnarray}\label{ubjump0'}
\begin{aligned}
w_n(x)=(u_n(x),v_n(x)):=
\begin{cases}
b & \hbox{if } x_2>\displaystyle \frac{\lambda}{2(2n+1)}+\sigma,\\
\displaystyle \vartheta\Big((2n+1)
\frac{x-(\kappa_1, \sigma)}{\lambda}\Big) & \hbox{if
} |x_2-\sigma|\displaystyle\lii
\frac{\lambda}{2(2n+1)},\\
a &\hbox{if } x_2<\displaystyle -\frac{\lambda}{2(2n+1)}+\sigma.
\end{cases}
\end{aligned}
\end{eqnarray}
For
all $n\in\NN$ large enough,
we have that 
 $A\cap\big\{x\in\RR^2\!:\,
x_2=\frac{\lambda}{2(2n+1)}+\sigma\big\}
\not=\emptyset$ and $A\cap\big\{x\in\RR^2\!:\,
x_2=-\frac{\lambda}{2(2n+1)}+\sigma\big\}
\not=\emptyset$. For
all such $n\in\NN$, a change of variables yields
{\setlength\arraycolsep{0.5pt}
\begin{eqnarray}\label{ubjump1}
\int_A |w_n(x) - w(x)|\,\dx
&&= 
\int_\sigma^{\frac{\lambda}{2(2n+1)}+
\sigma}
\int_{\kappa_1 - \frac{\lambda}{2}}
^{\kappa_1 + \frac{\lambda}{2}}
\Big|\vartheta\Big((2n+1)
\frac{x-(\kappa_1, \sigma)}{\lambda}
\Big)
- b\Big| \,\dx_1\,\dx_2
\nonumber\\
&&\qquad\qquad\quad+
\int^\sigma_{-\frac{\lambda}{2(2n+1)}
+\sigma}
\int_{\kappa_1 - \frac{\lambda}{2}}
^{\kappa_1 + \frac{\lambda}{2}}
\Big|\vartheta\Big((2n+1)
\frac{x-(\kappa_1, \sigma)}{\lambda}
\Big)
- a\Big| \,\dx_1\,\dx_2
\nonumber\\
&&=\frac{\lambda^2}{2n+1}
\bigg(
\int_0^{\frac{1}{2}}
\int_{-\frac{1}{2}}^{\frac{1}{2}}
|\vartheta((2n+1)y_1,
y_2) - b|\,\dy_1\,\dy_2
\nonumber\\
&&\qquad\qquad\quad+
\int^0_{-\frac{1}{2}}
\int_{-\frac{1}{2}}^{\frac{1}{2}}
|\vartheta((2n+1)y_1,
y_2) - a|\,\dy_1\,\dy_2
\bigg).
\end{eqnarray}}%
By the Riemann--Lebesgue
Lemma, by the 1-periodicity of $\vartheta$ in the $e_1$
direction, and by the Lebesgue Dominated Convergence Theorem, we  obtain
{\setlength\arraycolsep{0.5pt}
\begin{eqnarray*}
&& \lim_{n\to\infty} \bigg(
\int_0^{\frac{1}{2}}
\int_{-\frac{1}{2}}^{\frac{1}{2}}
|\vartheta((2n+1)y_1,
y_2) - b|\,\dy_1\,\dy_2+
\int^0_{\frac{1}{2}}
\int_{-\frac{1}{2}}^{\frac{1}{2}}
|\vartheta((2n+1)y_1,
y_2) - a|\,\dy_1\,\dy_2
\bigg)\\
&&\quad = \bigg(
\int_0^{\frac{1}{2}}
\int_{-\frac{1}{2}}^{\frac{1}{2}}
|\vartheta(z_1,
y_2) - b|\,\d z_1\,\dy_2+
\int^0_{\frac{1}{2}}
\int_{-\frac{1}{2}}^{\frac{1}{2}}
|\vartheta(z_1,
y_2) - a|\,\d z_1\,\dy_2
\bigg).
\end{eqnarray*}}%
Hence, passing \eqref{ubjump1}
to the limit as $n\to\infty$,
we conclude that
\begin{eqnarray}\label{ubjump2}
\begin{aligned}
& \lim_{n\to\infty}\Vert
w_n - w\Vert_{L^1(A;\RR\times
\RR^3)}=0.
\end{aligned}
\end{eqnarray}
Consequently, 
{\setlength\arraycolsep{0.5pt}
\begin{eqnarray*}
\calF(w;A)&&\lii \liminf_{n\to\infty}
\int_A f(w_n(x),\grad
w_n(x))\,\dx
\\
&& = \liminf_{n\to\infty}
\int_{-\frac{\lambda}{2(2n+1)}+\sigma}
^{\frac{\lambda}{2(2n+1)}+\sigma}
\int_{\kappa_1 - \frac{\lambda}{2}}
^{\kappa_1 + \frac{\lambda}{2}}
f \Big(\vartheta\Big((2n+1)
\frac{x-(\kappa_1, \sigma)}
{\lambda}\Big), \frac{2n+1}{\lambda}
\grad \vartheta\Big((2n+1)
\frac{x-(\kappa_1, \sigma)}
{\lambda}\Big)
\Big) \,\dx_1\,\dx_2\\
&&= \liminf_{n\to\infty} \int_{-\frac{1}{2}}^
{\frac{1}{2}}
\int_{-\frac{2n+1}{2}}^
{\frac{2n+1}{2}}
\frac{\lambda^2}{(2n+1)^2}
f \Big(\vartheta
(y), \frac{2n+1}{\lambda}
\grad \vartheta(y)
\Big) \,\dy_1\,\dy_2\\
&&= \liminf_{n\to\infty} \lambda\int_{-\frac{1}{2}}^
{\frac{1}{2}}
\int_{-\frac{1}{2}}^
{\frac{1}{2}}
\frac{\lambda}{2n+1}
f \Big(\vartheta
(y), \frac{2n+1}{\lambda}
\grad \vartheta(y)
\Big) \,\dy_1\,\dy_2,
\end{eqnarray*}}%
where we used $f(\cdot,\cdot,0,0)=0$ and in the last equality
we invoked the 1-periodicity
of
$\vartheta$ in the $e_1$
direction.
Hence,  Fatou's Lemma, together with \eqref{boundsf},
and \eqref{ubjump0}
yield
\begin{eqnarray*}
\begin{aligned}
&\calF(w;A) \lii \lambda
\int_Q f^\infty (\vartheta
(y), \grad \vartheta(y)) \,\dy < \lambda K(a,b, e_2)
+\lambda \epsi = K(a,b,e_2) \calH^1(A\cap
S_w) +\lambda \epsi,
\end{aligned}
\end{eqnarray*}
 from which we obtain \eqref{ubjump} by letting $\epsi\to0^+$.

{\sl Substep 1.1.2.}
We complete Substep~{1.1}.

Let $R\in SO(2)$ be such that $R e_2=\nu$, and define
\begin{eqnarray*}
\begin{aligned}
&\bar w(x):= w(Rx), \enspace x\in
R^T\! A = R^T\!\kappa + \lambda Q_{e_2},\quad
\bar\vartheta (y):= \vartheta (Ry), \enspace y\in Q.
\end{aligned}
\end{eqnarray*}
Let $\{\bar w_n\}_{n\in\NN}$
be the sequence in \eqref{ubjump0'}
with $\vartheta$ replaced
by $\bar\vartheta$. Then,
\eqref{ubjump2} reads
as $\bar w_n\to \bar
w$ in $L^1(R^T\!A;\RR\times\RR^3)$,
which in turn implies
that  $w_n\to
w$ in $L^1(A;\RR\times\RR^3)$,
where 
\begin{eqnarray*}
\begin{aligned}
& w_n(x):= \bar w_n(R^T
x),\quad x\in A,\enspace
n\in\NN.
\end{aligned}
\end{eqnarray*}
Finally, arguing as in
Substep~{1.1.1}, we obtain
{\setlength\arraycolsep{0.5pt}
\begin{eqnarray*}
\calF(w;A)&&\lii \liminf_{n\to\infty}
\int_A f(w_n(x),\grad
w_n(x))\,\dx = \liminf_{n\to\infty}
\int_{R^T\!A} f(\bar w_n(x),  \grad
\bar w_n(x)R^T)\,\dx\\
&&\lii\lambda \int_{
Q_{e_2}} f^\infty(\bar \vartheta(y),
 \grad
\bar \vartheta(y)R^T)\,\dy
= \lambda \int_{Q_\nu}
f^\infty(\vartheta(y),
\grad \vartheta(y))\,\dy,
\end{eqnarray*}}%
which, together with \eqref{ubjump0},
concludes Substep~{1.1}.

{\sl Substep 1.2.} We prove that
if  $\kappa \in\RR^2$,  $\lambda \in\RR^+$,
 $\nu\in S^1
$,  $\sigma\in\RR$, and  \(\delta>0\) are such that
for  \(x\in \kappa + (\lambda+\delta)Q_\nu\),
we have 
\begin{eqnarray*}
\begin{aligned}
w(x)=
\begin{cases}
b & \hbox{if } x\cdot\nu>\sigma
,\\
a &\hbox{if } x\cdot\nu<\sigma,
\end{cases}
\end{aligned}
\end{eqnarray*}
then
\begin{equation}
\label{closedcube}
\calF(w;\kappa + \lambda \overline Q_\nu) \lii
K(a,b,\nu) \, \calH^1((\kappa + \lambda \overline
Q_\nu)\cap
S_w).
\end{equation}

Let
 $\{\lambda_n\}_{n\in\NN} \subset (\lambda,\lambda+\delta)$
be a strictly decreasing sequence 
converging to $\lambda$.
By Lemma~\ref{CalFmeasure}
and Substep~{1.1},
we have 
\begin{equation*}
\calF(w;\kappa + \lambda \overline Q_\nu)
=
\lim_{n\to\infty} \calF(w;\kappa
+ \lambda_n
{Q_\nu}) \nonumber \\ 
\lii \lim_{n\to\infty}
K(a,b,\nu) \calH^1((\kappa
+ \lambda_n
{Q_\nu})\cap
S_w) = 
K(a,b,\nu) \, \calH^1((\kappa + \lambda \overline Q_\nu)\cap
S_w),
\end{equation*}
which proves \eqref{closedcube}.

{\sl Substep 1.3.} We
treat the case in which $A\in\calA(\Omega)$ is
arbitrary and
$w$ is of the form \eqref{purejump}.

Let $\nu_1\in S^1$ be a fixed vector such that
 $\{\nu_1,\nu\}$ is an orthonormal basis of $\RR^2$,
 and let $\{c_{ij}\in\RR^2\!:\, i,j\in\ZZ\}$ be the collection of nodes of a grid
in $\RR^2$ of size 1
with respect to the basis
$\{\nu_1,\nu\}$ such
that
\begin{eqnarray}\label{gridcij}
\begin{aligned}
& \Big\{\frac{c_{ij}}{2^{n-1}}\in\RR^2\!:\,
i,j\in\ZZ, \,n\in\NN\Big\}
\cap \{x\in\RR^2\!:\,
x\cdot\nu=\sigma\}=\emptyset.
\end{aligned}
\end{eqnarray}
Next, we write \(A\) as a union of convenient
closed squares for which the previous substep applies. 
For \(i, j \in \ZZ\) and \(n\in \NN\), let \(Q^{(n)}_{ij}\)
 represent the open square of size \(\frac{1}{2^{n-1}}\) whose  left inferior vertex is \(\frac{c_{ij}}{2^{n-1}}\), and let  \(\kappa^{(n)}_{ij} \in \RR^2\) be such that
\(Q^{(n)}_{ij} = \kappa^{(n)}_{ij} + \frac{1}{2^{n-1}}
Q_\nu\). Set \(B^{(0)} : = \emptyset\), and define
recursively the sets
{\setlength\arraycolsep{0.5pt}%
\begin{eqnarray*}
&&B^{(n)}:= \Big\{ Q^{(n)}_{ij} = \kappa^{(n)}_{ij} + \frac{1}{2^{n-1}} Q_\nu \!: \, i,j \in \ZZ, \, \overline
{Q^{(n)}_{ij}} \subset A, \, Q^{(n)}_{ij} \cap
\cup_{l=0}^{n-1} B^{(l)} =\emptyset \Big\}, \enspace
n\in\NN.
\end{eqnarray*}}%
Without loss of generality, we may assume that
\(B^{(n)} \not= \emptyset\) for all \(n\in\NN\), otherwise we simply
consider the subsequence of the sequence \(\{B^{(n)}
\}_{n\in\NN}\) obtained by removing all its empty
sets. Let \(I^{(n)}:= \{(i,j) \in \ZZ \times \ZZ\!
: \, Q^{(n)}_{ij} \in B^{(n)} \}\in\NN \) and \(A_k
:= \cup_{n=1}^k \tilde A_n\), where \(\tilde A_n:=
\cup_{(i,j)\in I^{(n)}} \big( \kappa^{(n)}_{ij} + \frac{1}{2^{n-1}}
\overline Q_\nu \big)\). Then,
{\setlength\arraycolsep{0.5pt}%
\begin{eqnarray*}
&&A=\bigcup_{k=1}^\infty
A_k, \quad A_1\subset A_2\subset\cdots\subset
A_k\subset \cdots
,\end{eqnarray*}}%
and, by construction and   \eqref{gridcij}, \( \big\{ \kappa^{(n)}_{ij}
+ \frac{1}{2^{n-1}}
 Q_\nu\!: \, n\in\NN, \, (i,j)\in I^{(n)} \big\} \) is a family of mutually disjoint sets
such that 
\begin{eqnarray*}
\begin{aligned}
&\calH^1\Big(\partial\Big(\kappa^{(n)}_{ij} + \frac{1}{2^{n-1}}
 Q_\nu\Big)\cap
\{x\in\RR^2\!:\,
x\cdot\nu=\sigma\}\Big)=0.
\end{aligned}
\end{eqnarray*}
Hence, 
using  Lemma~\ref{CalFmeasure} and  \eqref{closedcube},
we conclude that
{\setlength\arraycolsep{0.5pt}
\begin{eqnarray*}
\calF(w;A) &&= \lim_{k\to\infty}
\calF(w;A_k) \lii \liminf_{k\to\infty}
\sum_{n=1}^k \sum_{(i,j) \in I^{(n)} }
\calF\Big(w;\kappa^{(n)}_{ij}
+ \frac{1}{2^{n-1}}
\overline Q_\nu\Big)
\\
&& \lii \liminf_{j\to\infty}
\sum_{n=1}^k \sum_{(i,j) \in I^{(n)} }
 K(a,b,\nu)\, \calH^1\Big(\Big(\kappa^{(n)}_{ij}
+ \frac{1}{2^{n-1}}
\overline Q_\nu\Big)
\cap S_w\Big)=  K(a,b,\nu)\,
\calH^1(A
\cap S_w).
\end{eqnarray*}}%
This concludes Substep~{1.2}.

{\sl Substep 1.4.} We
now treat the case in
which  $A\in\calA(\Omega)$
is arbitrary and $w$ has a polygonal
interface; that is, 
\begin{eqnarray*}
\begin{aligned}
& w(x)= a\chi_E(x) +b\chi_{E^c}(x),
\end{aligned}
\end{eqnarray*}
where $E$ is polyhedral
open set with $\partial E= \cup_{i=1}^MH_i$, 
$H_i$ a closed segment
of a line of the
type $\{x\in\RR^2\!:\,
x\cdot\nu_i = \sigma_i\}$
for some $\nu_i\in S^1$
and $\sigma_i\in\RR$,
$i\in\{1,...,M\}$. 

Let $I:=\{i\in\{1,...,M\}\!:\,
\calH^1 (A\cap H_i)>0\}$.
Note that since $A$ is open
and $H_i$ is a closed
segment, $\calH^1 (A\cap H_i)=0$ is equivalent
to saying that $A\cap H_i=\emptyset$.  As in Substep~{1.1},
the only nontrivial case
is the case in which
${\rm card}\, I >0$.

Assume that ${\rm card}\, I =1,$ 
and let  $i\in\{1,...,M\}$
be such that $\calH^1 (A\cap H_i)>0$. Define
the sets
{\setlength\arraycolsep{0.5pt}
\begin{eqnarray*}
&& A_1:= A\cap \overline
E^c,\qquad A_2:= A\cap
E,\\
&&A_3:=\{x\in
A\cap \overline
E^c\!:\, x\cdot \nu_i>\sigma_i\}
\cup\{x\in
A\cap E\!:\, x\cdot \nu_i<\sigma_i\}\cup
(A\cap H_i). 
\end{eqnarray*}}%
We have that $A_1$, $A_2$,
and $A_3$ are open and satisfy $A=A_1\cup
A_2\cup A_3$, $w\equiv
b$ in $A_1$, $w\equiv
a$ in $A_2$, and
\begin{eqnarray*}
\begin{aligned}
w(x)=
\begin{cases}
b & \hbox{if } x\cdot\nu_i>\sigma_i
\hbox{ and }x\in A_3,\\
a &\hbox{if } x\cdot\nu_i<\sigma_i
\hbox{ and } x\in A_3.
\end{cases}
\end{aligned}
\end{eqnarray*}
Since $f(\cdot,\cdot,0,0)=0$,
we obtain $\calF(w;A_1)=
\calF(w;A_2)=0$, which,
together with Lemma~\ref{CalFmeasure} and Substep~{1.2},
yields
\begin{eqnarray*}
\begin{aligned}
&\calF(w;A)\lii \calF(w,A_3)
\lii K(a,b,\nu)
\calH^1(A_3
\cap S_w)= K(a,b,\nu)
\calH^1(A
\cap S_w).
\end{aligned}
\end{eqnarray*}
By induction, we assume
that the statement holds
true if ${\rm card}\,
I=k$ for some $k\in\{1,...,M-1\}$
and we prove that 
it is also true if  ${\rm card}\,
I=k+1$. Assume that
\begin{eqnarray*}
\begin{aligned}
& A\cap \partial E =
(A\cap H_1)\cup\cdots
\cup (A\cap H_{k+1}),
\end{aligned}
\end{eqnarray*}
and define 
\begin{eqnarray*}
\begin{aligned}
& A_1:= \{x\in A\!: \,
\dist(x, H_1) < \dist(x, H_2\cup\cdots \cup H_M)\},\quad
A_2:=A\backslash \overline
A_1.
\end{aligned}
\end{eqnarray*}
We have that $A_1$ and
$A_2$ are open sets such
that $A_1\cap H_1\not=\emptyset$,
$A_1\cap \big(H_2\cup\cdots\cup
H_{k+1}\big)=\emptyset$,
$A_2 \cap H_1=\emptyset$,
 and $A_2\cap \big(H_2\cup\cdots\cup
H_{k+1}\big)\not=\emptyset$.
Moreover, we observe
that $\partial
A_1\cap\partial A_2\subset
S:=\{x\in\RR^2\!:\,\dist(x, H_1) = \dist(x,
H_2\cup\cdots \cup H_M)\}$
and $\calH^1(S\cap S_w)=0$
since $\calH^1(H_i\cap H_j)=0$ for $i\not= j$.
Fix $\delta>0$. By the
induction hypothesis
applied to $A_1$ and
$A_2$, there exist sequences
$\{w_n^1\}_{n\in\NN}\subset
W^{1,1}(A_1;[\alpha,\beta]\times
S^2)$ and $\{w_n^2\}_{n\in\NN}\subset
W^{1,1}(A_2;[\alpha,\beta]\times
S^2)$ such that
\begin{eqnarray}\label{ubjump5}
\begin{aligned}
&\lim_{n\to\infty}\Vert
w_n^1-w\Vert_{L^1(A_1;\RR\times\RR^3)}
=0,\quad \lim_{n\to\infty}\Vert
w_n^2-w\Vert_{L^1(A_2;\RR\times\RR^3)}
=0,\\
&\lim_{n\to\infty}\int_{A_1} f(w_n^1(x),\grad w_n^1(x))\,\dx
\lii \int_{A_1\cap S_w}
 K(a,b,\nu_w(x))\,\d\calH^1(x)
+\delta,\\
&\lim_{n\to\infty}\int_{A_2} f(w_n^2(x),\grad w_n^2(x))\,\dx
\lii \int_{A_2\cap S_w}
 K(a,b,\nu_w(x))\,\d\calH^1(x)
+\delta.
\end{aligned}
\end{eqnarray}
Let $A'_1,\,A'_2\in
\calA_\infty(\Omega)$
satisfy $A'_1\subset\subset
A_1$, $A'_2\subset\subset
A_2$, and
\begin{eqnarray}\label{ubjump5'}
\begin{aligned}
& |Dw|(A_1\backslash\overline
{A_1'})\lii \delta, \qquad
|Dw|(A_2\backslash\overline
 {A_2'})\lii \delta.
\end{aligned}
\end{eqnarray}
By Lemma~\ref{slicingMan},
there exist 
sequences
$\{\tilde w_n^1\}_{n\in\NN}\subset
W^{1,1}(A_1';[\alpha,\beta]\times
S^2)$ and $\{\tilde w_n^2\}_{n\in\NN}\subset
W^{1,1}(A_2';[\alpha,\beta]\times
S^2)$ satisfying
\begin{eqnarray}\label{ubjump6}
\begin{aligned}
&\lim_{n\to\infty}\Vert\tilde
 w_n^1-w\Vert_{L^1(A_1';\RR\times\RR^3)}
=0,\qquad \lim_{n\to\infty}\Vert\tilde
 w_n^2-w\Vert_{L^1(A_2';\RR\times\RR^3)}
=0,\\
&\tilde w_n^1= w \hbox{
on $\partial A_1'$},\qquad
\tilde w_n^2= w \hbox{
on $\partial A_2'$},
\\
&\limsup_{n\to\infty}\int_{A_1'}
f(\tilde w_n^1(x),\grad\tilde
 w_n^1(x))\,\dx
\lii \liminf_{n\to\infty}\int_{A_1'}
f(w_n^1(x),\grad w_n^1(x))\,\dx,\\
&\limsup_{n\to\infty}\int_{A_2'}
f(\tilde w_n^2(x),\grad\tilde
 w_n^2(x))\,\dx
\lii \liminf_{n\to\infty}\int_{A_2'}
f(w_n^2(x),\grad w_n^2(x))\,\dx.
\end{aligned}
\end{eqnarray}
Moreover, by
Lemma~\ref{man+tr} and Remark~\ref{man+trhalflip},
together with the fact
that $\dist(\overline{A_1'},
\overline{A_2'})>0$,
there exist a positive
constant $\widetilde
C$ and a sequence
$\{\tilde w_n^3\}_{n\in\NN}\subset
W^{1,1}(A\backslash(\overline
{A_1'}\cup \overline
{A_2'});[\alpha,\beta]\times
S^2)$ such that
\begin{eqnarray}\label{ubjump7}
\begin{aligned}
&\lim_{n\to\infty}\Vert\tilde
 w_n^3-w\Vert_{L^1(A\backslash(\overline
{A_1'}\cup \overline
{A_2'});\RR\times\RR^3)}
=0,\qquad \tilde w_n^3= w \hbox{
on $\partial A_1'\cup\partial A_2'$},\\
&\limsup_{n\to\infty}\int_{A\backslash(\overline
{A_1'}\cup \overline
{A_2'})} |\grad \tilde
w^3_n(x)|\, \dx \lii
\widetilde C |Dw|\big(A\backslash(\overline
{A_1'}\cup \overline
{A_2'}) \big).
\end{aligned}
\end{eqnarray}
Define for $n\in\NN$,
\begin{eqnarray*}
\begin{aligned}
& w_n:= 
\begin{cases}
\tilde w_n^1 & \hbox{in
}
A_1',\\
\tilde w_n^2 & \hbox{in
}
A_2',\\
\tilde w_n^3 & \hbox{in
}
A\backslash(\overline
{A_1'}\cup \overline
{A_2'}).
\end{cases}
\end{aligned}
\end{eqnarray*}
In view of \eqref{ubjump6}
and \eqref{ubjump7},
we have that $\{w_n\}_{n\in\NN}\subset
W^{1,1}(A;[\alpha,\beta]\times
S^2)\) and  \( \lim_{n\to\infty}\Vert
 w_n-w\Vert_{L^1(A;\RR\times\RR^3)}
=0$. Consequently, by
definition of $\calF(w;A)$, and by
\eqref{ubjump5}, \eqref{boundsf},
 and \eqref{ubjump5'},
we obtain
{\setlength\arraycolsep{0.5pt}
\begin{eqnarray*}
\calF(w;A) &&\lii \liminf_{n\to\infty}
 \bigg(\int_{A_1'} f(\tilde
w_n^1(x),\grad\tilde
 w_n^1(x))\,\dx + \int_{A_2'}
f(\tilde w_n^2(x),\grad\tilde
 w_n^2(x))\,\dx\\
&&\hskip30mm + \int_{A\backslash(\overline
{A_1'}\cup \overline
{A_2'})} f(\tilde w_n^3(x),\grad\tilde
 w_n^3(x))\,\dx \bigg)\\
&&\lii  \int_{A_1\cap
S_w}
 K(a,b,\nu_w(x))\,\d\calH^1(x)
+  \int_{A_2\cap S_w}
 K(a,b,\nu_w(x))\,\d\calH^1(x)+
2\delta\\
&&\hskip30mm+ (3+\beta)\limsup_{n\to\infty}
\int_{A\backslash(\overline
{A_1'}\cup \overline
{A_2'})} |\grad \tilde
w_n^3(x)|\,\dx \\
&&\lii  \int_{A\cap
S_w}
K(a,b,\nu_w(x))\,\d\calH^1(x)
+ 2\delta[1+\widetilde
C(3+\beta)],
\end{eqnarray*}}%
and we deduce
Substep~{1.3} by letting
$\delta\to0^+$.

{\sl Substep 1.5.} We
conclude Step~1.

Let $\{E_n\}_{n\in\NN}$
be a sequence of polyhedral
open sets such that (see
Remark~\ref{AppByPoly}) 
\begin{eqnarray*}
\begin{aligned}
&\lim_{n\to\infty}\calL^2(E_n\Delta E)=0,\qquad \lim_{n\to\infty}
{\rm Per}_\Omega(E_n)
= {\rm Per}_\Omega(E),
\end{aligned}
\end{eqnarray*}
and define
\begin{eqnarray*}
\begin{aligned}
& w_n(x):= a\chi_{E_n}(x) +b\chi_{E_n^c}(x).
\end{aligned}
\end{eqnarray*}
Then
\begin{eqnarray*}
\begin{aligned}
& \lim_{n\to\infty} \Vert w_n - w\Vert_{L^1(\Omega:\RR\times
\RR^3)}= 0, \qquad \lim_{n\to\infty}
|Dw_n|(\Omega) = |D w|(\Omega).
\end{aligned}
\end{eqnarray*}
We now consider the homogeneous of degree-one extension
 $\widetilde K(a,b,\cdot)$
 of $K(a,b,\cdot)$
to the whole $\RR^2$
defined for $\upsilon\in
\RR^2$ by
\begin{eqnarray*}
\begin{aligned}
& \widetilde K(a,b,\upsilon):=
\begin{cases}
0 & \hbox{if } \upsilon=0,\\
|\upsilon|  K(a,b,\upsilon/|\upsilon|)
& \hbox{if } \upsilon\not=0.
\end{cases}
\end{aligned}
\end{eqnarray*}
In view of Lemma~\ref{pptyrecfct},
$\widetilde K(a,b,\cdot)$
is an upper semicontinuous
function in $\RR^2$ satisfying
$\widetilde K(a,b,\upsilon)\lii
C|\upsilon|$
for all $\upsilon\in\RR^2$
and for some positive
constant $C$.
Therefore, we can find
a decreasing sequence $\{h_m\}_{m\in\NN}$ of
continuous functions satisfying
for all $\upsilon\in \RR^2$,
\begin{eqnarray*}
\begin{aligned}
& \widetilde K(a,b,\upsilon)
\lii h_m(\upsilon)\lii
2C |\upsilon|,
\qquad \widetilde K (a,b,\upsilon)
= \inf_{m\in\NN} h_m(\upsilon).
\end{aligned}
\end{eqnarray*}
Using the lower semicontinuity
of $\calF(\cdot;A)$ with
respect to the $L^1$-convergence
(of sequences taking
values on $[\alpha,\beta]\times
S^2$), Substep~{1.3,} and
Reshetnyak's
Continuity Theorem, for every \(m\in\NN, \) we
obtain 
{\setlength\arraycolsep{0.5pt}
\begin{eqnarray*}
\calF(w;A)&&\lii \liminf_{n\to\infty}
\calF(w_n;A) \lii \liminf_{n\to\infty}
\int_{S_{w_n}\cap
A} \widetilde K(a,b,\nu(x))\,\d\calH^1(x)
\\
&&\lii \liminf_{n\to\infty}
\int_{S_{w_n}\cap
A} h_m(\nu(x))\,\d\calH^1(x)
= \int_{S_{w}\cap
A} h_m(\nu(x))\,\d\calH^1(x).
\end{eqnarray*}}%
We conclude
Step~1 by letting $m\to\infty$
and using
Lebesgue's Monotone Convergence
Theorem. 

\underbar{Step 2.} We
prove that \eqref{ubjump}
holds whenever $w$ is
of the form 
\begin{eqnarray}\label{sumfiniteper}
\begin{aligned}
w(x)= \sum_{i=1}^k a_i\chi_{E_i}(x),
\end{aligned}
\end{eqnarray}
where $k\in\NN$, $a_i\in [\alpha,\beta]\times
S^2$, $i\in\{1,...,k\}$, and $\{E_i\}_{i=1}^k$
is a family of mutually disjoint
sets of finite perimeter in $\Omega$, which covers
$\Omega$. 

By Theorem~\ref{DG&FE}
(see also \eqref{equivJump}),
we have that
for all $i,j\in\{1,...,k\}$,
{\setlength\arraycolsep{0.5pt}
\begin{eqnarray*}
&& (w^+(x), w^-(x), \nu_w(x))
\sim (a_i, a_j, \nu_{E_i}(x))
\hbox{ for all } x\in \calF^*E_i
\cap \calF^*E_j,\\
&& \bigcup_{i<j}
(\calF^*E_i
\cap \calF^*E_j) \subset
S_w \subset B \cup \bigcup_{i<j}
(\calF^*E_i
\cap \calF^*E_j)
\end{eqnarray*}}%
where $B$ is a suitable
Borel set satisfying
$\calH^1(B)=0$ and  $(\calF^*E_i
\cap \calF^*E_j) \cap
(\calF^*E_l
\cap \calF^*E_m) = \emptyset$
for all $i,j, l, m\in\{1,...,k\}$
such that $i\not= j$,
$l\not= m$, and  $\{i,j\} \not=
\{l,m\}$. Moreover,
\begin{eqnarray*}
\begin{aligned}
& Dw = \sum_{i=1}^k a_i
\otimes \nu_{E_i} {\calH^1}_{\lfloor
\calF^* E_i} = \sum_{i<j} (a_i - a_j)
\otimes \nu_{E_i} {\calH^1}_{\lfloor
(\calF^* E_i \cap \calF^* E_j)}.
\end{aligned}
\end{eqnarray*}
Therefore, having
in mind \eqref{ubcalF}
and the identification observed at the beginning
of Subsection~\ref{upperbound},
we conclude that $\calF(w;\cdot)\ll |Dw|
\ll {\calH^1}_{\lfloor
S_w}$ and
{\setlength\arraycolsep{0.5pt}
\begin{eqnarray*}
\calF(w;A) &&= \calF(w;A\cap
S_w) = \sum_{i<j}  \calF(w;A\cap
(\calF^* E_i \cap \calF^*
E_j))\\
&&= \sum_{i<j}  \calF(a_i\chi_{E_i}
+ a_j\chi_{E_i^c} ;A\cap
(\calF^* E_i \cap \calF^*
E_j)).
\end{eqnarray*}}%
On the other hand, by Step~1 together with
Theorem~\ref{DG&FE}, we obtain for $i,j\in\{1,...,k\}$
with $i<j$,
{\setlength\arraycolsep{0.5pt}
\begin{eqnarray*}
&& \calF(a_i\chi_{E_i}
+ a_j\chi_{E_i^c} ;A\cap
(\calF^* E_i \cap \calF^*
E_j))\\
&&\quad = \inf \big\{
\calF(a_i\chi_{E_i}
+ a_j\chi_{E_i^c} ; A')\!:\,
A'\in\calA(\Omega), \,
A' \supset A\cap
(\calF^* E_i \cap \calF^*
E_j) \big\}\\
&&\quad\lii \inf \bigg\{
\int_{A'\cap \calF^*E_i}
 K(a_i,a_j,\nu_{E_i}(x))
\,\d\calH^1(x)\!:\,
A'\in\calA(\Omega), \,
A' \supset A\cap
(\calF^* E_i \cap \calF^*
E_j) \bigg\}\\
&&\quad = \int_{A\cap
(\calF^* E_i \cap \calF^*
E_j)}
 K(a_i,a_j,\nu_{E_i}(x))
\,\d\calH^1(x).
\end{eqnarray*}}%
Consequently,
\begin{eqnarray*}
\begin{aligned}
 &\calF(w;A) &\lii  \sum_{i<j}  \int_{A\cap
(\calF^* E_i \cap \calF^*
E_j)}
 K(a_i,a_j,\nu_{E_i}(x))
\,\d\calH^1(x)= \int_{A\cap
S_w}
 K(w^+(x),w^-(x),\nu_{w}(x))
\,\d\calH^1(x),
\end{aligned}
\end{eqnarray*}
which concludes Step~2.

\underbar{Step 3.} We
establish Lemma~\ref{jumppartCalF}.

Let \(\phi\in C^\infty_c(\RR^3;[0,1])\) be a smooth
cut-off function such that \(\phi(z)=0\) if \(|z|
\lii \frac{1}{4}\), and  \(\phi(z)=1\) if \(|z|
\gii \frac{3}{4}\). 
Let $\bar\phi: \RR\times
(\RR^3 \backslash \{0\})
\to [\alpha,\beta]\times
S^2$ be the function
defined by $\bar\phi(r,s):=
(\tilde r, \tilde s)$,
where $\tilde r$ and
$\tilde s$ are given
by \eqref{deftilders}. Note that for all $\delta>0$,
 $\bar\phi$ is
a Lipschitz function
in $\RR\times (\RR^3
\backslash B(0,\delta))$.
Set \(\delta=\frac{1}{8}\), and let \(L_{\bar
\phi}:=Lip(\bar\phi_{|\RR\times (\RR^3
\backslash B(0,\frac{1}{8}))})\).
Consider the 
extension $\overline K: (\RR\times \RR^3 ) \times
(\RR\times \RR^3
) \times
 S^1 \to [0,+\infty)$
 of $K$ defined for $a=(r_1,s_1)$,
 $b=(r_2,s_2) \in \RR\times \RR^3
$ and
$\nu\in S^1$, by 
\[\overline K(a,b,\nu):=
\begin{cases}
0 & \hbox{if } s_1=0 \hbox{ or } s_2=0,\\
\phi(s_1) \phi(s_2) K(\bar\phi(a), \bar\phi(b), \nu)
& \hbox{if } s_1\not=0 \hbox{ and } s_2\not=0.
\end{cases}\]

Then, the properties
stated in Lemma~\ref{pptyrecfct}
hold  in $(\RR\times \RR^3 ) \times
(\RR\times \RR^3
) \times
 S^1$ for $\overline
K$, where the corresponding constant depends on
the constant in  Lemma~\ref{pptyrecfct}, on \(L_{\bar\phi}\),
and on \(\Vert\phi\Vert_{1,\infty}\).   Because $w$ takes
values on $[\alpha,\beta]\times S^2$,  arguing as in
\cite[Step 2 of Prop.~{4.8}]{AMTXCI}
we can construct a sequence
$\{w_n\}_{n\in\NN} \subset
BV(\Omega;\RR\times \RR^3)$
where each $w_n$
is of the type \eqref{sumfiniteper} (but whose
coefficients do not necessarily belong to \([\alpha,\beta]
\times S^2)\) and such that 
{\setlength\arraycolsep{0.5pt}
\begin{eqnarray}
&&\lim_{n\to\infty} \Vert
w_n - w\Vert_{L^\infty(\Omega;\RR
\times \RR^3)}=0, \label{AMTconvLi} \\
&&\liminf_{n\to\infty}
\int_{A \cap S_{w_n}}
\overline K (w_n^+(x),
w_n^-(x), \nu_{w_n}(x))\,
\d\calH^1(x)\\
&&\qquad \lii C |Dw|(A\backslash
S_w) +  \int_{A \cap
S_{w}}
\overline K (w^+(x),
w^-(x), \nu_{w}(x))\,
\d\calH^1(x) \nonumber
\\
&&\qquad= C |Dw|(A\backslash
S_w) +  \int_{A \cap
S_{w}}
 K (w^+(x),
w^-(x), \nu_{w}(x))\,
\d\calH^1(x),\label{AMTliminf}
\end{eqnarray}}%
where $C$ is a positive
constant  depending only
on the constants in Lemma~\ref{pptyrecfct}
for $\overline K$, and where in the last equality
we used Lemma~\ref{onManae} and Theorem~\ref{appdiffisac}.

In view of \eqref{AMTconvLi} and since $w$ takes values
on $[\alpha,\beta]\times S^2$, \(w_n\)
takes values in \(\RR\times (\RR^3
\backslash B(0,3/4))\) for all $n\in\NN$ sufficiently
large. Then, also \(w_n^{\pm}(x) \in\RR\times (\RR^3
\backslash B(0,3/4)) \) for \(\calH^1\)-\aev\ \(x\in
S_{w_n}\) and for all $n\in\NN$ sufficiently
large. For all such \(n\in\NN\),
 the function \[\bar w_n:= \bar\phi(w_n)\]
 belongs to $BV(\Omega;\RR\times
\RR^3)$, takes values on $[\alpha,\beta]\times S^2$,
and  is of the type \eqref{sumfiniteper}.
Moreover, by the Lipschitz continuity of \(\bar\phi\),
the equality \(\bar\phi(w)=w\), and \eqref{AMTconvLi},
we also have  $\lim_{n\to\infty} \Vert \bar
w_n - w\Vert_{L^1(\Omega;\RR
\times \RR^3)}=0$. Furthermore,
using Proposition~\ref{appppties}~{\it
(a)-iii), (b)-iii)} and
Theorem~\ref{appdiffisac}~{\it
(b)}, we have \(S_{\bar w_n} \subset S_{w_n}\), \(\calH^1(S_{w_n}
\backslash (J_{w_n} \cap J_{\bar w_n}))=0\), and
\((\bar w_n^+(x),
\bar w_n^-(x), \nu_{\bar
w_n}(x)) = (\bar\phi(w_n^+(x)),
\bar\phi(w_n^-(x), \nu_{
w_n}(x)) \) for all \(x\in J_{w_n} \cap J_{\bar w_n}
\). Thus,
{\setlength\arraycolsep{0.5pt}
\begin{eqnarray}\label{KbarwbarKw}
\int_{A \cap S_{\tilde
w_n}}
 K (\bar w_n^+(x),
\bar w_n^-(x), \nu_{\bar
w_n}(x))\,
\d\calH^1(x) &&\lii  \int_{A
\cap S_{
w_n}}
\overline K (w_n^+(x),
w_n^-(x), \nu_{
w_n}(x))\,
\d\calH^1(x).
\end{eqnarray}}%
Hence, using the
lower semicontinuity
of $\calF(\cdot, A)$
with respect to the $L^1$-convergence
(of sequences taking
values in $[\alpha,\beta]\times
S^2$), Step~2, \eqref{AMTliminf},
and \eqref{KbarwbarKw},
yields
{\setlength\arraycolsep{0.5pt}
\begin{eqnarray*}
\calF(w,A) &&\lii \liminf_{n\to\infty}
\calF(\bar w_n,A) \lii
\liminf_{n\to\infty}
\int_{A \cap S_{\bar
w_n}}
 K (\bar w_n^+(x),
\bar w_n^-(x), \nu_{\bar
w_n}(x))\,
\d\calH^1(x) \\
&&\lii C |Dw|(A\backslash
S_w) +  \int_{A \cap
S_{w}}
 K (w^+(x),
w^-(x), \nu_{w}(x))\,
\d\calH^1(x).
\end{eqnarray*}}%
Finally, 
{\setlength\arraycolsep{0.5pt}
\begin{eqnarray*}
\calF(w,A\cap S_w)
&&= \inf \big\{\calF(w
; A')\!:\,
A'\in\calA(\Omega), \,
A' \supset A\cap S_w
\big \}\\
&&\lii \inf\bigg\{ C |Dw|(A'\backslash
S_w) +  \int_{A' \cap
S_{w}}
 K (w^+(x),
w^-(x), \nu_{w}(x))\,
\d\calH^1(x)\!:\,
A'\in\calA(\Omega), \,
A' \supset A\cap S_w
\big \}\\
&&  = \int_{A \cap
S_{w}}
 K (w^+(x),
w^-(x), \nu_{w}(x))\,
\d\calH^1(x),
\end{eqnarray*}}%
which concludes the proof
of Lemma~\ref{jumppartCalF}.\end{proof}

\section*{Acknowledgements}

The authors acknowledge the funding of Fundação para a Ciência e a Tecnologia (Portuguese Foundation for Science
and Technology) through the ICTI CMU-Portugal Program in Applied Mathematics and UTACMU/
MAT/0005/2009. The authors also thank the Center for Nonlinear Analysis (NSF Grant DMS-0635983), where part of this research was carried out. 

R. Ferreira was partially supported by  the KAUST
SRI, Center for
Uncertainty Quantification  in Computational
Science and Engineering and by the Funda\c{c}\~ao
para a Ci\^encia e a Tecnologia 
through the grant
SFRH/BPD/81442/2011.
The work of I. Fonseca was partially supported
by the National Science Foundation under Grant
No. DMS-1411646.
The work of L.M. Mascarenhas was partially supported
by  UID/MAT/00297/2013.

\bibliographystyle{plain}

\bibliography{imaging}

\end{document}